\title[Geometric model for module categories of Dynkin quivers]
{Geometric model for module categories of Dynkin quivers via hearts of total stability conditions}
\author{Wen Chang}
\address{Wen Chang:
School of Mathematics and Statistics,
Shaanxi Normal University,
Xi'an 710062, China.}
\email{changwen161@163.com}
\author{Yu Qiu}
\address{Yu Qiu:
	Yau Mathematical Sciences Center and Department of Mathematical Sciences,
	Tsinghua University,
    100084 Beijing,
    China.
    \&
    Beijing Institute of Mathematical Sciences and Applications, Yanqi Lake, Beijing, China}
\email{yu.qiu@bath.edu}
\author{Xiaoting Zhang}
\address{Xiaoting Zhang:
    Beijing Advanced Innovation Center for Imaging Theory and Technology, Academy for Multidisciplinary Studies, Capital Normal University, Beijing 100048, China}
\email{xiaoting.zhang09@hotmail.com}
\dedicatory{Dedicated to Alastair King on the occasion of his sixtieth birthday}
\renewcommand{\floatpagefraction}{0.60}
\tikzset{->-/.style={decoration={  markings,  mark=at position #1 with
    {\arrow{>}}},postaction={decorate}}}
\tikzset{-<-/.style={decoration={  markings,  mark=at position #1 with
    {\arrow{<}}},postaction={decorate}}}
\def\Serre{\mathbb{S}}
\def\xx{\mathbb{X}}
\def\QStap{\QStab^\circ}
\def\dexc{cyan!21}
\def\back{green!25}
\newcommand{\Note}[1]{\textcolor{red}{\texttt{#1}}}
\theoremstyle{plain}
\newtheorem{theorem}{Theorem}[section]
\newtheorem{lemma}[theorem]{Lemma}
\newtheorem{proposition}[theorem]{Proposition}
\newtheorem{conjecture}[theorem]{Conjecture}
\theoremstyle{definition}
\newtheorem{definition}[theorem]{Definition}
\newtheorem{example}[theorem]{Example}
\newtheorem{remark}[theorem]{Remark}
\numberwithin{equation}{section}
\def\hh{\mathcal}
\def\ha{\hh{A}}
\def\kong{\mathbb}
\def\<{\langle}
\def\>{\rangle}
\def\NN{\mathbb{N}}
\def\ZZ{\mathbb{Z}}
\def\QQ{\mathbb{Q}}
\def\RR{\mathbb{R}}
\def\bP{\mathbb{P}}
\def\Obj{\operatorname{Obj}}
\def\CC{\mathbb{C}}
\def\Add{\operatorname{Add}}
\def\Aut{\operatorname{Aut}}
\def\Ind{\operatorname{Ind}}
\def\Sim{\operatorname{Sim}}
\def\Hom{\operatorname{Hom}}
\def\hom{{\hh{H}}om}
\def\End{\operatorname{End}}
\def\Ext{\operatorname{Ext}}
\def\Irr{\operatorname{Irr}}
\def\tri{\varsigma}
\def\Stab{\operatorname{Stab}}
\def\stab{\operatorname{St}}
\def\Stap{\operatorname{Stab}^\circ}
\def\diff{\operatorname{d}}
\def\Br{\operatorname{Br}}
\def\Rep{\operatorname{Rep}}
\def\thick{\operatorname{thick}}
\def\arg{\operatorname{arg}}
\def\rank{\operatorname{rank}}
\def\PSL{\operatorname{PSL}_2(\ZZ)}
\def\deg{\operatorname{deg}}
\newcommand{\h}{\hh{H}}            
\newcommand{\nh}{\widehat{\h}}
\newcommand{\nS}{\widehat{S}}
\newcommand{\ns}{\widehat{\sigma}}
\newcommand{\nz}{\widehat{Z}}
\newcommand{\np}{\widehat{\hh{P}}}
\renewcommand{\k}{\mathbf{k}}
\renewcommand{\mod}{\operatorname{mod}}
\newcommand{\Ho}[1]{\operatorname{\bf H}_{#1}}
\newcommand{\tilt}[3]{{#1}^{#2}_{#3}}
\newcommand{\Cone}{\operatorname{Cone}}
\newcommand{\Ae}[1]{\mathcal{#1}^e}
\renewcommand{\Re}{\operatorname{Re}}
\newcommand{\bahao}{\fontsize{6.25pt}{\baselineskip}\selectfont}
\renewcommand{\labelenumi}{\theenumi{$^\circ$}.} 
\newcommand{\id}{\operatorname{id}}
\newcommand{\HH}{\operatorname{HH}}
\newcommand{\im}{\operatorname{im}}
\newcommand{\Proj}{\operatorname{Proj}}
\newcommand{\Inj}{\operatorname{Inj}}
\newcommand{\EG}{\operatorname{EG}}       
\newcommand{\EGp}{\operatorname{EG}^\circ}       
\newcommand{\shift}[1]{\operatorname{\Sigma}_{#1}}
\newcommand{\CEG}[2]{\operatorname{CEG}_{#1}(#2)}             
\newcommand{\D}{\operatorname{\hh{D}}}
\newcommand{\per}{\operatorname{per}}
\newcommand\Sph{\operatorname{Sph}}
\def\zero{\hh{H}_\Gamma}
\newcommand{\Tri}{\bigtriangleup}
\def\edge{NavyBlue}
\def\vertex{red}
\def\arrow{red}
\def\surf{\mathbf{S}}                       
\def\SS{\mathbf{S}}                       
\def\sss{\hh{S}}
\def\ssp{\underline{\hh{S}}}
\def\SC{\mathrm{S}}                       
\def\SSo{\mathbf{S}_\Tri}                       
\def\Surf{\mathbf{S}^\diamond}              
\def\xm{\Sigma}
\def\type{\Sigma_\surf^\Tri}                            
\def\xmt{\Sigma_{\T}}
\def\ts{\Sigma_{\TT}}
\def\tsx{\Sigma_{\TT}^\bullet}
\def\xms{\Sigma_{\TT}}
\def\Xm{\Sigma^\diamond_{\surf}}            
\def\Xmt{\Sigma^\diamond_{\T}}            
\newcommand\XM[1]{\Sigma^{#1}_{\surf}}      
\newcommand{\DT}{\operatorname{DT}}        
\newcommand{\ST}{\operatorname{ST}}        
\newcommand{\BT}{\operatorname{BT}}        
\def\kg{g_\xm}                              
\def\bj{b_\xm}                              
\newcommand{\AEG}{\operatorname{AEG}}       
\newcommand{\MCG}{\operatorname{MCG}}
\newcommand{\mcg}{\operatorname{MCG}_\circ}
\newcommand{\corank}{\operatorname{corank}}
\newcommand{\Int}{\operatorname{Int}}
\newcommand{\ain}{\operatorname{AI}}         
\newcommand{\gin}{\operatorname{GI}}         
\newcommand{\EGT}{\operatorname{EG}^{\times}}
\newcommand{\PP}{\Gamma_{\TT}}             
\newcommand{\HD}{\operatorname{\Psi}}   
\newcommand{\hd}{\operatorname{\Psi}}  
\newcommand\Dehn[1]{\operatorname{D}_{#1}}
\newcommand\dehn[1]{\operatorname{D}_{#1}^{-1}}
\def\sign{\operatorname{sign}}
\def\wt{\operatorname{wt}}
\def\st{\T_{\times}}
\def\Homeo{\operatorname{Homeo}}
\newcommand\coho[1]{\operatorname{H}^{#1}}
\newcommand\ho[1]{\operatorname{H}_{#1}}
\newcommand\coch[1]{\operatorname{Z}^{#1}}
\def\Esurf{\widetilde{\surf}}
\def\Est{\widetilde{\st}}
\def\Exms{\widetilde{\xms}}
\def\EDfd{\D(\EPP)}
\def\EPP{\widetilde{\PP}}
\def\coh{\operatorname{coh}}
\def\Symp{\operatorname{Symp}}
\def\Diff{\operatorname{Diff}}
\def\Fuk{\operatorname{Fuk}}
\def\Dcoh{\D^b(\coh X)}
\def\DFuk{\D^b\Fuk(M, \omega)}
\def\Ezero{\widetilde{\hh{H}_\Gamma}}
\def\TT{\mathbf{T}}
\def\T{\kong{T}}
\def\lk{\operatorname{lk}}
\def\M{\mathbf{M}}
\def\P{\mathbf{P}}
\def\cconx{\ccon^\bullet}
\def\clc{\mathbf{C}_{\TT}}
\def\Clc{\clc^\diamond}
\def\Ccon{\ccon^\diamond}
\newcommand{\CG}{\operatorname{CG}}             
\newcommand{\CCG}{\operatorname{CCG}}             
\newcommand{\rep}{\operatorname{\xi}}
\def\dian{\small{$\bullet$}}
\def\jiantou{edge[->=stealth]}
\def\jt{edge[->=stealth]}
\newcommand{\gm}{\bigtriangleup}
\newcommand{\nil}{\operatorname{nil}}
\newcommand{\AT}{\operatorname{AT}}        
\def\RHom{\operatorname{RHom}}
\def\ee{\operatorname{\mathrm{E}}}
\def\EE{\operatorname{\kong{E}}}
\def\OPR{\operatorname{\kong{R}}}
\def\OPL{\operatorname{\kong{L}}}
\def\surfo{{\mathbf{S}}_\Tri}
\def\surfx{{\mathbf{S}}^\diamond}
\def\CA{\operatorname{CA}}
\def\cA{\underline{\CA}}
\def\oA{\operatorname{OA}}
\def\HA{\operatorname{HA}}
\def\oAp{\operatorname{OA}^\circ}
\def\Z{\mathbf{Z}}
\newcommand{\Q}[1]{\mathcal{Q}(#1)}
\newcommand\Bt[1]{\operatorname{B}_{#1}}
\newcommand\bt[1]{\operatorname{B}_{#1}^{-1}}
\newcommand{\EGC}{\operatorname{EG}^{\operatorname{c}}}
\def\ff{\operatorname{\mathrel{\Big|}}}
\def\add{\operatorname{add}}
\newcommand{\MMCG}{\operatorname{MCG}_\bullet}
\newcommand{\twi}{\Psi} 
\def\filt{\operatorname{filt}}
\def\Grot{\operatorname{\mathrm{K}}}
\def\gldim{\operatorname{gldim}}
\def\XStab{\QStab}
\def\sli{\mathcal{P}}
\newcommand{\norm}[1]{\lVert #1 \rVert}
\def\hua{\mathcal}
\def\HMF{\operatorname{HMF}}
\def\tshift{\overline{\tau}}
\def\AR{\operatorname{AR}}
\def\Dwq{\D_\infty}
\def\RC{\mathcal{R}}
\def\DQ{\D_\infty(Q)}
\newcommand{\qq}[1]{\operatorname{\Gamma}_{#1}Q}
\def\cd{\operatorname{gd}_Q}
\def\Lag{\hua{L}}
\def\nn{node{$\bullet$}}
\def\ww{node[white]{$\bullet$}node[red]{$\circ$}}
\def\sun{Emerald}
\def\fblue{blue!20}
\def\fgreen{green!30}
\def\forange{red!30!orange!30!white}
\def\fcyan{Emerald!49!cyan!20!white}
\def\fire{orange!40}
\def\fires{yellow!20}
\def\ice{blue!15!cyan!50!white}
\def\ices{blue!30!cyan!10!white}
\def\parity{\varrho}
\def\roots{\Lambda}
\def\xqq{circle(.05)}
\def\gms{\surf^\lambda}
\def\vot{\text{\Biohazard}}
\def\Vot{\text{\Cancer}}
\newcommand{\za}{\alpha}
\newcommand{\zb}{\beta}
\newcommand{\zg}{\gamma}
\newcommand{\zo}{\omega}
\newcommand{\zD}{\vartriangle}
\newcommand{\sjx}{\bigtriangleup}
\newcommand{\sbx}{\square}
\newcommand{\wbx}{\pentagon}
\newcommand{\wen}{\color{red}}
\newcommand{\gray}{\color{gray!50}}
\newcommand{\qy}[1]{\textcolor{TealBlue}{#1}}
\renewcommand{\Im}{\operatorname{Im}}
\newcommand{\imz}{{\Im}Z}
\def\UD{\mathrm{UD}}
\def\AD{\mathrm{Ad}}
\newcommand{\UP}[1]{\upharpoonleft\hskip -.05in{#1}\hskip -.05in\upharpoonright}
\begin{document}

\def\Sth{\operatorname{Stgon}}
\def\hgon{\mathbf{V}}
\def\agon{\mathbf{J}}
\def\ohgon{\overrightarrow{\hgon}}

\def\ihgon{\hgon_{\mathrm{ice}}}
\def\fhgon{\hgon_{\mathrm{fire}}}
\def\icore{\ihgon^{\text{\tiny$\copyright$}}}
\def\fcore{\fhgon^{\text{\tiny$\copyright$}}}
\def\uQ{\underline{Q}}
\def\XX{\mathfrak{M}}
\def\ToSt{\operatorname{ToSt}}
\begin{abstract}
We derive a geometric model for the module category $\mod \k Q$ of a Dynkin quiver $Q$
via the heart of a total stability condition on the bounded derived category of $\mod \k Q$.
As an application, we prove Reineke's conjecture that
there is a stability function on $\mod \k Q$ making any indecomposables stable.

    \vskip .3cm
    {\parindent =0pt
    \it Key words:
    total stability condition, Dynkin diagram, module category, geometric model, Reineke's conjecture}

\end{abstract}
\maketitle
\tableofcontents\addtocontents{toc}{\setcounter{tocdepth}{1}}


\section{Introduction}
The stability structures, e.g. King's $\theta$-stability on abelian categories and Bridgeland's stability conditions on triangulated categories, play an important role in many areas, such as, geometric invariant theory, Donaldson-Thomas theory, cluster theory, etc.

In the prequel \cite{QZ22}, Qiu-Zhang give a geometric model for the root category of the bounded derived category $\Dwq(Q)$
associated to a Dynkin diagram and describe the space $\ToSt(Q)$ of all total stability conditions on $\Dwq(Q)$.
In this paper, we describe the heart $\h_{\sigma}$ for $\sigma\in\ToSt(Q)$ on the geometric model and hence obtain a model
for the module category of certain Dynkin quiver whose underlying diagram is the same as $Q$.
In fact, we show that such a heart can realize the module category of a Dynkin quiver with any given orientation.
As an application, we prove Reineke's conjecture that, for any Dynkin quiver $Q$,
there is a stability function on $\mod \k Q$ such that all indecomposable objects are stable.

\subsection{Stability structure on abelian and triangulated categories}
An algebraic version of the notion of stability in geometric invariant theory is introduced by King \cite{K} in the representation theory of finite-dimensional algebras.
The King's $\theta$-stability is generalized by Rudakov \cite{R97} to any abelian category.
A stability function on the module category $\h(Q)\colon=\mod \k Q$ of a quiver $Q$
is a group homomorphism $Z\colon K \h(Q)\to \CC$.
The associated slope function is
\[
    \mu_Z(\alpha)=\frac{\Im Z(\alpha)}{\Re Z(\alpha)}.
\]

An object $M$ is $Z$-stable (resp. $Z$-semistable) if and only if
\[
    \mu_Z(L)<\mu_Z(M),\quad \text{(resp. $\mu_Z(L)\le\mu_Z(M)$)}
\]
for any subobject $0\neq L\subsetneq M$.

Later, motivated by $\Pi$-stability from string theory,
Bridgeland \cite{B07} introduce the notion of stability conditions on a triangulated category.
To be more precise, a stability condition $\sigma=(Z,\sli)$ on a triangulated category $\D$ consists of
a group homomorphism $Z\colon K\D\to\CC$, called the central charge,
and an $\RR$-collection of abelian subcategories $\sli(\phi)$ of $\D$,
known as the slicing, which satisfy certain conditions.
The objects in each $\sli(\phi)$ are called semistable
and the simple objects there are called stable.
Such a notion is a kind of triangulated generalization of the stability for an abelian category mentioned above.
Indeed, a stability condition $\sigma$ on $\D$ induces
a stability function $Z$ on the abelian category, i.e. its heart $\h_\sigma$.
Conversely, a stability function $Z$ on a heart $\h$ of $\D$,
satisfying the so-called Harder-Narasimhan property, is equivalent to a stability condition with heart $\h$ on $\D$.

\subsection{Reineke's conjecture and total stability}
Denote by $\Sim\h(Q)$ the set of simples in $\h(Q)$ and
$\hh{A}_Q$ be the quantum affine space
\begin{gather*}
    \kong{Q}(q^{1/2})\< y^{S} \mid S\in\Sim\h(Q) \>  \big{/}
    (y^{S_i} y^{S_j} - q^{\lambda_Q(i,j)} y^{S_j} y^{S_i} ),
\end{gather*}
where $q^{1/2}$ is an indeterminate and $\lambda_Q(i,j)=\<S_j,S_i\>-\<S_i,S_j\>$ with
$\<-,-\>$ given by the Euler form associated to $Q$.
One can obtain the completion $\widehat{\hh{A}}_Q$ of $\hh{A}_Q$
with respect to the ideal generated by $y^S, \forall S\in\Sim\h(Q)$.
Suppose that a stability function $Z$ on $\h(Q)$ is discrete,
i.e. $\mu_Z(M)\neq \mu_Z(L)$ for any indecomposable $M\not\cong L$ in $\h(Q)$.
Then an invariant $\DT(\h(Q))$ (cf. e.g. \cite{K11}) can be calculated as
\begin{gather}\label{eq:DT}
    \DT(\h)\colon=\prod^{\longleftarrow}_{ \text{$M$ stable} } \kong{E}(y^{\dim M})
    \quad \in \hh{A}_Q
\end{gather}
where the product is taking in phase-decreasing order and $\kong{E}$ is the quantum dilogarithm defined as the formal series
\[
    \kong{E}(X)=\sum_{j=0}^{\infty} \frac{q^{j^2/2}X^j}{
    \prod_{k=0}^{j-1}  (q^j-q^k)  }.
\]
The formula~\ref{eq:DT} for calculating quantum dilogarithm identities is proved by Reineke \cite{R03},
using Hall algebra and the integration map,
for $\h(Q)$ of a Dynkin quiver $Q$.
An alternative proof can be found in \cite{Q15} which uses the fundamental group of exchange graph of hearts.
In the setting of Calabi-Yau-$3$ categories,
this invariant produces the Donladson-Thomas invariants,
e.g. studied by Kontsevich-Sobliman \cite{KS08}.
In \cite{R03},
Reineke conjectures the following.

\begin{conjecture}\label{conj}
For any Dynkin quiver $Q$, there is a stability function $Z$ such that
the number of factors in the product \eqref{eq:DT} reaches maximal, i.e. equals $n\cdot h_Q/2$,
where $h_Q$ is the Coxeter number associated to $Q$.
In other words, any indecomposable object in $\h(Q)$ is $Z$-stable.
\end{conjecture}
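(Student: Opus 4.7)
The plan is to derive Reineke's conjecture as a direct consequence of the geometric model of hearts of total stability conditions established in this paper. The key input is the assertion that, for any prescribed orientation of the Dynkin diagram underlying $Q$, there exists a total stability condition $\sigma = (Z, \sli) \in \ToSt(Q)$ whose heart $\h_\sigma$ is equivalent, as an abelian category, to $\mod \k Q$. Once this identification is in hand, the desired stability function will simply be the restriction of $Z$ to the Grothendieck group of $\h_\sigma$.

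In more detail, my steps are the following. First, fix a total stability condition $\sigma \in \ToSt(Q)$ whose heart $\h_\sigma$ is identified with $\mod \k Q$ via the main construction of this paper; the central charge then restricts to a stability function $Z_\sigma \colon K(\mod \k Q) \to \CC$ in the sense of Rudakov. Second, invoke the standard compatibility between Bridgeland stability on $\Dwq(Q)$ restricted to $\h_\sigma$ and slope stability with respect to $Z_\sigma$: a subobject $L \hookrightarrow M$ inside $\h_\sigma$ produces a distinguished triangle $L \to M \to M/L \to L[1]$ in $\Dwq(Q)$, and since $L, M, M/L$ all lie in the heart, phase comparison via the slicing $\sli$ coincides with slope comparison via $\mu_{Z_\sigma}$. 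In particular, any $M \in \h_\sigma$ which is $\sigma$-stable in $\Dwq(Q)$ is automatically $Z_\sigma$-stable as a module.

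Third, by the defining property of total stability conditions, every indecomposable object of $\Dwq(Q)$ is $\sigma$-stable, so every indecomposable of $\h_\sigma \simeq \mod \k Q$ is $Z_\sigma$-stable. Gabriel's theorem identifies the set of indecomposables in $\mod \k Q$ with the set of positive roots of the corresponding Dynkin root system, which has cardinality $n \cdot h_Q / 2$. Hence the product in \eqref{eq:DT} associated to $Z_\sigma$ has this many factors, attaining the maximum claimed by the conjecture.

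The main obstacle is not in the deduction above, which is essentially formal, but in the first step: one needs a total stability condition whose heart \emph{is} $\mod \k Q$ for the prescribed orientation, rather than merely some heart equivalent to some module category. The parametrization of $\ToSt(Q)$ in \cite{QZ22} is carried out at the level of the root category, so pinpointing hearts and aligning their orientations with a given $Q$ requires the geometric model developed in the body of this paper; that is where the substantive work takes place.
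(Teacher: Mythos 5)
Your reduction is sound and it is exactly the paper's strategy: realize $\mod\k Q$ as the heart $\h_\sigma$ of some $\sigma\in\ToSt(Q)$, observe that the central charge restricts to a stability function on that heart for which $\sigma$-stability and slope stability agree, and count indecomposables via Gabriel's theorem as the $n\cdot h_Q/2$ positive roots. Steps two and three are the standard Bridgeland--Rudakov compatibility and are essentially formal, as you say.

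The genuine gap is that your ``key input'' is precisely the theorem that needs proving, and neither the geometric model for $\RC(Q)$ from \cite{QZ22} nor Theorems~\ref{thm:An}, \ref{thm:Dn}, \ref{thm:En} of this paper supply it on their own. Those theorems only say that for a \emph{given} $\sigma\in\ToSt(Q)$ the heart $\h_\sigma$ is the module category of the intersection quiver $\hh{Q}(\hgon_\sigma)$ of its stable $h$-gon; they do not say that every orientation of the Dynkin diagram occurs as such an intersection quiver. Establishing that surjectivity is the entire content of Section~5 and is where the paper does its real work: in types $A_n$ and $D_n$ one places the reordered vertices $Y_i$ on the unit circle with signs $\sign(x_i)=-\epsilon_i$ encoding the orientation of each arrow (Propositions~\ref{prop:An-quiver to polygon} and~\ref{prop:Dn-quiver to polygon}, the latter splitting into cases according to the position of the punctures $B_\pm$), while in type $E_n$ no such uniform construction is available and the paper instead reduces, via the mirror operation $\hgon\mapsto\hgon^\vee$ (Lemma~\ref{lem:ob}) and small deformations that flip arrows at the leaf vertices $s_1,s_3,s_n$ using openness of $\Sth(Q)$, to $2^{n-5}$ explicitly constructed stable $30$-gons listed in Appendix~\ref{app:E}. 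Without carrying out (or at least citing) these constructions, your argument proves only that \emph{some} orientation of each Dynkin diagram admits a total stability function, not that all of them do.
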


Such a stability function $Z$ is called a total stability function.
This conjecture has been confirmed for any quiver $Q$ of type $A_n$ (with arbitrary orientation) in several papers via different approaches, such as \cite{BGMS19,AI20,HH20,K20}, and also for any quiver $Q$ of type $D_n$ in \cite{A22}.

In this paper, we prove Reineke's conjecture from the point of view of triangulated categories via geometric models.
In the triangulated setting, one also has the notion of total stability for a Bridgeland's stability condition $\sigma$, (cf. \cite{Q15, Q18}),
i.e. requiring all indecomposable objects in the triangulated category $\sigma$-stable.
The existence of such total stability conditions is rare, if and only if under the circumstance that $\D=\Dwq(Q)$ for a Dynkin diagram $Q$ (cf. \cite{KOT19,Q20}).
Clearly, a total stability condition $\sigma$ on $\D$ will induce a total stability function on its heart $\h_\sigma$.
The subtle point is to describe the heart $\h_\sigma$ so that one can obtain the module category of any Dynkin quiver (with arbitrary orientation).
The geometric description (cf. \cite{QZ22}) of the space $\ToSt(Q)$ is the key here,
that is we have a geometric model for any module category of Dynkin type.

In type $A_n$, our approach is essentially the same as the one in \cite{BGMS19}.

\begin{remark}
Note that in Reineke's original paper \cite{R03},
he takes $\Re Z(M)=\dim M$ to be the dimension function of $M$ for any $M\in\h(Q)$, which is just for simplicity.
Very recently, it is proved in \cite{M22} that, when restricted to $\Re Z(M)=\dim M$,
Reineke's conjecture fails for the linear quiver $Q$ of type $E_7$.
However, such a constrain is unnecessary from the modern point of view.
Our approach proves the correct version of Reineke's conjecture.

Another remark is that Hille-Juteau also have a proof of Reineke's conjecture (unpublished) according to \cite[\S~1]{K11}.
\end{remark}

\subsection{Geometric model for module category of a Dynkin diagram}
There are many works on geometric models for various categories in representation theory of algebras.
Such models are in some sense, certain type of Fukaya categories, cf. \cite{HKK17, OPS18, HZZ20, BS21}.
The geometric model allows one to use surface combinatorics/ topology to produce many applications, i.e.
realizing arcs as indecomposables, intersections as morphisms, (total) rotations of arcs as Auslander-Reiten translations, etc.
In particular, Qiu-Zhang \cite{QZ22} construct a geometric model for the root category $\Dwq(Q)/[2]$ of each Dynkin diagram $Q$. In details, for a Dynkin diagram $Q$,
they construct stable $h_Q$-gons $\hgon$, which provide geometric models for $\Dwq(Q)/[2]$, in the sense that each indecomposable object in $\Dwq(Q)/[2]$ is interpreted as an oriented admissible diagonal in $\hgon$, and naturally induce a central charge of $\Dwq(Q)/[2]$.
Note that by upgrading orientation ($\ZZ_2$-grading) to $\ZZ$-grading for admissible diagonals,
the model also works for the bounded derived category $\Dwq(Q)$.
Furthermore, by the construction, contractible triangles in the geometric model imply triangles in the associated triangulated categories.

We will derive a geometric model for any heart of a total stability condition in $\ToSt(Q)$
from the geometric model of the root category. In Proposition \ref{prop:obj-in-heart}, we realize each admissible upward diagonal of a stable $h_Q$-gon associated to a total stability condition $\sigma$ as an indecomposable object in the heart $\h_\sigma$. Moreover, we locate all the simples in the stable $h_Q$-gon and obtain the following.

\begin{theorem}[Theorems \ref{thm:An}, \ref{thm:Dn} and \ref{thm:En}]\label{thm:intro}
Let $\sigma \in \ToSt(Q)$ for a quiver $Q$ of Dynkin type and $\hgon_\sigma$ be the stable $h_Q$-gon of type $Q$ associated to $\sigma$.
Then there is a bijection
$$\XX\colon\UD(\hgon_\sigma) \longrightarrow \Ind \h_\sigma$$
sending an upward diagonal $\ell$ in $\hgon_\sigma$ to an indecomposable object $\XX(\ell)$ in $\h_\sigma$.

Moreover, it restricts to a bijection
\[
    \Sim\hgon_\sigma\to\Sim\h_\sigma,
\]
between the set of $\Im Z$-Ind diagonals of $\hgon_\sigma$ and the set of simples of $\h_\sigma$.
As a consequence,
$\h_\sigma$ is equivalent to the module category of the (ungraded) intersection quiver of the $\Im Z$-Ind diagonals of $\hgon_\sigma$.
\end{theorem}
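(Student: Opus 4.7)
The plan is to build the bijection $\XX$ in three stages and then deduce the module-category equivalence. Since the combinatorial structure of the stable $h_Q$-gon $\hgon_\sigma$ depends on the Dynkin type of $Q$, I would treat types $A$, $D$ and $E$ separately, paralleling the split of the statement into Theorems \ref{thm:An}, \ref{thm:Dn} and \ref{thm:En}.

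First, Proposition \ref{prop:obj-in-heart} already associates to each upward admissible diagonal $\ell \in \UD(\hgon_\sigma)$ an indecomposable object $\XX(\ell) \in \h_\sigma$, so the map $\XX$ is well defined. Injectivity I would read off from the derived-category refinement of the root-category geometric model of \cite{QZ22}: distinct admissible diagonals, with their $\mathbb{Z}$-grading, give non-isomorphic indecomposables in $\Dwq(Q)$, and restricting to upward diagonals preserves this. For surjectivity I would compare cardinalities. Both sides have size $n \cdot h_Q / 2$: on the algebraic side this is the number of positive roots of $Q$, which counts $\Ind \h_\sigma$; on the geometric side this is the number of upward admissible diagonals in a stable $h_Q$-gon, where under the orientation provided by the total stability condition $\sigma$ exactly half of the $n\cdot h_Q$ admissible diagonals are upward while the other half correspond to shifts of heart objects. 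Equivalently, one may build the inverse directly: every $M \in \Ind \h_\sigma$ sits in $\Dwq(Q)$ and hence corresponds to some admissible diagonal $\ell_M$ of $\hgon_\sigma$, and the condition $M \in \h_\sigma$ forces the phase of $Z(M)$ to lie in $(0,1]$, which is precisely the geometric condition that $\ell_M$ be upward.

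Second, to identify the simples I need to characterize upward diagonals $\ell$ for which $\XX(\ell)$ has no proper nonzero subobject in $\h_\sigma$. Using the geometric dictionary in which morphisms are detected by intersection points and short exact sequences in $\h_\sigma$ correspond to resolving an interior crossing of upward diagonals into two upward sub-diagonals sharing an endpoint with $\ell$, the simplicity of $\XX(\ell)$ becomes the condition that $\ell$ cannot be so resolved. This is exactly the $\Im Z$-Ind condition introduced in the prequel, which selects $n$ upward diagonals; I would verify directly, on the explicit polygon pictures in each Dynkin type, that $\XX$ restricts to a bijection $\Sim \hgon_\sigma \to \Sim \h_\sigma$. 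To upgrade this object bijection to an equivalence $\h_\sigma \simeq \mod \k \Gamma$, where $\Gamma$ is the intersection quiver of $\Sim \hgon_\sigma$, I would invoke Gabriel's theorem: $\h_\sigma$ is a finite-length hereditary abelian category, hence determined up to equivalence by its $\Ext^1$-quiver between simples; and under the geometric model $\Ext^1(\XX(\ell),\XX(\ell'))$ is computed by the intersection number of the two $\Im Z$-Ind diagonals. So the Gabriel quiver of $\h_\sigma$ has underlying graph equal to that of $\Gamma$, which in turn is Dynkin of the same type as $Q$, giving the equivalence.

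The main obstacle will be the type-by-type identification of simples in types $D$ and $E$. In type $A$, the polygon is convex and the $\Im Z$-Ind upward diagonals are just the ``short'' ones, so the argument is essentially that of \cite{BGMS19}. In types $D$ and $E$, however, the stable $h_Q$-gons of \cite{QZ22} carry the ice/fire coloring and an interior branching structure, and the geometric notion of splitting an upward diagonal into two upward sub-diagonals interacts non-trivially with these features. A careful case analysis on the explicit polygon pictures is needed to verify that the $\Im Z$-Ind condition picks out exactly $n$ upward diagonals whose intersection quiver is Dynkin of the correct type. Once simples are pinned down, the remaining statements follow from the hereditary Dynkin setup and the $\Ext$/intersection dictionary.
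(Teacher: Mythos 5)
Your overall architecture (restrict the root-category bijection of \cite{QZ22} to upward diagonals, locate the simples among the $\Im Z$-Ind diagonals, then use heredity to recover the module category) matches the paper's, and your first stage is fine --- indeed the paper's Proposition \ref{prop:obj-in-heart} is even shorter than your counting argument: $\XX\colon\AD(\hgon_\sigma)\to\Ind\RC(Q)$ is already a bijection, and choosing $\h_\sigma[1]\cup\h_\sigma$ as a fundamental domain, the condition $\arg(\ell)\in[0,\pi)$ corresponds exactly to phase in $[0,1)$, so the restriction is automatically bijective. But there is a genuine gap in your second stage. You characterize simplicity of $\XX(\ell)$ by the condition that ``$\ell$ cannot be resolved into two upward sub-diagonals,'' appealing to a dictionary in which short exact sequences in $\h_\sigma$ correspond to resolving crossings. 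The paper only establishes one direction of that dictionary (Lemma \ref{lem:tri-vs-tri}: a contractible triangle of upward diagonals yields a short exact sequence); the converse --- that \emph{every} proper subobject of $\XX(\ell)$ arises from such a geometric resolution --- is exactly what you would need for your characterization, and it is neither proved nor obviously true (note that in type $D_n$ not every boundary intersection is essential, e.g.\ the crossing of $V_jB_-$ and $V_jB_+$ at $V_j$ gives no arrow). The paper sidesteps this by proving only the easy inclusion: every upward diagonal decomposes as a sum of $\Im Z$-Ind diagonals, which by iterated use of Lemma \ref{lem:tri-vs-tri} gives a filtration with factors $\XX(s)$, hence $\Sim\h_\sigma\subseteq\{\XX(s)\mid s\in\Sim\hgon_\sigma\}$, and then equality follows because both sets have $n$ elements. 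You should restructure your argument this way.

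A second, related gap is type $E$. You defer it to ``careful case analysis on the explicit polygon pictures,'' but the filtration-plus-counting argument of types $A$ and $D$ does not extend there, because it is not clear that every upward diagonal of the stable $h$-gon decomposes as a nonnegative sum of the six/seven/eight chosen diagonals. The paper instead proves that $\{\XX(s_i)\}$ is a simple-minded collection: $\Hom^{\le 0}$-vanishing is checked by locating the $s_i$ inside auxiliary stable sub-polygons of types $A_5$, $A_4$, $A_2$ (reducing to the already-proved type $A$ statement), and generation of $\Dwq(Q)$ is proved by propagating contractible triangles through the sets $\UD_j$. A simple-minded collection contained in $\h_\sigma$ generates a heart inside $\h_\sigma$, which must then equal $\h_\sigma$. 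This is a genuinely different mechanism that your proposal does not anticipate, so as written the exceptional cases remain unproved.
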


Furthermore, we shall use such a model to prove Reineke's conjecture.
\begin{theorem}[Theorem~\ref{thm:R's}]
Conjecture~\ref{conj} holds.
\end{theorem}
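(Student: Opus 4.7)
The plan is to combine Theorem~\ref{thm:intro} with the existence of total stability conditions on $\Dwq(Q_0)$, where $Q_0$ denotes the underlying Dynkin diagram of $Q$, so as to pull a triangulated total stability condition back to an abelian total stability function on $\mod \k Q$. I would fix the orientation of $Q$ in advance and then select $\sigma$ whose heart is precisely $\mod \k Q$.

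By \cite{KOT19, Q20} the space $\ToSt(Q_0)$ is non-empty, so total stability conditions are available on $\Dwq(Q_0)$. Theorem~\ref{thm:intro} asserts that for any $\sigma=(Z,\sli)\in\ToSt(Q_0)$ the heart $\h_\sigma$ is equivalent to the module category of the (ungraded) intersection quiver $Q_\sigma$ of the $\Im Z$-Ind diagonals of the associated stable $h_Q$-gon $\hgon_\sigma$; the underlying graph of $Q_\sigma$ is $Q_0$, and, as claimed in the introduction, varying $\sigma$ realises every orientation of $Q_0$. I would therefore pick $\sigma$ such that $Q_\sigma = Q$, producing an equivalence $\mathbf{F}\colon \h_\sigma \to \mod \k Q$ via the bijection $\XX$.

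Next I would transport $Z$ along $\mathbf{F}$ to obtain a stability function on $\mod \k Q$, again denoted $Z$. The standard compatibility between Bridgeland stability on $\Dwq(Q_0)$ and Rudakov stability on the heart guarantees that an object $M\in\h_\sigma$ is $\sigma$-stable in $\Dwq(Q_0)$ if and only if $M$ is $Z$-stable in the abelian sense inside $\h_\sigma$. Since $\sigma$ is total, every indecomposable object of $\Dwq(Q_0)$ is $\sigma$-stable; restricting to $\h_\sigma$ and applying $\mathbf{F}$, every indecomposable of $\mod \k Q$ is $Z$-stable. By Gabriel and Auslander-Reiten theory the number of indecomposables in $\mod \k Q$ equals $n\cdot h_Q/2$, so the product \eqref{eq:DT} attains the maximal number of factors asserted in Conjecture~\ref{conj}.

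The main obstacle is the construction of $\sigma$ with $Q_\sigma=Q$, i.e.\ showing that every orientation of $Q_0$ is realised as the intersection quiver of the $\Im Z$-Ind diagonals for some $\sigma\in\ToSt(Q_0)$. This is precisely the content of the geometric model for the hearts given by Theorem~\ref{thm:intro} combined with the description of $\ToSt(Q_0)$ from the prequel \cite{QZ22}; once that realisation is granted, the deduction of Reineke's conjecture is a formal passage from triangulated to abelian stability via the standard heart compatibility and a count of indecomposables.
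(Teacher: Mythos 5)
Your overall architecture matches the paper's: take a total stability condition $\sigma$ on $\Dwq(Q)$, use the geometric model to identify $\h_\sigma$ with $\mod \k Q_\sigma$ for the intersection quiver $Q_\sigma$, and transport the central charge to get an abelian total stability function; the passage from triangulated total stability to abelian total stability on the heart, and the count of $n\cdot h_Q/2$ indecomposables, are indeed routine. The problem is where you locate the content. You name ``the construction of $\sigma$ with $Q_\sigma=Q$'' as the main obstacle and then assert that it is ``precisely the content of Theorem~\ref{thm:intro} combined with the description of $\ToSt(Q_0)$ from \cite{QZ22}.'' It is not. Theorem~\ref{thm:intro} only says that \emph{for each} $\sigma$ the heart is the module category of \emph{some} orientation of the Dynkin diagram; neither it nor \cite{QZ22} establishes that the assignment $\sigma\mapsto Q_\sigma$ hits \emph{every} orientation. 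That surjectivity is exactly what the paper proves in Propositions~\ref{prop:An-quiver to polygon}, \ref{prop:Dn-quiver to polygon} and \ref{prop:E-quiver to polygon}, and the proof of Theorem~\ref{thm:R's} is literally the combination of those three propositions.

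Concretely, what is missing from your argument is the construction, for an arbitrarily prescribed orientation, of a stable $h_Q$-gon whose intersection quiver (after the grading shift $?\mapsto 1-?$) is that orientation. In type $A_n$ this is done by placing the reordered vertices $Y_i$ on a circle with the sign of the $x$-coordinate dictated by the arrow directions; in type $D_n$ one must additionally position the two punctures, which splits into three cases depending on the orientations at the fork; and in type $E_n$ the paper reduces to $2^{n-5}$ explicit stable $30$-gons via the mirror operation (Lemma~\ref{lem:ob}) and a small-deformation argument that reverses the arrow at a leaf vertex when $\arg Z_\hgon(S_i)=0$. None of this is ``granted'' by the earlier theorems, and without it your proof does not close. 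I would also note in passing that you should check the grading of the intersection quiver is trivial (so that $\hh{Q}^{1-?}(\hgon_\sigma)$ is an honest ungraded quiver); the paper gets this from heredity of the standard heart via Lemma~\ref{lem3.3}.
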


In type $A_n$ and $D_n$, the proof bases on building stable $h_Q$-gon directly.
In type $E_{n \in\{6,7,8\}}$,
we show that essentially we only need to give $2^{n-5}$ concrete examples (of stable $h_Q$-gons) to verify the conjecture.

\subsection*{Acknowledgments}
Qy would like to thank Alastair King for many inspiring discussions,
Ryan Kinser for the invitation of giving a talk in FD Seminar that pushed for this paper
and Jan Schr\"oer for pointing out the reference \cite{A22}.
This work is supported by
National Key R\&D Program of China (No.2020YFA0713000),
Beijing Natural Science Foundation (Grant No.Z180003),
National Natural Science Foundation of China (Grant No.12031007) and
National Natural Science Foundation of China (Grant No.12101422).


\section{Preliminaries}

In this section, we will recall some conventions and constructions from \cite{QZ22}.
\subsubsection*{Conventions}
Consider the complex plane $\CC=\RR^2$.
For any two points $W_1=(x_1,y_1)$ and $W_2=(x_2,y_2)$ in $\CC$, we define
\begin{gather}\label{eq:order}
W_1 < W_2 \Longleftrightarrow
\begin{cases}
y_1<y_2 \text{\quad or}\\
y_1=y_2,\, x_1<x_2.
\end{cases}
\end{gather}
We write a vector $\overrightarrow{VW}$ by $VW$ for simplicity and
denote by $\arg(VW)$ the angle of $VW$ which takes value in $[0,2\pi)$.

Let $\hgon$ be a (labelled) $h$-gon in $\CC$ with vertices $V_0,V_1,\cdots,V_h=V_0$
and (oriented) edges $0\neq z_j=V_{j-1}V_j$, where $j\in\ZZ_h$.
Then $\hgon$ is \emph{positively convex} if all other $V_i$'s are on the left hand side of
the edge $z_j=V_{j-1}V_j$.
Note that the positively convexity implies that the vertices $V_j$'s are in anti-clockwise order.
For any integer $0\le s\le h/2$, the \emph{length-$s$ diagonals} of $\hgon$ are those $V_jV_{j+s}$.
For example, the length-$1$ diagonals are just the edges $z_j$.
For a positively convex $h$-gon $\hgon$,
the \emph{level-$s$ diagonal-gon} is the convex polygon bounded by its length-$s$ diagonals (i.e. on the left hand side of).

\begin{definition}\label{def:h-gon}
An \emph{$h$-gon $\hgon$ of type $Q$} (a Dynkin quiver) is defined respectively as:
\begin{description}
\item[$A_n\, (n\geq 1)$] an $(n+1)$-gon.
\item[$D_n\, (n\geq 4)$] a (centrally) symmetric doubly punctured $2(n-1)$-gon with punctures $B_\pm$.
\item[$E_6$] a $12$-gon satisfying:
\begin{gather}\label{eq:E6-rel}
    \begin{cases}
     z_j+z_{j+4}+z_{j+8}=0,\\
     z_{j}-z_{j-3}+z_{j-6}-z_{j-9}=0,
    \end{cases}\quad \forall j\in\ZZ_{12}.
\end{gather}

\item[$E_7$] a symmetric $18$-gon satisfying:
\begin{gather}\label{eq:E7-rel}
     z_{j}+z_{j+1}+z_{j+6}+z_{j+7}+z_{j+12}+z_{j+13}=0,\qquad\forall j\in\ZZ_{18}.
\end{gather}

\item[$E_8$] a symmetric $30$-gon satisfying:
\begin{gather}\label{eq:E8-rel}
    \begin{cases}
     z_{j}+z_{j+10}+z_{j+20}=0,\\
     z_{j}+z_{j+6}+z_{j+12}+z_{j+18}+z_{j+24}=0,
    \end{cases}\quad \forall j\in\ZZ_{30}.
\end{gather}
%
%
\end{description}
To summarize, an $h$-gon of type $Q$ is an $h_Q$-gon (satisfying above extra conditions),
where $h_Q$ is the Coxeter number associated to the underlying Dynkin diagram of $Q$.
In this case, we have $h=h_Q$. For simplicity, we prefer to write $h$ instead of $h_Q$ if there is no confusion.

\end{definition}

Let $\hgon$ be an $h$-gon of the exceptional type $E_n$ $(n=6,7,8)$. We follow the notion in \cite{QZ22} respectively:
\begin{description}
\item[$E_6$] for each $j\in\ZZ_{12}$ we can draw an triangle $\TT_j\colon=V_{j-1}V_jW_{j}$ with edges
\[
    V_{j-1}V_{j}=z_j,\quad V_jW_{j}=z_{j+4},\quad
    W_{j}V_{j-1}=z_{j+8},
\]
cf. the triangle relations in \eqref{eq:E6-rel}.

\item[$E_7$] for each $j\in\ZZ_{18}$ we can draw a hexagon
$\mathbf{L}_{j}\colon=V_{j-1}V_{j}V_{j+1} W_{j+1} U_{j} W_{j-1}$ with edges
\begin{gather*}
    V_{j-1}V_{j}=z_{j},\quad V_{j}V_{j+1}=z_{j+1},\quad
    V_{j+1} W_{j+1}=z_{j+6},\\ W_{j+1} U_{j}=z_{j+7},\quad
    U_{j} W_{j-1}=z_{j+12},\quad W_{j-1}V_{j-1}=z_{j+13},
\end{gather*}
cf. the hexagon relations in \eqref{eq:E7-rel}.
\item[$E_8$]
for each $j\in\ZZ_{30}$ we can draw
a triangle $\TT_{j}\colon=V_{j-1}V_{j}W_{j-1}$
with edges
\[V_{j-1}V_{j}=z_{j},\; V_{j}W_{j-1}=z_{j+10},\;
    W_{j-1}V_{j-1}=z_{j+20},\]
cf. the triangle relations in \eqref{eq:E8-rel}.
\end{description}
The key feature in the exceptional cases is that there are two $h/2$-gons inside $\hgon$,
i.e. the ice/fire core with vertices $W_j$ of even/odd indices.

\begin{definition}\label{def:adm-diag}
Let $\hgon$ be an $h$-gon of type $Q$.
The \emph{admissible diagonals} of $\hgon$ are defined respectively as:
\begin{description}
\item[$A_n$] any usual (oriented) diagonals of $\hgon$.
\item[$D_n$] either the length-$s$ diagonals $\pm V_jV_{j+s}=\pm V_{j+s+h/2}V_{j+h/2}$ for $1\leq s< h/2$ and $j\in \ZZ_{h}$
    or vectors $\pm V_jB_{\pm}=\pm B_{\mp}V_{j+h/2}$ for $j\in \ZZ_{h}.$
\item[$E_n$] either the length-$s$ diagonals $\pm V_{j}V_{j+s}$ for $1 \leq s \leq n-3$ and $j\in \ZZ_h$
    or vectors
$$\begin{cases}
    \pm W_{j}W_{j+2},\\
    \pm V_{j-1}W_{j+2}=\pm W_{j+1}V_{j+n-3},\\
    \pm V_{j-1}W_{j+1}=\pm W_{j+2}V_{j+n-3},
    \end{cases}\quad \forall j\in \ZZ_h.
$$
\end{description}
Here, the equalities above, together with the ones in the construction of the exceptional case, i.e.
\[
    \pm V_{j-1}V_j=\pm W_{j+4}V_{j+n-3}=\pm V_{j-2+n}W_{j-2+n},  \quad \forall j\in \ZZ_h,
\]
and
\[\begin{cases}
    \pm W_{j}W_{j+2}=\pm W_{j+8}V_{j+7}=\pm V_{j-5}W_{j-6},& \text{for $E_7$}\\
    \pm W_{j}W_{j+2}=\pm W_{j+14}V_{j+13}=\pm V_{j-10}W_{j-12},& \text{for $E_8$}
    \end{cases}\quad \forall j\in \ZZ_h.
\]
as well as (central) symmetry in type $E_7/E_8$, are considered as equivalent relations.
We denote by $\AD(\hgon)$ the set of (equivalent classes of) admissible diagonals in $\hgon$.
\end{definition}

Let $\Dwq(Q)$ be the bounded derived category of the module category of $\k Q$ and $\RC(Q)$ the corresponding root category $\Dwq(Q)/[2]$.
Denote by $\Stab(Q)$ the space of stability conditions on $\Dwq(Q)$, cf \cite{B07} for the detailed definition, and
$\ToSt(Q)$ the space of total stability conditions on $\Dwq(Q)$.
Now we recall the main result from \cite{QZ22} that the admissible diagonals give a central charge, which is a combination of Proposition~3.9, Theorems~4.5 and 6.7 there.
In fact, we slightly generalize it to obtain a bijection between the set of (equivalent classes of) admissible diagonals and the set of indecomposable objects in $\RC(Q)$.

\begin{theorem}\label{thm:geo-mod-root-cat}\cite{QZ22}
A stable $h$-gon $\hgon$ of type $Q$ gives a geometric model for $\RC(Q)$
in the sense that we have a bijection
\begin{gather}\label{eq:XAD}\begin{array}{rcl}
\XX:\AD(\hgon) &\longrightarrow& \Ind \RC(Q),\\
\ell &\mapsto& \XX(\ell)\end{array}
\end{gather}
which naturally induces a central charge $Z_\hgon: K \Dwq(Q)\to\CC$ with $Z_\hgon(\XX(\ell))=\ell$.
\end{theorem}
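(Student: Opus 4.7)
The plan is to leverage the three cited results of \cite{QZ22}---Proposition~3.9, Theorem~4.5 and Theorem~6.7---which already produce, for each Dynkin type, the central charge $Z_\hgon$ on $\RC(Q)$ together with a correspondence between a distinguished subfamily of admissible diagonals (the edges and certain length-$s$ diagonals) and indecomposables. The new content here is to promote this partial correspondence into a bijection on \emph{all} of $\AD(\hgon)$, modulo the equivalence relations of Definition~\ref{def:adm-diag}.

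First I would match cardinalities type-by-type: $|\AD(\hgon)|=n\cdot h_Q=|\Ind\RC(Q)|$. In type $A_n$ this is a direct count of oriented diagonals of an $(n+1)$-gon. In type $D_n$ the length-$s$ diagonals for $1\le s<h/2$, together with the diagonals ending at the punctures $B_\pm$ and collapsed by the relations $\pm V_jB_\pm=\pm B_\mp V_{j+h/2}$, yield exactly $2n(n-1)$. In the exceptional types the equivalence relations among diagonals involving the core vertices $W_j$ are essential, and they cut the raw list of geometric objects down to the required $n\cdot h_Q$. Next I would construct $\XX$ by sending each edge $z_j$ to the shift of a simple of $\h(Q)$ as prescribed in \cite{QZ22}, and then extending by covariance with the rotation on $\hgon$ and the Auslander--Reiten translation $\tau$ on $\RC(Q)$; these two actions are identified in the prequel. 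Since each $\tau$-orbit on the AR-quiver of $\RC(Q)$ pairs with a unique rotation orbit of admissible diagonals, the extension to length-$s\ge 2$ diagonals and to the exceptional diagonals through the cores is forced, and it is well-defined precisely because of the equivalence relations of Definition~\ref{def:adm-diag}.

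The remaining step is the compatibility $Z_\hgon(\XX(\ell))=\ell$. Because $K\Dwq(Q)$ is generated by the classes of simples, $Z_\hgon$ is determined by its values on the edges, so the only real content is that the polygon relations \eqref{eq:E6-rel}--\eqref{eq:E8-rel} are the $Z_\hgon$-images of actual triangles in $\RC(Q)$, and that the central symmetries appearing in types $D_n$, $E_7$ and $E_8$ realize the shift $[1]$ (which is an involution on the root category). This is where the slogan mentioned in the introduction---contractible triangles of admissible diagonals lift to triangles in $\RC(Q)$---does the work. The main obstacle lies in the exceptional types, where one must check, case by case, that all equivalences among admissible diagonals mixing the $V_j$'s with the core vertices $W_j$ match isomorphisms of indecomposables in $\RC(Q)$ coming from the AR-structure. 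This is finite but case-heavy, and it is essentially the content carried out in \cite{QZ22}; our statement then repackages their geometric model as the bijection $\XX$ together with the induced central charge.
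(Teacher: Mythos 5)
The paper does not actually prove this statement: Theorem~\ref{thm:geo-mod-root-cat} is recalled verbatim from the prequel \cite{QZ22} (as a combination of Proposition~3.9 and Theorems~4.5 and~6.7 there), with only the remark that the correspondence is ``slightly generalized'' to a bijection on all of $\AD(\hgon)$; no argument is given in this paper, so there is no proof to compare yours against line by line. Your sketch is a reasonable reconstruction and is consistent with how the paper uses the result elsewhere: the cardinality count $|\AD(\hgon)|=n\cdot h_Q=|\Ind\RC(Q)|$ checks out in types $A$ and $D$ (and the equivalence relations of Definition~\ref{def:adm-diag} are exactly what makes it work in the exceptional types), the rotation/AR-translation equivariance is the mechanism the introduction advertises, and reducing $Z_\hgon(\XX(\ell))=\ell$ to the edges plus the ``contractible triangles give triangles'' principle matches Lemma~\ref{lem:tri-vs-tri}. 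Since you, like the authors, ultimately defer the case-by-case verification in types $E_n$ to \cite{QZ22}, your proposal is best read as an accurate account of what the cited proof does rather than an independent one; that is also all the paper itself offers.
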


There are two natural actions on $\Stab(Q)$, i.e. the $\CC$-action and the $\Aut\Dwq(Q)$-action,
which also can be induced on $\ToSt(Q)$. In particular, the action of $[1]$ on $\ToSt(Q)$ coincide with the action of $1\in\CC$ on $\ToSt(Q)$.
\begin{definition}\label{def:sth-gon}
An $h$-gon $\hgon$ of type $Q$ is \emph{stable} if it is positively convex and moreover:
\begin{description}
\item[$D_n$] the two punctures $B_{\pm}$ are inside the level-$(n-2)$ diagonal-gon.
\item[$E_n$] the vertices $W_j$'s are inside the level-$(n-3)$ diagonal-gon, for $n\in\{6,7,8\}$.
\end{description}
Note that a stable $h$-gon, without stating the types explicitly, just means a positively convex $h$-gon (with above extra conditions).

Denote by $\Sth(Q)$ the moduli space of stable $h$-gons of type $Q$, where two stable $h$-gons are equivalent if and only if they are parallel, i.e. related by a translation of $\mathbb{C}$.
\end{definition}

\begin{theorem}\label{thm:iso-mfd}\cite[Thm.~1]{QZ22}
There is an isomorphism between complex manifolds
$$\ToSt(Q)/[2]\cong\Sth(Q),$$
which sends a total stability condition $\sigma$ to a stable $h$-gon $\hgon_\sigma$ of type $Q$ with central charge $Z_{\hgon_\sigma}=Z_\sigma$.
\end{theorem}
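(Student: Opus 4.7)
The plan is to construct mutually inverse holomorphic maps $\Phi\colon\Sth(Q)\to\ToSt(Q)/[2]$ and $\Psi\colon\ToSt(Q)/[2]\to\Sth(Q)$, with Theorem~\ref{thm:geo-mod-root-cat} as the central input. That theorem already assigns to each stable $h$-gon $\hgon$ a bijection $\XX\colon\AD(\hgon)\to\Ind\RC(Q)$ and a central charge $Z_\hgon$ on $K\Dwq(Q)$, so defining $\Phi$ reduces to upgrading $Z_\hgon$ to an honest (total) stability condition.

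To construct $\Phi(\hgon)$, I would define the slicing by placing each indecomposable $\XX(\ell)$ in the slice of phase $\arg(\ell)/\pi$. Reversing the orientation of $\ell$ multiplies it by $-1$, shifting the phase by $1$; this matches the triangulated shift $[1]$ when we lift $\XX(\ell)$ from $\RC(Q)$ back to $\Dwq(Q)$, and explains why the resulting stability condition is only well-defined up to the action of $[2]$. Positive convexity of $\hgon$ ensures the slicing is a well-defined $\RR$-grading, the Harder--Narasimhan property is automatic from the finiteness of $\AD(\hgon)$, and totality follows directly from $\XX$ being a bijection: each slice contains at most one indecomposable, which is therefore simple in its slice and hence stable.

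To construct $\Psi(\sigma)$, I would read the edges $z_j$ of $\hgon_\sigma$ off the $Z_\sigma$-values of a distinguished Auslander--Reiten orbit in $\RC(Q)$; since the Auslander--Reiten translate satisfies $\tau^{h}=[-2]=\id$ on $\RC(Q)$, such an orbit consists of exactly $h$ indecomposables whose Grothendieck classes sum to zero, yielding a closed polygon. Totality of $\sigma$ makes their phases pairwise distinct, and a cyclic reordering places them in strictly increasing phase, which is precisely positive convexity. In the exceptional types, the relations~\eqref{eq:E6-rel}, \eqref{eq:E7-rel}, \eqref{eq:E8-rel} among the $z_j$ reduce to Grothendieck-group relations among the chosen orbit elements, hence hold automatically. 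The remaining geometric conditions in Definition~\ref{def:sth-gon}---the punctures $B_\pm$ for $D_n$ and the inner vertices $W_j$ for $E_n$ lying inside the appropriate level-$(n-2)$ or level-$(n-3)$ diagonal-gon---are built as partial sums of $Z_\sigma$ on the remaining classes of admissible diagonals listed in Definition~\ref{def:adm-diag}, and become the requirement that these indecomposables be $\sigma$-stable, which once more follows from totality.

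Verifying $\Phi\circ\Psi=\id$ and $\Psi\circ\Phi=\id$ then reduces to tracking $\XX$ through both constructions, using that a total stability condition is determined by its central charge. Holomorphicity is clear: in edge coordinates on $\Sth(Q)$ and central-charge coordinates on $\ToSt(Q)/[2]$, both maps are $\CC$-linear, and both sides are complex manifolds of dimension $\rank Q$. The main obstacle is establishing the equivalence between the geometric conditions of Definition~\ref{def:sth-gon} (positive convexity plus inner-vertex conditions) and the totality of $\sigma$: one must show that any destabilizing short exact sequence $0\to A\to B\to C\to 0$ forces the admissible diagonals for $A,B,C$ into a configuration violating positive convexity or pushing a puncture/inner vertex out of its required diagonal-gon, and conversely that any failure of these geometric conditions produces a destabilizing filtration. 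This is most delicate in type $E$, where the equivalence relations among admissible diagonals in Definition~\ref{def:adm-diag} interact non-trivially with the ice and fire cores.
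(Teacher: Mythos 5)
First, a structural point: this paper does not prove Theorem~\ref{thm:iso-mfd} at all --- it is imported verbatim from the prequel as \cite[Thm.~1]{QZ22}, so there is no internal proof to measure your argument against. Judged on its own terms, your proposal has a reasonable architecture (two mutually inverse $\CC$-linear maps, with Theorem~\ref{thm:geo-mod-root-cat} supplying the dictionary between admissible diagonals and indecomposables), but it contains a genuine gap which you yourself flag in the final paragraph and then do not close. The entire content of the theorem is the equivalence between the geometric conditions of Definition~\ref{def:sth-gon} (positive convexity, plus the puncture/inner-vertex conditions in types $D$ and $E$) and the existence of a total stability condition with the prescribed central charge. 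In the direction $\Phi$, declaring that $\XX(\ell)$ has phase $\arg(\ell)/\pi$ does not by itself produce a slicing: one must verify the axiom that $\Hom(\XX(\ell_1),\XX(\ell_2))$ vanishes in the appropriate degrees whenever $\arg(\ell_1)>\arg(\ell_2)$, and this is exactly where positive convexity and the extra conditions enter, via the intersection-to-morphism dictionary \eqref{eq:int=dim}. Writing ``positive convexity ensures the slicing is a well-defined $\RR$-grading'' asserts the conclusion rather than deriving it. Similarly, in the direction $\Psi$, the closedness of the polygon is fine (the Coxeter relation $1+c+\dots+c^{h-1}=0$ on the root lattice does force the edge classes to sum to zero), but deducing positive convexity from increasing phases, and --- more seriously --- locating the punctures $B_\pm$ and the inner vertices $W_j$ inside the required diagonal-gons, is precisely the stability analysis you defer. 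What you call ``the main obstacle'' is the theorem.

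A secondary error: your claim that totality is automatic because ``each slice contains at most one indecomposable'' is false. Two distinct admissible diagonals can be parallel (already for a parallelogram in type $A_3$ the edges $V_0V_1$ and $V_2V_3$ give upward diagonals of equal argument), so a slice may contain several non-isomorphic indecomposables; what must be shown is that each indecomposable is \emph{simple} in its slice, which again requires the Hom-vanishing analysis among same-phase objects. There is also an unexamined subtlety in $\Psi$: in types $D_n$ and $E_n$ a single $\tau$-orbit does not account for all of $\AD(\hgon)$ (the punctures and the ice/fire core vertices are not read off from one orbit of length $h$), so ``a distinguished Auslander--Reiten orbit'' determines only the outer $h$-gon, not the full datum of Definition~\ref{def:sth-gon}. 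The proposal is therefore a plausible roadmap for the proof carried out in \cite{QZ22}, but not a proof.
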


\section{Hearts and triangles}
In this section, we will describe the heart $\h_\sigma$ of $\sigma\in\ToSt(Q)$ for a Dynkin quiver $Q$
and special short exact sequence in $\h_\sigma$.
\subsection{Hearts of total stability conditions}\label{sec:hearts}

Recall that a stability condition $\sigma=(Z,\sli)$ consists of the central charge $Z$ and the slicing $\sli$,
which is a $\RR$-collection of abelian subcategory $\sli(\phi)$ for $\phi\in\RR$.
Its heart $\h_\sigma\colon=\sli[0,1)$ is defined to be the abelian category generated by $\sli(\phi)$, where $\phi\in[0,1)$.
In particular, if $\sigma$ is totally stable, then the indecomposables in $\h_\sigma$ are precisely those objects whose phases are in $[0,1)$.

Define
\begin{gather}\label{eq:order3}
\UP{VW}\colon=
\begin{cases}
VW & \text{if~}~ V<W,\\
WV & \text{if~}~ W<V.
\end{cases}
\end{gather}
Then we obtain $\arg(\UP{VW})\in[0,\pi)$.

\begin{definition}
An admissible diagonal $\ell$ of an $h$-gon $\hgon$ of type $Q$ is called \emph{upward} if $\arg(\ell)\in[0,\pi)$.
Denote by $\UD(\hgon)$ the set of upward (admissible) diagonals in $\hgon$.
\end{definition}

For any admissible diagonal $VW$, the upward version is exactly $\UP{VW}$,
where $\UP{\cdot}$ is defined in \eqref{eq:order3}.

\begin{proposition}\label{prop:obj-in-heart}
Let $\sigma\in \ToSt(Q)$ and $\hgon_\sigma$ be the corresponding stable $h$-gon of type $Q$.
Then $\XX$ in \eqref{eq:XAD} induces a bijection
\begin{equation}\label{eq:XUD}
\XX\colon\UD(\hgon_\sigma) \longrightarrow \Ind \h_\sigma.
\end{equation}
\end{proposition}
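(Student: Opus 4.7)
The plan is to use the defining property of total stability: every indecomposable $X\in\Dwq(Q)$ is $\sigma$-semistable, so it lies in a unique slice $\sli(\phi_X)$ with $\phi_X\equiv \arg Z_\sigma(X)/\pi\pmod 2$. Consequently $\h_\sigma=\sli[0,1)$ consists precisely of direct sums of indecomposables with phase in $[0,1)$, equivalently with $\arg Z_\sigma(-)\in[0,\pi)$. The task is thus reduced to translating this phase condition into a condition on admissible diagonals under the bijection $\XX$ of Theorem~\ref{thm:geo-mod-root-cat}.

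First I would promote $\XX\colon\AD(\hgon_\sigma)\to\Ind\RC(Q)$ to a bijection landing in $\Ind\Dwq(Q)$. Each indecomposable of $\RC(Q)=\Dwq(Q)/[2]$ is a $[2]$-orbit in $\Dwq(Q)$ on which the central charge is constant, so specifying a lift amounts to specifying a phase in a fundamental interval of length $2$. Since by Theorem~\ref{thm:iso-mfd} we have $Z_{\hgon_\sigma}=Z_\sigma$, the relation $Z_\sigma(\XX(\ell))=\ell$ pins down the lift as the unique representative in its $[2]$-orbit whose phase equals $\arg(\ell)/\pi\in[0,2)$. Here the orientation carried by an admissible diagonal is exactly the extra piece of data that distinguishes $\XX(\ell)$ from $\XX(-\ell)=\XX(\ell)[1]$, compatibly with $[1]\cdot Z=-Z$.

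Second, the definition of $\UD(\hgon_\sigma)$ means $\ell\in\UD(\hgon_\sigma)$ iff $\arg(\ell)\in[0,\pi)$ iff the phase of $\XX(\ell)$ lies in $[0,1)$ iff $\XX(\ell)\in\h_\sigma$ by the opening paragraph. This gives the bijection $\UD(\hgon_\sigma)\to\Ind\h_\sigma$ claimed, with the complementary set of downward diagonals enumerating $\Ind\h_\sigma[1]$ via the same formula shifted by $\pi$. The only step that requires a moment's care is the canonical lift in the first step; this is essentially bookkeeping once one checks that orientation reversal on $\AD(\hgon_\sigma)$ corresponds to $[1]$ on $\Ind\Dwq(Q)$, which is already encoded in the construction of $\XX$ in \cite{QZ22} via the $\CC$-action on $\Stab(Q)$.
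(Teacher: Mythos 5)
Your argument is correct and follows essentially the same route as the paper: the paper likewise chooses $\h_\sigma\cup\h_\sigma[1]$ as a fundamental domain for $\RC(Q)$ (your ``canonical lift'' step), observes that the phase of $\XX(\ell)$ equals $\arg(\ell)/\pi$ via $Z_{\hgon_\sigma}=Z_\sigma$, and concludes that upward diagonals correspond exactly to the indecomposables of phase in $[0,1)$, which by total stability are precisely $\Ind\h_\sigma$. Your write-up simply makes explicit the bookkeeping (orientation reversal $\leftrightarrow$ shift $[1]$) that the paper leaves implicit.
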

\begin{proof}
We choose $\h_\sigma[1]\cup\h_\sigma$ as a fundamental domain for $\RC(Q)$.
For an upward diagonal $\ell$, the phase of $\XX(\ell)$ is $\arg(\ell)/\pi$, which belongs to $[0,1)$.
Hence the map $\XX$ in \eqref{eq:XAD} restricts to the required bijection.
\end{proof}

Recall that the heart $\h$ in $\Dwq(Q)$ is \emph{standard}
if it is equivalent to the module category of some quiver with the same underlying graph of $Q$.
When $\sigma$ is totally stable, we have
$\Ind\Dwq(Q)=\Ind\sli_\sigma\cup\Ind\sli_\sigma^\perp$ for $\sli_\sigma=\sli[0,+\infty)$.
Thus, by \cite[Proposition 2.7]{Q15} we have the following statement.

\begin{lemma}\label{lem3.3}
The heart $\h_\sigma$ is standard if $\sigma\in\ToSt(Q)$.
\end{lemma}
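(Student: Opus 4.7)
The plan is to reduce the lemma directly to \cite[Prop.~2.7]{Q15}, with the only substantive step being the verification of its hypothesis.

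First, I would spell out the indecomposable decomposition hinted at in the paragraph just before the lemma. Since $\sigma=(Z,\sli) \in \ToSt(Q)$ is totally stable, every indecomposable object $X \in \Ind \Dwq(Q)$ is $\sigma$-stable and hence lies in a unique slice $\sli(\phi_X)$ for some $\phi_X \in \RR$. Partitioning $\RR$ as $[0,+\infty) \sqcup (-\infty,0)$ then gives the disjoint union
\[
    \Ind\Dwq(Q) = \Ind \sli_\sigma \;\sqcup\; \Ind \sli_\sigma^\perp,
\]
where $\sli_\sigma = \sli[0,+\infty)$ is the aisle of the t-structure whose heart is $\h_\sigma = \sli[0,1)$, and $\sli_\sigma^\perp = \sli(-\infty,0)$ is the corresponding coaisle. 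This is the precise input that feeds into \cite[Prop.~2.7]{Q15}.

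Next, I would invoke \cite[Prop.~2.7]{Q15}: such a splitting of the indecomposables across the aisle and coaisle is exactly the criterion guaranteeing that $\h_\sigma$ is standard, i.e.\ equivalent to $\mod \k Q'$ for some quiver $Q'$ whose underlying graph agrees with that of $Q$. Applied in our setting this delivers the lemma immediately; no further tilting argument is required because total stability already yields the decomposition in one stroke.

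The only real obstacle is a bookkeeping match-up: one must confirm that the hypothesis of \cite[Prop.~2.7]{Q15} is phrased in terms of precisely the aisle/coaisle pair $(\sli_\sigma, \sli_\sigma^\perp)$ associated to $\h_\sigma$, and that the notion of standardness used there agrees with the one recalled above (matching the underlying graph of $Q$). Neither is a substantive obstruction, and the content of the lemma is essentially contained in the total-stability assumption itself.
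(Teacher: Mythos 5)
Your proposal is correct and follows essentially the same route as the paper: the paper's proof consists precisely of the observation that total stability forces $\Ind\Dwq(Q)=\Ind\sli_\sigma\cup\Ind\sli_\sigma^\perp$ for $\sli_\sigma=\sli[0,+\infty)$, followed by an appeal to \cite[Proposition~2.7]{Q15}. You have merely spelled out the slicing argument behind that decomposition in slightly more detail, which is a faithful expansion rather than a different approach.
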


\subsection{Intersections as homomorphisms}\label{subsec:int and hom}
In this section, we let $\hgon$ be an $h$-gon of type $Q$ and
will establish a correspondence between certain intersections of admissible diagonals in $\AD(\hgon)$
and the homomorphisms between the associated objects in the root category $\RC(Q)$.

\begin{definition}
An \emph{oriented intersection} from an upward diagonal $\ell_1$ to another upward diagonal $\ell_2$ is an anticlockwise angle $\za$ from $\ell_1$ to $\ell_2$ locally at an intersection $P\in \ell_1\cap \ell_2$ such that $\za$ is in the interior of $\hgon$.
\end{definition}

The opposite angles are considered equivalent if $P$ is an interior intersection, cf. Figure~\ref{fig:def-or-inter}.

For an oriented intersection $\za$ from $\ell_1$ to $\ell_2$, we set
\begin{gather}\label{eq:int-index}
\rho(\za)\colon=
\begin{cases}
0 & \text{if~} arg(\ell_1)<arg(\ell_2),\\
1 & \text{if~} arg(\ell_2)<arg(\ell_1),
\end{cases}
\end{gather}
called the \emph{intersection index} of $\za$. Note that $arg(\ell_1)\neq arg(\ell_2)$ if $\ell_1$ and $\ell_2$ intersect.
For instance, in Figure~\ref{fig:def-or-inter}, $\za$ is an oriented intersection from $\ell_1$ to $\ell_2$ with index $\rho(\za)=0$,
while $\zb$ is an oriented intersection from $\ell_2$ to $\ell_1$ with index $\rho(\zb)=1$.

\begin{figure}
\begin{tikzpicture}[>=stealth,scale=0.5]
\draw [very thick,blue,<-] (2,2)--(-4.5,-4.5);
\draw [very thick,blue,<-](-3,3)--(3,-3);

\draw [bend right,->,thick] (.5,.5)to(-0.5,.5);
\node at (0,1.4) {$\za$};
\draw [bend left,<-,thick] (.5,-.5)to(-0.5,-.5);
\node at (0,-1.4) {$\za$};

\draw [bend left,<-,thick] (.5,.5)to(0.5,-.5);
\node at (1.4,0) {$\zb$};
\draw [bend right,->,thick] (-.5,.5)to(-0.5,-.5);
\node at (-1.4,0) {$\zb$};
%
\node at (-3,-2) {$\ell_1$};
\node at (3,-2) {$\ell_2$};
\node at (0,0) {\tiny$\bullet$};
\end{tikzpicture}
\begin{center}
\caption{Two oriented intersections $\za$ and $\zb$}\label{fig:def-or-inter}
\end{center}
\end{figure}

\begin{lemma}\label{lem:tri-vs-tri}
Assume that there is a contractible triangle formed by upward diagonals $\ell_1$, $\ell_2$ and $\ell_3$ in $\hgon$
such that $\ell_3=\ell_1+\ell_2$ and $arg(\ell_1) < arg(\ell_2)$.
Then we have the following triangle in $\RC(Q)$:
\begin{gather}\label{eq:tr}
    \XX(\ell_1)  \xrightarrow{\za} \XX(\ell_3)\xrightarrow{\zb} \XX(\ell_2)  \xrightarrow{\zg} \XX(\ell_1)[1],
\end{gather}
where morphisms correspond to oriented intersections.

Moreover, such a triangle is a short exact sequence when restricted to $\h_\sigma$:
\[ 0  \xrightarrow{} \XX(\ell_1)  \xrightarrow{\za} \XX(\ell_3)\xrightarrow{\zb} \XX(\ell_2)  \xrightarrow{} 0.\]
\end{lemma}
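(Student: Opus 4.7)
My plan is to break the statement into three stages: (i) placing $\XX(\ell_1)$, $\XX(\ell_2)$, $\XX(\ell_3)$ in the heart $\h_\sigma$; (ii) upgrading the contractible triangle to a distinguished triangle in $\RC(Q)$; and (iii) restricting to a short exact sequence in $\h_\sigma$.

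For (i), each $\ell_i$ is upward, so Proposition~\ref{prop:obj-in-heart} gives $\XX(\ell_i)\in\Ind\h_\sigma$ with phase $\phi_i = \arg(\ell_i)/\pi\in[0,1)$. The parallelogram law applied to $\ell_3 = \ell_1+\ell_2$ together with $\arg(\ell_1)<\arg(\ell_2)$ forces $\arg(\ell_1)<\arg(\ell_3)<\arg(\ell_2)$, hence the strict chain $\phi_1<\phi_3<\phi_2$.

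For (ii), Theorem~\ref{thm:geo-mod-root-cat} gives $Z_\hgon(\XX(\ell_i))=\ell_i$, so $\ell_3=\ell_1+\ell_2$ becomes the central-charge identity $Z_\hgon([\XX(\ell_3)])=Z_\hgon([\XX(\ell_1)])+Z_\hgon([\XX(\ell_2)])$. Invoking Lemma~\ref{lem3.3} to present $\h_\sigma\simeq \mod\k Q'$ for a suitable Dynkin quiver $Q'$, I would use Gabriel's classification (indecomposables in $\mod\k Q'$ are in bijection with positive roots via dimension vector) to lift this numerical identity to the Grothendieck-class equation $[\XX(\ell_3)]=[\XX(\ell_1)]+[\XX(\ell_2)]$ in $K(\Dwq(Q))$. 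Since $\XX(\ell_3)$ is indecomposable while $\phi_1\ne\phi_2$ rules out the splitting $\XX(\ell_3)\cong\XX(\ell_1)\oplus\XX(\ell_2)$, the space $\Ext^1(\XX(\ell_2),\XX(\ell_1))$ must be non-zero and $\XX(\ell_3)$ is the middle term of a non-split extension. The three oriented intersections at the vertices of the contractible triangle then supply the morphisms: $\za$ (index $0$, where $\ell_1$ meets $\ell_3$), $\zb$ (index $0$, where $\ell_3$ meets $\ell_2$), and $\zg$ (index $1$, where $\ell_1$ meets $\ell_2$), completing \eqref{eq:tr}. For (iii), any distinguished triangle whose three terms lie in the heart of a bounded t-structure is automatically a short exact sequence there, via the cohomology long exact sequence.

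The main obstacle is Stage (ii), specifically concretely realising $\XX(\ell_3)$ as the middle term of an extension of $\XX(\ell_2)$ by $\XX(\ell_1)$. Two non-trivial steps are involved: promoting central-charge additivity to Grothendieck-class additivity (automatic when $Z_\hgon$ is $\ZZ$-injective on $K(\Dwq(Q))\cong\ZZ^n$, a generic condition on $\ToSt(Q)$, but requiring a separate argument for non-generic totally stable $\sigma$); and then exhibiting the required non-split extension inside $\mod\k Q'$ using only the class equation together with the phase chain $\phi_1<\phi_3<\phi_2$. A robust strategy would be to verify the lemma first on the basic mesh triangles of the AR-quiver, whose geometric incarnations are exactly the triangle/hexagon relations \eqref{eq:E6-rel}--\eqref{eq:E8-rel} in the exceptional cases (and analogous AR-meshes in types $A_n$, $D_n$), and then bootstrap to arbitrary contractible triangles by gluing/composing meshes. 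An alternative is a case-by-case analysis in types $A_n$, $D_n$, $E_n$ using the explicit geometric models of stable $h$-gons developed in \cite{QZ22}.
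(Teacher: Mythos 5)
Your stage (i) and stage (iii) are fine: the parallelogram law does give $\arg(\ell_1)<\arg(\ell_3)<\arg(\ell_2)$, and a distinguished triangle with all three terms in a heart does restrict to a short exact sequence. The problem is stage (ii), which is where the actual content of the lemma lives, and your argument there has a genuine gap. From ``$\XX(\ell_3)$ is indecomposable and $\XX(\ell_3)\not\cong\XX(\ell_1)\oplus\XX(\ell_2)$'' you conclude that $\Ext^1(\XX(\ell_2),\XX(\ell_1))\neq 0$ and that $\XX(\ell_3)$ is the middle term of a non-split extension. This is a non sequitur: at that point nothing links $\XX(\ell_3)$ homologically to the pair $(\XX(\ell_1),\XX(\ell_2))$. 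Even granting the Grothendieck-class identity $[\XX(\ell_3)]=[\XX(\ell_1)]+[\XX(\ell_2)]$ (which you correctly note requires injectivity of $Z$ on the lattice, a condition you do not establish for non-generic total stability conditions), a class equation between indecomposables does not by itself produce a short exact sequence with those prescribed sub- and quotient objects; and conversely, a non-split extension of $\XX(\ell_2)$ by $\XX(\ell_1)$, if one exists, need not have indecomposable middle term, so you cannot invoke Gabriel's theorem to identify it with $\XX(\ell_3)$. Likewise, the assertion that the three oriented intersections ``supply the morphisms'' with the stated indices is exactly the nontrivial input (intersections compute $\Hom^\bullet$, smoothing an intersection computes the cone) that needs to be justified, not assumed.

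The paper closes this gap not by a numerical/Grothendieck-group argument but by appealing directly to the geometric models: for type $A$ and type $D$ the correspondence between contractible triangles of arcs and distinguished triangles (equivalently, between oriented intersections and morphisms, with cones given by smoothing) is quoted from the surface-model literature \cite{HKK17,OPS18} and \cite{QZZ}, and the exceptional types are checked directly against the explicit triangles of \cite[\S~5]{QZ22}. This is essentially the ``alternative'' you mention in your final sentence, but you present it only as a fallback strategy and do not carry it out; as written, your main line of argument does not prove the lemma.
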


\begin{figure}[H]
\begin{tikzpicture}[>=stealth,yscale=0.8,scale=.8]
\draw [blue, very thick,->] (3,-3)--(0,0);
\draw [blue, very thick,->](0,0)--(1,2);
\draw [blue, very thick,->](3,-3)--(1,2);

\draw [thick, bend left,<-] (.2,.35)to(0.3,-.3);
\node at (.6,0) {$\zg$};

\draw [thick, bend right,->] (.8,1.6)to(1.2,1.6);
\node at (1,1.2) {$\za$};

\draw [thick, bend right,->] (2.6,-2)to(2.3,-2.3);
\node at (2.2,-1.7) {$\zb$};

\node at (.2,1.2) {$\ell_1$};
\node at (1,-1.5) {$\ell_2$};
\node at (2.2,0) {$\ell_3$};
\node at (-.5,0) {$P$};
\node at (0,0) {$\bullet$};
\node at (1,2) {$\bullet$};
\node at (3,-3) {$\bullet$};
\node at (6,0) {};
\end{tikzpicture}
\begin{tikzpicture}[>=stealth,yscale=0.8,scale=.8]
\draw [blue, very thick,->] (-3,-3)--(0,0);
\draw [blue, very thick,->](0,0)--(-1,2);
\draw [blue, very thick,->](-3,-3)--(-1,2);

\draw [thick,bend right,->] (-.2,.35)to(-0.3,-.3);
\node at (-.6,0) {$\zg$};

\draw [thick,bend left,<-] (-.8,1.6)to(-1.2,1.6);
\node at (-1,1.1) {$\zb$};

\draw [thick,bend left,<-] (-2.6,-2)to(-2.3,-2.3);
\node at (-2.2,-1.7) {$\za$};

\node at (-.3,1.3) {$\ell_2$};
\node at (-1,-1.5) {$\ell_1$};
\node at (-2.2,0) {$\ell_3$};
\node at (.5,0) {$P$};
\node at (0,0) {$\bullet$};
\node at (-1,2) {$\bullet$};
\node at (-3,-3) {$\bullet$};
\end{tikzpicture}
\begin{center}
\caption{Contractible triangle of upward diagonals}\label{fig:lem-tri-heart}
\end{center}
\end{figure}
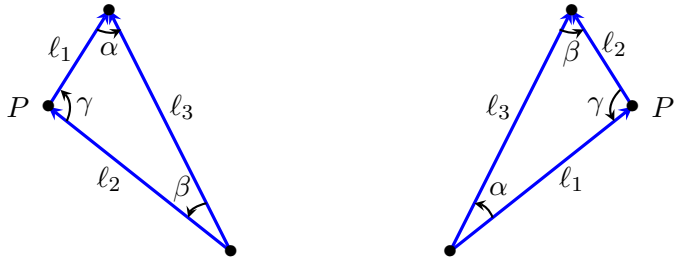
\begin{proof}
There are two cases of the contractible triangle depending on the positions of $\ell_1$ and $\ell_2$, see Figure~\ref{fig:lem-tri-heart},
but the triangle \eqref{eq:tr} will always be as above (where the order of objects depends on $\arg$ of $\ell_i$'s).

The statement follows from \cite{HKK17,OPS18} and \cite{QZZ} for type $A$ and type $D$, respectively.
As for the exceptional cases, it can be checked straightforwardly,
cf. \cite[\S~5]{QZ22} for couple forms of the triangles.
\end{proof}

\begin{remark}
In general, oriented intersections between admissible diagonals (arcs) induce (a basis of) morphisms between the corresponding objects
and the index of an intersection corresponds to the degree of the morphism.
More precisely, one has (cf. \cite{HKK17,OPS18,QZZ} or \cite[(6.7)]{IQZ20})
\begin{gather}\label{eq:int=dim}
    \Int^d(\ell_1,\ell_2)=\dim\Hom^d(\XX(\ell_1),\XX(\ell_2))
\end{gather}
in the setting of (graded skew) gentle algebras.

Moreover, the mapping cone of such a morphism usually (e.g. when the intersection is at the boundary) corresponds
to the diagonals obtained from the union of the original two diagonals by smoothing out the oriented intersection.
\end{remark}

\begin{definition}
For two upward diagonals $l_1$ and $l_2$,
there is an \emph{essential oriented intersection} from $l_2$ to $l_1$ if
there is a contractible triangle with edges (equivalent to) $l_1$ and $l_2$,
as shown in one of the pictures in Figure~\ref{fig:lem-tri-heart}.
\end{definition}

Such a notion will be used to define the intersection quiver of an $h$-gon of type $Q$ in Section~\ref{sec:hearts}.
Note that there is at most one essential oriented intersection between two upward diagonals.
In type $A_n$, any oriented intersection at the boundary between two upward diagonals is essential.
However, this is not true in other types.
For instance, in type $D_n$, the oriented intersection of upward diagonals $V_jB_-$ and $V_jB_+$ is not essential since $B_-B_+$ is not an admissible diagonal.

\section{Geometric model of hearts}\label{sec:geo}
In this section, we give algorithms to find admissible diagonals corresponding to simples in the hearts of total stability conditions
and hence obtain all the information (that we need) of the hearts in the geometric model.
\subsection{Type $A_n$}\label{subsec:type An}
Let $\hgon$ be a stable $(n+1)$-gon in $\CC$. We reorder the vertices $V_j$ of $\hgon$ as
\[
    Y_1 < Y_2 < \cdots <Y_{n+1}
\]
with respect to the order \eqref{eq:order}, where $Y_i=(x_i,y_i)$.
Note that there are at most two $y_i$'s coincide.
Then an upward diagonal in $\hgon$ is of the form $Y_iY_j$ with $1 \leq i<j\leq n+1$.

\begin{definition}\label{def:InZ-ind:A}
The diagonals $s_i:=Y_{i}Y_{i+1}$ are called \emph{$\imz$-ind diagonals}, for $1 \leq i \leq n$.
We denote by
$$\Sim \hgon \colon=\{ s_i \mid 1\leq i\leq n\}.$$
\end{definition}

Note that any upward diagonal $Y_iY_j$ ($1 \leq i<j\leq n+1$) uniquely decomposes into the sum:
\begin{gather}\label{eq:decom}
    Y_iY_j=Y_iY_{i+1}+\cdots+Y_{j-1}Y_j=s_i+s_{i+1}\cdots+s_{j-1},
\end{gather}
with respect to $y$-coordinate (i.e. the imaginary part of the central charge).

\begin{lemma}\label{lem:filtration-An}
Let $\sigma \in \ToSt(A_n)$ and $Y_iY_j\in\UD(\hgon_\sigma)$ with $\hgon_\sigma$ being the associated stable $(n+1)$-gon.
The decomposition \eqref{eq:decom} induces a filtration of short exact sequences for $\XX(Y_iY_j)$ in the heart $\h_\sigma$ with factors
$\XX(s_k)$ for $i\leq k\leq j-1$.
\end{lemma}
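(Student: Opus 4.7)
The plan is to proceed by induction on $j-i$. The base case $j-i=1$ is immediate since then $Y_iY_j=s_i$. For the inductive step, the strategy is to peel off the topmost summand $s_{j-1}=Y_{j-1}Y_j$ by exhibiting a contractible triangle whose three edges are upward diagonals, then invoke Lemma~\ref{lem:tri-vs-tri} to convert this into a short exact sequence in $\h_\sigma$.

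More precisely, consider the triangle inside $\hgon_\sigma$ with vertices $Y_i,Y_{j-1},Y_j$. Since $\hgon_\sigma$ is positively convex and $Y_i<Y_{j-1}<Y_j$ under the order~\eqref{eq:order}, each of the three edges $Y_iY_{j-1}$, $Y_{j-1}Y_j$ and $Y_iY_j$ is an upward diagonal; the triangle lies in $\hgon_\sigma$ and is therefore contractible, and the vector identity $Y_iY_j=Y_iY_{j-1}+Y_{j-1}Y_j$ matches the hypothesis of Lemma~\ref{lem:tri-vs-tri}. Relabeling $\{\ell_1,\ell_2\}=\{Y_iY_{j-1},\, s_{j-1}\}$ so that $\arg(\ell_1)<\arg(\ell_2)$, that lemma yields a short exact sequence
\[
0\longrightarrow \XX(\ell_1)\longrightarrow \XX(Y_iY_j)\longrightarrow \XX(\ell_2)\longrightarrow 0
\]
in $\h_\sigma$.

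By the inductive hypothesis applied to $Y_iY_{j-1}$ (whose decomposition has one fewer summand), the object $\XX(Y_iY_{j-1})$ already carries a filtration by short exact sequences whose factors are $\XX(s_i),\dots,\XX(s_{j-2})$. Splicing this filtration into the displayed short exact sequence --- on the subobject side if $\ell_1=Y_iY_{j-1}$, or on the quotient side if $\ell_2=Y_iY_{j-1}$ --- produces the desired filtration of $\XX(Y_iY_j)$ with factors $\XX(s_i),\dots,\XX(s_{j-1})$.

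The only technical item to verify is that the triangle $Y_iY_{j-1}Y_j$ is genuinely contractible and non-degenerate, so that Lemma~\ref{lem:tri-vs-tri} applies: contractibility is automatic from the convexity of $\hgon_\sigma$, while non-degeneracy (ensuring the strict inequality $\arg(\ell_1)<\arg(\ell_2)$) reflects the fact that a positively convex polygon admits no three collinear vertices. I do not expect a real obstacle here; the argument should reduce to a clean induction once Lemma~\ref{lem:tri-vs-tri} is in hand.
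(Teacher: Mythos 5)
Your proposal is correct and follows essentially the same route as the paper's proof: both peel off $s_{j-1}=Y_{j-1}Y_j$ using the contractible triangle on $Y_i,Y_{j-1},Y_j$, apply Lemma~\ref{lem:tri-vs-tri} to obtain the short exact sequence in one of the two orders dictated by $\arg(Y_iY_{j-1})$ versus $\arg(s_{j-1})$, and conclude by induction. Your extra remarks on convexity and non-degeneracy are fine and merely make explicit what the paper leaves implicit.
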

\begin{proof}
Note that $Y_iY_j$, $Y_iY_{j-1}$ and $s_{j-1}=Y_{j-1}Y_j$ form a contractible triangle.
By Lemma \ref{lem:tri-vs-tri}, there exists a short exact sequence (in $\h_\sigma$) either of the form
\[ 0  \xrightarrow{} \XX(Y_iY_{j-1})  \xrightarrow{} \XX(Y_iY_j)\xrightarrow{} \XX(s_{j-1})  \xrightarrow{} 0,\]
if $arg(Y_iY_{j-1})<arg(s_{j-1})$, or
\[ 0  \xrightarrow{} \XX(s_{j-1})  \xrightarrow{} \XX(Y_iY_j)\xrightarrow{} \XX(Y_iY_{j-1})  \xrightarrow{} 0,\]
if $arg(s_{j-1})<arg(Y_iY_{j-1})$.
Thus the required filtration can be constructed inductively.
\end{proof}

\begin{definition}\label{def:An-quiver}
The \emph{intersection quiver} $\hh{Q}(\hgon)$ of $\hgon$ is the graded quiver whose
\begin{itemize}
  \item vertices are $\imz$-ind diagonals and
  \item arrows are essential oriented intersections with gradings given by intersection indices.
\end{itemize}
\end{definition}

For example, one can draw $\hh{Q}(\hgon)$ for a stable $6$-gon $\hgon$ of type $A_5$ as in Figure~\ref{fig:tt}.
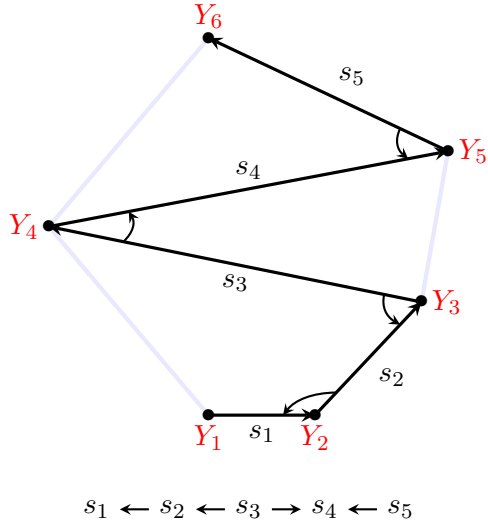
\begin{figure}[ht]\centering
\begin{tikzpicture}[xscale=.7,yscale=.5]
\path[red] (-1,0) coordinate (t1) node[below]{$Y_1$};
\path[red] (1,0) coordinate (t2) node[below]{$Y_2$};
\path[red] (3,3) coordinate (t3) node[right]{$Y_3$};
\path[red] (3.5,7) coordinate (t5)node[right]{$Y_5$};
\path[red] (-1,10) coordinate (t6)node[above]{$Y_6$};
\path[red] (-4,5) coordinate (t4)node[left]{$Y_4$};
\draw[blue!30,ultra thick,opacity=.3](t1)to(t4);
\draw[blue!30,ultra thick,opacity=.3](t3)to(t5);
\draw[blue!30,ultra thick,opacity=.3](t4)to(t6);
\foreach \j in {1,2,3,4,5,6}
{\draw (t\j) node {$\bullet$};}
\foreach \j/\i in {1/2,2/3,3/4,4/5,5/6}
{\draw[->,>=stealth,very thick,black] (t\j) to (t\i);}
\draw[black]
    ($(t1)!.5!(t2)$) node[below] {$s_1$}
    ($(t3)!.5!(t2)$) node[below right] {$s_2$}
    ($(t3)!.5!(t4)$) node[below] {$s_3$}
    ($(t5)!.5!(t4)$) node[above] {$s_4$}
    ($(t5)!.5!(t6)$) node[above right] {$s_5$};

\draw     [bend right,->,>=stealth, thick]($(t2)!.2!(t3)$) to ($(t2)!.3!(t1)$);
\draw    [bend right,->,>=stealth, thick]($(t3)!.1!(t4)$) to ($(t3)!.2!(t2)$);
\draw     [bend right,->,>=stealth, thick]($(t4)!.2!(t3)$) to ($(t4)!.2!(t5)$);
\draw     [bend right,->,>=stealth, thick]($(t5)!.2!(t6)$) to ($(t5)!.1!(t4)$);
\end{tikzpicture}

\begin{tikzpicture}[scale=.5,xscale=1,ar/.style={->,thick,>=stealth}]
\draw(0,0)node(v1){$s_1$}(2,0)node(v2){$s_2$}(4,0)node(v3){$s_3$}
(6,0)node(v4){$s_4$}(8,0)node(v5){$s_5$};

\draw[ar](v2)to(v1);
\draw[ar](v3)to(v2);
\draw[ar](v3)to(v4);
\draw[ar](v5)to(v4);
\draw(0,1)node{};
\end{tikzpicture}
\caption{The intersection quiver of a $6$-gon of type $A_5$}\label{fig:tt}
\end{figure}

Note that any oriented intersection at the boundary is essential in a stable $(n+1)$-gon.
Denote by $\hh{Q}^{1-?}(\hgon)$ the quiver obtained from $\hh{Q}(\hgon)$ by grading change $?\mapsto(1-?)$.
We will show that the quiver $\hh{Q}^{1-?}(\hgon)$ will have trivial grading for all Dynkin case.

Recall from \cite{KQ} that the \emph{Ext-quiver} $\hh{Q}(\h)$ of a (finite length) heart $\h$ is the graded quiver
whose vertices are the simples of $\h$
and whose graded arrows from $S$ to $T$ are given by a basis of $\Ext^\bullet(S,T)$.

Let $\Sim\hua{A}$ denote the set of simples of an abelian category $\hua{A}$.
As a consequence, we have the following.

\begin{theorem}\label{thm:An}
Let $\sigma \in \ToSt(Q)$ with heart $\h_\sigma$ and the corresponding stable $h$-gon $\hgon_\sigma$ of type $Q$.
If $Q$ is of type $A_n$,
then the bijection \eqref{eq:XUD} restricts to a bijection
$$\XX\colon\Sim\hgon_\sigma\to\Sim\h_\sigma.$$
Moreover, the Ext-quiver $\hh{Q}(\h_\sigma)$ coincides with the intersection quiver $\hh{Q}(\hgon_\sigma)$.
Thus, we have
\begin{gather}\label{eq:Heart}
    \h_\sigma\cong\h(\hh{Q}^{1-?}(\hgon_\sigma)).
\end{gather}
Recall that $\h(\hh{Q}^{1-?}(\hgon_\sigma))=\mod \k \hh{Q}^{1-?}(\hgon_\sigma)$.
\end{theorem}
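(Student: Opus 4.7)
The plan is to establish the theorem in three stages: identify the simples of $\h_\sigma$ with the $\imz$-ind diagonals $s_i$, match essential oriented intersections with arrows of the Ext-quiver, and then invoke the standardness of $\h_\sigma$ to promote this quiver identification to an equivalence of categories.

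For the bijection $\XX\colon\Sim\hgon_\sigma\to\Sim\h_\sigma$, I would combine a generation argument with a direct simplicity check. Lemma~\ref{lem:filtration-An} shows that every indecomposable $\XX(Y_iY_j)$ admits a filtration in $\h_\sigma$ whose composition factors lie among $\{\XX(s_k)\}$; hence every simple of $\h_\sigma$ must itself be one of the $\XX(s_k)$. Conversely, to see each $\XX(s_k)$ is simple, suppose there were a short exact sequence $0\to L\to\XX(s_k)\to M\to 0$ in $\h_\sigma$ with $L, M$ nonzero. By Lemma~\ref{lem:tri-vs-tri}, this would correspond to a decomposition $s_k=\ell_1+\ell_2$ into two upward admissible diagonals forming a contractible triangle. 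Such a decomposition requires a vertex $V$ of $\hgon_\sigma$ strictly between $Y_k$ and $Y_{k+1}$ in the order \eqref{eq:order}, which does not exist by the very choice of the $Y_i$. A cardinality check (both $\Sim\hgon_\sigma$ and $\Sim\h_\sigma$ have exactly $n$ elements, the latter by Lemma~\ref{lem3.3}) then yields the claimed bijection.

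For the identification of the Ext-quiver, I would compute $\Ext^\bullet_{\h_\sigma}(\XX(s_i),\XX(s_j))$ via the geometric dictionary. Two $\imz$-ind diagonals $s_i$ and $s_j$ with $|i-j|>1$ have four pairwise distinct endpoints $\{Y_i,Y_{i+1},Y_j,Y_{j+1}\}$, so no contractible triangle can carry both as edges; combined with the hereditariness of $\h_\sigma$ and Lemma~\ref{lem:tri-vs-tri}, this forces $\Ext^1_{\h_\sigma}(\XX(s_i),\XX(s_j))=0$. For adjacent diagonals $s_i,s_{i+1}$, the three vertices $Y_i,Y_{i+1},Y_{i+2}$ bound a contractible triangle whose third edge is the upward (admissible) diagonal $Y_iY_{i+2}$, producing exactly one essential oriented intersection of index $1$, hence a single arrow of degree $1$ in $\hh{Q}(\hgon_\sigma)$ between $\XX(s_i)$ and $\XX(s_{i+1})$. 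Its direction is determined by the comparison of $\arg(s_i)$ and $\arg(s_{i+1})$ as prescribed in Lemma~\ref{lem:tri-vs-tri}, matching the direction of the corresponding Ext arrow, so $\hh{Q}(\h_\sigma)$ and $\hh{Q}(\hgon_\sigma)$ agree as graded quivers.

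Finally, by Lemma~\ref{lem3.3} the heart $\h_\sigma$ is standard, hence equivalent to $\mod\k Q'$ for some type $A_n$ quiver $Q'$, whose Ext-quiver is $Q'$ itself with all arrows concentrated in degree $1$. Since every arrow of $\hh{Q}(\hgon_\sigma)$ likewise sits in degree $1$, the regrading $\hh{Q}^{1-?}(\hgon_\sigma)$ is the ordinary degree-zero quiver whose path algebra's module category is $\h_\sigma$, yielding \eqref{eq:Heart}. I expect the most delicate point to be ruling out contributions to $\Ext^1$ from non-adjacent pairs $s_i, s_j$: although such diagonals might intersect in the interior of $\hgon_\sigma$, Lemma~\ref{lem:tri-vs-tri} guarantees that only triangle-bounding configurations yield extensions in the heart, and the endpoint-counting argument above precisely excludes this.
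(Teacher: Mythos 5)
Your overall architecture matches the paper's: the filtration of Lemma~\ref{lem:filtration-An} gives $\Sim\h_\sigma\subset\{\XX(s)\mid s\in\Sim\hgon_\sigma\}$, the cardinality count upgrades this to a bijection, and hereditariness (Lemma~\ref{lem3.3}) forces all arrows of the Ext-quiver into degree one so that $\hh{Q}^{1-?}(\hgon_\sigma)$ is trivially graded. However, there is a recurring logical gap: twice you use Lemma~\ref{lem:tri-vs-tri} in the converse direction, which it does not provide. That lemma produces a triangle (and a short exact sequence in $\h_\sigma$) \emph{from} a contractible triangle of diagonals; it does not assert that every short exact sequence or every class in $\Ext^1$ arises this way. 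So your direct simplicity check (``a nontrivial subobject of $\XX(s_k)$ would correspond to a decomposition $s_k=\ell_1+\ell_2$'') and, more importantly, your vanishing argument $\Ext^1(\XX(s_i),\XX(s_j))=0$ for $|i-j|>1$ (``no contractible triangle carries both as edges, hence no extension'') are not justified as written. The first instance is harmless because your cardinality argument already settles simplicity, but the second is the crux of identifying the Ext-quiver with the intersection quiver.

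The repair is exactly the tool the paper uses at this point: the intersection--dimension formula \eqref{eq:int=dim}, $\Int^d(\ell_1,\ell_2)=\dim\Hom^d(\XX(\ell_1),\XX(\ell_2))$. For $|i-j|>1$ the diagonals $s_i=Y_iY_{i+1}$ and $s_j=Y_jY_{j+1}$ have disjoint ranges of imaginary parts and hence do not intersect at all, so all $\Hom^d$ vanish; for adjacent indices the single oriented intersection at the shared endpoint $Y_{i+1}$ accounts for the unique arrow, with index $1$ matching $\Ext^1$ by \eqref{eq:int-index} and Lemma~\ref{lem:tri-vs-tri}. With that substitution your proof closes and is essentially the paper's, differing only in that you verify the intersection pattern of the $s_i$ by elementary endpoint/height counting where the paper simply invokes \eqref{eq:int=dim} wholesale.
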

\begin{proof}
Any object in $\h_\sigma$ admits a filtration with factors in $\{\XX(s) \mid s\in\Sim\hgon_\sigma\}$.
Hence we have
$$\Sim\h_\sigma\subset\{\XX(s) \mid s\in\Sim\hgon_\sigma\}.$$
Noticing that both sets have $n$ elements, they must coincide.
The intersection quiver matches the $\Ext$-quiver follows from
the general formula \eqref{eq:int=dim}.

The final statement, i.e. \eqref{eq:Heart}, follows from the standard simple-projective duality: $\Ext^1(S_i,S_j)\cong\Irr(P_j,P_i)^*$.
In particular, since the heart $\h_\sigma$ is hereditary by Lemma \ref{lem3.3}, the grading of each arrow in $\hh{Q}(\h_\sigma)$ is one. Thus $\hh{Q}^{1-?}(\hgon_\sigma)$ has trivial grading.
\end{proof}
\begin{remark}
By Theorem~\ref{thm:An}, the filtration in Lemma \ref{lem:filtration-An} is exactly the Jordan-H\"{o}lder filtration of $\XX(Y_iY_j)$ in $\h_\sigma$.

If we forget the central charges in the theorem above and treat $\hgon$ as
a topological polygon, then it becomes the result of \cite{BGMS19}.
\end{remark}

\subsection{Type $D_n$}
Let $\hgon$ be a stable $2(n-1)$-gon of type $D_n$ with center $O=(0,0)$ in $\CC$ and
$B_{\pm}=(\pm x_B,\pm y_B)$ the two punctures.
Without loss of generality,
we assume that $B_-\le B_+$, i.e. either $B_-< B_+$ or $B_-=O=B_+$.

Reorder the points in $\{V_j, B_\pm\mid j\in\ZZ_{2(n-1)}\}$ as
\begin{gather}\label{eq:orderD}
    Y_{-n} < \cdots Y_{-1} \le {\gray{O}} \le Y_1 < \cdots Y_n
\end{gather}
with respect to \eqref{eq:order}, where $Y_i=(x_i,y_i)\in\CC$.
The symmetry of $\hgon$ implies that $Y_{-i}=-Y_i$.
Also note that the two equalities in~\eqref{eq:orderD} can only hold simultaneously and, in which case,
we have $Y_{\pm1}=O=B_{\pm}$.
The stability of $\hgon$ implies that
there are at least $(n-2)$ $V_j$'s, which are less then $B_-$.
Hence $B_\pm$ must be either $Y_{\pm1}$ or $Y_{\pm2}$.
In such notation, an upward diagonal in $\hgon$ is of the form $Y_iY_j$ such that $-n \leq i < j \leq n$ and $i+j\neq 0$.

Denote by
\begin{itemize}
\item $\hgon_n^{-}$ the convex hull of vertices in $\{Y_{-n},\cdots, Y_{-2}, Y_{-1}\}$ and
\item $\hgon_n^{+}$ the convex hull of vertices in $(\{Y_{-n},\cdots, Y_{-2}, Y_{-1}\}\setminus \{B_-\})\cup \{B_+\}$.
\end{itemize}

\begin{lemma}
Both $\hgon_n^{-}$ and $\hgon_n^{+}$ are stable $n$-gons of type $A_{n-1}$.
\end{lemma}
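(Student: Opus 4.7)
The plan is to exhibit $\hgon_n^\pm$ as a convex $n$-gon whose vertex order is anti-clockwise; since type $A_{n-1}$ has Coxeter number $n$ and Definition~\ref{def:sth-gon} imposes no further condition in type $A$, any such polygon is automatically a stable $n$-gon of type $A_{n-1}$.

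First I would pin down the $n-1$ ``lower'' vertices of $\hgon$, namely those $V_j$ with $V_j\le O$. The central symmetry $V_{j+(n-1)}=-V_j$ pairs the $2(n-1)$ vertices antipodally, and since the linear order \eqref{eq:order} is a tie-broken half-plane through $O$, exactly one vertex of each antipodal pair is lower; by convexity of $\hgon$ these $n-1$ lower vertices form a consecutive arc in the anti-clockwise labeling. Calling them $A_0,A_1,\ldots,A_{n-2}$ in that order, their convex hull is a convex $(n-1)$-gon closed off by the length-$(n-2)$ diagonal $A_0A_{n-2}$.

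The key step will be to show that each of $B_\pm$ lies on the $O$-side of this chord $A_0A_{n-2}$. By stability of $\hgon$, the punctures $B_\pm$ lie in the level-$(n-2)$ diagonal-gon, which by definition is on the left of every oriented length-$(n-2)$ diagonal of $\hgon$, and in particular on the left of $A_0A_{n-2}$. That left half-plane is exactly the one containing $O$: both endpoints $A_0,A_{n-2}$ lie in the closed lower half-plane of \eqref{eq:order}, so the entire chord does, whereas $O$ lies (weakly) above. Hence $B_\pm$ sits above the chord, outside the convex hull of $\{A_0,\ldots,A_{n-2}\}$. Adjoining it therefore produces the convex $n$-gon $\hgon_n^\pm$ with anti-clockwise vertex sequence $A_0,A_1,\ldots,A_{n-2},B_\pm$, i.e.\ a positively convex $n$-gon, as required.

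The hard part is precisely that geometric translation: identifying the abstract ``on the left of every length-$(n-2)$ diagonal'' condition with the concrete statement ``$B_\pm$ lies above the chord $A_0A_{n-2}$''. The degenerate possibilities permitted by \eqref{eq:orderD} (namely $B_-=O=B_+$, or a lower vertex lying on the real axis with $x<0$) need to be checked separately, but they reduce to weak inequalities that do not spoil convex position, so the same anti-clockwise labeling still works.
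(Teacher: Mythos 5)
Your strategy is essentially the paper's: identify the $n-1$ non-puncture vertices of $\hgon_n^{\pm}$ as the consecutive ``lower'' vertices of $\hgon$, and use the stability condition (punctures inside the level-$(n-2)$ diagonal-gon) to place $B_\pm$ relative to them. What you correctly extract from stability is that $B_\pm$ lies on the left of the length-$(n-2)$ diagonal $A_0A_{n-2}$, i.e.\ beyond the closing edge of the $(n-1)$-gon. But the concluding step ``adjoining it therefore produces the convex $n$-gon $A_0A_1\cdots A_{n-2}B_\pm$'' does not follow from that alone. A point lying beyond the closing edge of a convex polygon need not extend it to a convex polygon on the full vertex set: it must also lie on the correct side of the supporting lines of the two edges $A_0A_1$ and $A_{n-3}A_{n-2}$ adjacent to that closing edge, or else the turns at $A_0$ or $A_{n-2}$ fail. (Take $A_0=(0,0)$, $A_1=(1,-1)$, $A_2=(2,0)$ and $B=(100,1)$: then $B$ is beyond the closing chord $A_0A_2$, yet $A_0A_1A_2B$ is not convex and $A_2$ is not even extreme in the hull.) This is precisely the point the paper's proof handles by confining $B_\pm$ to the parallelogram $V_jV_{j+1}V_{j+(n-1)}V_{j+n}$ rather than to a single half-plane. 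The missing conditions are in fact available from what you already have --- $B_\pm$ lies in the level-$(n-2)$ diagonal-gon, hence inside $\hgon$, hence on the left of every edge of $\hgon$ including $A_0A_1$ and $A_{n-3}A_{n-2}$ --- but you never establish or invoke them, so as written the proof has a gap exactly at its last step.

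A secondary problem is your justification that the left half-plane of $A_0A_{n-2}$ is the one containing $O$: the inference ``the chord lies in the closed lower half-plane of \eqref{eq:order} while $O$ lies weakly above, hence $O$ is on the far side of the chord'' is invalid, since a segment lying weakly below the horizontal line through $O$ says nothing about which side of its \emph{supporting line} $O$ falls on (the segment from $(1,-0.1)$ to $(2,-0.3)$ has $O$ strictly below its supporting line). The claim itself is true, but it needs the central symmetry of $\hgon$ (the antipodes $-A_0,-A_{n-2}$ lie strictly on the left of $A_0A_{n-2}$, hence so does their midpoint's reflection argument force $O$ to the left); alternatively, it is not needed at all, since the orientation convention already makes the left of $V_jV_{j+n-2}$ the side not containing the intermediate vertices $A_1,\ldots,A_{n-3}$, and stability directly puts $B_\pm$ there.
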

\begin{proof}
Consider the polygon with $(n-1)$ consecutive vertices $V_{j+1},\ldots V_{j+(n-1)}$,
which inherit the positive convexity of $\hgon$.
By the stability of $\hgon$, $B_\pm$ is bounded in the parallelogram with vertices $V_j, V_{j+1}, V_{j+(n-1)}, V_{j+n}$.
Thus, the polygon with vertices $V_{j+1},\ldots V_{j+(n-1)}$ and $B_\epsilon$ is positively convex, for $\epsilon\in\{+,-\}$.

In particular, $\hgon_n^{\pm}$ are both positively convex
since the vertices of $\hgon^\pm_n$ except $B_\pm$ are $(n-1)$ consecutive vertices of $\hgon$.
\end{proof}

Set $s_i\colon=Y_{i-n-1}Y_{i-n}$ for $1 \leq i \leq n-1$.
Then, by Definition~\ref{def:InZ-ind:A}, we have
\begin{gather}\label{eq:SimDn}
\Sim\hgon_n^-=\{ s_i \mid 1\leq i\leq n-1\},\quad \text{and}
\\ \label{eq:SimDn2}
    \Sim\hgon_n^+=\begin{cases}
    \{s_i \mid 1 \leq i \leq n-2\}\cup\{ Y_{-2}B_+\}  & \text{if $B_-=Y_{-1}$},\\
    \{s_i \mid 1 \leq i \leq n-3\}\cup\{ Y_{-3}Y_{-1}, Y_{-1}B_+\}  & \text{if $B_-=Y_{-2}$}.
    \end{cases}
\end{gather}

\begin{definition}\label{def:Dn-sim}
For type $D_n$,
the set $\Sim \hgon$ of \emph{$\imz$-ind diagonals} is defined to be $\{s_i\mid 1\le i\le n-1\}$ together
with a diagonal
\begin{gather}\label{eq:extra-diag}
s_{n}=
\begin{cases}
Y_{-2}B_+=B_-Y_2 & \text{if~}~ B_\pm=Y_{\pm1},\\
Y_{-1}B_+=B_-Y_1 & \text{if~}~ B_\pm=Y_{\pm2}.
\end{cases}
\end{gather}
\end{definition}

\begin{lemma}\label{lem:filtration}
Let $\sigma \in \ToSt(D_n)$ and $Y_iY_j\in\UD(\hgon_\sigma)$ with $\hgon:=\hgon_\sigma$ being the associated stable $2(n-1)$-gon.
Then $Y_iY_j$ decomposes into a sum of $\imz$-ind diagonals in $\Sim\hgon$,
which induces a filtration of short exact sequences for $\XX(Y_iY_j)$ in the heart $\h_\sigma$ with factors $\XX(s), s\in\Sim\hgon$.
\end{lemma}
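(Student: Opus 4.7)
My plan is to construct the claimed filtration inductively, by repeatedly splitting $Y_iY_j$ via contractible triangles in $\hgon$ and converting each split into a short exact sequence in $\h_\sigma$ through Lemma~\ref{lem:tri-vs-tri}. Most of the work is reduced to the type~$A$ version (Lemma~\ref{lem:filtration-An}) applied to the stable $n$-gons $\hgon_n^\pm$ of type $A_{n-1}$ introduced just before the statement. I would distinguish three cases on the location of the endpoints $Y_i,Y_j$.

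If both $Y_i$ and $Y_j$ are vertices of $\hgon_n^-$ (i.e.\ $i,j\le -1$), then $Y_iY_j$ is an upward diagonal of $\hgon_n^-$, and Lemma~\ref{lem:filtration-An} produces a filtration with factors in $\Sim\hgon_n^-=\{s_1,\ldots,s_{n-1}\}\subseteq\Sim\hgon$; the triangles used lie in the closed lower half-plane, so they do not enclose $B_+$ and remain contractible in $\hgon$. The symmetric case $i,j\ge 1$ reduces to this one via the central-symmetry equivalence $Y_iY_j\sim Y_{-j}Y_{-i}$ of Definition~\ref{def:adm-diag}. If instead $Y_iY_j$ is a diagonal of $\hgon_n^+$ (possibly after applying the puncture equivalence $V_jB_\pm = B_\mp V_{j+h/2}$), apply Lemma~\ref{lem:filtration-An} to $\hgon_n^+$: in the subcase $B_\pm=Y_{\pm1}$ we have $\Sim\hgon_n^+\subseteq\Sim\hgon$ by~\eqref{eq:SimDn2} and the filtration is complete; in the subcase $B_\pm=Y_{\pm2}$ the only foreign factor is $Y_{-3}Y_{-1}$, which splits further through the contractible triangle $Y_{-3}B_-Y_{-1}$ into $s_{n-2}=Y_{-3}B_-$ and $s_{n-1}=B_-Y_{-1}$, adding one more short exact sequence.

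The remaining case is a genuinely mixed diagonal $Y_iY_j$ with $i<0<j$, $i+j\neq 0$, and neither endpoint a puncture. Here I would insert a puncture as midpoint: after swapping to the central-symmetric representative if necessary, write $Y_iY_j = Y_iB_\epsilon + B_\epsilon Y_j$ for a suitable $\epsilon\in\{+,-\}$ chosen so that both summands are upward and admissible. Then $Y_iB_\epsilon$ is a diagonal of $\hgon_n^\epsilon$, and $B_\epsilon Y_j\sim Y_{-j}B_{-\epsilon}$ is a diagonal of $\hgon_n^{-\epsilon}$, so both fall under the previous two cases. Lemma~\ref{lem:tri-vs-tri} applied to the triangle $Y_iB_\epsilon Y_j$ provides the final short exact sequence; concatenating with the filtrations of the two summands yields the filtration of $\XX(Y_iY_j)$.

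The main obstacle will be verifying in this last case that the pair (choice of representative, choice of $\epsilon$) can always be made so that the triangle $Y_iB_\epsilon Y_j$ is contractible in $\hgon$, i.e.\ does not enclose the opposite puncture $B_{-\epsilon}$. This relies on the stability hypothesis placing $B_\pm$ inside the level-$(n-2)$ diagonal-gon of $\hgon$, together with a careful combinatorial check over the four available (representative, $\epsilon$) combinations. Once this is settled, the vector decomposition $Y_iY_j=\sum n_k s_k$ drops out as the additivity of the central charge along the chain of contractible triangles.
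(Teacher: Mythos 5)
Your proposal is correct and follows essentially the same route as the paper's proof: reduce to the type $A_{n-1}$ filtration lemma applied to the two stable $n$-gons $\hgon_n^{\pm}$, split a mixed diagonal at a puncture via $Y_iY_j=Y_iB_\epsilon+B_\epsilon Y_j$ (which, using the symmetry $B_-Y_j=Y_{-j}B_+$, lands both summands in $\UD(\hgon_n^{\pm})$), and convert the resulting vector decomposition into short exact sequences by iterating Lemma~\ref{lem:tri-vs-tri}. The only difference is one of thoroughness: you treat both configurations $B_\pm=Y_{\pm1}$ and $B_\pm=Y_{\pm2}$ (including the extra splitting of $Y_{-3}Y_{-1}$) and flag the contractibility of the puncture triangle explicitly, whereas the paper writes out only the case $B_\pm=Y_{\pm1}$ and leaves the rest as ``similar.''
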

\begin{proof}
We only prove the case that $B_\pm=Y_{\pm1}$ as the argument for $B_\pm=Y_{\pm2}$ is similar.

Note that there are the following four possibilities of the indices $i,j$:
\begin{itemize}
\item[$($a$)$] $i<j\le -1$;
\item[$($b$)$] $i<-1<1\le j$;
\item[$($a$')$] $j>i\geq 1$;
\item[$($b$')$] $j>1>-1\geq i$,
\end{itemize}
where the latter two cases are the dual of the former two since $Y_iY_j=Y_{-j}Y_{-i}$. It suffices to discuss the former two cases.
To be more precise,
\begin{itemize}
\item[$($a$)$] if $i<j\leq-1$, then $Y_iY_j$ is an upward diagonal in $\hgon_n^-$ and thus
a sum of elements in $\Sim\hgon_n^-(\subset \Sim\hgon)$ by Lemma~\ref{lem:filtration-An};
\item[$($b1$)$] if $i<-1$ and $j=1$, then $Y_iY_j=Y_iB_+$ is an upward diagonal in $\hgon_n^+$ and thus a sum of elements in $\Sim\hgon_n^+(\subset \Sim\hgon)$ by Lemma~\ref{lem:filtration-An};
\item[$($b2$)$] if $i<-1<1< j$, then we have
\[Y_iY_j = Y_iB_-+B_-Y_j= Y_iB_-+Y_{-j}B_+.\]
Note that $Y_iY_{-1}\in\UD(\hgon_n^-)$ and $Y_{-j}B_+\in\UD(\hgon_n^+)$.
Hence, combining the decompositions in the two situations above, we obtain a required decomposition.
\end{itemize}

Furthermore, similar to the proof of Lemma \ref{lem:filtration-An},
we can iteratively use Lemma \ref{lem:tri-vs-tri} to show that the sum gives rise to a filtration of short exact sequences whose factors are $\imz$-ind diagonals.
\end{proof}

Following the Definition \ref{def:An-quiver}, we can define the \emph{intersection quiver} $\hh{Q}(\hgon)$ for a stable $2(n-1)$-gon $\hgon$ of type $D_n$.
Note that the arrows in $\hh{Q}(\hgon)$ arise from essential oriented intersections.
See Figure~\ref{fig:int-quiver-D5} for an example,
that there is no arrow from $Y_{-2}Y_1$ to $Y_{-2}Y_{-1}$ as the oriented intersection at $Y_{-2}$ is not essential.

\begin{figure}[ht]\centering
\begin{tikzpicture}[scale=2, rotate=-28, xscale=1,arrow/.style={->,>=stealth,thick}]
\clip[rotate=30] (-2,-2) rectangle (2.2,2);
\path (0,0) coordinate (O)
      (40:1.12) coordinate (v1)
      (78:1.3) coordinate (v2)
      (123:1.2) coordinate (v3)
      (172:1.7) coordinate (v4);
\draw[cyan, ultra thick]
      ($(O)-.5*(v1)-.5*(v2)-.5*(v3)-.5*(v4)$) coordinate (V0) coordinate (V8)  coordinate (W5)
      ($(V0)+(v1)$) coordinate (V1) coordinate (W6)
      ($(V1)+(v2)$) coordinate (V2) coordinate (W7)
      ($(V2)+(v3)$) coordinate (V3) coordinate (W8) coordinate (W0)
      ($(V3)+(v4)$) coordinate (V4) coordinate (W1);
\path ($(O)-(V1)$) coordinate (V5) coordinate (W2)
      ($(O)-(V2)$) coordinate (V6) coordinate (W3)
      ($(O)-(V3)$) coordinate (V7) coordinate (W4);
\draw[cyan, ultra thick, fill=cyan!10](V0)
    \foreach \j in {1,...,8} {to(V\j)};
\foreach \j in {0,...,7}
    {\draw[white,fill=white](V\j)to($3*(V\j)$)to($3*(W\j)$)to(W\j);}
\foreach \j in {0,...,7} {\draw[Green!20, very thick](V\j)to(W\j);
\draw[thin,gray,dotted](V\j)to($(O)-(V\j)$)
     ($1.1*(V0)$)node[red]{\footnotesize{$Y_{-5}$}}
     ($1.1*(V1)$)node[red]{\footnotesize{$Y_{-4}$}}
     ($1.13*(V2)$)node[red]{\footnotesize{$Y_{-2}$}}
     ($1.1*(V3)$)node[red]{\footnotesize{$Y_{3}$}}
     ($1.1*(V4)$)node[red]{\footnotesize{$Y_{5}$}}
     ($1.1*(V5)$)node[red]{\footnotesize{$Y_{4}$}}
     ($1.1*(V6)$)node[red]{\footnotesize{$Y_{2}$}}
     ($1.1*(V7)$)node[red]{\footnotesize{$Y_{-3}$}};}
\draw[cyan!50, ultra thick](V0)
    \foreach \j in {1,...,8} {to(V\j)\nn };
\draw[dotted](125:.3) coordinate
    (p) \nn ($1.5*(p)$) node[red]{\footnotesize{$Y_1$}}to[thin,gray,dotted]
    ($(O)-(p)$)\nn ($(O)-1.5*(p)$) node[red]{\footnotesize{$Y_{-1}$}};

    \foreach \j/\i in {0/1,1/7,7/2}
{\draw[->,>=stealth,very thick,black] (V\j) to (V\i);}
\draw[->,>=stealth,very thick,black] (V2) to (p);
\draw[->,>=stealth,very thick,black] (V2) to ($(O)-(p)$);

\draw     [bend right,->,>=stealth, thick]($(V1)!.15!(V7)$) to ($(V1)!.3!(V0)$);
\draw     [bend right,->,>=stealth, thick]($(V7)!.2!(V1)$) to ($(V7)!.15!(V2)$);
\draw     [bend right,->,>=stealth, thick]($(V2)!.25!(p)$) to ($(V2)!.15!(V7)$);
\draw     [bend right,->,>=stealth, thick]($(V2)!.5!($(O)-(p)$)$) to ($(V2)!.25!(V7)$);

\draw[black]
    ($(V0)!.5!(V1)$) node[below] {$s_1$}
    ($(V1)!.5!(V7)$) node[below] {$s_2$}
    ($(V2)!.5!(V7)$) node[below] {$s_3$}
    ($(p)!.5!(V2)$) node[above] {$s_5$}
    ($($(O)-(p)$)!.3!(V2)$) node[above] {$s_4$};

\end{tikzpicture}

\begin{tikzpicture}[scale=.5,xscale=1,ar/.style={->,thick,>=stealth}]
\draw(0,0)node(v1){$s_1$}(2,0)node(v2){$s_2$}(4,0)node(v3){$s_3$}
(6,-1)node(v4){$s_4$}(6,1)node(v5){$s_5$};

\draw[ar](v2)to(v1);
\draw[ar](v2)to(v3);
\draw[ar](v4)to(v3);
\draw[ar](v5)to(v3);
\end{tikzpicture}
\caption{The intersection quiver of an $8$-gon of type $D_5$.}
\label{fig:int-quiver-D5}
\end{figure}
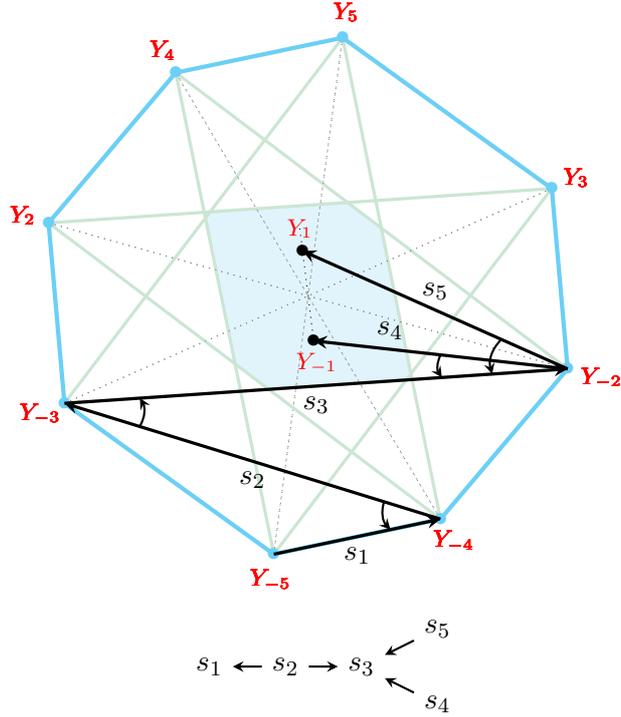

As a consequence, we have the following theorem.

\begin{theorem}\label{thm:Dn}
Theorem~\ref{thm:An} holds for any quiver $Q$ of type $D_n$.
In particular, the underlying diagram of the Ext-quiver/intersection quiver is of the form:
\begin{center}
\begin{tikzpicture}[scale=.5,xscale=1,ar/.style={-,thick}]
\draw(0,0)node(v1){$s_1$}(2,0)node(v2){$s_2$}(4,0)node(v3){$s_3$}
(6,0)node(v4){$\cdots$}(8.5,0)node(vn2){$s_{n-2}$}
(11.25,1.5)node(vn1){$s_{n-1}$}
(11.25,-1.5)node(vn){$s_n$};
\draw[ar](v2)edge(v1)edge(v3);
\draw[ar](v4)edge(v3)edge(vn2);
\draw[ar](vn2)edge(vn1)edge(vn);
\end{tikzpicture}
\end{center}
\end{theorem}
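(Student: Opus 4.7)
The plan is to mirror the proof of Theorem~\ref{thm:An}, using Lemma~\ref{lem:filtration} in place of Lemma~\ref{lem:filtration-An}; the essentially new ingredient is the additional puncture-related simple $s_n$. First, for any upward diagonal $Y_iY_j\in\UD(\hgon_\sigma)$, Lemma~\ref{lem:filtration} supplies a decomposition into $\imz$-ind diagonals, which by iterated application of Lemma~\ref{lem:tri-vs-tri} lifts to a filtration of $\XX(Y_iY_j)$ in $\h_\sigma$ whose composition factors lie in $\{\XX(s)\mid s\in\Sim\hgon_\sigma\}$. This gives the inclusion $\Sim\h_\sigma\subseteq\{\XX(s)\mid s\in\Sim\hgon_\sigma\}$, and a cardinality count (both sides have $n$ elements by Definition~\ref{def:Dn-sim} and by the standardness of $\h_\sigma$ from Lemma~\ref{lem3.3}) upgrades this to the required bijection $\Sim\hgon_\sigma\to\Sim\h_\sigma$.

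Second, the general formula \eqref{eq:int=dim} identifies essential oriented intersections between $\imz$-ind diagonals with a basis of $\Ext^\bullet$ between the associated simples of $\h_\sigma$. Since $\h_\sigma$ is hereditary, every arrow of the Ext-quiver sits in degree $1$, so $\hh{Q}^{1-?}(\hgon_\sigma)$ has trivial grading and the intersection quiver $\hh{Q}(\hgon_\sigma)$ agrees with the Ext-quiver $\hh{Q}(\h_\sigma)$. The standard simple-projective duality $\Ext^1(S_i,S_j)\cong\Irr(P_j,P_i)^{*}$ then yields the identification $\h_\sigma\cong\mod\k\,\hh{Q}^{1-?}(\hgon_\sigma)$, exactly as in the type~$A$ argument.

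It remains to read off the $D_n$ shape of the underlying diagram. The simples $s_1,\ldots,s_{n-1}$ are precisely the $\imz$-ind diagonals of the sub-polygon $\hgon^-_n$, which is a stable polygon of type $A_{n-1}$, so by Theorem~\ref{thm:An} applied to $\hgon^-_n$ they form a chain $s_1-s_2-\cdots-s_{n-1}$. The additional simple $s_n$ from \eqref{eq:extra-diag} attaches to this chain as follows: at the endpoint it shares with $s_{n-1}$, the third side of the would-be contractible triangle is the segment $B_-B_+$, which is forbidden by Definition~\ref{def:adm-diag}, so there is no essential intersection between $s_n$ and $s_{n-1}$. Using the appropriate representative of $s_n$, namely $Y_{-2}B_+$ when $B_\pm=Y_{\pm 1}$ or $B_-Y_1$ when $B_\pm=Y_{\pm 2}$, one sees that $s_n$ shares an endpoint with $s_{n-2}=Y_{-3}Y_{-2}$ and the third side of the resulting triangle is admissible (respectively $Y_{-3}B_+$ or $Y_{-3}Y_1$), so there is an essential intersection. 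No other $s_i$ shares an endpoint with $s_n$, hence no other essential intersections occur and the underlying graph is exactly the claimed $D_n$.

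The main obstacle is the last paragraph: one must handle both positions of the punctures $B_\pm$ inside the ordering \eqref{eq:orderD} and keep track of the two equivalent representatives of $s_n$ coming from Definition~\ref{def:adm-diag}. The conceptual point, however, is uniform and quite clean: it is precisely the non-admissibility of the diagonal $B_-B_+$ connecting the two punctures that prevents $s_n$ and $s_{n-1}$ from being linked and thereby produces the characteristic branching of the $D_n$ Dynkin diagram.
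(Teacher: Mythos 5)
Your proposal is correct and follows essentially the same route as the paper, whose own proof is just the one-line ``Mutatis mutandis the proof of Theorem~\ref{thm:An}''; you have simply filled in the intended details (Lemma~\ref{lem:filtration} in place of Lemma~\ref{lem:filtration-An}, the cardinality count, the formula \eqref{eq:int=dim}, and the simple-projective duality). Your identification of the non-admissibility of $B_-B_+$ (and of the diameters $Y_{-j}Y_j$) as the reason $s_{n-1}$ and $s_n$ are not linked is exactly the point the paper makes via its remark on the intersections of $V_jB_\pm$ and Figure~\ref{fig:int-quiver-D5}.
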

\begin{proof}
Mutatis mutandis the proof of Theorem~\ref{thm:An}.
\end{proof}

\subsection{Exceptional type $E_6$}\label{s4.3}

Let $\hgon$ be a stable $12$-gon of type $E_6$ in $\CC$.
Reorder the points in $\{V_j, W_j\mid j \in \ZZ_{12}\}$ with respect to \eqref{eq:order} as $\{Y_i \mid 1\le i\le 24\}$ such that $Y_i<Y_{i+1}$ for all $i$.
The stability of $\hgon$ implies that there are at least three $V_j$'s that are less than all $W_j$'s. In other words,
$Y_1,Y_2,Y_3$ must all be $V_j$'s and thus are consecutive vertices of $\hgon$.
Without loss of generality, we can assume that $\{Y_1,Y_2,Y_3\}=\{V_1,V_2,V_3\}$.
Denote by $\sjx_-$ the triangle with vertices $V_1$, $V_2$ and $V_3$.

\begin{remark}\label{rem:e6only}
Similarly, one can deduce that $Y_{22},Y_{23},Y_{24}$ are consecutive vertices of $\hgon$ and they are vertices in
$\{V_7,V_8,V_9\}$. We leave it as an exercise to the reader.
Denote by $\sjx_0$ the triangle with vertices $V_7$, $V_8$ and $V_9$.
\end{remark}

Consider the two triangles (cf. Figure~\ref{fig:E6sjx}) respectively:
\begin{itemize}
\item $\sjx_L$ with vertices $W_3$, $W_1$ and $V_0$,
\item $\sjx_R$ with vertices $V_4$, $W_4$ and $W_2$.
\end{itemize}

\begin{figure}[th]\centering
\definecolor{ffqqqq}{rgb}{1.,0.,0.}
\definecolor{qqffff}{rgb}{0.,1.,1.}
\definecolor{qqffqq}{rgb}{0.,1.,0.}
\definecolor{ffcctt}{rgb}{1.,0.8,0.2}
\definecolor{qqccqq}{rgb}{0.,0.8,0.}
\definecolor{xdxdff}{rgb}{0.49019607843137253,0.49019607843137253,1.}
\definecolor{qqzzff}{rgb}{0.,0.6,1.}
\definecolor{ffwwqq}{rgb}{1.,0.4,0.}
\definecolor{rvwvcq}{rgb}{0.08235294117647059,0.396078431372549,0.7529411764705882}
\begin{tikzpicture}[scale=.4,arrow/.style={->,>=stealth,>=stealth,thick}]
\clip(5.13890508660171,-2.3781855621584524) rectangle (31.252915609287367,23.351281298211735);
\fill[fill=ffcctt,fill opacity=0.5] (16.671906096116764,20.73388171029249) -- (10.289687300774663,17.99411050563459) -- (21.557116438041643,21.7494300041815) -- cycle;

\fill[line width=1.2pt,color=qqffqq,fill=qqffqq,fill opacity=0.5] (15.,5.) -- (8.617781204657899,2.2602287953421007) -- (19.88521034192488,6.015548293889011) -- cycle;

\fill[line width=1.2pt,color=qqffqq,fill=qqffqq,fill opacity=0.5] (24.736432434219985,6.196642251719948) -- (18.354213638877884,3.456871047062049) -- (29.621642776144864,7.212190545608959) -- cycle;
\fill[line width=1.2pt,color=qqffff,fill=qqffff,fill opacity=0.4]  (26.263250812189575,0.8399102176574627) -- (14.941845314902121,-0.058154685097879966) -- (20.098854191814286,-0.84657565018107) -- cycle;
\draw [line width=1.2pt,color=ffwwqq] (15.,5.)-- (14.941845314902121,-0.058154685097879966);
\draw [line width=1.2pt,color=ffwwqq] (14.941845314902121,-0.058154685097879966)-- (8.617781204657899,2.2602287953421007);

\draw [line width=1.2pt,color=qqzzff] (14.941845314902121,-0.058154685097879966)-- (18.354213638877884,3.456871047062049);
\draw [line width=1.2pt,color=qqzzff] (18.354213638877884,3.456871047062049)-- (20.098854191814286,-0.84657565018107);
\draw [line width=1.2pt,color=qqzzff] (20.098854191814286,-0.84657565018107)-- (14.941845314902121,-0.058154685097879966);

\draw [line width=1.2pt,color=xdxdff] (26.263250812189575,0.8399102176574627)-- (29.621642776144864,7.212190545608959);
\draw [line width=1.2pt,color=xdxdff] (29.621642776144864,7.212190545608959)-- (24.736432434219985,6.196642251719948);
\draw [line width=1.2pt,color=xdxdff] (24.736432434219985,6.196642251719948)-- (26.263250812189575,0.8399102176574627);
\draw [line width=1.2pt,color=ffwwqq] (23.29757866590064,9.530574026048939)-- (29.679797461242742,12.270345230706837);
\draw [line width=1.2pt,color=ffwwqq] (29.679797461242742,12.270345230706837)-- (29.621642776144864,7.212190545608959);
\draw [line width=1.2pt,color=ffwwqq] (29.621642776144864,7.212190545608959)-- (23.29757866590064,9.530574026048939);
\draw [line width=1.2pt,color=qqzzff] (29.679797461242742,12.270345230706837)-- (24.522788584330577,13.058766195790026);
\draw [line width=1.2pt,color=qqzzff] (24.522788584330577,13.058766195790026)-- (27.93515690830634,16.573791927949955);
\draw [line width=1.2pt,color=qqzzff] (27.93515690830634,16.573791927949955)-- (29.679797461242742,12.270345230706837);
\draw [line width=1.2pt,color=qqccqq] (21.557116438041643,21.7494300041815)-- (21.77076028793105,14.887306060111422);
\draw [line width=1.2pt,color=qqccqq] (21.77076028793105,14.887306060111422)-- (27.93515690830634,16.573791927949955);
\draw [line width=1.2pt,color=qqccqq] (27.93515690830634,16.573791927949955)-- (21.557116438041643,21.7494300041815);
\draw [line width=1.2pt,color=xdxdff] (18.198724474086355,15.377149676230006)-- (21.557116438041643,21.7494300041815);
\draw [line width=1.2pt,color=xdxdff] (21.557116438041643,21.7494300041815)-- (16.671906096116764,20.73388171029249);
\draw [line width=1.2pt,color=xdxdff] (16.671906096116764,20.73388171029249)-- (18.198724474086355,15.377149676230006);
\draw [line width=1.2pt,color=ffwwqq] (10.289687300774663,17.99411050563459)-- (16.671906096116764,20.73388171029249);
\draw [line width=1.2pt,color=ffwwqq] (16.671906096116764,20.73388171029249)-- (16.613751411018885,15.675727025194611);
\draw [line width=1.2pt,color=ffwwqq] (16.613751411018885,15.675727025194611)-- (10.289687300774663,17.99411050563459);
\draw [line width=1.2pt,color=qqzzff] (12.034327853711066,13.690663808391474)-- (6.8773189767989,14.479084773474662);
\draw [line width=1.2pt,color=qqzzff] (6.8773189767989,14.479084773474662)-- (10.289687300774663,17.99411050563459);
\draw [line width=1.2pt,color=qqzzff] (10.289687300774663,17.99411050563459)-- (12.034327853711066,13.690663808391474);
\draw [line width=1.2pt,color=qqccqq] (6.8773189767989,14.479084773474662)-- (7.090962826688308,7.616960829404583);
\draw [line width=1.2pt,color=qqccqq] (7.090962826688308,7.616960829404583)-- (13.255359447063597,9.303446697243116);
\draw [line width=1.2pt,color=qqccqq] (13.255359447063597,9.303446697243116)-- (6.8773189767989,14.479084773474662);
\draw [line width=1.2pt,color=xdxdff] (8.617781204657899,2.2602287953420994)-- (11.976173168613187,8.632509123293595);
\draw [line width=1.2pt,color=xdxdff] (11.976173168613187,8.632509123293595)-- (7.090962826688308,7.616960829404583);
\draw [line width=1.2pt,color=xdxdff] (7.090962826688308,7.616960829404583)-- (8.617781204657899,2.2602287953420994);
\draw [line width=1.2pt,color=qqzzff] (19.88521034192488,6.015548293889011)-- (23.29757866590064,9.530574026048939);
\draw [line width=1.2pt,color=xdxdff] (23.29757866590064,9.530574026048939)-- (21.77076028793105,14.887306060111422);
\draw [line width=1.2pt,color=qqzzff] (21.77076028793105,14.887306060111422)-- (16.613751411018885,15.675727025194611);
\draw [line width=1.2pt,color=xdxdff] (16.613751411018885,15.675727025194611)-- (13.255359447063597,9.303446697243116);
\draw [line width=1.2pt,color=qqzzff] (13.255359447063597,9.303446697243116)-- (15.,5.);
\draw [line width=1.2pt,color=qqccqq] (24.736432434219985,6.196642251719948)-- (24.522788584330577,13.058766195790026);
\draw [line width=1.2pt,color=ffwwqq] (24.522788584330577,13.058766195790026)-- (18.198724474086355,15.377149676230006);
\draw [line width=1.2pt,color=qqccqq] (18.198724474086355,15.377149676230006)-- (12.034327853711066,13.690663808391474);
\draw [line width=1.2pt,color=ffwwqq] (12.034327853711066,13.690663808391474)-- (11.976173168613187,8.632509123293595);
\draw [line width=1.2pt,color=qqccqq] (11.976173168613187,8.632509123293595)-- (18.354213638877884,3.456871047062049);

\draw [line width=1.2pt,color=gray!50] (10.289687300774663,17.99411050563459)-- (21.557116438041643,21.7494300041815);
\draw [line width=1.2pt,color=ffwwqq] (16.671906096116764,20.73388171029249)-- (10.289687300774663,17.99411050563459);
\draw [line width=1.2pt,color=xdxdff] (21.557116438041643,21.7494300041815)-- (16.671906096116764,20.73388171029249);

\draw [line width=1.2pt,color=gray!50] (18.354213638877884,3.456871047062049)-- (29.621642776144864,7.212190545608959);
\draw [line width=1.2pt,color=gray!50] (8.617781204657899,2.2602287953421007)-- (19.88521034192488,6.015548293889011);
\draw [line width=1.2pt,color=ffwwqq] (24.736432434219985,6.196642251719948)-- (18.354213638877884,3.456871047062049);

\draw [line width=1.2pt,color=ffwwqq] (15.,5.) -- (8.617781204657899,2.2602287953421007);
\draw [line width=1.2pt,color=xdxdff] (19.88521034192488,6.015548293889011) -- (15.,5.);

\draw [line width=1.2pt,color=qqffqq] (20.098854191814286,-0.84657565018107)-- (26.263250812189575,0.8399102176574627);
\draw [line width=1.2pt,color=gray!50] (14.941845314902121,-0.058154685097879966)-- (26.263250812189575,0.8399102176574627);

\draw [->,>=stealth,line width=2pt,color=ffqqqq] (8.617781204657899,2.2602287953421007) -- (18.354213638877884,3.456871047062049);
\draw [->,>=stealth,line width=2pt,color=ffqqqq] (19.88521034192488,6.015548293889011) -- (29.621642776144864,7.212190545608959);

\draw [line width=1.2pt,color=qqccqq] (19.88521034192488,6.015548293889011)-- (26.263250812189575,0.8399102176574627);
\draw [line width=1.2pt,color=qqccqq] (19.88521034192488,6.015548293889011)-- (20.098854191814286,-0.84657565018107);
\begin{scriptsize}
\draw [fill=rvwvcq] (8.617781204657899,2.2602287953421007) circle (2.5pt)node[color=rvwvcq,left] {$V_0$};
\draw [fill=rvwvcq] (15.,5.15) circle (2.5pt)node[color=rvwvcq,left] {$W_1$};
\draw [fill=rvwvcq] (14.941845314902121,-0.058154685097879966) circle (2.5pt)node[color=rvwvcq,below] {$V_1$};
\draw [fill=rvwvcq] (20.098854191814286,-0.84657565018107) circle (2.5pt)node[color=rvwvcq,below] {$V_2$};
\draw [fill=rvwvcq] (18.354213638877884,3.456871047062049) circle (2.5pt)node[color=rvwvcq,above] {$W_2$};
\draw [fill=rvwvcq] (19.88521034192488,6.015548293889011) circle (2.5pt);
\draw [color=rvwvcq] (19.3,6.5)node {$W_3$};
\draw [fill=rvwvcq] (26.263250812189575,0.8399102176574627) circle (2.5pt)node[color=rvwvcq,below] {$V_3$};
\draw [fill=rvwvcq] (29.621642776144864,7.212190545608959) circle (2.5pt)node[color=rvwvcq,right] {$V_4$};
\draw [fill=rvwvcq] (14.96,5.02) circle (2.5pt);
\draw [fill=rvwvcq] (26.263250812189575,0.8399102176574627) circle (2.5pt);
\draw [fill=rvwvcq] (24.736432434219985,6.196642251719948) circle (2.5pt);
\draw [color=rvwvcq] (25.25,7.15) node {$W_4$};
\draw [fill=rvwvcq] (23.29757866590064,9.530574026048939) circle (2.5pt)node[color=rvwvcq,left] {$W_5$};
\draw [fill=rvwvcq] (29.679797461242742,12.270345230706837) circle (2.5pt)node[color=rvwvcq,right] {$V_5$};
\draw [fill=rvwvcq] (29.621642776144864,7.212190545608959) circle (2.5pt);
\draw [fill=rvwvcq] (29.679797461242742,12.270345230706837) circle (2.5pt);
\draw [fill=rvwvcq] (24.522788584330577,13.058766195790026) circle (2.5pt);
\draw[color=rvwvcq] (24.2,13.8)node {$W_6$};
\draw [fill=rvwvcq] (27.93515690830634,16.573791927949955) circle (2.5pt)node[color=rvwvcq,right] {$V_6$};
\draw [fill=rvwvcq] (21.557116438041643,21.7494300041815) circle (2.5pt)node[color=rvwvcq,above] {$V_7$};
\draw [fill=rvwvcq] (21.77076028793105,14.887306060111422) circle (2.5pt);
\draw[color=rvwvcq] (22.3,15.5) node {$W_7$};
\draw [fill=rvwvcq] (27.93515690830634,16.573791927949955) circle (2.5pt);
\draw [fill=rvwvcq] (18.198724474086355,15.377149676230006) circle (2.5pt)node[color=rvwvcq,below] {$W_8$};
\draw [fill=rvwvcq] (21.557116438041643,21.7494300041815) circle (2.5pt);
\draw [fill=rvwvcq] (16.671906096116764,20.73388171029249) circle (2.5pt)node[color=rvwvcq,above] {$V_8$};
\draw [fill=rvwvcq] (10.289687300774663,17.99411050563459) circle (2.5pt)node[color=rvwvcq,above] {$V_9$};
\draw [fill=rvwvcq] (16.671906096116764,20.73388171029249) circle (2.5pt);
\draw [fill=rvwvcq] (16.613751411018885,15.675727025194611) circle (2.5pt);
\draw[color=rvwvcq] (15.95,16.6) node {$W_9$};
\draw [fill=rvwvcq] (12.034327853711066,13.690663808391474) circle (2.5pt);
\draw[color=rvwvcq] (12.9,13.388104592769473) node {$W_{10}$};
\draw [fill=rvwvcq] (6.8773189767989,14.479084773474662) circle (2.5pt)node[color=rvwvcq,left] {$V_{10}$};
\draw [fill=rvwvcq] (10.289687300774663,17.99411050563459) circle (2.5pt);
\draw [fill=rvwvcq] (6.8773189767989,14.479084773474662) circle (2.5pt);
\draw [fill=rvwvcq] (7.090962826688308,7.616960829404583) circle (2.5pt)node[color=rvwvcq,left] {$V_{11}$};
\draw [fill=rvwvcq] (13.255359447063597,9.303446697243116) circle (2.5pt)node[color=rvwvcq,right] {$W_{11}$};
\draw [fill=rvwvcq] (8.617781204657899,2.2602287953420994) circle (2.5pt);
\draw [fill=rvwvcq] (11.976173168613187,8.632509123293595) circle (2.5pt);
\draw[color=rvwvcq] (12.1,7.55003262852787) node {$W_{0}$};
\draw [fill=rvwvcq] (7.090962826688308,7.616960829404583) circle (2.5pt);
\draw[color=ffcctt] (13,20) node {$\sjx_{0}$};
\draw[color=qqffqq] (16.063440947832323,4.228973726713782) node {$\sjx_L$};
\draw[color=qqffqq] (24.313650430233682,4.648475903785036) node {$\sjx_R$};
\draw[color=cyan] (21.586886279270523,1.1875829429471982) node {$\sjx_{-}$};
\draw[color=ffqqqq] (13.511469370648852,2.2) node {$s_{3,L}$};
\draw[color=ffqqqq] (23.33481201706742,7.1) node {$s_{3}$};
\end{scriptsize}
\end{tikzpicture}\vskip -.5cm
\caption{The triangles $\sjx_{?}$ in type $E_6$ with $?=\{0,-,L,R\}$.}
\label{fig:E6sjx}

\centering
\begin{tikzpicture}[line cap=round,line join=round,arrow/.style={->,>=stealth,thick},scale=.5]
\draw (10,8) node {};
\clip(6,-2) rectangle (30,7);
\fill[line width=1.2pt,color=qqffff,fill=qqffff,fill opacity=0.4]
(8.617781204657899,2.2602287953421007) -- (15.,5.) -- (19.88521,6.01555) -- (26.263250812189575,0.8399102176574627) -- (20.098854191814286,-0.84657565018107) -- (14.941845314902121,-0.058154685097879966) -- cycle;
\draw [line width=1.2pt,color=ffwwqq] (15.,5.)-- (14.941845314902121,-0.058154685097879966);
\draw [line width=1.2pt,color=ffwwqq] (14.941845314902121,-0.058154685097879966)-- (8.617781204657899,2.2602287953421007);
\draw [line width=1.2pt,color=qqzzff] (14.941845314902121,-0.058154685097879966)-- (18.354213638877884,3.456871047062049);
\draw [line width=1.2pt,color=qqzzff] (18.354213638877884,3.456871047062049)-- (20.098854191814286,-0.84657565018107);
\draw [line width=1.2pt,color=qqccqq] (19.88521,6.01555)-- (20.098854191814286,-0.84657565018107);
\draw [line width=1.2pt,color=qqccqq] (20.098854191814286,-0.84657565018107)-- (26.263250812189575,0.8399102176574627);
\draw [line width=1.2pt,color=qqccqq] (26.263250812189575,0.8399102176574627)-- (19.88521,6.01555);
\draw [line width=1.2pt,color=xdxdff] (26.263250812189575,0.8399102176574627)-- (29.621642434219986,7.212192251719948);
\draw [line width=1.2pt,color=xdxdff] (29.621642434219986,7.212192251719948)-- (24.736432434219985,6.196642251719948);
\draw [line width=1.2pt,color=xdxdff] (24.736432434219985,6.196642251719948)-- (26.263250812189575,0.8399102176574627);
\draw [line width=1.2pt,color=ffwwqq] (23.297578323975763,9.530575732159928)-- (29.679797119317865,12.270346936817827);
\draw [line width=1.2pt,color=ffwwqq] (29.679797119317865,12.270346936817827)-- (29.621642434219986,7.212192251719948);
\draw [line width=1.2pt,color=ffwwqq] (29.621642434219986,7.212192251719948)-- (23.297578323975763,9.530575732159928);
\draw [line width=1.2pt,color=qqzzff] (29.679797119317865,12.270346936817827)-- (24.5227882424057,13.058767901901016);
\draw [line width=1.2pt,color=qqzzff] (24.5227882424057,13.058767901901016)-- (27.935156566381462,16.573793634060944);
\draw [line width=1.2pt,color=qqzzff] (27.935156566381462,16.573793634060944)-- (29.679797119317865,12.270346936817827);
\draw [line width=1.2pt,color=qqccqq] (21.557115754191887,21.74943341640348)-- (21.770759946006173,14.887307766222412);
\draw [line width=1.2pt,color=qqccqq] (21.770759946006173,14.887307766222412)-- (27.935156566381462,16.573793634060944);
\draw [line width=1.2pt,color=qqccqq] (27.935156566381462,16.573793634060944)-- (21.557115754191887,21.74943341640348);
\draw [line width=1.2pt,color=xdxdff] (18.198724132161477,15.377151382340996)-- (21.557115754191887,21.74943341640348);
\draw [line width=1.2pt,color=xdxdff] (21.557115754191887,21.74943341640348)-- (16.671905754191886,20.73388341640348);
\draw [line width=1.2pt,color=xdxdff] (16.671905754191886,20.73388341640348)-- (18.198724132161477,15.377151382340996);
\draw [line width=1.2pt,color=ffwwqq] (10.289686958849785,17.99411221174558)-- (16.671905754191886,20.73388341640348);
\draw [line width=1.2pt,color=ffwwqq] (16.671905754191886,20.73388341640348)-- (16.613751069094008,15.6757287313056);
\draw [line width=1.2pt,color=ffwwqq] (16.613751069094008,15.6757287313056)-- (10.289686958849785,17.99411221174558);
\draw [line width=1.2pt,color=qqzzff] (12.034327511786188,13.690665514502463)-- (6.8773186348740225,14.479086479585652);
\draw [line width=1.2pt,color=qqzzff] (6.8773186348740225,14.479086479585652)-- (10.289686958849785,17.99411221174558);
\draw [line width=1.2pt,color=qqzzff] (10.289686958849785,17.99411221174558)-- (12.034327511786188,13.690665514502463);
\draw [line width=1.2pt,color=qqccqq] (6.8773186348740225,14.479086479585652)-- (7.090962826688308,7.616960829404583);
\draw [line width=1.2pt,color=qqccqq] (7.090962826688308,7.616960829404583)-- (13.255359447063597,9.303446697243116);
\draw [line width=1.2pt,color=qqccqq] (13.255359447063597,9.303446697243116)-- (6.8773186348740225,14.479086479585652);
\draw [line width=1.2pt,color=xdxdff] (8.617781204657899,2.2602287953420994)-- (11.976172826688309,8.632510829404584);
\draw [line width=1.2pt,color=xdxdff] (11.976172826688309,8.632510829404584)-- (7.090962826688308,7.616960829404583);
\draw [line width=1.2pt,color=xdxdff] (7.090962826688308,7.616960829404583)-- (8.617781204657899,2.2602287953420994);
\draw [line width=1.2pt,color=qqzzff] (19.88521,6.01555)-- (23.297578323975763,9.530575732159928);
\draw [line width=1.2pt,color=xdxdff] (23.297578323975763,9.530575732159928)-- (21.770759946006173,14.887307766222412);
\draw [line width=1.2pt,color=qqzzff] (21.770759946006173,14.887307766222412)-- (16.613751069094008,15.6757287313056);
\draw [line width=1.2pt,color=xdxdff] (16.613751069094008,15.6757287313056)-- (13.255359447063597,9.303446697243116);
\draw [line width=1.2pt,color=qqzzff] (13.255359447063597,9.303446697243116)-- (15.,5.);
\draw [line width=1.2pt,color=ffwwqq] (18.354213638877884,3.456871047062049)-- (24.736432434219985,6.196642251719948);
\draw [line width=1.2pt,color=qqccqq] (24.736432434219985,6.196642251719948)-- (24.5227882424057,13.058767901901016);
\draw [line width=1.2pt,color=ffwwqq] (24.5227882424057,13.058767901901016)-- (18.198724132161477,15.377151382340996);
\draw [line width=1.2pt,color=qqccqq] (18.198724132161477,15.377151382340996)-- (12.034327511786188,13.690665514502463);
\draw [line width=1.2pt,color=ffwwqq] (12.034327511786188,13.690665514502463)-- (11.976172826688309,8.632510829404584);
\draw [line width=1.2pt,color=qqccqq] (11.976172826688309,8.632510829404584)-- (18.354213638877884,3.456871047062049);
\draw [->,>=stealth,line width=1.5pt] (20.098854191814286,-0.84657565018107) -- (14.941845314902121,-0.05815468509788002);
\draw [->,>=stealth,line width=1.5pt] (14.941845314902121,-0.058154685097879966) -- (26.263250812189575,0.8399102176574627);
\draw [->,>=stealth,line width=1.5pt] (26.263250812189575,0.8399102176574627) -- (8.617781204657899,2.2602287953421007);
\draw [->,>=stealth,line width=1.5pt] (8.617781204657899,2.2602287953421007) -- (15.,5.);
\draw [->,>=stealth,line width=1.5pt] (8.617781204657899,2.2602287953421007) -- (18.354213638877884,3.456871047062049);
\draw [->,>=stealth,line width=1.5pt] (15.,5.) -- (19.88521,6.01555);
\begin{scriptsize}
\draw [fill=rvwvcq] (8.617781204657899,2.2602287953421007) circle (2.5pt)node[color=rvwvcq,left] {$V_0$};
\draw [fill=rvwvcq] (15.,5.) circle (2.5pt);
\draw [color=rvwvcq] (14,5.5) node {$W_1$};
\draw [fill=rvwvcq] (14.941845314902121,-0.058154685097879966) circle (2.5pt)node[color=rvwvcq,below] {$V_1$};
\draw [fill=rvwvcq] (20.098854191814286,-0.84657565018107) circle (2.5pt)node[color=rvwvcq,below] {$V_2$};
\draw [fill=rvwvcq] (18.354213638877884,3.456871047062049) circle (2.5pt)node[color=rvwvcq,above] {$W_2$};
\draw [fill=rvwvcq] (19.88521034192488,6.015548293889011) circle (2.5pt)node[color=rvwvcq,right] {$W_3$};
\draw [fill=rvwvcq] (26.263250812189575,0.8399102176574627) circle (2.5pt)node[color=rvwvcq,below] {$V_3$};
\draw [fill=rvwvcq] (29.621642776144864,7.212190545608959) circle (2.5pt)node[color=rvwvcq,right] {$V_4$};
\draw [fill=rvwvcq] (14.96,5.02) circle (2.5pt);
\draw [fill=rvwvcq] (26.263250812189575,0.8399102176574627) circle (2.5pt);
\draw [fill=rvwvcq] (24.736432434219985,6.196642251719948) circle (2.5pt);
\draw [color=rvwvcq] (24.1,6.45) node {$W_4$};
\draw [fill=rvwvcq] (23.29757866590064,9.530574026048939) circle (2.5pt)node[color=rvwvcq,left] {$W_5$};
\draw [fill=rvwvcq] (29.679797461242742,12.270345230706837) circle (2.5pt)node[color=rvwvcq,right] {$V_5$};
\draw [fill=rvwvcq] (29.621642776144864,7.212190545608959) circle (2.5pt);
\draw [fill=rvwvcq] (29.679797461242742,12.270345230706837) circle (2.5pt);
\draw [fill=rvwvcq] (24.522788584330577,13.058766195790026) circle (2.5pt);
\draw[color=rvwvcq] (24.2,13.8)node {$W_6$};
\draw [fill=rvwvcq] (27.93515690830634,16.573791927949955) circle (2.5pt)node[color=rvwvcq,right] {$V_6$};
\draw [fill=rvwvcq] (21.557116438041643,21.7494300041815) circle (2.5pt)node[color=rvwvcq,above] {$V_7$};
\draw [fill=rvwvcq] (21.77076028793105,14.887306060111422) circle (2.5pt);
\draw[color=rvwvcq] (22.3,15.5) node {$W_7$};
\draw [fill=rvwvcq] (27.93515690830634,16.573791927949955) circle (2.5pt);
\draw [fill=rvwvcq] (18.198724474086355,15.377149676230006) circle (2.5pt)node[color=rvwvcq,below] {$W_8$};
\draw [fill=rvwvcq] (21.557116438041643,21.7494300041815) circle (2.5pt);
\draw [fill=rvwvcq] (16.671906096116764,20.73388171029249) circle (2.5pt)node[color=rvwvcq,above] {$V_8$};
\draw [fill=rvwvcq] (10.289687300774663,17.99411050563459) circle (2.5pt)node[color=rvwvcq,above] {$V_9$};
\draw [fill=rvwvcq] (16.671906096116764,20.73388171029249) circle (2.5pt);
\draw [fill=rvwvcq] (16.613751411018885,15.675727025194611) circle (2.5pt);
\draw[color=rvwvcq] (15.95,16.6) node {$W_9$};
\draw [fill=rvwvcq] (12.034327853711066,13.690663808391474) circle (2.5pt);
\draw[color=rvwvcq] (12.9,13.388104592769473) node {$W_{10}$};
\draw [fill=rvwvcq] (6.8773189767989,14.479084773474662) circle (2.5pt)node[color=rvwvcq,left] {$V_{10}$};
\draw [fill=rvwvcq] (10.289687300774663,17.99411050563459) circle (2.5pt);
\draw [fill=rvwvcq] (6.8773189767989,14.479084773474662) circle (2.5pt);
\draw [fill=rvwvcq] (7.090962826688308,7.616960829404583) circle (2.5pt)node[color=rvwvcq,left] {$V_{11}$};
\draw [fill=rvwvcq] (13.255359447063597,9.303446697243116) circle (2.5pt)node[color=rvwvcq,right] {$W_{11}$};
\draw [fill=rvwvcq] (8.617781204657899,2.2602287953420994) circle (2.5pt);
\draw [fill=rvwvcq] (11.976173168613187,8.632509123293595) circle (2.5pt);
\draw[color=rvwvcq] (12.1,7.55003262852787) node {$W_{0}$};
\draw [fill=rvwvcq] (7.090962826688308,7.616960829404583) circle (2.5pt);
\draw[color=black] (17.467361977093386,-1) node {$s_6$};
\draw[color=black] (17.776992839882436,.6) node {$s_5$};
\draw[color=black] (14,1.2) node {$s_4$};
\draw[color=black] (11.73919101549599,4.2) node {$s_2$};
\draw[color=black] (13.94531091286796,3.3) node {$s_3$};
\draw[color=black] (17.080323398607074,6) node {$s_1$};

\draw [->,>=stealth,line width=.8pt,bend right] (18.8,-0.6) to (18.9,.2);
\draw [->,>=stealth,line width=.8pt,bend right] (20.8,1.3) to (20.9,.5);
\draw [->,>=stealth,line width=.8pt,bend right] (14,1.9) to (14,2.85);
\draw [->,>=stealth,line width=.8pt,bend right] (12,2) to (12,3.6);
\draw [->,>=stealth,line width=.8pt,bend right] (13.5,4.3) to (16,5.2);
\end{scriptsize}
\end{tikzpicture}
\begin{tikzpicture}[scale=.5,xscale=1,ar/.style={->,thick,>=stealth}]
\draw(0,0)node(v1){$s_1$}(2,0)node(v2){$s_2$}(4,0)node(v4){$s_4$}(6,0)node(v5){$s_5$}(8,0)node(v6)
{$s_6$}(4,2)node(v3){$s_3$};

\draw[ar](v2)to(v1);
\draw[ar](v4)to(v2);
\draw[ar](v4)to(v3);
\draw[ar](v4)to(v5);
\draw[ar](v6)to(v5);
\end{tikzpicture}
\caption{The intersection quiver of a $12$-gon of type $E_6$}
\label{fig:int-quiver-E6}
\end{figure}


\begin{lemma}\label{lem:sjx-LR}
The triangles $\sjx_L$ and $\sjx_R$ (as well as $\sjx_0$) are parallel (i.e. related by translations).
\end{lemma}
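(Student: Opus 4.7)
The plan is to identify explicit translation vectors realizing each claimed parallelism, and reduce the verification to linear identities among the edge vectors $z_j$ which follow from the two families of $E_6$ relations in~\eqref{eq:E6-rel}.

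Concretely, I claim that the translation $\tau_0 = V_9 - V_0$ carries the ordered triple $(V_0, W_1, W_3)$ of $\sjx_L$ onto $(V_9, V_8, V_7)$, realizing $\sjx_L \parallel \sjx_0$; and that the translation $\tau_R = W_2 - V_0$ carries $(V_0, W_1, W_3)$ onto $(W_2, W_4, V_4)$, realizing $\sjx_L \parallel \sjx_R$. The third parallelism $\sjx_R \parallel \sjx_0$ then follows by transitivity via $\tau_0 - \tau_R$.

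For the first claim, after expanding using $W_1 = V_1 + z_5$ and $W_3 = V_3 + z_7$, the identities $\tau_0 = V_8 - W_1$ and $\tau_0 = V_7 - W_3$ reduce respectively to $z_1 + z_5 + z_9 = 0$ (directly the three-term relation in~\eqref{eq:E6-rel} at $j = 1$) and $z_1 + z_2 + z_3 + z_7 + z_8 + z_9 = 0$, which collapses, via the three-term relations at $j = 1$ and $3$, to the four-term relation at $j = 2$. The second claim is handled analogously: after substituting $W_j = V_j + z_{j+4}$, the two required equalities become $z_3 - z_0 - z_6 + z_9 = 0$ (the four-term relation at $j = 9$) and $z_4 - z_7 - z_1 - z_2 - z_6 = 0$, which via the four-term relation at $j = 4$ reduces to $-(z_2 + z_6 + z_{10}) = 0$ (the three-term relation at $j = 2$).

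The main obstacle is the bookkeeping involved in selecting the right combinations of relations for each identity; no single step is conceptually difficult. The underlying geometric reason the lemma holds is that the $E_6$ relations in~\eqref{eq:E6-rel} are designed precisely so that triangles sharing a common edge-multiset (e.g., $\TT_j, \TT_{j+4}, \TT_{j+8}$) are automatically translates of each other, and the parallelisms among $\sjx_L$, $\sjx_R$, $\sjx_0$ are of the same flavor.
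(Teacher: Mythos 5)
Your proof is correct and follows essentially the same route as the paper: the paper establishes parallelism via the edge-vector equalities $V_7V_8=V_4W_4=W_3W_1$ and $V_8V_9=W_4W_2=W_1V_0$ (citing \cite[Construction~6.3]{QZ22} for them), which is exactly the content of your translations $\tau_0$ and $\tau_R$ with the same vertex correspondences. The only difference is that you verify these equalities directly from the relations \eqref{eq:E6-rel} rather than quoting \cite{QZ22}, which makes the argument self-contained; your linear-algebra reductions check out.
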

\begin{proof}
The statement follows from the equalities of diagonals/vectors:
\[
    V_7V_8=V_4W_4=W_3W_1 \quad\text{and}\quad V_8V_9=W_4W_2=W_1V_0,
\]
cf. \cite[Construction~6.3 or \S~6.3]{QZ22}.
\end{proof}

Comparing the vertices in $\sjx_L$ and $\sjx_R$ with respect to \eqref{eq:order},
we set $\sjx_+$ to be the smaller one among them in the sense that
\begin{gather}\label{def:up-tri}
\sjx_+\colon=
\begin{cases}
\sjx_L & \text{if~}~ W_1<W_4,\\
\sjx_R & \text{if~}~ W_4<W_1.
\end{cases}
\end{gather}

\begin{lemma}\label{lem:agon6}
Let $\agon_6$ be the convex hull of $\sjx_-\cup\sjx_+$.
Then it is a positively convex/stable hexagon (of type $A_5$).
\end{lemma}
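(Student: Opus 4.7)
The plan is to verify the two requirements of a stable hexagon of type $A_5$: that $\agon_6$ has six extreme vertices (so it is genuinely a hexagon, not a pentagon or smaller), and that it is positively convex (anti-clockwise cyclic order with every vertex on the left of each non-incident edge, per Definition~\ref{def:h-gon}).

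First I would split into two cases according to~\eqref{def:up-tri}. When $W_1 < W_4$ we have $\sjx_+ = \sjx_L$ with vertices $\{V_0, W_1, W_3\}$; when $W_4 < W_1$ we have $\sjx_+ = \sjx_R$ with vertices $\{W_2, V_4, W_4\}$. The two cases are symmetric (under the involution of the $12$-gon swapping ``left'' and ``right''), so I would focus on $\sjx_+ = \sjx_L$ and claim that the six vertices of $\agon_6$ appear in the anti-clockwise cyclic order
\[
(V_1,\; V_2,\; V_3,\; W_3,\; W_1,\; V_0).
\]
The key tool is Lemma~\ref{lem:sjx-LR}: since $\sjx_L$ is parallel to $\sjx_0 = V_7V_8V_9$, its three edges in the order $W_3 \to W_1 \to V_0 \to W_3$ are $z_8$, $z_9$ and $-(z_8+z_9)$, with their directions controlled by the positive convexity of $\hgon$. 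Together with the edges $V_1V_2 = z_2$ and $V_2V_3 = z_3$ inherited from $\sjx_-$, this pins down four of the six putative edges of $\agon_6$, leaving only the two ``connecting'' edges $V_3W_3$ and $V_0V_1$ to analyse.

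To confirm the cyclic order I would compare the arguments of the six candidate edges and show that they are strictly increasing modulo $2\pi$. For the four inherited edges, this is a direct consequence of the positive convexity of $\hgon$ (plus the parallelism of $\sjx_L$ with $\sjx_0$). For the two connecting edges, one computes $V_3W_3 = z_4 + z_5 + z_6 + z_7$ (using the triangle $\TT_3$ to write $W_3 = V_3 + z_7$, and so on), and similarly expresses $V_0V_1 = z_1$, and verifies by the convexity of $\hgon$ that their arguments interleave correctly. Positive convexity of the whole hexagon then follows because every vertex of $\sjx_-$ lies on the correct side of every edge of $\sjx_L$ (and vice versa), both triangles being separately positively oriented within $\hgon$.

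The main obstacle, as I see it, will be the first requirement — ruling out the possibility that one of the six points lies in the convex hull of the other five, which would collapse $\agon_6$ to a pentagon or quadrilateral. This is where the stability hypothesis on $\hgon$ (the $W_j$'s lie inside the level-$3$ diagonal-gon) and the choice of $\sjx_+$ as the ``$<$-smaller'' of $\sjx_L, \sjx_R$ both become essential. The former prevents $W_1$ or $W_3$ from drifting down into the triangle $\sjx_-$, while the latter prevents $V_0$ from lying inside $V_1 W_3 W_1$: had we picked the ``larger'' triangle instead, one of its vertices could be dominated by the hexagon formed with the smaller one. Once this extremality is established, positive convexity follows as above and $\agon_6$ is recognised as a stable $6$-gon of type $A_5$.
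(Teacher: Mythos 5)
Your overall strategy is reasonable and is essentially what the paper outsources to \cite[\S~6.3]{QZ22}: the paper's own proof is a one-line citation identifying $\agon_6$ with the narrow hexagon $\mathbf{H}_1=V_0V_1V_2V_3W_3W_1$ or $\mathbf{H}_2=V_1V_2V_3V_4W_4W_2$ and observing that such a hexagon inherits positive convexity from $\hgon$. However, as written your argument contains a computational error and leaves its self-declared key step unproven. The error: from the triangle $\TT_3=V_2V_3W_3$ one has $V_3W_3=z_{7}$ (exactly as your own parenthetical $W_3=V_3+z_7$ says), not $z_4+z_5+z_6+z_7$; the latter is the vector $V_3V_7$. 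Likewise $V_0V_1=z_1$, and Lemma~\ref{lem:sjx-LR} gives $W_3W_1=z_8$, $W_1V_0=z_9$. So in the case $\sjx_+=\sjx_L$ all six edges of the claimed hexagon are the vectors $z_1,z_2,z_3,z_7,z_8,z_9$, i.e.\ translates of edges of $\hgon$, traversed in the cyclic order they occur around $\hgon$; the relations \eqref{eq:E6-rel} show they sum to zero. Positive convexity of $\hgon$ then gives strictly increasing arguments along this closed edge path, which is all one needs.

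This also shows that what you call the main obstacle is not actually an obstacle, and your proposed resolution of it is the wrong mechanism. A closed hexagonal path whose six nonzero edge vectors have strictly increasing arguments (total turning $2\pi$) is automatically the boundary of a non-degenerate convex hexagon, so none of the six points can lie in the convex hull of the other five; there is nothing further to rule out. In particular, neither the stability condition on the $W_j$'s nor the choice of $\sjx_+$ as the $<$-smaller of $\sjx_L,\sjx_R$ is what prevents degeneration: both $\mathbf{H}_1$ and $\mathbf{H}_2$ are positively convex hexagons regardless of which triangle is smaller. (Stability has already been used earlier, to ensure $\sjx_-$ consists of three consecutive vertices of $\hgon$; the choice of $\sjx_+$ matters only later, when ordering the $\imz$-ind diagonals.) As submitted, your proof both miscomputes the edge $V_3W_3$ and defers the step you identify as essential to unproved assertions about stability and the choice of $\sjx_+$, so it is not yet a complete argument; correcting the edge computation makes the rest immediate and renders those assertions unnecessary.
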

\begin{proof}
By \cite[\S~6.3]{QZ22}, we know that $\agon_6$ is in fact a narrow hexagon either $\mathbf{H}_1$ or $\mathbf{H}_2$, where
$$\mathbf{H}_j=V_{j-1}V_jV_{j+1}V_{j+2}W_{j+2}W_{j}.$$
Note that those $\mathbf{H}_j$'s inherit the positive convexity from $\hgon$.
\end{proof}

Denote the $\imz$-ind diagonals of $\agon_6$ from top to bottom by $s_1, s_2, s_4, s_5,$ and $s_6$ respectively.
Set $s_3\colon=\UP{V_0W_2}=\UP{W_3V_4}$, i.e.
\begin{gather}\label{def:s3-1}
s_3=
\begin{cases}
V_0W_2=W_3V_4 & \text{if~}~ V_0<W_2 (\Longleftrightarrow W_3<V_4),\\
W_2V_0=V_4W_3 & \text{if~}~ W_2<V_0 (\Longleftrightarrow V_4<W_3)
\end{cases}
\end{gather}
and define $\Sim\hgon\colon=\{s_i\mid 1\leq i \leq 6\}$.
Then, following Definition \ref{def:An-quiver}, we also have the \emph{intersection quiver} $\hh{Q}(\hgon)$.
See Figure~\ref{fig:int-quiver-E6} for an example, which corresponds to Figure~\ref{fig:E6sjx}.
\begin{figure}[ht]\centering
\definecolor{ffcctt}{rgb}{1.,0.8,0.2}
\begin{tikzpicture}[xscale=.85,yscale=.85,rotate=-45, font=\tiny,
arrow/.style={->,>=stealth,thick}]
\clip[rotate=-45](-.8,2)rectangle(6.3,19);
\foreach \j in {-3,...,8}{
\foreach \k in {0,...,8}{
    \draw[teal,->,>=stealth,very thick]
        (2*\k,2*\j+.5)to++(0,1);
    \draw[teal,->,>=stealth,very thick]
        (2*\k+.5,2*\j)to++(1,0);
}}
\draw[rotate=-45,white,fill=white](0,2)rectangle(-2,19)(8,2)rectangle(5.5,19);
\foreach \j in {-3,...,8}{
\foreach \k in {0,...,8}{
    \draw[white](2*\k,2*\j) node[circle,draw=teal] (x\k\j) {$000$};
}}

\foreach \j in {0,...,5}{
    \draw[teal,->,>=stealth,very thick] (2*\j+4+1+.005+.35,2*\j+1-.005+.35)to (2*\j+4+2-.35,2*\j+2-.35);
    \draw[teal,->,>=stealth,very thick] (2*\j+4+.35,2*\j+.35) to (2*\j+4+1+.005-.35,2*\j+1-.005-.35);
}
\draw[green!15](6,4) node[circle,draw=teal,fill=green!15] {$000$};
\draw[ffcctt!15](2*2+2+1+.005,2*2-1-.005) node[circle,draw=teal,fill=ffcctt!15] {$000$};
\draw[blue!15](8,2) node[cyan!15,circle,draw=teal,fill=cyan!15] {$000$};
\foreach \j in {1,...,6}{
    \draw[yellow!10](2*\j+2+1+.005,2*\j-1-.005) node[circle,draw=teal](m\j) {$000$};
    \draw(m\j)node{$M_{\j}$};
}
\foreach \j/\k in {0/3,1/4,2/5,3/6,4/7,5/8}{
    \draw[](2*\j+6,2*\j+2) node {$V_\j V_\k$};
}
\draw[](-2+6,-2+2) node {$V_{11} V_2$};
\foreach \j/\k in {0/2,1/3,2/4,3/5,4/6,5/7}{
    \draw[](2*\j+6,2*\j) node {$V_\j V_\k$};
}
\foreach \j/\k/\l in {8/11/2,9/0/3,10/1/4,11/2/5,12/3/6,13/4/7}{
    \draw[](2*\j+6-18,2*\j-14) node {$V_{\k} W_\l$};
}
\foreach \j/\k in {0/1,1/2,2/3,3/4,4/5,5/6}{
    \draw[](2*\j+6,2*\j-2) node[white,circle,draw=teal] {$000$} node {$V_\j V_\k$};}
\foreach \j/\k in {1/2,2/3}{
    \draw[](2*\j+6,2*\j-2) node[cyan!15,circle,draw=teal,fill=cyan!15] {$000$} node {$V_\j V_\k$};}
\foreach \j/\k/\l in {8/0/2,9/1/3,10/2/4,11/3/5,12/4/6,13/5/7,14/6/8}{
    \draw[](2*\j-2-12,2*\j-2-12) node[green!15,circle,draw=teal] {$000$} node {$W_\k W_\l$};}
\foreach \j/\k/\l in {8/1/3,9/2/4}{
    \draw[](2*\j-12,2*\j-12) node[green!15,circle,draw=teal,fill=green!15] {$000$} node {$W_\k W_\l$};}
\end{tikzpicture}
\caption{Part of the AR quiver of $\Dwq(E_6)$}
\label{fig:AR-E6}
\end{figure}
\begin{proposition}\label{pp:thmE6}
Theorem~\ref{thm:An} holds for any quiver $Q$ of type $E_6$.
In particular, the underlying diagram of the Ext-quiver/intersection quiver is of the form (of homogenous degree one):
\begin{gather}\label{eq:E6}
\begin{tikzpicture}[scale=.5,xscale=1,ar/.style={-,thick}]
\draw(0,0)node(v1){$s_1$}(2,0)node(v2){$s_2$}(4,0)node(v4){$s_4$}(6,0)node(v5){$s_5$}(8,0)node(v6){$s_6$}
    (4,2)node(v3){$s_3$};
\draw[ar](v2)edge(v1)edge(v4);
\draw[ar](v3)edge(v4);
\draw[ar](v5)edge(v6)edge(v4);
\end{tikzpicture}
\end{gather}
\end{proposition}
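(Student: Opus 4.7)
The plan is to mimic the strategy of Theorems~\ref{thm:An} and~\ref{thm:Dn}. The key step is to show that every upward diagonal $\ell\in\UD(\hgon_\sigma)$ admits a decomposition $\ell=\sum_{i=1}^{6} c_i\, s_i$ with $c_i\in\ZZ_{\ge 0}$, obtained by repeatedly splitting $\ell$ into two upward admissible diagonals via contractible triangles. Once this is done, iterated application of Lemma~\ref{lem:tri-vs-tri} produces a filtration of short exact sequences on $\XX(\ell)$ in $\h_\sigma$ with factors among $\XX(s_1),\dots,\XX(s_6)$. Hence $\Sim\h_\sigma\subseteq\{\XX(s_i)\}_{i=1}^{6}$, and since $\h_\sigma$ is standard (Lemma~\ref{lem3.3}) with Grothendieck group of rank $6$, equality follows by a count.

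To produce the decomposition I would split $\AD(\hgon)$ into cases according to how a diagonal sits relative to the hexagon $\agon_6$ (which is a stable $6$-gon of type $A_5$ by Lemma~\ref{lem:agon6}) and its ``dual'' counterpart built from $\sjx_0$ (cf.\ Remark~\ref{rem:e6only}). Upward diagonals lying entirely inside $\agon_6$ are handled by Lemma~\ref{lem:filtration-An} applied to $\agon_6$, yielding sums in $s_1,s_2,s_4,s_5,s_6$. By the central symmetry of the $E_6$ relations, diagonals lying inside the parallel hexagon near $\sjx_0$ reduce likewise. For the remaining diagonals (length-$s$ diagonals $V_iV_{i+s}$ that exit $\agon_6$, all $W_jW_{j+2}$ diagonals, and the mixed $V_jW_k$ types) I would use the triangle relations~\eqref{eq:E6-rel} together with the identifications in Definition~\ref{def:adm-diag} to write each as the sum of two shorter admissible diagonals via an explicit contractible triangle; the diagonal $s_3$ plays the role of the bridge between the two halves through the identity $V_0W_2=W_3V_4$. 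Iterating reduces every upward diagonal to a sum of elements of $\Sim\hgon$.

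Next I would verify that the intersection quiver $\hh{Q}(\hgon_\sigma)$ has the shape~\eqref{eq:E6}. The five diagonals $s_1,s_2,s_4,s_5,s_6$ form the $A_5$ subquiver inherited from $\agon_6$. A direct inspection, with two symmetric configurations depending on whether $\sjx_+=\sjx_L$ or $\sjx_+=\sjx_R$, shows that $s_3$ forms a contractible triangle with exactly one of the $s_i$'s, namely $s_4$ (the middle edge of the $A_5$ chain of $\agon_6$), while no essential oriented intersection exists between $s_3$ and any other $s_j$ since the corresponding would-be third edge is not admissible. By~\eqref{eq:int=dim}, this combinatorial intersection quiver coincides with the $\Ext$-quiver $\hh{Q}(\h_\sigma)$.

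Finally, since $\h_\sigma$ is hereditary, every arrow in $\hh{Q}(\h_\sigma)$ has degree one, so $\hh{Q}^{1-?}(\hgon_\sigma)$ is ungraded; simple-projective duality $\Ext^1(S_i,S_j)\cong\Irr(P_j,P_i)^*$ then gives $\h_\sigma\cong\mod\k\,\hh{Q}^{1-?}(\hgon_\sigma)$, exactly as in the proof of Theorem~\ref{thm:An}. The main obstacle I anticipate is the decomposition step: unlike $A_n$ (where splitting is essentially trivial) or $D_n$ (where the puncture provides a single, easily described bridge), in $E_6$ the admissible diagonals come in several geometric families linked by the exceptional relations~\eqref{eq:E6-rel}, and one must exhibit an explicit contractible triangle whose three edges are all upward admissible in each family so that the inductive procedure provably terminates in $\Sim\hgon$.
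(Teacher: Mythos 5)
Your overall plan is a direct generalization of the filtration arguments used for types $A_n$ and $D_n$, but the paper deliberately does \emph{not} follow this route for $E_6$, and the step you yourself flag as the main obstacle is exactly where your proposal is incomplete. You need that every upward admissible diagonal of $\hgon_\sigma$ can be split, by an iterated sequence of contractible triangles \emph{all of whose edges are upward admissible diagonals}, into the six diagonals $s_1,\dots,s_6$; you describe which families of diagonals should be handled and assert that the relations \eqref{eq:E6-rel} provide the required triangles, but you never exhibit them, and in type $E_6$ (where admissible diagonals include the $W_jW_{j+2}$ and mixed $V_jW_k$ types subject to nontrivial identifications) the existence of such an all-upward splitting at every stage is precisely the nontrivial content. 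The paper avoids this entirely: it shows instead that $\{\XX(s_i)\}$ is a simple minded collection, which requires only (a) $\Hom^{\le 0}$-vanishing among the $\XX(s_i)$ and (b) thick generation $\thick\Sim\hgon=\Dwq(Q)$. Condition (b) is much weaker than your positive filtration claim — thick closure permits shifts and cones in both directions — and is verified by the soft inductive argument $\thick\UD_{j+1}\subset\thick\UD_j$; condition (a) is verified by embedding the relevant pairs into smaller stable polygons ($\agon_6$ of type $A_5$, the pentagon $\agon_5$ of type $A_4$, and the triangles $\sjx_\pm$ of type $A_2$) where Theorem~\ref{thm:An} already applies. A simple minded collection contained in $\h_\sigma$ generates a heart inside $\h_\sigma$, forcing equality, which is how the paper gets $\Sim_0=\Sim\h_\sigma$ without any decomposition of general diagonals.

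A second gap: you invoke \eqref{eq:int=dim} to conclude that the intersection quiver equals the Ext-quiver in type $E_6$. That formula is only quoted in the paper for the (graded skew) gentle settings of types $A$ and $D$; the $E_6$ model is not of that form, and the paper instead \emph{computes} the Ext-quiver of $\Sim_0$ directly from the subcategory analysis — $\Hom^\bullet(\XX(s_3),\XX(s_i))=0$ for $i\in\{1,2,5,6\}$ via the $A_2$ triangles, and a single $\Hom^1$ between $\XX(s_3)$ and $\XX(s_4)$ via the $A_4$ pentagon in which they are consecutive $\imz$-ind diagonals. Your ``direct inspection'' of essential oriented intersections would need to be replaced by, or supplemented with, such an argument to be a proof. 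The final paragraph of your proposal (heredity forcing degree one, and simple--projective duality) agrees with the paper and is fine.
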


\begin{proof}
Recall the assumption that $\sigma\in\ToSt(Q)$ and $\hgon:=\hgon_\sigma$ be the corresponding stable $12$-gon of type $E_6$.
Following the construction above, we have the set of upward diagonals
\[\Sim\hgon=\Sim\agon_6\cup\{s_3\}=\{s_i\mid 1\le i\le 6\},\]
where $\agon_6$ is the narrow hexagon (either $\mathbf{H}_1$ or $\mathbf{H}_2$) in Lemma~\ref{lem:agon6} and $\Sim\agon_6=\{s_1,s_2,s_4,s_5,s_6\}$.

Denote by $\thick\mathbb{L}$ the thick subcategory of $\Dwq(Q)$ generated by objects in $\{\XX(\ell)\mid\ell\in\mathbb{L}\}$ for some set $\mathbb{L}$ consisting of upward diagonals.

We claim that $\Sim_0\colon=\{\XX(s_i)\mid 1\le i\le 6\}\subset\h_\sigma$ is a simple minded collection of $\Dwq(Q)$, in the sense that
\begin{itemize}
  \item $\Hom^{\le0}(S,T)=0$ for any $S,T\in\Sim_0$ and
  \item $\thick\Sim\hgon=\Dwq(Q)$.
\end{itemize}
If so, $\Sim_0$ generates a heart that is contained in $\h_\sigma$, which must coincide with $\h_\sigma$.
In other words, $\Sim_0=\Sim\h_\sigma$ and then the proposition follows easily.

For the first condition, we actually have
\begin{gather}\label{eq:d=1}
    \Hom^\bullet(S,T)=\Hom^{1}(S,T).
\end{gather}
To verify this, we first list several full subcategories of $\Dwq(Q)$:
\begin{itemize}
  \item[$($a$)$] $\thick\Sim\agon_6\cong\Dwq(A_5)$ since $\agon_6$ is a stable hexagon (of type $A_5$).
  \item[$($b$)$] $\thick\{s_3,s_4,s_5,s_6\}\cong\Dwq(A_4)$ with the corresponding stable pentagon $\agon_5$ (of type $A_4$)
 formed by the vertices of $s_{3,?}$ and those of $\sjx_-$, where
  \[s_{3,?}\colon=\begin{cases}
    s_{3,L}=V_0W_2& \text{if $\sjx_+=\sjx_L$,}\\
    s_{3,R}=V_4W_3& \text{if $\sjx_+=\sjx_R$.}
  \end{cases}\]
  Indeed, we have $\Sim\agon_5=\{s_3,s_4,s_5,s_6\}$.
  \item[$($c$)$] $\thick\Sim\sjx_\pm\cong\Dwq(A_2)$ with the corresponding stable triangle $\sjx_\pm$ of type $A_2$.
  Its indecomposable objects (up to shift) are the ones in green (for $\sjx_+$) and in blue (for $\sjx_-$) in Figure~\ref{fig:AR-E6}.
\end{itemize}
By Theorem~\ref{thm:An} and $($a$)$, any objects $S,T$ in $\{\XX(s)\mid s\in\Sim\agon_6\}$ satisfies \eqref{eq:d=1}.
Note that the object with labeling $XY\in\AD(\hgon)$ in Figure~\ref{fig:AR-E6} is precisely $\XX(XY)\in\RC(Q)$
and the one in yellow corresponds to some shift of $\XX(s_3)$.
Using $($c$)$, it is straightforward to see that $\Hom^\bullet$ vanishes between (i.e. from and to)
$\XX(s_3)$ and $\XX(s_i)\in\thick\Sim\sjx_\pm$ for $i\in\{1,2,5,6\}$.
By $($b$)$, there exists exactly one $\Hom^1$ between $\XX(s_3)$ and $\XX(s_4)$ as they are consecutive $\Im Z$-Ind diagonals of $\agon_5$.
In all, the equality \eqref{eq:d=1} holds for any $S,T\in\Sim_0$, which implies the first condition.
In fact, from the discussion above,
we have actually shown that the Ext-quiver of $\Sim_0$ is of the form \eqref{eq:E6}.

For any $j\in\ZZ_{12}$, define
\[\UD_{j}\colon=\UD(\mathbf{H}_{j})\cup\{\UP{V_{j-1}W_{j+1}},\UP{W_{j+1}V_{j+2}}\}.\]
The second condition can be proved as follows:
\begin{itemize}
\item[$($i$)$] By Lemma~\ref{lem:tri-vs-tri}, there is a triangle with objects $\UP{\ell}$, where $\ell$ runs through the edges of a contractible triangle.
\item[$($ii$)$] By Lemma~\ref{lem:filtration-An},
we know that $\thick\Sim\agon_6=\thick\UD(\agon_6)$. Note that
\[\Sim\hgon=\Sim\agon_6\cup\{\UP{V_0W_2}=\UP{W_3V_4}\},\]
where $\agon_6=\mathbf{H}_j$ and $\UP{V_0W_2}=\UP{W_3V_4}$ is one element in $\UD_j\setminus\UD(\mathbf{H}_j)$ for some $j\in\{1,2\}$. It immediately follows from $($i$)$ that $\thick\Sim\hgon=\thick\UD_1$ or $\thick\UD_2$.
\item[$($iii$)$] For any $\ell\in\UD_{j+1}$, one checks that $\ell$ is an edge of a contractible triangle whose other two edges are in $\UD_{j}$. By $($i$)$, we have $\thick\UD_{j+1}\subset\thick\UD_{j}$. Hence all $\UD_{j}$'s coincide for $j\in\ZZ_{12}$.

 \item [$($iv$)$] By definition, we have
 \[\UD(\hgon)=\bigcup_{j\in\ZZ_{12}}\UD_j.\]
 Then $($iii$)$ implies that $\thick\UD_j=\thick\UD(\hgon)=\Dwq(Q).$ Together with $($ii$)$, we obtain the required claim.\qedhere
\end{itemize}
\end{proof}
%
%
%
%
%

\subsection{Exceptional types $E_{n}$}

\begin{figure}[ht]\centering
\makebox[\textwidth]{
\definecolor{ffqqqq}{rgb}{1.,0.,0.}
\definecolor{qqffqq}{rgb}{0.,1.,0.}
\definecolor{wrwrwr}{rgb}{0.3803921568627451,0.3803921568627451,0.3803921568627451}
\definecolor{qqffff}{rgb}{0.,1.,1.}
\definecolor{ffwwqq}{rgb}{1.,0.4,0.}
\definecolor{qqccqq}{rgb}{0.,0.8,0.}
\definecolor{rvwvcq}{rgb}{0.08235294117647059,0.396078431372549,0.7529411764705882}
\centering
\begin{tikzpicture}[line cap=round,line join=round,arrow/.style={->,>=stealth,thick},scale=.8]
\clip(-7.6637476144654455,-4.711430363598874) rectangle (4.957924266923282,9.070450910786567);

\fill[fill=qqffqq,fill opacity=0.5] (-6.64,-1.14) -- (-3.92,-0.82) -- (-1.04,-1.3) -- cycle;
\fill[fill=qqffqq,fill opacity=0.5] (-2.8,-2.06) -- (-0.08,-1.74) -- (2.8,-2.22) -- cycle;
\fill[fill=qqffff,fill opacity=0.4] (-5.04,-2.78) -- (-3.36,-3.48) -- (-1.34,-3.96) -- (0.7,-3.62) -- cycle;

\draw [line width=1.2pt,color=qqccqq] (0.7,-3.62)-- (2.8,-2.22);
\draw [line width=1.2pt,color=qqccqq] (2.8,-2.22)-- (3.92,-0.26);
\draw [line width=1.2pt,color=ffwwqq] (3.92,-0.26)-- (4.48,1.16);
\draw [line width=1.2pt,color=ffwwqq] (4.48,1.16)-- (4.78,3.82);
\draw [line width=1.2pt,color=rvwvcq] (4.78,3.82)-- (4.,5.7);
\draw [line width=1.2pt,color=rvwvcq] (4.,5.7)-- (2.4,7.34);
\draw [line width=1.2pt,color=qqccqq] (2.4,7.34)-- (0.72,8.04);
\draw [line width=1.2pt,color=qqccqq] (0.72,8.04)-- (-1.3,8.52);
\draw [line width=1.2pt,color=ffwwqq] (-1.3,8.52)-- (-3.34,8.18);
\draw [line width=1.2pt,color=ffwwqq] (-3.34,8.18)-- (-5.44,6.78);
\draw [line width=1.2pt,color=rvwvcq] (-5.44,6.78)-- (-6.56,4.82);
\draw [line width=1.2pt,color=rvwvcq] (-6.56,4.82)-- (-7.12,3.4);
\draw [line width=1.2pt,color=qqccqq] (-7.12,3.4)-- (-7.42,0.74);
\draw [line width=1.2pt,color=qqccqq] (-7.42,0.74)-- (-6.64,-1.14);
\draw [line width=1.2pt,color=ffwwqq] (-6.64,-1.14)-- (-5.04,-2.78);
\draw [line width=1.7pt,color=qqffff] (-0.08,-1.74)-- (2.24,0.44);
\draw [line width=1.7pt,color=qqffff] (2.24,0.44)-- (2.74,3.48);
\draw [line width=1.7pt,color=qqffff] (2.74,3.48)-- (1.28,5.38);
\draw [line width=1.7pt,color=qqffff] (1.28,5.38)-- (-1.6,5.86);
\draw [line width=1.7pt,color=qqffff] (-1.6,5.86)-- (-3.84,5.14);
\draw [line width=1.7pt,color=qqffff] (-3.84,5.14)-- (-5.1,2.92);
\draw [line width=1.7pt,color=qqffff] (-5.1,2.92)-- (-4.54,0.26);
\draw [line width=1.7pt,color=qqffff] (-4.54,0.26)-- (-2.8,-2.06);
\draw [line width=1.7pt,color=ffwwqq] (0.16,6.62)-- (-2.56,6.3);
\draw [line width=1.7pt,color=ffwwqq] (-2.56,6.3)-- (-4.88,4.12);
\draw [line width=1.7pt,color=ffwwqq] (-4.88,4.12)-- (-5.38,1.08);
\draw [line width=1.7pt,color=ffwwqq] (-5.38,1.08)-- (-3.92,-0.82);
\draw [line width=1.7pt,color=ffwwqq] (-1.04,-1.3)-- (1.2,-0.58);
\draw [line width=1.7pt,color=ffwwqq] (1.2,-0.58)-- (2.46,1.64);
\draw [line width=1.7pt,color=ffwwqq] (2.46,1.64)-- (1.9,4.3);
\draw [line width=1.7pt,color=ffwwqq] (1.9,4.3)-- (0.16,6.62);
\draw [line width=1.2pt,color=qqffqq] (-6.64,-1.14)-- (-3.92,-0.82);
\draw [line width=2.pt,color=qqffqq] (-3.92,-0.82)-- (-1.04,-1.3);
\draw [line width=1.2pt,color=qqffqq] (-1.04,-1.3)-- (-6.64,-1.14);
\draw [line width=1.2pt,color=qqffqq] (-2.8,-2.06)-- (-0.08,-1.74);
\draw [line width=1.2pt,color=qqffqq] (-0.08,-1.74)-- (2.8,-2.22);
\draw [line width=1.2pt,color=qqffqq] (2.8,-2.22)-- (-2.8,-2.06);
\draw [line width=1.2pt,color=qqffff] (-5.04,-2.78)-- (-3.36,-3.48);
\draw [line width=1.2pt,color=qqffff] (-3.36,-3.48)-- (-1.34,-3.96);
\draw [line width=1.2pt,color=qqffff] (-1.34,-3.96)-- (0.7,-3.62);
\draw [line width=1.2pt,color=qqffff] (0.7,-3.62)-- (-5.04,-2.78);

\draw [line width=.8pt,dashed,gray] (-3.36,-3.48)-- (-2.8,-2.06);
\draw [line width=.8pt,dashed,gray] (-1.34,-3.96)-- (-1.04,-1.3);
\draw [line width=.8pt,dashed,gray] (0.7,-3.62)-- (-0.08,-1.74);
\draw [line width=.8pt,dashed,gray] (2.8,-2.22)-- (1.2,-0.58);
\draw [line width=.8pt,dashed,gray] (3.92,-0.26)-- (2.24,0.44);
\draw [line width=.8pt,dashed,gray] (4.48,1.16)-- (2.46,1.64);
\draw [line width=.8pt,dashed,gray] (4.78,3.82)-- (2.74,3.48);
\draw [line width=.8pt,dashed,gray] (4.,5.7)-- (1.9,4.3);
\draw [line width=.8pt,dashed,gray] (1.28,5.38)-- (2.4,7.34);
\draw [line width=.8pt,dashed,gray] (0.72,8.04)-- (0.16,6.62);
\draw [line width=.8pt,dashed,gray] (-1.6,5.86)-- (-1.3,8.52);
\draw [line width=.8pt,dashed,gray] (-2.56,6.3)-- (-3.34,8.18);
\draw [line width=.8pt,dashed,gray] (-3.84,5.14)-- (-5.44,6.78);
\draw [line width=.8pt,dashed,gray] (-4.88,4.12)-- (-6.56,4.82);
\draw [line width=.8pt,dashed,gray] (-7.12,3.4)-- (-5.1,2.92);
\draw [line width=.8pt,dashed,gray] (-5.38,1.08)-- (-7.42,0.74);
\draw [line width=.8pt,dashed,gray] (-6.64,-1.14)-- (-4.54,0.26);
\draw [line width=.8pt,dashed,gray] (-3.92,-0.82)-- (-5.04,-2.78);
\draw [line width=0.5pt,dotted,color=wrwrwr] (-3.36,-3.48)-- (-3.92,-0.82);
\draw [line width=0.5pt,dotted,color=wrwrwr] (-2.8,-2.06)-- (-5.04,-2.78);
\draw [line width=0.5pt,dotted,color=wrwrwr] (-1.04,-1.3)-- (-3.36,-3.48);
\draw [line width=0.5pt,dotted,color=wrwrwr] (-2.8,-2.06)-- (-1.34,-3.96);
\draw [line width=0.5pt,dotted,color=wrwrwr] (-0.08,-1.74)-- (-1.34,-3.96);
\draw [line width=0.5pt,dotted,color=wrwrwr] (-1.04,-1.3)-- (0.7,-3.62);
\draw [line width=0.5pt,dotted,color=wrwrwr] (0.7,-3.62)-- (1.2,-0.58);
\draw [line width=0.5pt,dotted,color=wrwrwr] (2.8,-2.22)-- (2.24,0.44);
\draw [line width=0.5pt,dotted,color=wrwrwr] (1.2,-0.58)-- (3.92,-0.26);
\draw [line width=0.5pt,dotted,color=wrwrwr] (2.46,1.64)-- (3.92,-0.26);
\draw [line width=0.5pt,dotted,color=wrwrwr] (4.48,1.16)-- (2.24,0.44);
\draw [line width=0.5pt,dotted,color=wrwrwr] (2.46,1.64)-- (4.78,3.82);
\draw [line width=0.5pt,dotted,color=wrwrwr] (4.48,1.16)-- (2.74,3.48);
\draw [line width=0.5pt,dotted,color=wrwrwr] (4.,5.7)-- (2.74,3.48);
\draw [line width=0.5pt,dotted,color=wrwrwr] (1.9,4.3)-- (4.78,3.82);
\draw [line width=0.5pt,dotted,color=wrwrwr] (1.9,4.3)-- (2.4,7.34);
\draw [line width=0.5pt,dotted,color=wrwrwr] (4.,5.7)-- (1.28,5.38);
\draw [line width=0.5pt,dotted,color=wrwrwr] (1.28,5.38)-- (0.72,8.04);
\draw [line width=0.5pt,dotted,color=wrwrwr] (0.16,6.62)-- (2.4,7.34);
\draw [line width=0.5pt,dotted,color=wrwrwr] (0.16,6.62)-- (-1.3,8.52);
\draw [line width=0.5pt,dotted,color=wrwrwr] (-1.6,5.86)-- (0.72,8.04);
\draw [line width=0.5pt,dotted,color=wrwrwr] (-1.3,8.52)-- (-2.56,6.3);
\draw [line width=0.5pt,dotted,color=wrwrwr] (-3.34,8.18)-- (-1.6,5.86);
\draw [line width=0.5pt,dotted,color=wrwrwr] (-3.34,8.18)-- (-3.84,5.14);
\draw [line width=0.5pt,dotted,color=wrwrwr] (-2.56,6.3)-- (-5.44,6.78);
\draw [line width=0.5pt,dotted,color=wrwrwr] (-5.44,6.78)-- (-4.88,4.12);
\draw [line width=0.5pt,dotted,color=wrwrwr] (-4.88,4.12)-- (-7.12,3.4);
\draw [line width=0.5pt,dotted,color=wrwrwr] (-7.12,3.4)-- (-5.38,1.08);
\draw [line width=0.5pt,dotted,color=wrwrwr] (-5.38,1.08)-- (-6.64,-1.14);
\draw [line width=0.5pt,dotted,color=wrwrwr] (-5.04,-2.78)-- (-4.54,0.26);
\draw [line width=0.5pt,dotted,color=wrwrwr] (-4.54,0.26)-- (-7.42,0.74);
\draw [line width=0.5pt,dotted,color=wrwrwr] (-7.42,0.74)-- (-5.1,2.92);
\draw [line width=0.5pt,dotted,color=wrwrwr] (-5.1,2.92)-- (-6.56,4.82);
\draw [line width=0.5pt,dotted,color=wrwrwr] (-6.56,4.82)-- (-3.84,5.14);
\draw [line width=0.5pt,dashed,color=qqffff] (-2.8,-2.06)-- (-2.5,0.6);
\draw [line width=0.5pt,dashed,color=qqffff] (-2.5,0.6)-- (-4.54,0.26);
\draw [line width=0.5pt,dashed,color=qqffff] (-0.08,-1.74)-- (-1.68,-0.1);
\draw [line width=0.5pt,dashed,color=qqffff] (-1.68,-0.1)-- (-2.8,-2.06);
\draw [line width=0.5pt,dashed,color=qqffff] (2.24,0.44)-- (0.22,0.92);
\draw [line width=0.5pt,dashed,color=qqffff] (0.22,0.92)-- (-0.08,-1.74);
\draw [line width=0.5pt,dashed,color=qqffff] (-4.54,0.26)-- (-3.42,2.22);
\draw [line width=0.5pt,dashed,color=qqffff] (-3.42,2.22)-- (-5.1,2.92);
\draw [line width=0.5pt,dashed,color=qqffff] (2.74,3.48)-- (0.64,2.08);
\draw [line width=0.5pt,dashed,color=qqffff] (0.64,2.08)-- (2.24,0.44);
\draw [line width=0.5pt,dashed,color=qqffff] (1.28,5.38)-- (0.72,3.96);
\draw [line width=0.5pt,dashed,color=qqffff] (2.74,3.48)-- (0.72,3.96);
\draw [line width=0.5pt,dashed,color=qqffff] (-1.6,5.86)-- (-0.82,3.98);
\draw [line width=0.5pt,dashed,color=qqffff] (-0.82,3.98)-- (1.28,5.38);
\draw [line width=0.5pt,dashed,color=qqffff] (-3.84,5.14)-- (-2.16,4.44);
\draw [line width=0.5pt,dashed,color=qqffff] (-2.16,4.44)-- (-1.6,5.86);
\draw [line width=0.5pt,dashed,color=qqffff] (-5.1,2.92)-- (-3.06,3.26);
\draw [line width=0.5pt,dashed,color=qqffff] (-3.06,3.26)-- (-3.84,5.14);
\draw [line width=.8pt,dashed,color=ffwwqq] (-0.14,3.96)-- (1.9,4.3);
\draw [line width=.8pt,dashed,color=ffwwqq] (0.16,6.62)-- (-0.14,3.96);
\draw [line width=.8pt,dashed,color=ffwwqq] (-0.96,4.66)-- (0.16,6.62);
\draw [line width=.8pt,dashed,color=ffwwqq] (-2.56,6.3)-- (-0.96,4.66);
\draw [line width=.8pt,dashed,color=ffwwqq] (-2.86,3.64)-- (-2.56,6.3);
\draw [line width=.8pt,dashed,color=ffwwqq] (-4.88,4.12)-- (-2.86,3.64);
\draw [line width=.8pt,dashed,color=ffwwqq] (-3.28,2.48)-- (-4.88,4.12);
\draw [line width=.8pt,dashed,color=ffwwqq] (-5.38,1.08)-- (-3.28,2.48);
\draw [line width=.8pt,dashed,color=ffwwqq] (-5.38,1.08)-- (-3.36,0.6);
\draw [line width=.8pt,dashed,color=ffwwqq] (-3.92,-0.82)-- (-3.36,0.6);
\draw [line width=.8pt,dashed,color=ffwwqq] (-1.82,0.58)-- (-3.92,-0.82);
\draw [line width=.8pt,dashed,color=ffwwqq] (-1.04,-1.3)-- (-1.82,0.58);
\draw [line width=.8pt,dashed,color=ffwwqq] (-0.48,0.12)-- (-1.04,-1.3);
\draw [line width=.8pt,dashed,color=ffwwqq] (1.2,-0.58)-- (-0.48,0.12);
\draw [line width=.8pt,dashed,color=ffwwqq] (0.42,1.3)-- (1.2,-0.58);
\draw [line width=.8pt,dashed,color=ffwwqq] (2.46,1.64)-- (0.42,1.3);
\draw [line width=.8pt,dashed,color=ffwwqq] (0.78,2.34)-- (2.46,1.64);
\draw [line width=.8pt,dashed,color=ffwwqq] (1.9,4.3)-- (0.78,2.34);

\draw [line width=0.5pt,dotted,color=wrwrwr] (-6.64,-1.14) -- (-3.92,-0.82);
\draw [line width=2pt,color=ffwwqq] (-3.92,-0.82) -- (-1.04,-1.3);
\draw [line width=1.2pt,color=gray!50] (-6.64,-1.14) -- (-1.04,-1.3);

\draw [line width=2pt,color=qqffff] (-2.8,-2.06) -- (-0.08,-1.74);
\draw [line width=0.5pt,dotted,color=wrwrwr] (-0.08,-1.74) -- (2.8,-2.22);
\draw [line width=1.2pt,color=gray!50] (2.8,-2.22)-- (-2.8,-2.06);

\draw [line width=1.2pt,color=ffwwqq] (-5.04,-2.78) -- (-3.36,-3.48);
\draw [line width=1.2pt,color=rvwvcq] (-3.36,-3.48) -- (-1.34,-3.96);
\draw [line width=1.2pt,color=rvwvcq] (-1.34,-3.96) -- (0.7,-3.62);
\draw [line width=1.2pt,color=gray!50] (0.7,-3.62) -- (-5.04,-2.78);

\draw [->,>=stealth,line width=2pt,color=ffqqqq] (2.8,-2.22) -- (-1.04,-1.3);
\draw [->,>=stealth,line width=2pt,color=ffqqqq] (-2.8,-2.06) -- (-6.64,-1.14);

\begin{scriptsize}
\draw [fill=rvwvcq] (-3.36,-3.48) circle (1.5pt)node [color=rvwvcq,below]{$V_2$};
\draw [fill=rvwvcq] (-1.34,-3.96) circle (1.5pt)node [color=rvwvcq,below]{$V_3$};
\draw [fill=rvwvcq] (0.7,-3.62) circle (1.5pt)node [color=rvwvcq,below]{$V_4$};
\draw [fill=rvwvcq] (-0.08,-1.74) circle (1.5pt)node [color=rvwvcq,left]{$W_4$};
\draw [fill=rvwvcq] (-2.8,-2.06) circle (1.5pt)node [color=rvwvcq,above]{$W_2$};
\draw [fill=rvwvcq] (2.8,-2.22) circle (1.5pt)node [color=rvwvcq,right]{$V_5$};
\draw [fill=rvwvcq] (3.92,-0.26) circle (1.5pt);
\draw [fill=rvwvcq] (2.24,0.44) circle (1.5pt);
\draw [fill=rvwvcq] (4.48,1.16) circle (1.5pt);
\draw [fill=rvwvcq] (4.78,3.82) circle (1.5pt);
\draw [fill=rvwvcq] (2.74,3.48) circle (1.5pt);
\draw [fill=rvwvcq] (2.74,3.48) circle (1.5pt);
\draw [fill=rvwvcq] (4.78,3.82) circle (1.5pt);
\draw [fill=rvwvcq] (4.,5.7) circle (1.5pt);
\draw [fill=rvwvcq] (2.4,7.34) circle (1.5pt);
\draw [fill=rvwvcq] (1.28,5.38) circle (1.5pt);
\draw [fill=rvwvcq] (1.28,5.38) circle (1.5pt);
\draw [fill=rvwvcq] (2.4,7.34) circle (1.5pt);
\draw [fill=rvwvcq] (0.72,8.04) circle (1.5pt);
\draw [fill=rvwvcq] (-1.3,8.52) circle (1.5pt);
\draw [fill=rvwvcq] (-1.6,5.86) circle (1.5pt);
\draw [fill=rvwvcq] (-5.44,6.78) circle (1.5pt);
\draw [fill=rvwvcq] (-3.84,5.14) circle (1.5pt);
\draw [fill=rvwvcq] (-1.6,5.86) circle (1.5pt);
\draw [fill=rvwvcq] (-1.3,8.52) circle (1.5pt);
\draw [fill=rvwvcq] (-3.34,8.18) circle (1.5pt);
\draw [fill=rvwvcq] (-7.12,3.4) circle (1.5pt);
\draw [fill=rvwvcq] (-5.1,2.92) circle (1.5pt);
\draw [fill=rvwvcq] (-3.84,5.14) circle (1.5pt);
\draw [fill=rvwvcq] (-5.44,6.78) circle (1.5pt);
\draw [fill=rvwvcq] (-6.56,4.82) circle (1.5pt);
\draw [fill=rvwvcq] (-6.64,-1.14) circle (1.5pt)node [color=rvwvcq,left]{$V_0$};
\draw [fill=rvwvcq] (-4.54,0.26) circle (1.5pt);
\draw [fill=rvwvcq] (-5.1,2.92) circle (1.5pt);
\draw [fill=rvwvcq] (-7.12,3.4) circle (1.5pt);
\draw [fill=rvwvcq] (-7.42,0.74) circle (1.5pt);
\draw [fill=rvwvcq] (-6.64,-1.14) circle (1.5pt);
\draw [fill=rvwvcq] (-5.04,-2.78) circle (1.5pt)node [color=rvwvcq,below]{$V_1$};
\draw [fill=rvwvcq] (-3.36,-3.48) circle (1.5pt);
\draw [fill=rvwvcq] (-2.8,-2.06) circle (1.5pt);
\draw [fill=rvwvcq] (-4.54,0.26) circle (1.5pt);
\draw [fill=wrwrwr] (-1.32,2.28) circle (2.0pt);
\draw [fill=rvwvcq] (0.16,6.62) circle (1.5pt);
\draw [fill=rvwvcq] (-2.56,6.3) circle (1.5pt);
\draw [fill=rvwvcq] (-4.88,4.12) circle (1.5pt);
\draw [fill=rvwvcq] (-5.38,1.08) circle (1.5pt);
\draw [rvwvcq] (-4.4,-0.65) node {$W_1$};

\draw [rvwvcq] (-1.5,-1) node {$W_3$};
\draw [fill=rvwvcq] (1.2,-0.58) circle (1.5pt);
\draw [fill=rvwvcq] (2.46,1.64) circle (1.5pt);
\draw [fill=rvwvcq] (1.9,4.3) circle (1.5pt);
\draw[color=qqffqq] (-4,-1.5) node {$\sjx_L$};
\draw[color=qqffqq] (0.5368233223978434,-2.4) node {$\sjx_R$};
\draw [fill=wrwrwr] (-2.8,-2.06) circle (2.0pt);
\draw [fill=wrwrwr] (-2.8,-2.06) circle (2.0pt);
\draw[color=cyan] (-2.486752671472437,-3.95) node {$\sbx_{4}$};
\draw[color=ffqqqq] (1.4,-1.6) node {$s_{3,R}$};
\draw[color=ffqqqq] (-4.1,-2.1) node {$s_{3}$};

\draw [fill=rvwvcq] (-2.8,-2.06) circle (1.5pt);
\draw [fill=rvwvcq] (-4.54,0.26) circle (1.5pt);
\draw [fill=rvwvcq] (-0.08,-1.74) circle (1.5pt);
\draw [fill=rvwvcq] (-2.8,-2.06) circle (1.5pt);
\draw [fill=rvwvcq] (2.24,0.44) circle (1.5pt);
\draw [fill=rvwvcq] (-0.08,-1.74) circle (1.5pt);
\draw [fill=rvwvcq] (-4.54,0.26) circle (1.5pt);
\draw [fill=rvwvcq] (-5.1,2.92) circle (1.5pt);
\draw [fill=rvwvcq] (2.74,3.48) circle (1.5pt);
\draw [fill=rvwvcq] (2.24,0.44) circle (1.5pt);
\draw [fill=rvwvcq] (1.28,5.38) circle (1.5pt);
\draw [fill=rvwvcq] (2.74,3.48) circle (1.5pt);
\draw [fill=rvwvcq] (-1.6,5.86) circle (1.5pt);
\draw [fill=rvwvcq] (1.28,5.38) circle (1.5pt);
\draw [fill=rvwvcq] (-3.84,5.14) circle (1.5pt);
\draw [fill=rvwvcq] (-1.6,5.86) circle (1.5pt);
\draw [fill=rvwvcq] (-5.1,2.92) circle (1.5pt);
\draw [fill=rvwvcq] (-3.84,5.14) circle (1.5pt);
\draw [fill=rvwvcq] (1.9,4.3) circle (1.5pt);
\draw [fill=rvwvcq] (0.16,6.62) circle (1.5pt);
\draw [fill=rvwvcq] (0.16,6.62) circle (1.5pt);
\draw [fill=rvwvcq] (-2.56,6.3) circle (1.5pt);
\draw [fill=rvwvcq] (-2.56,6.3) circle (1.5pt);
\draw [fill=rvwvcq] (-4.88,4.12) circle (1.5pt);
\draw [fill=rvwvcq] (-4.88,4.12) circle (1.5pt);
\draw [fill=rvwvcq] (-5.38,1.08) circle (1.5pt);
\draw [fill=rvwvcq] (-5.38,1.08) circle (1.5pt);
\draw [fill=rvwvcq] (-3.92,-0.82) circle (1.5pt);
\draw [fill=rvwvcq] (-3.92,-0.82) circle (1.5pt);
\draw [fill=rvwvcq] (-1.04,-1.3) circle (1.5pt);
\draw [fill=rvwvcq] (1.2,-0.58) circle (1.5pt);
\draw [fill=rvwvcq] (1.2,-0.58) circle (1.5pt);
\draw [fill=rvwvcq] (2.46,1.64) circle (1.5pt);
\draw [fill=rvwvcq] (2.46,1.64) circle (1.5pt);
\draw [fill=rvwvcq] (1.9,4.3) circle (1.5pt);
\end{scriptsize}
\end{tikzpicture}
}\vskip -.5cm
\caption{The triangles $\sjx_L,\sjx_R$ and the square $\sbx_-$ in type $E_7$}
\label{fig:polygon-E7}
\vskip .6cm
\definecolor{qqffqq}{rgb}{0.,1.,0.}
\makebox[\textwidth]{
\definecolor{wrwrwr}{rgb}{0.3803921568627451,0.3803921568627451,0.3803921568627451}
\definecolor{qqffff}{rgb}{0.,1.,1.}
\definecolor{ffwwqq}{rgb}{1.,0.4,0.}
\definecolor{qqccqq}{rgb}{0.,0.8,0.}
\definecolor{rvwvcq}{rgb}{0.08235294117647059,0.396078431372549,0.7529411764705882}
\centering
\begin{tikzpicture}[line cap=round,line join=round,arrow/.style={->,>=stealth,thick},scale=1.1]
\clip(-5.5,-4.616555547102639) rectangle (3.5,-.9);
\fill[line width=1.7pt,color=qqffff,fill=qqffff,fill opacity=0.4] (-5.04,-2.78) -- (-2.8,-2.06) -- (-0.08,-1.74) -- (2.8,-2.22) -- (0.7,-3.62) -- (-1.34,-3.96) -- (-3.36,-3.48) -- cycle;
\draw [line width=1.2pt,color=qqccqq] (0.7,-3.62)-- (2.8,-2.22);
\draw [line width=1.2pt,color=qqccqq] (2.8,-2.22)-- (3.92,-0.26);
\draw [line width=1.2pt,color=ffwwqq] (3.92,-0.26)-- (4.48,1.16);
\draw [line width=1.2pt,color=ffwwqq] (4.48,1.16)-- (4.78,3.82);
\draw [line width=1.2pt,color=rvwvcq] (4.78,3.82)-- (4.,5.7);
\draw [line width=1.2pt,color=rvwvcq] (4.,5.7)-- (2.4,7.34);
\draw [line width=1.2pt,color=qqccqq] (2.4,7.34)-- (0.72,8.04);
\draw [line width=1.2pt,color=qqccqq] (0.72,8.04)-- (-1.3,8.52);
\draw [line width=1.2pt,color=ffwwqq] (-1.3,8.52)-- (-3.34,8.18);
\draw [line width=1.2pt,color=ffwwqq] (-3.34,8.18)-- (-5.44,6.78);
\draw [line width=1.2pt,color=rvwvcq] (-5.44,6.78)-- (-6.56,4.82);
\draw [line width=1.2pt,color=rvwvcq] (-6.56,4.82)-- (-7.12,3.4);
\draw [line width=1.2pt,color=qqccqq] (-7.12,3.4)-- (-7.42,0.74);
\draw [line width=1.2pt,color=qqccqq] (-7.42,0.74)-- (-6.64,-1.14);
\draw [line width=1.2pt,color=ffwwqq] (-6.64,-1.14)-- (-5.04,-2.78);
\draw [line width=1.7pt,color=qqffff] (-0.08,-1.74)-- (2.24,0.44);
\draw [line width=1.7pt,color=qqffff] (2.24,0.44)-- (2.74,3.48);
\draw [line width=1.7pt,color=qqffff] (2.74,3.48)-- (1.28,5.38);
\draw [line width=1.7pt,color=qqffff] (1.28,5.38)-- (-1.6,5.86);
\draw [line width=1.7pt,color=qqffff] (-1.6,5.86)-- (-3.84,5.14);
\draw [line width=1.7pt,color=qqffff] (-3.84,5.14)-- (-5.1,2.92);
\draw [line width=1.7pt,color=qqffff] (-5.1,2.92)-- (-4.54,0.26);
\draw [line width=1.7pt,color=qqffff] (-4.54,0.26)-- (-2.8,-2.06);
\draw [line width=1.7pt,color=ffwwqq] (0.16,6.62)-- (-2.56,6.3);

\draw [line width=1.7pt,color=ffwwqq] (-2.56,6.3)-- (-4.88,4.12);
\draw [line width=1.7pt,color=ffwwqq] (-4.88,4.12)-- (-5.38,1.08);
\draw [line width=1.7pt,color=ffwwqq] (-5.38,1.08)-- (-3.92,-0.82);
\draw [line width=1.7pt,color=ffwwqq] (-1.04,-1.3)-- (1.2,-0.58);
\draw [line width=1.7pt,color=ffwwqq] (1.2,-0.58)-- (2.46,1.64);
\draw [line width=1.7pt,color=ffwwqq] (2.46,1.64)-- (1.9,4.3);
\draw [line width=1.7pt,color=ffwwqq] (1.9,4.3)-- (0.16,6.62);
\draw [line width=2pt,color=ffwwqq] (-3.92,-0.82) -- (-1.04,-1.3);

\draw [line width=1.2pt,color=qqffff] (-3.36,-3.48)-- (-1.34,-3.96);
\draw [line width=1.2pt,color=qqffff] (-1.34,-3.96)-- (0.7,-3.62);

\draw [line width=.8pt,dashed,gray] (-3.36,-3.48)-- (-2.8,-2.06);
\draw [line width=.8pt,dashed,gray] (-1.34,-3.96)-- (-1.04,-1.3);
\draw [line width=.8pt,dashed,gray] (0.7,-3.62)-- (-0.08,-1.74);
\draw [line width=.8pt,dashed,gray] (2.8,-2.22)-- (1.2,-0.58);
\draw [line width=.8pt,dashed,gray] (3.92,-0.26)-- (2.24,0.44);
\draw [line width=.8pt,dashed,gray] (4.48,1.16)-- (2.46,1.64);
\draw [line width=.8pt,dashed,gray] (4.78,3.82)-- (2.74,3.48);
\draw [line width=.8pt,dashed,gray] (4.,5.7)-- (1.9,4.3);
\draw [line width=.8pt,dashed,gray] (1.28,5.38)-- (2.4,7.34);
\draw [line width=.8pt,dashed,gray] (0.72,8.04)-- (0.16,6.62);
\draw [line width=.8pt,dashed,gray] (-1.6,5.86)-- (-1.3,8.52);
\draw [line width=.8pt,dashed,gray] (-2.56,6.3)-- (-3.34,8.18);
\draw [line width=.8pt,dashed,gray] (-3.84,5.14)-- (-5.44,6.78);
\draw [line width=.8pt,dashed,gray] (-4.88,4.12)-- (-6.56,4.82);
\draw [line width=.8pt,dashed,gray] (-7.12,3.4)-- (-5.1,2.92);
\draw [line width=.8pt,dashed,gray] (-5.38,1.08)-- (-7.42,0.74);
\draw [line width=.8pt,dashed,gray] (-6.64,-1.14)-- (-4.54,0.26);
\draw [line width=.8pt,dashed,gray] (-3.92,-0.82)-- (-5.04,-2.78);
\draw [line width=0.5pt,dotted,color=wrwrwr] (-6.64,-1.14)-- (-3.92,-0.82);
\draw [line width=0.5pt,dotted,color=wrwrwr] (-3.36,-3.48)-- (-3.92,-0.82);
\draw [line width=0.5pt,dotted,color=wrwrwr] (-2.8,-2.06)-- (-5.04,-2.78);
\draw [line width=0.5pt,dotted,color=wrwrwr] (-1.04,-1.3)-- (-3.36,-3.48);
\draw [line width=0.5pt,dotted,color=wrwrwr] (-2.8,-2.06)-- (-1.34,-3.96);
\draw [line width=0.5pt,dotted,color=wrwrwr] (-0.08,-1.74)-- (-1.34,-3.96);
\draw [line width=0.5pt,dotted,color=wrwrwr] (-1.04,-1.3)-- (0.7,-3.62);
\draw [line width=0.5pt,dotted,color=wrwrwr] (0.7,-3.62)-- (1.2,-0.58);
\draw [line width=0.5pt,dotted,color=wrwrwr] (2.8,-2.22)-- (2.24,0.44);
\draw [line width=0.5pt,dotted,color=wrwrwr] (1.2,-0.58)-- (3.92,-0.26);
\draw [line width=0.5pt,dotted,color=wrwrwr] (2.46,1.64)-- (3.92,-0.26);
\draw [line width=0.5pt,dotted,color=wrwrwr] (4.48,1.16)-- (2.24,0.44);
\draw [line width=0.5pt,dotted,color=wrwrwr] (2.46,1.64)-- (4.78,3.82);
\draw [line width=0.5pt,dotted,color=wrwrwr] (4.48,1.16)-- (2.74,3.48);
\draw [line width=0.5pt,dotted,color=wrwrwr] (4.,5.7)-- (2.74,3.48);
\draw [line width=0.5pt,dotted,color=wrwrwr] (1.9,4.3)-- (4.78,3.82);
\draw [line width=0.5pt,dotted,color=wrwrwr] (1.9,4.3)-- (2.4,7.34);
\draw [line width=0.5pt,dotted,color=wrwrwr] (4.,5.7)-- (1.28,5.38);
\draw [line width=0.5pt,dotted,color=wrwrwr] (1.28,5.38)-- (0.72,8.04);
\draw [line width=0.5pt,dotted,color=wrwrwr] (0.16,6.62)-- (2.4,7.34);
\draw [line width=0.5pt,dotted,color=wrwrwr] (0.16,6.62)-- (-1.3,8.52);
\draw [line width=0.5pt,dotted,color=wrwrwr] (-1.6,5.86)-- (0.72,8.04);
\draw [line width=0.5pt,dotted,color=wrwrwr] (-1.3,8.52)-- (-2.56,6.3);
\draw [line width=0.5pt,dotted,color=wrwrwr] (-3.34,8.18)-- (-1.6,5.86);
\draw [line width=0.5pt,dotted,color=wrwrwr] (-3.34,8.18)-- (-3.84,5.14);
\draw [line width=0.5pt,dotted,color=wrwrwr] (-2.56,6.3)-- (-5.44,6.78);
\draw [line width=0.5pt,dotted,color=wrwrwr] (-5.44,6.78)-- (-4.88,4.12);
\draw [line width=0.5pt,dotted,color=wrwrwr] (-4.88,4.12)-- (-7.12,3.4);
\draw [line width=0.5pt,dotted,color=wrwrwr] (-7.12,3.4)-- (-5.38,1.08);
\draw [line width=0.5pt,dotted,color=wrwrwr] (-5.38,1.08)-- (-6.64,-1.14);
\draw [line width=0.5pt,dotted,color=wrwrwr] (-5.04,-2.78)-- (-4.54,0.26);
\draw [line width=0.5pt,dotted,color=wrwrwr] (-4.54,0.26)-- (-7.42,0.74);
\draw [line width=0.5pt,dotted,color=wrwrwr] (-7.42,0.74)-- (-5.1,2.92);
\draw [line width=0.5pt,dotted,color=wrwrwr] (-5.1,2.92)-- (-6.56,4.82);
\draw [line width=0.5pt,dotted,color=wrwrwr] (-6.56,4.82)-- (-3.84,5.14);
\draw [line width=0.5pt,dashed,color=qqffff] (-2.8,-2.06)-- (-2.5,0.6);
\draw [line width=0.5pt,dashed,color=qqffff] (-2.5,0.6)-- (-4.54,0.26);
\draw [line width=0.5pt,dashed,color=qqffff] (-0.08,-1.74)-- (-1.68,-0.1);
\draw [line width=0.5pt,dashed,color=qqffff] (-1.68,-0.1)-- (-2.8,-2.06);
\draw [line width=0.5pt,dashed,color=qqffff] (2.24,0.44)-- (0.22,0.92);
\draw [line width=0.5pt,dashed,color=qqffff] (0.22,0.92)-- (-0.08,-1.74);
\draw [line width=0.5pt,dashed,color=qqffff] (-4.54,0.26)-- (-3.42,2.22);
\draw [line width=0.5pt,dashed,color=qqffff] (-3.42,2.22)-- (-5.1,2.92);
\draw [line width=0.5pt,dashed,color=qqffff] (2.74,3.48)-- (0.64,2.08);
\draw [line width=0.5pt,dashed,color=qqffff] (0.64,2.08)-- (2.24,0.44);
\draw [line width=0.5pt,dashed,color=qqffff] (1.28,5.38)-- (0.72,3.96);
\draw [line width=0.5pt,dashed,color=qqffff] (2.74,3.48)-- (0.72,3.96);
\draw [line width=0.5pt,dashed,color=qqffff] (-1.6,5.86)-- (-0.82,3.98);
\draw [line width=0.5pt,dashed,color=qqffff] (-0.82,3.98)-- (1.28,5.38);
\draw [line width=0.5pt,dashed,color=qqffff] (-3.84,5.14)-- (-2.16,4.44);
\draw [line width=0.5pt,dashed,color=qqffff] (-2.16,4.44)-- (-1.6,5.86);
\draw [line width=0.5pt,dashed,color=qqffff] (-5.1,2.92)-- (-3.06,3.26);
\draw [line width=0.5pt,dashed,color=qqffff] (-3.06,3.26)-- (-3.84,5.14);
\draw [line width=.8pt,dashed,color=ffwwqq] (-0.14,3.96)-- (1.9,4.3);
\draw [line width=.8pt,dashed,color=ffwwqq] (0.16,6.62)-- (-0.14,3.96);
\draw [line width=.8pt,dashed,color=ffwwqq] (-0.96,4.66)-- (0.16,6.62);
\draw [line width=.8pt,dashed,color=ffwwqq] (-2.56,6.3)-- (-0.96,4.66);
\draw [line width=.8pt,dashed,color=ffwwqq] (-2.86,3.64)-- (-2.56,6.3);
\draw [line width=.8pt,dashed,color=ffwwqq] (-4.88,4.12)-- (-2.86,3.64);
\draw [line width=.8pt,dashed,color=ffwwqq] (-3.28,2.48)-- (-4.88,4.12);
\draw [line width=.8pt,dashed,color=ffwwqq] (-5.38,1.08)-- (-3.28,2.48);
\draw [line width=.8pt,dashed,color=ffwwqq] (-5.38,1.08)-- (-3.36,0.6);
\draw [line width=.8pt,dashed,color=ffwwqq] (-3.92,-0.82)-- (-3.36,0.6);
\draw [line width=.8pt,dashed,color=ffwwqq] (-1.82,0.58)-- (-3.92,-0.82);
\draw [line width=.8pt,dashed,color=ffwwqq] (-1.04,-1.3)-- (-1.82,0.58);
\draw [line width=.8pt,dashed,color=ffwwqq] (-0.48,0.12)-- (-1.04,-1.3);
\draw [line width=.8pt,dashed,color=ffwwqq] (1.2,-0.58)-- (-0.48,0.12);
\draw [line width=.8pt,dashed,color=ffwwqq] (0.42,1.3)-- (1.2,-0.58);
\draw [line width=.8pt,dashed,color=ffwwqq] (2.46,1.64)-- (0.42,1.3);
\draw [line width=.8pt,dashed,color=ffwwqq] (0.78,2.34)-- (2.46,1.64);
\draw [line width=.8pt,dashed,color=ffwwqq] (1.9,4.3)-- (0.78,2.34);

\begin{scriptsize}
\draw [fill=rvwvcq] (-3.36,-3.48) circle (1.5pt)node [color=rvwvcq,below]{$V_2$};
\draw [fill=rvwvcq] (-1.34,-3.96) circle (1.5pt)node [color=rvwvcq,below]{$V_3$};
\draw [fill=rvwvcq] (0.7,-3.62) circle (1.5pt)node [color=rvwvcq,below]{$V_4$};
\draw[color=rvwvcq] (0.2958395052839391,-1.95) node {$W_4$};
\draw [fill=rvwvcq] (-2.8,-2.06) circle (1.5pt)node [color=rvwvcq,above]{$W_2$};
\draw [fill=rvwvcq] (2.8,-2.22) circle (1.5pt)node [color=rvwvcq,right]{$V_5$};
\draw [fill=rvwvcq] (3.92,-0.26) circle (1.5pt);
\draw [fill=rvwvcq] (2.24,0.44) circle (1.5pt);
\draw [fill=rvwvcq] (4.48,1.16) circle (1.5pt);
\draw [fill=rvwvcq] (4.78,3.82) circle (1.5pt);
\draw [fill=rvwvcq] (2.74,3.48) circle (1.5pt);
\draw [fill=rvwvcq] (2.74,3.48) circle (1.5pt);
\draw [fill=rvwvcq] (4.78,3.82) circle (1.5pt);
\draw [fill=rvwvcq] (4.,5.7) circle (1.5pt);
\draw [fill=rvwvcq] (2.4,7.34) circle (1.5pt);
\draw [fill=rvwvcq] (1.28,5.38) circle (1.5pt);
\draw [fill=rvwvcq] (1.28,5.38) circle (1.5pt);
\draw [fill=rvwvcq] (2.4,7.34) circle (1.5pt);
\draw [fill=rvwvcq] (0.72,8.04) circle (1.5pt);
\draw [fill=rvwvcq] (-1.3,8.52) circle (1.5pt);
\draw [fill=rvwvcq] (-1.6,5.86) circle (1.5pt);
\draw [fill=rvwvcq] (-5.44,6.78) circle (1.5pt);
\draw [fill=rvwvcq] (-3.84,5.14) circle (1.5pt);
\draw [fill=rvwvcq] (-1.6,5.86) circle (1.5pt);
\draw [fill=rvwvcq] (-1.3,8.52) circle (1.5pt);
\draw [fill=rvwvcq] (-3.34,8.18) circle (1.5pt);
\draw [fill=rvwvcq] (-7.12,3.4) circle (1.5pt);
\draw [fill=rvwvcq] (-5.1,2.92) circle (1.5pt);
\draw [fill=rvwvcq] (-3.84,5.14) circle (1.5pt);
\draw [fill=rvwvcq] (-5.44,6.78) circle (1.5pt);
\draw [fill=rvwvcq] (-6.56,4.82) circle (1.5pt);
\draw [fill=rvwvcq] (-6.64,-1.14) circle (1.5pt)node [color=rvwvcq,left]{$V_0$};
\draw [fill=rvwvcq] (-4.54,0.26) circle (1.5pt);
\draw [fill=rvwvcq] (-5.1,2.92) circle (1.5pt);
\draw [fill=rvwvcq] (-7.12,3.4) circle (1.5pt);
\draw [fill=rvwvcq] (-7.42,0.74) circle (1.5pt);
\draw [fill=rvwvcq] (-6.64,-1.14) circle (1.5pt);
\draw [fill=rvwvcq] (-5.04,-2.78) circle (1.5pt)node [color=rvwvcq,below]{$V_1$};
\draw [fill=rvwvcq] (-3.36,-3.48) circle (1.5pt);
\draw [fill=rvwvcq] (-2.8,-2.06) circle (1.5pt);
\draw [fill=rvwvcq] (-4.54,0.26) circle (1.5pt);
\draw [fill=wrwrwr] (-1.32,2.28) circle (2.0pt);
\draw [fill=rvwvcq] (0.16,6.62) circle (1.5pt);
\draw [fill=rvwvcq] (-2.56,6.3) circle (1.5pt);
\draw [fill=rvwvcq] (-4.88,4.12) circle (1.5pt);
\draw [fill=rvwvcq] (-5.38,1.08) circle (1.5pt);
\draw [fill=rvwvcq] (-3.92,-0.82) circle (1.5pt)node [color=rvwvcq,left]{$W_1$};
\draw [fill=rvwvcq] (-1.04,-1.3) circle (1.5pt)node [color=rvwvcq,left]{$W_3$};
\draw [fill=rvwvcq] (1.2,-0.58) circle (1.5pt);
\draw [fill=rvwvcq] (2.46,1.64) circle (1.5pt);
\draw [fill=rvwvcq] (1.9,4.3) circle (1.5pt);

\draw[color=black] (-0.3142178053551662,-4) node {$s_{7}$};
\draw[color=black] (-1.5,-3.35) node {$s_{6}$};
\draw[color=black] (-4.313482397322634,-3.3) node {$s_5$};
\draw[color=black] (-0.9073290795876299,-2.7) node {$s_4$};
\draw[color=black] (-1.8563071183595714,-2.3) node {$s_2$};
\draw[color=black] (-1.8563071183595714,-1.7) node {$s_1$};
\draw[color=black] (0.7194904154499845,-1.5) node {$s_3$};
\draw [fill=rvwvcq] (3.92,-0.26) circle (1.5pt);
\draw [fill=rvwvcq] (2.24,0.44) circle (1.5pt);
\draw [fill=rvwvcq] (4.78,3.82) circle (1.5pt);
\draw [fill=rvwvcq] (2.74,3.48) circle (1.5pt);
\draw [fill=rvwvcq] (4.,5.7) circle (1.5pt);
\draw [fill=rvwvcq] (2.4,7.34) circle (1.5pt);
\draw [fill=rvwvcq] (1.28,5.38) circle (1.5pt);
\draw [fill=rvwvcq] (0.72,8.04) circle (1.5pt);
\draw [fill=rvwvcq] (-1.6,5.86) circle (1.5pt);
\draw [fill=rvwvcq] (-5.44,6.78) circle (1.5pt);
\draw [fill=rvwvcq] (-3.84,5.14) circle (1.5pt);
\draw [fill=rvwvcq] (-3.34,8.18) circle (1.5pt);
\draw [fill=rvwvcq] (-7.12,3.4) circle (1.5pt);
\draw [fill=rvwvcq] (-5.1,2.92) circle (1.5pt);
\draw [fill=rvwvcq] (-6.56,4.82) circle (1.5pt);
\draw [fill=rvwvcq] (-4.54,0.26) circle (1.5pt);
\draw [fill=rvwvcq] (-7.42,0.74) circle (1.5pt);
\draw [fill=rvwvcq] (-2.8,-2.06) circle (1.5pt);
\draw [fill=rvwvcq] (-4.54,0.26) circle (1.5pt);
\draw [fill=rvwvcq] (-0.08,-1.74) circle (1.5pt);
\draw [fill=rvwvcq] (-2.8,-2.06) circle (1.5pt);
\draw [fill=rvwvcq] (2.24,0.44) circle (1.5pt);
\draw [fill=rvwvcq] (-0.08,-1.74) circle (1.5pt);
\draw [fill=rvwvcq] (2.74,3.48) circle (1.5pt);
\draw [fill=rvwvcq] (1.28,5.38) circle (1.5pt);
\draw [fill=rvwvcq] (-1.6,5.86) circle (1.5pt);
\draw [fill=rvwvcq] (-3.84,5.14) circle (1.5pt);
\draw [fill=rvwvcq] (-1.6,5.86) circle (1.5pt);
\draw [fill=rvwvcq] (-5.1,2.92) circle (1.5pt);
\draw [fill=rvwvcq] (-3.84,5.14) circle (1.5pt);
\draw [fill=rvwvcq] (-5.1,2.92) circle (1.5pt);
\draw [fill=rvwvcq] (0.16,6.62) circle (1.5pt);
\draw [fill=rvwvcq] (-2.56,6.3) circle (1.5pt);
\draw [fill=rvwvcq] (-2.56,6.3) circle (1.5pt);
\draw [fill=rvwvcq] (-4.88,4.12) circle (1.5pt);
\draw [fill=rvwvcq] (-5.38,1.08) circle (1.5pt);
\draw [fill=rvwvcq] (-3.92,-0.82) circle (1.5pt);
\draw [fill=rvwvcq] (-1.04,-1.3) circle (1.5pt);
\draw [fill=rvwvcq] (-2.8,-2.06) circle (1.5pt);
\draw [fill=rvwvcq] (-0.08,-1.74) circle (1.5pt);
\draw [fill=rvwvcq] (-1.04,-1.3) circle (1.5pt);
\draw [fill=rvwvcq] (1.2,-0.58) circle (1.5pt);
\draw [fill=rvwvcq] (-0.08,-1.74) circle (1.5pt);
\draw [fill=rvwvcq] (-0.08,-1.74) circle (1.5pt);
\draw [fill=rvwvcq] (1.2,-0.58) circle (1.5pt);
\draw [fill=rvwvcq] (2.24,0.44) circle (1.5pt);
\draw [fill=rvwvcq] (1.2,-0.58) circle (1.5pt);
\draw [fill=rvwvcq] (2.46,1.64) circle (1.5pt);
\draw [fill=rvwvcq] (2.74,3.48) circle (1.5pt);
\draw [fill=rvwvcq] (2.46,1.64) circle (1.5pt);
\draw [fill=rvwvcq] (2.46,1.64) circle (1.5pt);
\draw [fill=rvwvcq] (1.9,4.3) circle (1.5pt);
\draw [fill=rvwvcq] (1.28,5.38) circle (1.5pt);
\draw [fill=rvwvcq] (0.16,6.62) circle (1.5pt);
\end{scriptsize}

\draw [line width=0.5pt,dotted,color=wrwrwr] (-0.08,-1.74) -- (2.8,-2.22);
\draw [line width=1.2pt,color=rvwvcq] (-3.36,-3.48) -- (-1.34,-3.96);

\draw [->,>=stealth,line width=1.5pt] (-1.34,-3.96) -- (0.7,-3.62);
\draw [->,>=stealth,line width=1.5pt] (0.7,-3.62) -- (-3.36,-3.48);
\draw [->,>=stealth,line width=1.5pt] (-3.36,-3.48) -- (-5.04,-2.78);
\draw [->,>=stealth,line width=1.5pt] (-5.04,-2.78) -- (2.8,-2.22);
\draw [->,>=stealth,line width=1.5pt] (2.8,-2.22) -- (-2.8,-2.06);
\draw [->,>=stealth,line width=1.5pt] (-2.8,-2.06) -- (-0.08,-1.74);
\draw [->,>=stealth,line width=1.5pt] (2.8,-2.22) -- (-1.04,-1.3);

\draw [->,>=stealth,line width=.8pt,bend right] (.8,-1.75) to (.8,-2.35);
\draw [->,>=stealth,line width=.8pt,bend right=20] (-.9,-3.6) to (-.9,-3.85);
\draw [<-,>=stealth,line width=.8pt,bend right=-20] (-.9,-1.85) to (-.9,-2.1);
\draw [->,>=stealth,line width=.8pt,bend right=20] (-.6,-2.15) to (-.6,-2.45);
\draw [<-,>=stealth,line width=.8pt,bend left] (-4,-2.73) to (-4,-3.2);
\draw [->,>=stealth,line width=.8pt,bend right] (-2.8,-3.5) to (-3.8,-3.25);
\end{tikzpicture}
}
\vskip -.3cm
\begin{tikzpicture}[scale=.5,xscale=1,ar/.style={->,thick,>=stealth}]
\draw(0,0)node(v1){$s_1$}(2,0)node(v2){$s_2$}(4,0)node(v4){$s_4$}(6,0)node(v5){$s_5$}(8,0)node(v6)
{$s_6$}(10,0)node(vn){$s_7$}
    (4,2)node(v3){$s_3$};

\draw[ar](v6)to(vn);
\draw[ar](v2)to(v1);
\draw[ar](v2)to(v4);
\draw[ar](v3)to(v4);
\draw[ar](v5)to(v4);
\draw[ar](v6)to(v5);
\end{tikzpicture}
\caption{The intersection quiver of an $18$-gon of type $E_7$}
\label{fig:int-quiver-E7}
\end{figure}

Let $\sigma\in\ToSt(E_n)$, where $n\in\{6,7,8\}$, and $\hgon\colon=\hgon_\sigma$ the associated stable $h$-gon of type $E_n$.
Reorder the points in $\{V_j, W_j\mid j \in \ZZ_{h_Q}\}$ increasingly with respect to \eqref{eq:order} as $\{Y_i \mid 1\le i\le 2h_Q\}$.
As in \S~\ref{s4.3} for the case $n=6$, we have the following in all exceptional cases:
\begin{itemize}
\item the points in $\{Y_i\mid 1\le i\le n-3\}$ are consecutive vertices of $\hgon$.
 Thus we can assume that they are $\{V_j\mid 1\le j\le n-3\}$.
 Those vertices $V_j$'s form a positively convex $(n-3)$-gon $\mathbf{P}_{n-3}$. More precisely, $\mathbf{P}_3$ is a triangle $\sjx_-$, $\mathbf{P}_4$ is a square $\sbx_-$ and $\mathbf{P}_5$ is a pentagon $\wbx_-$.
\item there are two distinct parallel triangles $\sjx_L$ and $\sjx_R$ (since $W_1V_0=W_4W_2$ and $W_3W_1=V_{n-2}W_4$), where
  \begin{itemize}
      \item  $\sjx_L$ is the triangle with vertices $V_0,W_1$ and $ W_3$, and
      \item  $\sjx_R$ is the triangle with vertices $W_2,W_4$ and $V_{n-2}$.
  \end{itemize}
\item
define $\sjx_+$ to be the smaller one among the above two triangles as in \eqref{def:up-tri} and $\agon_n$ the convex hull of $\mathbf{P}_{n-3}\cup\sjx_+$, which is a positively convex $n$-gon due to the stability of
$\hgon$.
\item denote the $\imz$-ind diagonals of $\agon_n$ from top to bottom by $s_1, s_2, s_4,\ldots, s_n$ respectively and set
$s_3\colon=\UP{V_0W_2}=\UP{W_3V_{n-2}}$.
\end{itemize}
\begin{definition}
For type $E_n$, the set $\Sim \hgon$ of \emph{$\imz$-ind diagonals} of $\hgon$ is defined to be $\{s_i\mid 1\le i\le n\}$.
Thus we have the \emph{intersection quiver} $\hh{Q}(\hgon)$ as in Definition \ref{def:An-quiver}.
See Figures~\ref{fig:polygon-E7}-\ref{fig:int-quiver-E7} and Figures~\ref{fig:polygon-E8}-\ref{fig:int-quiver-E8}
for examples of the associated intersection quivers in types $E_7$ and $E_8$ respectively.
\end{definition}

Now, we can upgrade Proposition~\ref{pp:thmE6} to all exceptional cases.

\begin{theorem}\label{thm:En}
Theorem~\ref{thm:An} holds for any quiver $Q$ of type $E_n$, where $n\in\{6,7,8\}$.
In particular, the underlying diagram of the Ext-quiver/intersection quiver is of the form
\begin{gather}\label{eq:En}
\begin{tikzpicture}[scale=.5,xscale=1,ar/.style={-,thick}]
\draw(0,0)node(v1){$s_1$}(2,0)node(v2){$s_2$}(4,0)node(v4)
{$s_4$}(6,0)node(v5){$s_5$}(8,0)node(v6){$\cdots$}(10,0)node(vn){$s_n.$}
    (4,2)node(v3){$s_3$};
\draw[ar](v2)edge(v1)edge(v4);
\draw[ar](v3)edge(v4);
\draw[ar](v5)edge(v6)edge(v4);
\draw[ar](v6)edge(vn);
\end{tikzpicture}
\end{gather}
\end{theorem}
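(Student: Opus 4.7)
The plan is to follow the template of Proposition~\ref{pp:thmE6} essentially verbatim, with appropriate adjustments for the larger polygons in types $E_7$ and $E_8$. I will show that $\Sim_0 := \{\XX(s_i) \mid 1 \le i \le n\} \subset \h_\sigma$ is a simple-minded collection of $\Dwq(Q)$, which forces $\Sim_0 = \Sim\h_\sigma$; the remaining statements on the Ext-quiver and the shape of the underlying diagram then follow by the same simple-projective duality argument used in Theorem~\ref{thm:An}, combined with the hereditary-ness of $\h_\sigma$ from Lemma~\ref{lem3.3}.

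For the Hom-vanishing condition, I will establish the stronger statement that $\Hom^\bullet(S,T) = \Hom^1(S,T)$ for any $S, T \in \Sim_0$, by decomposing into three cases mirroring the $E_6$ argument. Pairs both lying in $\Sim\agon_n$ are handled by $\thick\Sim\agon_n \cong \Dwq(A_{n-1})$ via Theorem~\ref{thm:An}, since $\agon_n$ is by construction a stable $n$-gon of type $A_{n-1}$. Pairs involving $\XX(s_3)$ and $\XX(s_4)$ are handled by building an auxiliary stable $(n-1)$-gon $\agon_{n-1}$ of type $A_{n-2}$ whose $\imz$-ind diagonals are precisely $\{s_3, s_4, \ldots, s_n\}$, constructed from the translated diagonal $s_{3,?}$ together with the vertices of $\mathbf{P}_{n-3}$; this gives the unique nonzero $\Hom^1$ between $\XX(s_3)$ and $\XX(s_4)$. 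Finally, pairs involving $\XX(s_3)$ and $\XX(s_i)$ with $i$ not adjacent to $s_3$ are handled by embedding into $\thick\Sim\sjx_\pm \cong \Dwq(A_2)$, where the two objects lie in disjoint shifted slices and hence admit no Hom in any degree. Tracking the gradings of the essential oriented intersections shows the Ext-quiver is exactly of the form \eqref{eq:En}.

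For the generation condition $\thick\Sim_0 = \Dwq(Q)$, I will mimic the inductive argument from steps (i)--(iv) in the $E_6$ proof. Define neighborhoods $\UD_j$ analogously: for $E_7$ take the hexagon $\mathbf{L}_j$ augmented by the appropriate upward diagonals arising from the hexagon relation~\eqref{eq:E7-rel}, and for $E_8$ take the corresponding neighborhood dictated by the two $\sjx_j$ triangles and the relations~\eqref{eq:E8-rel}. Starting from $\thick\Sim\agon_n = \thick\UD(\agon_n)$ by Lemma~\ref{lem:filtration-An} and adjoining $s_3$, we get $\thick\Sim\hgon \supseteq \thick\UD_j$ for an initial $j$. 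Lemma~\ref{lem:tri-vs-tri} then propagates generation: each upward diagonal in $\UD_{j+1}$ is the third side of a contractible triangle whose other two sides lie in $\UD_j$, so $\thick\UD_{j+1} \subseteq \thick\UD_j$; by rotational symmetry all $\thick\UD_j$ coincide, and their union covers $\UD(\hgon)$, giving $\thick\Sim\hgon = \Dwq(Q)$.

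The main obstacle is the concrete geometric verification in type $E_8$: one must precisely identify the neighborhoods $\UD_j$ that are small enough to be generated by $\Sim\hgon$ yet large enough for the triangle-propagation step to sweep out the entire polygon using the relations~\eqref{eq:E8-rel} together with the ice/fire core structure. This is routine diagram chasing rather than a new conceptual ingredient, so the argument for $E_7$ and $E_8$ is genuinely a \emph{mutatis mutandis} adaptation of the $E_6$ proof.
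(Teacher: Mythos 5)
Your proposal is correct and follows essentially the same route as the paper: the paper offers no separate proof of Theorem~\ref{thm:En}, presenting it as a direct upgrade of Proposition~\ref{pp:thmE6}, and your adaptation (exhibiting $\{\XX(s_i)\}$ as a simple-minded collection via the auxiliary polygons $\agon_n$ and $\agon_{n-1}$, then sweeping out generation with the $\UD_j$-propagation through contractible triangles) is exactly the intended mutatis mutandis argument. The only cosmetic slip is that for $n\in\{7,8\}$ the lower polygon is $\mathbf{P}_{n-3}$ (a square or pentagon), so the pairs $\XX(s_3),\XX(s_i)$ with $i\ge 5$ lie in $\thick\Sim\mathbf{P}_{n-3}\cong\Dwq(A_{n-4})$ rather than in a copy of $\Dwq(A_2)$; this does not affect the vanishing argument.
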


\section{Reineke's conjecture}
In this section, we will use the geometric model constructed in \S~\ref{sec:geo} to prove
Reineke's conjecture about the existence of total stability functions on the module category of any Dynkin quiver.
The conjecture for type $A$ case is proved in \cite{BGMS19}, where the construction is more or less the same.
However, we include the details as well for self-containedness.

\subsection{Type $A$ and $D$}
\begin{proposition}\label{prop:An-quiver to polygon}
For any quiver $Q$ of type $A_n$, there is a stable $(n+1)$-gon $\hgon$ such that $\hh{Q}^{1-?}(\hgon)\cong Q$.
\end{proposition}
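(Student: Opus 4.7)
The plan is to construct $\hgon$ explicitly by first encoding $Q$'s orientation as a comparison pattern for the edge-directions $\arg(s_i)$, and then realizing this pattern geometrically. I would write $\epsilon_i \in \{+, -\}$ for $i = 1, \ldots, n-1$, with $\epsilon_i = +$ when $Q$ contains the arrow $i \to i+1$. By Theorem~\ref{thm:An} and the intersection-index formula~\eqref{eq:int-index}, each arrow of $\hh{Q}(\hgon)$ between $s_i$ and $s_{i+1}$ has grading $1$ (the heart $\h_\sigma$ is hereditary) and always points from the diagonal of larger $\arg$ to the one of smaller $\arg$; the shift $? \mapsto 1-?$ preserves this direction while lowering the grading to $0$. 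Hence $\hh{Q}^{1-?}(\hgon) \cong Q$ reduces to
\[
    \epsilon_i = +\ \Longleftrightarrow\ \arg(s_i) > \arg(s_{i+1}), \qquad i = 1, \ldots, n-1.
\]

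Next, I would pick $\phi_1, \ldots, \phi_n \in (0, \pi)$ satisfying this comparison pattern (say inductively: set $\phi_1 = \pi/2$ and $\phi_{i+1} = \phi_i \pm \delta$ for a small $\delta > 0$ according to $\epsilon_i$), and try $Y_1 = 0$, $Y_{i+1} = Y_i + r_i e^{\mathbf{i}\phi_i}$ for positive lengths $r_i$ to be chosen. Since $\sin\phi_i > 0$, the imaginary parts strictly increase along the polyline, so the labelling $Y_1 < Y_2 < \cdots < Y_{n+1}$ matches the lex order~\eqref{eq:order} of Section~\ref{subsec:type An}, and the segments $s_i = Y_iY_{i+1}$ automatically realize the prescribed arguments. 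What remains is to choose $r_1, \ldots, r_n > 0$ so that $\{Y_1, \ldots, Y_{n+1}\}$ sits in strictly convex position: once this is secured, $\hgon \colon= \operatorname{conv}\{Y_1, \ldots, Y_{n+1}\}$ is a positively convex $(n+1)$-gon --- hence stable by Definition~\ref{def:sth-gon} --- and Theorem~\ref{thm:An} delivers $\hh{Q}^{1-?}(\hgon) \cong Q$.

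The hard part is exactly this convex-position step. When the $(\phi_i)$ pattern is non-monotone (e.g.\ a zigzag) the polyline $Y_1 \to Y_2 \to \cdots \to Y_{n+1}$ bends back on itself and is \emph{not} the polygon boundary, so some $s_i$ end up as genuine diagonals of $\hgon$; naive scalings such as $r_i = 1$ or $r_i = M^i$ then easily produce accidental collinearities among non-adjacent $Y_j$'s (for instance, when $\phi_j + \phi_{j+1} = \pi$ with equal lengths one finds $Y_j, Y_{j+2}, Y_{j+4}$ aligned vertically). I would resolve this by partitioning the intermediate indices $\{2, \ldots, n\}$ into a ``left chain'' $L$ and a ``right chain'' $R$ compatible with $\epsilon$, placing the vertices in $L \cup \{1, n+1\}$ on a strictly convex arc to the left of a median line and those in $R \cup \{1, n+1\}$ on a strictly convex arc to the right, and then tuning the finitely many remaining geometric parameters to realize the prescribed $\phi_i$. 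Each vertex then lies on a strictly convex arc of the polygon boundary and is automatically extreme; as a sanity check, in the delicate four-zigzag $(\epsilon_1, \epsilon_2, \epsilon_3) = (-, +, -)$ a (slightly perturbed) reflected regular pentagon already carries out the construction, with $\phi_1 = \phi_3 \approx \pi/10$ and $\phi_2 = \phi_4 \approx 9\pi/10$.
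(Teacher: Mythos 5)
Your endgame is the paper's construction: the paper places $Y_1$ and $Y_{n+1}$ at the bottom and top of the unit circle, puts the intermediate $Y_i$ on the circle at equally spaced heights, and lets the orientation of the corresponding arrow of $Q$ decide whether $Y_i$ sits on the left or the right half; convexity is then automatic. Your reduction of the statement to the comparison pattern $\epsilon_i=+\Leftrightarrow\arg(s_i)>\arg(s_{i+1})$ is correct and is implicitly what the paper uses too. Where you differ is the order of quantifiers: you first fix target angles $\phi_i$ and then try to find a convex polygon whose height-consecutive chords realize those exact angles, whereas the paper fixes the positions first and only then reads off the angle comparisons. That inversion is what manufactures your ``hard part.'' The two-arc partition you propose at the end is the right fix --- it is precisely the circle construction --- but your closing step ``tune the remaining parameters to realize the prescribed $\phi_i$'' is neither carried out nor needed: once the left/right assignment of each $Y_i$ ($2\le i\le n$) matches $\epsilon$, the sign of $\arg(s_{i-1})-\arg(s_i)$ is already forced by convexity (the upward path $Y_{i-1}\to Y_i\to Y_{i+1}$ turns right at a left-chain vertex and left at a right-chain vertex), and the exact values of the $\phi_i$ are irrelevant. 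I would drop the angle-prescription stage entirely and state the side-assignment-implies-turn-direction fact explicitly; as written, that implication is hidden inside the phrase ``compatible with $\epsilon$,'' and it is the only geometric content the proof actually needs.
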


\begin{figure}[ht]\centering
\begin{tikzpicture}[scale=.5, rotate=0, xscale=1,arrow/.style={->,>=stealth,thick}]
\draw [cyan!50,very thick] (0,0) circle (6cm);

\draw[thin,gray,dashed](-6,0)to(6,0);
\draw[thin,gray,dashed](-5.7,-1.5)to(5.7,-1.5);
\draw[thin,gray,dashed](-5,-3)to(5,-3);
\draw[thin,gray,dashed](-4,-4.4)to(4,-4.4);
\draw[thin,gray,dashed](-5.7,1.5)to(5.7,1.5);
\draw[thin,gray,dashed](-5,3)to(5,3);
\draw[thin,gray,dashed](-4,4.4)to(4,4.4);
\draw [fill] (0,-6) circle (.1cm);
    \draw[red] (0,-6.8) node {$Y_1$};
\draw [fill] (0,6) circle (.1cm);
    \draw[red] (0,6.8) node {$Y_{n+1}$};

\draw [fill] (5.2,-3) circle (.1cm);
    \draw[red] (6,-3) node {$Y_i$};
    \draw (8,-3) node {$\epsilon_i=-$};

\draw [fill] (-5.2,3) circle (.1cm);
    \draw[red] (-6,3) node {$Y_j$};
    \draw (-8,3) node {$\epsilon_j=+$};

\draw  (4.7,-2) coordinate (vi) node {$s_i$};
\draw  (3.4,-4) coordinate (vi1) node {$s_{i-1}$};
\draw [thick, bend right,arrow] ($(vi)!.2!(vi1)$) to ($(vi)!.8!(vi1)$);

\draw  (-4.7,2) coordinate (vj) node {$s_{j-1}$};
\draw  (-3.4,4) coordinate (vj1) node {$s_j$};
\draw [thick, bend right,arrow] ($(vj)!.2!(vj1)$) to ($(vj)!.8!(vj1)$);
\end{tikzpicture}
\caption{Stable $(n+1)$-gon of type $A_n$}
\label{fig:exA}
\end{figure}
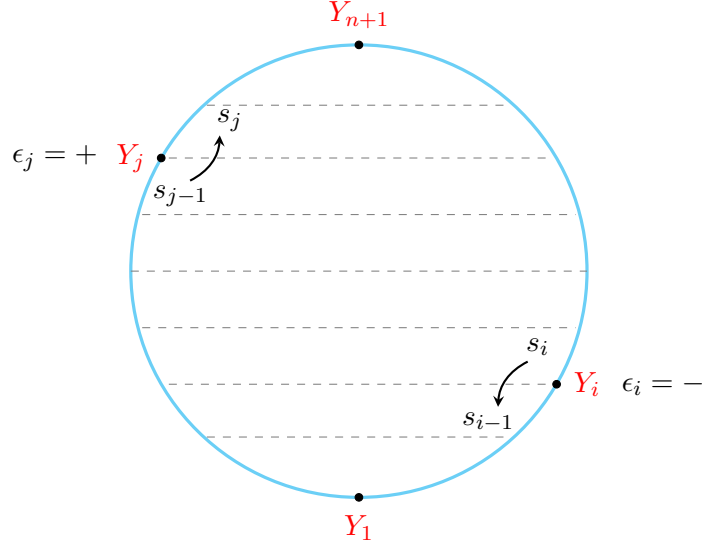

\begin{proof}
Assume that the underlying diagram of $Q$ is as follows:
\begin{center}
\begin{tikzpicture}[scale=.5,xscale=1,ar/.style={-,thick}]
\draw(0,0)node(v1){$1$}(2,0)node(v2){$2$}(4,0)node(v3){$3$}
(6.5,0)node(v4){$\cdots$}(9,0)node(vn){$n.$};
\draw[ar](v2)edge(v1)edge(v3);
\draw[ar](v4)edge(v3)edge(vn);
\end{tikzpicture}
\end{center}
We define a sign function $\epsilon$ on $Q_1=\{a_i\mid 2\le i\le n\}$, where $a_i$ is the arrow between $i-1$ and $i$, as:
\begin{gather}\label{eq:eps}
\epsilon_i\colon= \begin{cases}
						+ & \textrm{if $i-1 \xrightarrow{a_i} i$}, \\
						- & \textrm{if $i-1 \xleftarrow{a_i} i$}.
					\end{cases}
\end{gather}
Set $Y_1=(0,-1)$ and $Y_{n+1}=(1,0)$ in $\CC$.
For any $2\le i\le n$, let $Y_i=(x_i,y_i)$ be the point on $S^1=\{x^2+y^2=1\}$ with
$$y_i=-1+\frac{2(i-1)}{n},$$
and $\sign(x_i)=-\epsilon_i$. Clearly, we have
\[Y_1<Y_2<\cdots<Y_{n+1}.\]
Denote by $\hgon$ the convex $(n+1)$-gon with vertices $\{Y_i\mid 1 \leq i\leq n+1\}$.
By Definition \ref{def:An-quiver}, the intersection quiver of $\hgon$ is isomorphic to $Q$,
cf. Figure~\ref{fig:exA}.
\end{proof}

\begin{proposition}\label{prop:Dn-quiver to polygon}
For any quiver $Q$ of type $D_n$, there is a stable $2(n-1)$-gon $\hgon$ of type $D_n$ such that $\hh{Q}^{1-?}(\hgon)\cong Q$.
\end{proposition}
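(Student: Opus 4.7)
The plan is to extend the type-$A$ construction of Proposition~\ref{prop:An-quiver to polygon} by using the placement of the punctures $B_\pm$ to encode the orientation of the fork-arrow. Label the vertices of $Q$ as $1,2,\dots,n$ with $1-2-\cdots-(n-1)$ a linear subpath and $n$ attached to $n-2$. Let $a_i$ denote the arrow of $Q$ between $i-1$ and $i$ for $2\le i\le n-1$, and $a_n$ the arrow between $n-2$ and $n$; assign to each a sign $\epsilon_i\in\{+,-\}$ by the convention~\eqref{eq:eps}.

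First I would apply Proposition~\ref{prop:An-quiver to polygon} to the sub-quiver of type $A_{n-1}$ supported on $\{1,\dots,n-1\}$ to produce $n$ points on the lower unit semicircle, in increasing order with respect to~\eqref{eq:order}, whose $x$-coordinate signs realize $\epsilon_2,\dots,\epsilon_{n-1}$. These will serve as $Y_{-n}<\cdots<Y_{-1}$, and their reflections through the origin give the upper half $Y_1<\cdots<Y_n$. To encode $\epsilon_n$, I would place the puncture $B_+$ (with $B_-:=-B_+$ by central symmetry): by Definition~\ref{def:Dn-sim} either $B_-=Y_{-1}$ (so $s_n=Y_{-2}B_+$) or $B_-=Y_{-2}$ (so $s_n=Y_{-1}B_+$). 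In each case the sign of the $x$-coordinate of $B_+$ relative to the shared endpoint of $s_{n-1}$ and $s_n$ controls the orientation of the arrow between $s_{n-2}$ and $s_n$; I pick the case whose geometry is compatible with $\epsilon_{n-1}$ and then fix $B_+$ so that $\epsilon_n$ is realized. The vertices $V_j$ are then taken to be the $Y_{\pm i}$ other than those playing the role of $B_\pm$, and $\hgon$ is the resulting centrally symmetric $2(n-1)$-gon.

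Finally, stability needs to be checked: positive convexity is inherited from the type-$A$ construction together with central symmetry, and placing $|B_+|$ sufficiently small ensures $B_\pm$ lies inside the level-$(n-2)$ diagonal-gon as required by Definition~\ref{def:sth-gon}. Then Theorem~\ref{thm:Dn} yields $\hh{Q}^{1-?}(\hgon)\cong Q$. The main obstacle is the case analysis at the fork: one must confirm that the intersection quiver attaches both $s_{n-1}$ and $s_n$ to $s_{n-2}$ (and not, say, to each other) with the prescribed orientations, accounting for the non-essential intersection of $V_jB_-$ and $V_jB_+$ flagged after Lemma~\ref{lem:tri-vs-tri}. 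This is a finite verification in each of the two puncture configurations, which is where the bulk of the work should go.
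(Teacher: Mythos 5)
Your overall skeleton matches the paper's: realize the linear part $s_1,\dots,s_{n-1}$ by the type-$A$ circle construction on the lower half, double by central symmetry, and use the placement of the punctures to deal with the fork. However, the mechanism you propose for the fork is the one step that does not work as stated, and it is precisely where the paper's proof has its content. The constraints force $B_-=-B_+$ with both punctures inside the level-$(n-2)$ diagonal-gon, hence close to the center $O$. In the configuration $B_\pm=Y_{\pm1}$ this means $s_{n-1}=Y_{-2}B_-$ and $s_n=Y_{-2}B_+$ are two diagonals from the vertex $Y_{-2}$ to two nearly coincident points, so $\arg(s_{n-1})$ and $\arg(s_n)$ are (nearly) equal and both fork arrows automatically get the \emph{same} orientation relative to $s_{n-2}$ --- the sign of the $x$-coordinate of $B_+$ changes nothing, and $\epsilon_{n-1}\neq\epsilon_n$ is unreachable in this configuration. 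Dually, in the configuration $B_\pm=Y_{\pm2}$ one has $s_{n-1}=B_-Y_{-1}$ and $s_n=B_-Y_1$ with $Y_1=-Y_{-1}$ genuine vertices near height $0$, so $\arg(s_{n-1})$ and $\arg(s_n)$ sit near $\pi$ and $0$ and straddle $\arg(s_{n-2})$, forcing \emph{opposite} orientations. So the choice between the two puncture configurations is dictated by whether $\epsilon_{n-1}=\epsilon_n$, not "picked for compatibility with $\epsilon_{n-1}$," and the remaining sign is controlled by a different parameter: the side of the $y$-axis on which the vertex $Y_{-2}$ lies (when $\epsilon_{n-1}=\epsilon_n$, where the paper even collapses $B_\pm$ to $O$ so that $s_{n-1}$ and $s_n$ are literally parallel), respectively which of $(\pm1,0)$ plays the role of $Y_{-1}$ (when $\epsilon_{n-1}=-\epsilon_n$, which the paper toggles by a small rotation, or equivalently by the mirror of Lemma~\ref{lem:ob}).

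In short, the "finite verification in each of the two puncture configurations" that you defer is the whole proof: once it is done one finds that each configuration realizes only one of the two relative orientations at the fork, so your proposed tuning of $B_+$ cannot substitute for the case analysis. The stability check you mention (punctures small, convexity from the circle) is fine and agrees with the paper.
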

\begin{proof}
Assume that the underlying diagram of $Q$ is as follows:
\begin{center}
\begin{tikzpicture}[scale=.5,xscale=1,ar/.style={-,thick}]
\draw(0,0)node(v1){$1$}(2,0)node(v2){$2$}(4,0)node(v3){$3$}
(6.5,0)node(v4){$\cdots$}(9.5,0)node(vn2){$n-2$}
(12.5,1.5)node(vn1){$n-1$}
(12.5,-1.5)node(vn){$n.$};
\draw[ar](v2)edge(v1)edge(v3);
\draw[ar](v4)edge(v3)edge(vn2);
\draw[ar](vn2)edge(vn1)edge(vn);
\end{tikzpicture}
\end{center}
Denote by $Q_1=\{a_i\mid 2\le i\le n\}$, where $a_i$ is the arrow between $i-1$ and $i$ for $2\le i\le n-1$ and $a_n$ is
the arrow between $n-2$ and $n$.
We define a sign function $\epsilon$ on $Q_1$ with $\epsilon_i$ given by \eqref{eq:eps} for $2\le i\le n-1$ and
\begin{gather}
\epsilon_n\colon= \begin{cases}
						+ & \textrm{if $n-2 \xrightarrow{a_n} n$}, \\
						- & \textrm{if $n-2 \xleftarrow{a_n} n$}.
					\end{cases}
\end{gather}

Set $Y_{\pm n}=(0,\pm1)$ in $\CC$.
For any $3\le i\le n-1$, let $Y_i=(x_i,y_i)$ be the point on $S^1=\{x^2+y^2=1\}$ with
$$y_{\pm i}=\frac{\pm i}{n},$$
and $\sign(x_{\pm i})=\pm\epsilon_{n+1-|i|}$.
To choose vertices $Y_{\pm1}$ and $Y_{\pm2}$, there are three cases.

\begin{itemize}
\item[$($a$)$]\label{case1} If $\epsilon_{n-1}=\epsilon_n$,
then we set $Y_{\pm2}=(x_{\pm2},\pm 2/n)$ be the points on $S^1$ with $\sign(x_{\pm2})=\pm \epsilon_{n-1}$
and $Y_{\pm1}=(0,0)$.
Let $\hgon$ be the convex $2(n-1)$-gon with vertices $\{Y_{\pm i}\mid 2 \leq i\leq n\}$ and two punctures $B_\pm=Y_{\pm1}$,
which is a stable $2(n-1)$-gon of type $D_n$ by construction.
Moreover, the ordering \eqref{eq:orderD} holds.
By Definitions~\ref{def:An-quiver} and~\ref{def:Dn-sim},
the intersection quiver of $\hgon$ is isomorphic to $Q$.
The pictures in Figure~\ref{fig:Dn-1} give two examples for the case $n=6$.
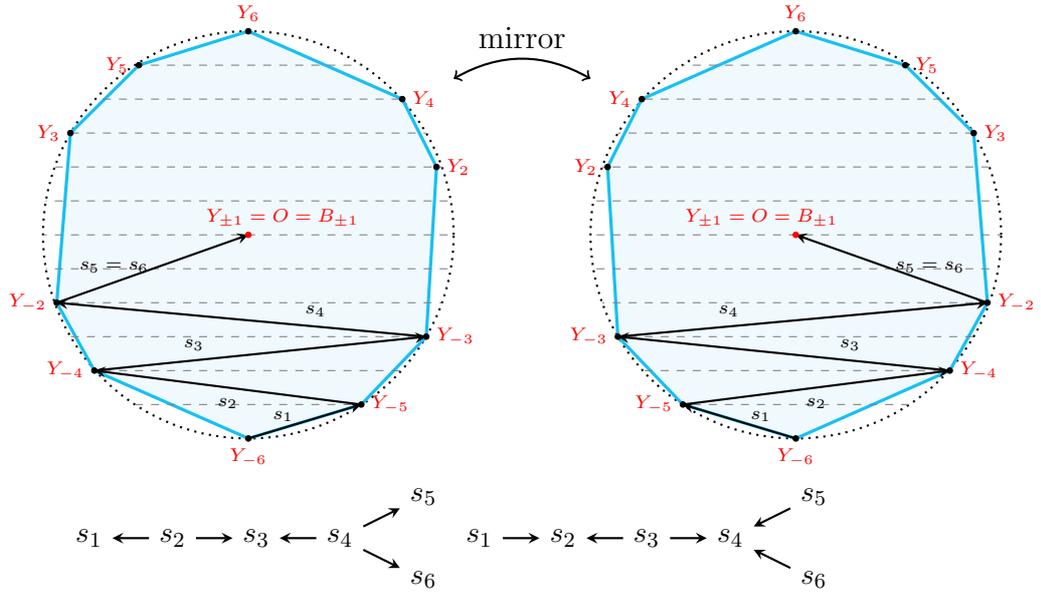
\begin{figure}[ht]\centering
\begin{tikzpicture}[scale=.45, rotate=0, xscale=1,arrow/.style={->,>=stealth,thick},font=\tiny]
    \draw (-18,0) node {};
\draw [thick,dotted] (-8,0) circle (6cm);
\draw [thick,dotted] (8,0) circle (6cm);
\path (-8,-6) coordinate (Y-6) (-4.7,-5) coordinate (Y-5) (-12.5,-4) coordinate (Y-4) (-2.8,-3) coordinate (Y-3)(-13.6,-2) coordinate (Y-2) (-8,6) coordinate (Y6) (-11.2,5) coordinate (Y5) (-3.5,4) coordinate (Y4) (-13.2,3) coordinate (Y3) (-2.5,2) coordinate (Y2) (-8,0) coordinate (O);
\begin{scope}
\draw[cyan!70,very thick,fill=cyan!5] (Y-6) to (Y-5) to (Y-3) to (Y2) to (Y4) to (Y6) to (Y5) to (Y3) to (Y-2) to (Y-4) to (Y-6);
\end{scope}

\begin{scope}
\clip (-8,0) circle (6cm);
\foreach \j in {-5,...,5}{
\draw[thin,gray,dashed](-16,\j)to(-2,\j);}
\end{scope}

\draw [fill] (Y-6) circle (.08cm);
    \draw (Y-6) node[red,below] {$Y_{-6}$};

\draw [fill] (Y-5) circle (.08cm);
    \draw (Y-5) node[red,right] {$Y_{-5}$};

\draw [fill] (Y-4)  circle (.08cm);
    \draw (Y-4) node[red,left] {$Y_{-4}$};

\draw [fill] (Y-3)  circle (.08cm);
    \draw (Y-3) node[red,right] {$Y_{-3}$};

\draw [fill] (Y-2)  circle (.08cm);
    \draw (Y-2) node[red,left] {$Y_{-2}$};

\draw [fill] (Y6)  circle (.08cm);
    \draw[red] (Y6) node[above] {$Y_{6}$};

\draw [fill] (Y5)  circle (.08cm);
    \draw (Y5) node[red,left] {$Y_{5}$};

\draw [fill] (Y4)  circle (.08cm);
    \draw (Y4) node[red,right] {$Y_{4}$};

\draw [fill] (Y3) circle (.08cm);
    \draw (Y3) node[red,left] {$Y_{3}$};

\draw [fill] (Y2)  circle (.08cm);
    \draw (Y2) node[red,right] {$Y_{2}$};

\draw[->,>=stealth,thick,black] (Y-6) to (Y-5);
\draw[->,>=stealth,thick,black] (Y-5) to (Y-4);
\draw[->,>=stealth,thick,black] (Y-4) to (Y-3);
\draw[->,>=stealth,thick,black] (Y-3) to (Y-2);
\draw[->,>=stealth,thick,black] (Y-2) to (O);

\draw[black]
    ($(Y-6)!.4!(Y-5)+(.3,.25)$) node[left] {$s_1$}
    ($(Y-5)!.5!(Y-4)$) node[left,below] {$s_2$}
    ($(Y-4)!.3!(Y-3)$) node[left,above] {$s_3$}
    ($(Y-3)!.3!(Y-2)$) node[right,above] {$s_4$}
    ($(Y-2)!.3!(O)$) node[above] {$s_5=s_6$};


\path (8,-6) coordinate (Y-26) (4.7,-5) coordinate (Y-25) (12.5,-4) coordinate (Y-24) (2.8,-3) coordinate (Y-23)(13.6,-2) coordinate (Y-22) (8,6) coordinate (Y26) (11.2,5) coordinate (Y25) (3.5,4) coordinate (Y24) (13.2,3) coordinate (Y23) (2.5,2) coordinate (Y22) (8,0) coordinate (2O);

\begin{scope}
\draw[cyan!70,very thick,fill=cyan!5] (Y-26) to (Y-25) to (Y-23) to (Y22) to (Y24) to (Y26) to (Y25) to (Y23) to (Y-22) to (Y-24) to (Y-26);
\end{scope}

\begin{scope}
\clip (8,0) circle (6cm);
\foreach \j in {-5,...,5}{
\draw[thin,gray,dashed](16,\j)to(2,\j);}
\end{scope}

\draw [fill] (Y-26) circle (.08cm);
    \draw (Y-26) node[red,below] {$Y_{-6}$};

\draw [fill] (Y-25) circle (.08cm);
    \draw (Y-25) node[red,left] {$Y_{-5}$};

\draw [fill] (Y-24)  circle (.08cm);
    \draw (Y-24) node[red,right] {$Y_{-4}$};

\draw [fill] (Y-23)  circle (.08cm);
    \draw (Y-23) node[red,left] {$Y_{-3}$};

\draw [fill] (Y-22)  circle (.08cm);
    \draw (Y-22) node[red,right] {$Y_{-2}$};

\draw [fill] (Y26)  circle (.08cm);
    \draw[red] (Y26) node[above] {$Y_{6}$};

\draw [fill] (Y25)  circle (.08cm);
    \draw (Y25) node[red,right] {$Y_{5}$};

\draw [fill] (Y24)  circle (.08cm);
    \draw (Y24) node[red,left] {$Y_{4}$};

\draw [fill] (Y23) circle (.08cm);
    \draw (Y23) node[red,right] {$Y_{3}$};

\draw [fill] (Y22)  circle (.08cm);
    \draw (Y22) node[red,left] {$Y_{2}$};

\draw[->,>=stealth,thick,black] (Y-26) to (Y-25);
\draw[->,>=stealth,thick,black] (Y-25) to (Y-24);
\draw[->,>=stealth,thick,black] (Y-24) to (Y-23);
\draw[->,>=stealth,thick,black] (Y-23) to (Y-22);
\draw[->,>=stealth,thick,black] (Y-22) to (2O);

\draw[black]
    ($(Y-26)!.4!(Y-25)+(-.3,.25)$) node[right] {$s_1$}
    ($(Y-25)!.5!(Y-24)$) node[right,below] {$s_2$}
    ($(Y-24)!.3!(Y-23)$) node[right,above] {$s_3$}
    ($(Y-23)!.3!(Y-22)$) node[left,above] {$s_4$}
    ($(Y-22)!.3!(2O)$) node[above] {$s_5=s_6$};


\draw [red,fill=red] (O)  circle (.08cm);
    \draw (-7,.5) node[red] {$Y_{\pm1}=O=B_{\pm1}$};

\draw [red,fill=red] (2O)  circle (.08cm);
    \draw (7,.5) node[red] {$Y_{\pm1}=O=B_{\pm1}$};

\draw [thick,<->,bend left] (-2,4.6) to (2,4.6);
    \draw[font=\large] (0,5.8) node {mirror};
\end{tikzpicture}

\begin{tikzpicture}[scale=.55,xscale=1,ar/.style={->,thick,>=stealth}]
\draw(-2,0)node(v1){$s_1$}(0,0)node(v2){$s_2$}(2,0)node(v3){$s_3$}(4,0)node(v4){$s_4$}
(6,-1)node(v6){$s_6$}(6,1)node(v5){$s_5$};

\draw[ar](v2)to(v1);
\draw[ar](v2)to(v3);
\draw[ar](v4)to(v3);
\draw[ar](v4)to(v5);
\draw[ar](v4)to(v6);

\end{tikzpicture}
\begin{tikzpicture}[scale=.55,xscale=1,ar/.style={->,thick,>=stealth}]
\draw(-2,0)node(v1){$s_1$}(0,0)node(v2){$s_2$}(2,0)node(v3){$s_3$}(4,0)node(v4){$s_4$}
(6,-1)node(v6){$s_6$}(6,1)node(v5){$s_5$};

\draw[ar](v1)to(v2);
\draw[ar](v3)to(v2);
\draw[ar](v3)to(v4);
\draw[ar](v5)to(v4);
\draw[ar](v6)to(v4);
\end{tikzpicture}
\caption{Two mirror stable $10$-gons of type $D_6$ in Case~$($a$)$}
\label{fig:Dn-1}
\end{figure}

\item[$($b$)$]\label{case2} If $\epsilon_{n-1}=-\epsilon_n$ and $(\epsilon_{n-1},\epsilon_n)=(-,+)$, then we
set $Y_{\pm1}=(\pm1,0)$. Let $\hgon$ be the convex $2(n-1)$-gon with vertices $\{Y_{\pm i}\mid 1 \leq i\leq n, i\ne2\}$.
As $\hgon$ is inscribed in $S^1$, the level-$(n-2)$ diagonal-gon is the intersection of $(n-1)$ symmetric rectangles containing the origin $O$.
Hence, such level-$(n-2)$ diagonal-gon contains a disk with center at $O$ whose diameter $d$ is the shortest edge of those rectangles.
Take $B_\pm=Y_{\pm2}=(0,\pm d/3)$ inside such a disk.
Then the convex $2(n-1)$-gon $\hgon$ with punctures $B_\pm$ is a stable $2(n-1)$-gon of type $D_n$.
Moreover, the ordering \eqref{eq:orderD} holds.
By Definitions~\ref{def:An-quiver} and~\ref{def:Dn-sim},
the intersection quiver of $\hgon$ is isomorphic to $Q$.
The left picture in Figure~\ref{fig:Dn-2} gives an example for the case $n=6$.

\item[$($b$')$]\label{case3}
If If $\epsilon_{n-1}=-\epsilon_n$ and $(\epsilon_{n-1},\epsilon_n)=(+,-)$, then we
take the stable $2(n-1)$-gon $\hgon'$ of type $D_n$ obtained from $\hgon$ in Case~$($b$)$ by clockwise rotating
$R_{-\theta}=e^{-\mathbf{i} \pi \theta}$ of a small angle $0<\theta\ll1$.
Such a rotation does not change the order between almost all $Y_i$'s, except the one between $Y_{\pm1}$.
Set $Y'_i=R_{-\theta}(Y_i)$ for $i\neq\pm1$ and $Y'_{\pm1}=R_{-\theta}(Y_{\mp1})$.
Then the ordering \eqref{eq:orderD} holds for all $Y'_i$'s.
By Definitions~\ref{def:An-quiver} and~\ref{def:Dn-sim},
the intersection quiver of $\hgon'$ is isomorphic to $Q$.
The right picture in Figure~\ref{fig:Dn-2} gives an example for the case $n=6$.\qedhere
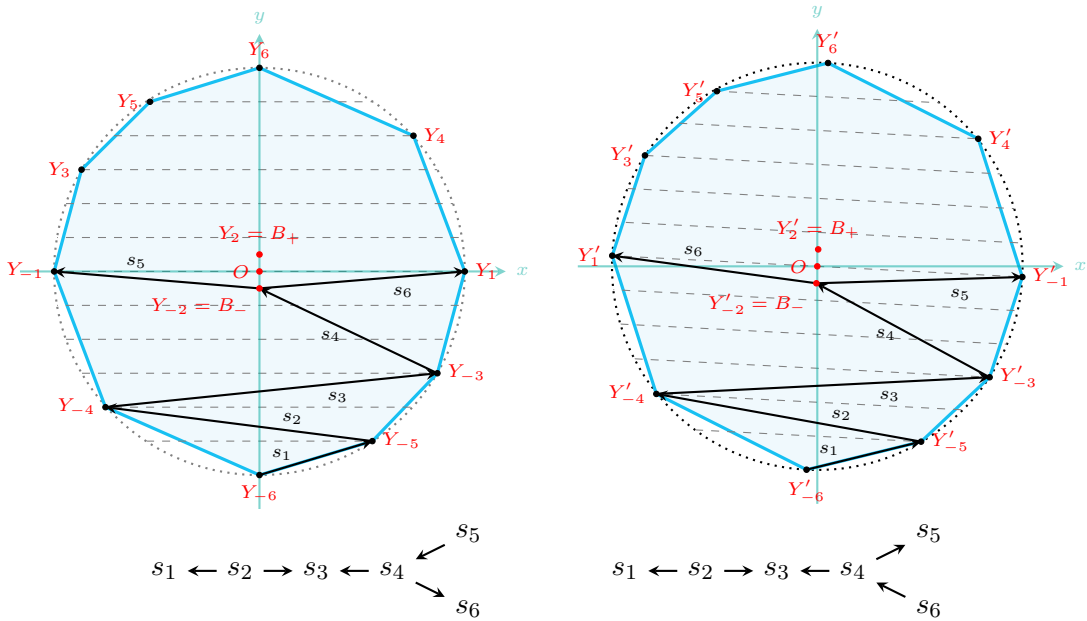
\begin{figure}[ht]\centering
\begin{tikzpicture}[scale=.45, rotate=0, xscale=1,arrow/.style={->,>=stealth,thick},font=\tiny]
\draw [thick,dotted,gray] (0,0) circle (6cm);
\path (0,-6) coordinate (Y-6) (3.3,-5) coordinate (Y-5) (-4.5,-4) coordinate (Y-4) (5.2,-3) coordinate (Y-3) (0,-.5) coordinate (Y-2)(-6,0) coordinate (Y-1)(6,0) coordinate (Y1)(0,6) coordinate (Y6)(-3.2,5)  coordinate (Y5)(4.5,4) coordinate (Y4)(-5.2,3) coordinate (Y3) (0,.5) coordinate (Y2);
\begin{scope}
\clip[draw] (Y-6) to (Y-5) to (Y-3) to (Y1) to (Y4) to (Y6) to (Y5) to (Y3) to (Y-1) to (Y-4) to (Y-6);
\draw[fill=cyan!10](0,0) circle (10cm);
\end{scope}
\begin{scope}
\draw[cyan!70,very thick,fill=cyan!5] (Y-6) to (Y-5) to (Y-3) to (Y1) to (Y4) to (Y6) to (Y5) to (Y3) to (Y-1) to (Y-4) to (Y-6);
\end{scope}

\begin{scope}
\clip (0,0) circle (6cm);
\foreach \j in {-5,...,5}{
\draw[thin,gray,dashed](-8,\j)to(8,\j);}
\end{scope}

\draw[thick,arrow,Emerald,opacity=.5](-7,0)to(7.2,0)node[right]{$x$};
\draw[thick,arrow,Emerald,opacity=.5](0,-7)to(0,7)node[above]{$y$};

\draw [fill] (Y-6) circle (.08cm) node[red,below] {$Y_{-6}$};
\draw [fill] (Y-5) circle (.08cm)node[red,right] {$Y_{-5}$};
\draw [fill] (Y-4) circle (.08cm)node[red,left] {$Y_{-4}$};
\draw [fill] (Y-3) circle (.08cm)node[red,right] {$Y_{-3}$};
\draw [fill] (Y1) circle (.08cm)node[red,right] {$Y_{1}$};
\draw [fill] (Y6) circle (.08cm)node[red,above] {$Y_{6}$};
\draw [fill] (Y5) circle (.08cm)node[red,left] {$Y_{5}$};
\draw [fill] (Y4) circle (.08cm)node[red,right] {$Y_{4}$};
\draw [fill] (Y3) circle (.08cm)node[red,left] {$Y_{3}$};
\draw [fill] (Y-1) circle (.08cm)node[red,left] {$Y_{-1}$};

\draw[->,>=stealth,thick,black] (Y-6) to (Y-5);
\draw[->,>=stealth,thick,black] (Y-5) to (Y-4);
\draw[->,>=stealth,thick,black] (Y-4) to (Y-3);
\draw[->,>=stealth,thick,black] (Y-3) to (Y-2);
\draw[->,>=stealth,thick,black] (Y-2) to (Y-1);
\draw[->,>=stealth,thick,black] (Y-2) to (Y1);

\draw[black]
    ($(Y-6)!.2!(Y-5)+(0,-.05)$) node[above] {$s_1$}
    ($(Y-5)!.3!(Y-4)-(0,.1)$) node[above] {$s_2$}
    ($(Y-4)!.7!(Y-3)+(0,.05)$) node[below] {$s_3$}
    ($(Y-3)!.6!(Y-2)+(0,.05)$) node[below] {$s_4$}
    ($(Y-2)!.6!(Y-1)$) node[above] {$s_5$}
    ($(Y-2)!.7!(Y1)$) node[below] {$s_6$};

\draw [red,fill=red] (0,0) circle (.08cm)node[left] {$O$};
\draw [red,fill=red] (0,.5) circle (.08cm)node[red,above] {$Y_{2}=B_+$};
\draw [red,fill=red] (0,-.5) circle (.08cm) node[red,below left] {$Y_{-2}=B_-$};
\end{tikzpicture}\quad
\begin{tikzpicture}[scale=.45, rotate=-3, xscale=1,arrow/.style={->,>=stealth,thick},font=\tiny]
\draw [thick,dotted] (0,0) circle (6cm);

\path (0,-6) coordinate (Y-6) (3.3,-5) coordinate (Y-5) (-4.5,-4) coordinate (Y-4) (5.2,-3) coordinate (Y-3) (0,-.5) coordinate (Y-2)(-6,0) coordinate (Y-1)(6,0) coordinate (Y1)(0,6) coordinate (Y6)(-3.2,5)  coordinate (Y5)(4.5,4) coordinate (Y4)(-5.2,3) coordinate (Y3) (0,.5) coordinate (Y2);

\begin{scope}
\clip[draw] (Y-6) to (Y-5) to (Y-3) to (Y1) to (Y4) to (Y6) to (Y5) to (Y3) to (Y-1) to (Y-4) to (Y-6);
\draw[fill=cyan!10](0,0) circle (10cm);
\end{scope}

\begin{scope}
\draw[cyan!70,very thick,fill=cyan!5] (Y-6) to (Y-5) to (Y-3) to (Y1) to (Y4) to (Y6) to (Y5) to (Y3) to (Y-1) to (Y-4) to (Y-6);
\end{scope}

\begin{scope}
\clip (0,0) circle (6cm);
\foreach \j in {-5,...,5}{
\draw[thin,gray,dashed](-8,\j)to(8,\j);}
\end{scope}

\draw[rotate=3,thick,arrow,Emerald,opacity=.5](-7,0)to(7.2,0)node[right]{$x$};
\draw[rotate=3,thick,arrow,Emerald,opacity=.5](0,-7)to(0,7)node[above]{$y$};

\draw [fill] (Y-6) circle (.08cm) node[red,below] {$Y'_{-6}$};
\draw [fill] (Y-5) circle (.08cm)node[red,right] {$Y'_{-5}$};
\draw [fill] (Y-4) circle (.08cm)node[red,left] {$Y'_{-4}$};
\draw [fill] (Y-3) circle (.08cm)node[red,right] {$Y'_{-3}$};
\draw [fill] (Y1) circle (.08cm)node[red,right] {$Y'_{-1}$};
\draw [fill] (Y6) circle (.08cm)node[red,above] {$Y'_{6}$};
\draw [fill] (Y5) circle (.08cm)node[red,left] {$Y'_{5}$};
\draw [fill] (Y4) circle (.08cm)node[red,right] {$Y'_{4}$};
\draw [fill] (Y3) circle (.08cm)node[red,left] {$Y'_{3}$};
\draw [fill] (Y-1) circle (.08cm)node[red,left] {$Y'_{1}$};

\draw[->,>=stealth,thick,black] (Y-6) to (Y-5);
\draw[->,>=stealth,thick,black] (Y-5) to (Y-4);
\draw[->,>=stealth,thick,black] (Y-4) to (Y-3);
\draw[->,>=stealth,thick,black] (Y-3) to (Y-2);
\draw[->,>=stealth,thick,black] (Y-2) to (Y-1);
\draw[->,>=stealth,thick,black] (Y-2) to (Y1);

\draw[black]
    ($(Y-6)!.2!(Y-5)+(0,-.05)$) node[above] {$s_1$}
    ($(Y-5)!.3!(Y-4)-(0,.1)$) node[above] {$s_2$}
    ($(Y-4)!.7!(Y-3)+(0,.05)$) node[below] {$s_3$}
    ($(Y-3)!.6!(Y-2)+(0,.05)$) node[below] {$s_4$}
    ($(Y-2)!.6!(Y-1)$) node[above] {$s_6$}
    ($(Y-2)!.7!(Y1)$) node[below] {$s_5$};

\draw [red,fill=red] (0,0) circle (.08cm)node[left] {$O$};
\draw [red,fill=red] (0,.5) circle (.08cm)node[red,above] {$Y'_{2}=B_+$};
\draw [red,fill=red] (0,-.5) circle (.08cm) node[red,below left] {$Y'_{-2}=B_-$};
\end{tikzpicture}
\begin{tikzpicture}[scale=.5,xscale=1,ar/.style={->,thick,>=stealth}]
\draw(-2,0)node(v1){$s_1$}(0,0)node(v2){$s_2$}(2,0)node(v3){$s_3$}(4,0)node(v4){$s_4$}
(6,-1)node(v6){$s_6$}(6,1)node(v5){$s_5$};

\draw[ar](v2)to(v1);
\draw[ar](v2)to(v3);
\draw[ar](v4)to(v3);
\draw[ar](v5)to(v4);
\draw[ar](v4)to(v6);

\draw(9,0)node{};
\end{tikzpicture}
\begin{tikzpicture}[scale=.5,xscale=1,ar/.style={->,thick,>=stealth}]
\draw(0,0)node{};
\draw(-2,0)node(v1){$s_1$}(0,0)node(v2){$s_2$}(2,0)node(v3){$s_3$}(4,0)node(v4){$s_4$}
(6,-1)node(v6){$s_6$}(6,1)node(v5){$s_5$};

\draw[ar](v2)to(v1);
\draw[ar](v2)to(v3);
\draw[ar](v4)to(v3);
\draw[ar](v4)to(v5);
\draw[ar](v6)to(v4);
\end{tikzpicture}
\caption{Two stable $10$-gons of type $D_6$ in Cases~$($b$)$ and $($b$')$}
\label{fig:Dn-2}
\end{figure}
\end{itemize}
\end{proof}
Note that any quiver in Case~$($b$')$ is isomorphic to some quiver in Case~$($b$)$ by swapping the vertices $n-1$ and $n$.
Thus, one can omit the proof in Case~$($b$')$.
However we keep it to demonstrate the $\CC$-action effect.

\subsection{Exceptional type}
Firstly, note that if we prove the conjecture for $E_8$,
then $E_6$ and $E_7$ follows directly by restricted to full subcategories.
Secondly, we have the following easy observation, cf. Lemma~\ref{lem:ob} and an illustration in Figure~\ref{fig:Dn-1}.

Consider the mirror operation $\hgon \mapsto \hgon^\vee$ with respect to $y$-axis,
which also mirror on the order \eqref{eq:order} to be
\begin{gather}\label{eq:order2}
W_1 < W_2 \Longleftrightarrow
\begin{cases}
y_1<y_2 \text{\quad or}\\
y_1=y_2,\, x_1>x_2,
\end{cases}
\end{gather}
for any two points $W_1=(x_1,y_1)$ and $W_2=(x_2,y_2)$ in $\CC$.
Let $\sigma^\vee$ be the total stability condition to $\hgon^\vee$.
The order \eqref{eq:order2} effects the definition of upward diagonals, in the sense that
we need to take $\arg$ to be in $(0,\pi]$.
Consequently, we need choose $\h^\vee\colon=\sli^\vee(0,1]$ as the heart of $\sigma^\vee$
so that everything in the paper still hold, e.g. Proposition~\ref{prop:obj-in-heart}.

\begin{lemma}\label{lem:ob}
The intersection quiver $\hh{Q}(\hgon^\vee)$ is the opposite quiver of $\hh{Q}(\hgon)$ and
it is the Ext-quiver of the heart $\h^\vee$.
\end{lemma}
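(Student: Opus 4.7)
The plan is to transport every ingredient of Sections~\ref{sec:hearts}-\ref{sec:geo} across the mirror map $\mu\colon(x,y)\mapsto(-x,y)$ and track how the conventions flip. First I would observe that $\mu$ preserves positive convexity (now measured against the mirrored order \eqref{eq:order2}) as well as the linear relations \eqref{eq:E6-rel}--\eqref{eq:E8-rel} of Definition~\ref{def:h-gon}, so $\hgon^\vee$ is again a stable $h$-gon of type $Q$. The map $\ell\mapsto\ell^\vee$ sends $\AD(\hgon)$ bijectively onto $\AD(\hgon^\vee)$, and because $\arg(\ell^\vee)=\pi-\arg(\ell)$, it restricts to a bijection between the original upward diagonals ($\arg\in[0,\pi)$) and the new-upward diagonals ($\arg\in(0,\pi]$). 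The definitions of $s_i\in\Sim\hgon$ from \S~\ref{sec:geo} are phrased entirely in terms of the ordering on vertices, hence are transported verbatim, giving $\Sim\hgon^\vee=\{s_i^\vee\mid s_i\in\Sim\hgon\}$.

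Next I would compare essential oriented intersections. If $\ell_1,\ell_2,\ell_3$ form a contractible triangle in $\hgon$ as in Lemma~\ref{lem:tri-vs-tri}, with $\ell_3=\ell_1+\ell_2$ and $\arg\ell_1<\arg\ell_2$, then $\mu$ yields a contractible triangle $\ell_1^\vee,\ell_2^\vee,\ell_3^\vee$ in $\hgon^\vee$ with $\ell_3^\vee=\ell_1^\vee+\ell_2^\vee$, but now $\arg(\ell_1^\vee)>\arg(\ell_2^\vee)$. The essential oriented intersection prescribed by such a triangle therefore runs from $\ell_2\to\ell_1$ on the $\hgon$-side and from $\ell_1^\vee\to\ell_2^\vee$ on the $\hgon^\vee$-side. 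Because the notion of an essential intersection is entirely encoded by these contractible triangles, this shows that $\hh{Q}(\hgon^\vee)$ is obtained from $\hh{Q}(\hgon)$ by reversing every arrow, i.e.\ $\hh{Q}(\hgon^\vee)\cong\hh{Q}(\hgon)^{\mathrm{op}}$.

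For the second assertion, I would run the simple-minded-collection argument of Proposition~\ref{pp:thmE6} (and its $A$- and $D$-analogues Theorems~\ref{thm:An}, \ref{thm:Dn}) in the mirrored conventions. The fundamental domain for $\RC(Q)$ is taken to be $\sli^\vee(-1,0]\cup\sli^\vee(0,1]$, so that $\Ind\h^\vee$ is precisely the image under $\XX$ of the new-upward diagonals, as recorded after the statement of Proposition~\ref{prop:obj-in-heart}. The filtration arguments (Lemma~\ref{lem:filtration-An}, Lemma~\ref{lem:filtration}) and the $\Hom$-vanishing deduction \eqref{eq:d=1} depend only on the combinatorics of contractible triangles, hence transfer to $\hgon^\vee$ word for word. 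Combined with the general identity \eqref{eq:int=dim}, this identifies $\hh{Q}(\hgon^\vee)$ with the Ext-quiver of $\h^\vee$, which by the first part equals $\hh{Q}(\hgon)^{\mathrm{op}}$.

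The main obstacle is purely bookkeeping: one must check that each time a strict inequality on $\arg$ appears in the proofs above (for example in the case distinctions defining $\sjx_+$ in \eqref{def:up-tri} and $s_3$ in \eqref{def:s3-1}), the mirrored version uses the correspondingly flipped inequality, so the simples produced by the algorithm of \S~\ref{sec:geo} for $\hgon^\vee$ really do match $\{s_i^\vee\}$. Once this accounting is done, no new geometric or categorical input is needed.
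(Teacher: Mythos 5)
Your argument is correct and matches the paper's intent: the paper states this lemma as an easy observation with no written proof, relying precisely on the mirrored conventions (the order \eqref{eq:order2}, upward meaning $\arg\in(0,\pi]$, and $\h^\vee=\sli^\vee(0,1]$) that you transport explicitly. Your key step --- that mirroring flips the $\arg$-inequality in each contractible triangle and hence reverses the direction of every essential oriented intersection, while the filtration and $\Hom$-vanishing arguments of \S~\ref{sec:geo} rerun verbatim to identify the Ext-quiver of $\h^\vee$ --- is exactly the intended justification, illustrated by the two mirror $10$-gons in Figure~\ref{fig:Dn-1}.
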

Note that by this lemma,
one can also easily reduce Case~$($b$')$ to Case~$($b$)$ using the mirror operation.

\begin{remark}
As there are only finitely many indecomposable objects in $\Dwq(Q)/[2]$,
then $\sli(\theta)=\emptyset$ for $0<\theta\ll1$.
This implies that
\[
    \sli(0,1] = \sli[\theta,1+\theta),
\]
which equals the canonical heart of $\theta\cdot\sigma$ (the $\CC$-action).
Thus, in the setting of Lemma~\ref{lem:ob},
if one combines the mirror operation with a rotation smaller enough,
the effect on the Ext-quiver of the canonical heart would be precisely taking opposite quiver.
\end{remark}

Next, we have another important observation.

\textbf{Observation:} Suppose that
we have constructed a stable $h$-gon $\hgon$ of type $E_n$, where $n\in\{6,7,8\}$, with the intersection quiver
\begin{gather}\label{eq:En2}
\begin{tikzpicture}[scale=.5,xscale=1,ar/.style={->,thick,>=stealth}]
\draw(0,0)node(v1){$s_1$}(2,0)node(v2){$s_2$}(4,0)node(v4){$s_4$}(6,0)node(v5){$s_5$}(8.25,0)node(v6){$\cdots$}(10.5,0)node(vn){$s_n$}
    (4,2)node(v3){$s_3$};
\draw[-,thick](v2)edge(v1)edge(v4);
\draw[-,thick](v5)edge(v6)edge(v4);
\draw[ar](v6)to(vn);
\draw[ar](v2)to(v1);
\draw[ar](v4)to(v3);
\end{tikzpicture}
\end{gather}
such that
\begin{gather}\label{eq:arg=0}
    \arg Z_{\hgon}(S_1)=\arg Z_{\hgon}(S_3)=\arg Z_{\hgon}(S_n)=0,
\end{gather}
where $S_j=\XX(s_j)$ for $1\le j\le 8$.
Then, by a small deformation, $\arg Z_{\hgon}(S_i)$ can become near $\pi$
for any leaf vertex $i\in\{1,3,n\}$ and thus reverse the arrow at $S_i$.
Note that since the space $\Sth(Q)$ of stable $h$-gons of type $Q$ is open, such small deformation is permitted.

\begin{proposition}\label{prop:E-quiver to polygon}
For any quiver $Q$ of type $E_n$, where $n\in\{6,7,8\}$, there is a stable $h$-gon $\hgon$ of type $E_n$ such that $\hh{Q}^{1-?}(\hgon)\cong Q$.
\end{proposition}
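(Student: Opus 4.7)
The plan is to combine the Observation above with the mirror Lemma~\ref{lem:ob} to reduce the proposition to a finite list of explicit base examples. The underlying diagram $E_n$ has $n-1$ edges and hence $2^{n-1}$ orientations; three of these edges are incident to the leaves $s_1$, $s_3$, $s_n$, while the remaining $n-4$ edges form the ``spine'' $s_2-s_4-s_5-\cdots-s_{n-1}$. Starting from any stable $h$-gon $\hgon$ whose intersection quiver has the form \eqref{eq:En2} and satisfies $\arg Z_\hgon(S_j)=0$ for $j\in\{1,3,n\}$, the Observation produces $2^3=8$ further stable $h$-gons realising every choice of leaf orientations while keeping the spine fixed. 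Applying the mirror $\hgon\mapsto\hgon^\vee$ and Lemma~\ref{lem:ob} reverses all arrows; a subsequent round of leaf flips restores the leaves, producing the opposite spine orientation with any desired leaf configuration. Hence it suffices to construct one base example per equivalence class of spine orientations modulo total reversal, i.e.\ $2^{n-4}/2=2^{n-5}$ examples (two for $E_6$, four for $E_7$, eight for $E_8$).

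To build these base examples I would follow the blueprint already used in \S~\ref{s4.3} and illustrated in Figures~\ref{fig:E6sjx}, \ref{fig:polygon-E7}, \ref{fig:polygon-E8}. Namely, I would choose the small polygon $\mathbf{P}_{n-3}$ together with the triangles $\sjx_\pm$ so that the narrow $n$-gon $\agon_n$ has its $\imz$-Ind diagonals $s_1$, $s_3$ and $s_n$ perfectly horizontal, forcing $\arg Z(S_j)=0$ for $j\in\{1,3,n\}$, while the slopes of $s_2, s_4, s_5,\ldots, s_{n-1}$ are arranged in the strict vertical order dictated by the prescribed spine orientation. The remaining vertices of $\hgon$ are then determined up to a finite choice by the exceptional relations \eqref{eq:E6-rel}--\eqref{eq:E8-rel}, and one verifies positive convexity together with the containment of the $W_j$'s inside the level-$(n-3)$ diagonal-gon, so that $\hgon\in\Sth(E_n)$.

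The main obstacle is the rigidity of the exceptional linear relations: an arbitrary choice of $\mathbf{P}_{n-3}$ and $\sjx_\pm$ generally does not extend to a positively convex $h$-gon satisfying those relations. To circumvent this I would exploit the fact, supplied by Theorem~\ref{thm:iso-mfd}, that $\Sth(E_n)$ is an open complex manifold. Starting from a nearly symmetric baseline configuration and perturbing along its free parameters, the slopes of $s_2, s_4,\ldots, s_{n-1}$ can be tuned independently, because the slope-order condition defining each prescribed spine orientation cuts out a non-empty Zariski-open subset of $\Sth(E_n)$. The proposition thus reduces to a finite explicit check, carried out once for each of the $2^{n-5}$ spine-orientation classes; together with the leaf-flip and mirror maneuvers of the first paragraph this covers all $2^{n-1}$ orientations of $E_n$.
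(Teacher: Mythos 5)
Your reduction in the first paragraph is essentially the paper's: Lemma~\ref{lem:ob} (the mirror) accounts for one global reversal, the Observation flips the three leaf arrows one at a time, and what remains is the set of spine orientations modulo reversal, i.e.\ $2^{n-5}$ base examples --- for $n=8$ exactly the eight quivers \eqref{eq:En3} subject to \eqref{eq:arg=0}. (The paper phrases the quotient by reversal as fixing the orientation of the arrow between $s_2$ and $s_4$, and it only builds the $E_8$ examples, handling $E_6$ and $E_7$ by restriction to full subcategories rather than by separate lists; both routes are fine, and your mirror-then-flip ordering works because the mirror preserves horizontality of the leaf diagonals.)

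The gap is in the second half. The entire substance of the proposition is the existence of those $2^{n-5}$ base polygons, and your argument for their existence is circular. You assert that the slopes of $s_2,s_4,\dots,s_{n-1}$ ``can be tuned independently, because the slope-order condition \dots cuts out a non-empty Zariski-open subset of $\Sth(E_n)$'' --- but openness is the easy part (each slope order is a strict-inequality condition on the open set $\Sth(E_n)$), while non-emptiness is precisely what has to be proved. The relations \eqref{eq:E6-rel}--\eqref{eq:E8-rel} tie all the edge vectors together, so it is not a priori clear that every prescribed order of the imaginary parts of the spine simples is realized by a positively convex $h$-gon with all $W_j$ inside the level-$(n-3)$ diagonal-gon; the local independence of the central charges coming from Theorem~\ref{thm:iso-mfd} only lets you perturb near an existing point of $\Sth(E_n)$, not reach an arbitrary slope order while remaining stable. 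You concede that ``a finite explicit check'' remains, but that check \emph{is} the proof: the paper discharges it by exhibiting eight explicit stable $30$-gons with concrete coordinates in Appendix~\ref{app:E}. Without producing these examples (or a genuine non-emptiness argument replacing them), the proposal does not establish the proposition.
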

\begin{proof}
By Lemma~\ref{lem:ob}, we can fix the orientation of a chosen arrow (say between vertex 2 and 4).
Further, by the observation above, we only need to
construct $2^3=8$ stable $30$-gons $\hgon$ with the following intersection quivers:
\begin{gather}\label{eq:En3}
\begin{tikzpicture}[scale=.5,xscale=1,ar/.style={->,thick,>=stealth}]
\draw(0,0)node(v1){$s_1$}(2,0)node(v2){$s_2$}(4,0)node(v4){$s_4$}(6,0)node(v5){$s_5$}(8,0)node(v6){$s_6$}
(10,0)node(v7){$s_7$}(12,0)node(v8){$s_8$}
    (4,2)node(v3){$s_3$};
\draw[ar](v2)edge(v1);
\draw[ar](v4)edge(v2);
\draw[ar](v4)edge(v3);
\draw[ar](v7)edge(v8);
\draw[-,thick](v4)to(v5)(v5)to(v6)(v6)to(v7);
\end{tikzpicture}
\end{gather}
satisfying \eqref{eq:arg=0}.
All possibilities of those $8$ stable $30$-gons are shown in Appendix~\ref{app:E}.
\end{proof}
\subsection{Summary}
Combining Propositions~\ref{prop:An-quiver to polygon}~\ref{prop:Dn-quiver to polygon} and~\ref{prop:E-quiver to polygon},
we verify Reineke's conjecture:
\begin{theorem}\label{thm:R's}
For any Dynkin quiver $Q$,
there is a stability function on $\h(Q)$ such that any indecomposable is stable.
\end{theorem}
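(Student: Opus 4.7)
The plan is to assemble the machinery built up in the previous sections into a short synthesis, where essentially all the heavy lifting has already been done. Given any Dynkin quiver $Q$, I would first invoke the appropriate construction proposition according to the type of the underlying diagram: Proposition~\ref{prop:An-quiver to polygon} for type $A_n$, Proposition~\ref{prop:Dn-quiver to polygon} for type $D_n$, or Proposition~\ref{prop:E-quiver to polygon} for types $E_6, E_7, E_8$. Each of these yields a stable $h_Q$-gon $\hgon$ of the corresponding type whose intersection quiver satisfies $\hh{Q}^{1-?}(\hgon)\cong Q$.

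Next I would pass from $\hgon$ to a total stability condition via Theorem~\ref{thm:iso-mfd}: since $\Sth(Q)\cong\ToSt(Q)/[2]$, the stable $h_Q$-gon $\hgon$ determines a total stability condition $\sigma\in\ToSt(Q)$ with $\hgon_\sigma=\hgon$ (up to shift, which is harmless). Applying the geometric model theorem for hearts (Theorems~\ref{thm:An}, \ref{thm:Dn}, \ref{thm:En}), the heart $\h_\sigma$ is equivalent to $\mod\k\hh{Q}^{1-?}(\hgon)\cong\mod\k Q=\h(Q)$ via the bijection $\XX$ sending $\imz$-ind diagonals to simples.

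Now the central charge of $\sigma$ restricts to a group homomorphism $Z\colon K\h_\sigma\to\CC$ which, under the equivalence $\h_\sigma\cong\h(Q)$, gives the required stability function on $\h(Q)$. Finally, by definition of $\ToSt(Q)$, every indecomposable object of $\Dwq(Q)$ is $\sigma$-stable; those whose phase lies in $[0,1)$ (equivalently, those corresponding to upward admissible diagonals of $\hgon$ by Proposition~\ref{prop:obj-in-heart}) are precisely the indecomposables of $\h_\sigma$, so each one is $Z$-stable in $\h_\sigma$ and hence in $\h(Q)$.

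The main obstacle was really in the construction propositions; the present theorem is essentially a formal consequence, and the only thing one has to be careful about is bookkeeping: matching the orientation conventions of $\hh{Q}^{1-?}(\hgon)$ (which encodes arrows via intersection indices via the simple-projective duality used in the proof of Theorem~\ref{thm:An}) with the given orientation of $Q$, and confirming that stability of $\sigma$ in the triangulated sense specialises to Rudakov/King stability for the induced stability function $Z$ on the heart $\h_\sigma$. The latter compatibility is standard and was recalled in Section~1.1, so no new argument is needed.
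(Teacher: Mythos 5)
Your proposal is correct and follows essentially the same route as the paper, whose own proof is simply the one-line synthesis ``Combining Propositions~\ref{prop:An-quiver to polygon}, \ref{prop:Dn-quiver to polygon} and~\ref{prop:E-quiver to polygon}, we verify Reineke's conjecture''; you have merely made explicit the intermediate steps (passing through Theorem~\ref{thm:iso-mfd} and Theorems~\ref{thm:An}, \ref{thm:Dn}, \ref{thm:En}) that the authors leave implicit.
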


%

%
%

Final remark is that all the examples in the exceptional cases are produce by
\href{https://www.geogebra.org/}{GeoGebra}
where the precise coordinate information is in the tex/tikz code.

\appendix
\section{List of examples of stable $30$-gons in type $E_8$}\label{app:E}
\begin{example}
Figures~\ref{fig:E8-1} and~\ref{fig:E8-2} give the eight examples of stable 30-gon $\hgon$,
in which we only draw the partial part containing $\Sim\hgon$ such that the associated intersection quiver $\hh{Q}(\hgon)$ can be read.
More precisely, the intersection quiver is of the form
\begin{gather}\label{eq:En4}
\begin{tikzpicture}[scale=.5,xscale=1,ar/.style={->,thick,>=stealth}]
\draw(0,0)node(v1){$s_1$}(2,0)node(v2){$s_2$}(4,0)node(v4){$s_4$}(6,0)node(v5){$s_5$}(8,0)node(v6){$s_6$}
(10,0)node(v7){$s_7$}(12,0)node(v8){$s_8$}
    (4,2)node(v3){$s_3$};
\draw[ar](v7)edge(v8);
\draw[ar](v2)edge(v1);
\draw[ar](v4)edge(v3);
\draw[ar](v4)edge(v2);
\draw[black,font=\tiny]
    ($(v4)!.5!(v5)$) node[above] {$a_5$}
    ($(v5)!.5!(v6)$) node[above] {$a_6$}
    ($(v6)!.5!(v7)$) node[above] {$a_7$};
\draw[black,thick]
    (v4)to(v5)
    (v5)to(v6)
    (v6)to(v7);
\end{tikzpicture}
\end{gather}
with $\epsilon_i$ denoting the sign of the orientation of $a_i$ for $5\le i\le 7$ as in \eqref{eq:eps}.
And we have the sign labeling $\epsilon_5\epsilon_6\epsilon_7$ with the following eight possibilities:
\begin{itemize}
  \item $---,+--,-+-,--+$ for the pictures in Figure~\ref{fig:E8-1}, respectively, and
  \item $++-,+-+,-++,+++$ for the pictures in Figure~\ref{fig:E8-2}, respectively.
\end{itemize}

\begin{figure}[ht]\centering
\makebox[\textwidth][c]{
\definecolor{rvwvcq}{rgb}{0.08235294117647059,0.396078431372549,0.7529411764705882}
\definecolor{ffxfqq}{rgb}{1.,0.4980392156862745,0.}
\definecolor{qqffff}{rgb}{0.,1.,1.}
\definecolor{qqffqq}{rgb}{0.,1.,0.}
\definecolor{ffqqqq}{rgb}{1.,0.,0.}
\definecolor{ttffcc}{rgb}{0.2,1.,0.8}
\definecolor{uuuuuu}{rgb}{0.26666666666666666,0.26666666666666666,0.26666666666666666}
\definecolor{zzttqq}{rgb}{0.6,0.2,0.}
\definecolor{aqaqaq}{rgb}{0.6274509803921569,0.6274509803921569,0.6274509803921569}
\definecolor{xfqqff}{rgb}{0.4980392156862745,0.,1.}
\definecolor{ududff}{rgb}{0.30196078431372547,0.30196078431372547,1.}
\definecolor{ffwwqq}{rgb}{1.,0.4,0.}
\definecolor{ttffqq}{rgb}{0.2,1.,0.}
\definecolor{qqzzff}{rgb}{0.,0.6,1.}
\definecolor{qqqqff}{rgb}{0.08235294117647059,0.396078431372549,0.7529411764705882}

\centering
\begin{tikzpicture}[line cap=round,line join=round,arrow/.style={->,>=stealth,thick},scale=.8,rotate=3]
\clip(-3.1,-0.4718120250644906) rectangle (11.783935758155993,15.839620174270646);
\fill[line width=1.2pt,dotted,color=ffxfqq,fill=ffxfqq!5] (3.5061961158567763,1.4834932259945088) -- (5.475976115856779,2.048223225994509) -- (6.911022593073872,3.692956985899212) -- (7.891055574020006,5.672889935910688) -- (8.414584166485634,7.918859776195598) -- (7.758085503194081,10.395289861398567) -- (5.999346839902525,12.306989946601536) -- (3.960324751165938,13.120949296423401) -- (1.913698273948845,13.120945536518697) -- (-0.08895031851678414,12.519705696233787) -- (-1.6098303185167868,10.874975696233788) -- (-1.8208716552252326,8.39854561103082) -- (-1.2168960437057388,5.939852501304252) -- (-0.1503025474347801,3.9599233111974783) -- (1.3288174525652208,2.3151933111974787) -- cycle;
\fill[line width=1.2pt,color=qqffff,fill=qqffff!5] (0.9603247511659356,13.120949296423401) -- (2.930104751165938,13.6856792964234) -- (5.107483414457493,12.85397921122043) -- (6.586603414457494,11.209249211220431) -- (7.653196910728453,9.229320021113658) -- (8.257172522247947,6.770626911387091) -- (8.046131185539501,4.2941968261841215) -- (6.525251185539498,2.649466826184122) -- (4.522602593073869,2.048226985899212) -- (2.4759761158567763,2.048223225994509) -- (0.4369540271201888,2.862182575816373) -- (-1.3217846361713672,4.773882661019343) -- (-1.9782832994629196,7.250312746222312) -- (-1.4547547069972921,9.49628258650722) -- (-0.4747217260511576,11.476215536518698) -- cycle;

\fill[line width=1.2pt,color=qqffqq,fill=qqffqq,fill opacity=0.5] (0.5061961158567739,1.4834932259945088) -- (2.4759761158567763,2.048223225994509) -- (4.522602593073869,2.048226985899212) -- cycle;
\fill[line width=1.2pt,color=qqffqq,fill=qqffqq,fill opacity=0.5] (3.5061961158567763,1.4834932259945088) -- (5.475976115856779,2.048223225994509) -- (7.522602593073872,2.048226985899212) -- cycle;

\fill[fill=qqffff,fill opacity=0.4] (1.5035475233911475,0.8822533857095992) -- (3.0875561158567764,0.4034932259945089) -- (3.9550961158567763,0.4034932259945089) -- (5.545218204593364,0.6695338761726454) -- (6.699981256365425,1.2165269006962423) -- cycle;

\draw [line width=1.2pt,color=qqzzff] (3.0875561158567764,0.4034932259945089)-- (2.4759761158567763,2.048223225994509);
\draw [line width=1.2pt,color=qqzzff] (2.4759761158567763,2.048223225994509)-- (1.5035475233911475,0.8822533857095992);
\draw [line width=1.2pt,color=ttffqq] (3.9550961158567763,0.4034932259945089)-- (3.5061961158567763,1.4834932259945088);
\draw [line width=1.2pt,color=ttffqq] (3.5061961158567763,1.4834932259945088)-- (3.0875561158567764,0.4034932259945089);
\draw [line width=1.2pt,color=ffwwqq] (5.545218204593364,0.6695338761726454)-- (4.522602593073869,2.048226985899212);
\draw [line width=1.2pt,color=ffwwqq] (4.522602593073869,2.048226985899212)-- (3.9550961158567763,0.4034932259945089);
\draw [line width=1.2pt,color=xfqqff] (6.699981256365425,1.2165269006962423)-- (7.522602593073872,2.048226985899212);
\draw [line width=1.2pt,color=xfqqff] (7.522602593073872,2.048226985899212)-- (6.525251185539498,2.649466826184122);
\draw [line width=1.2pt,color=xfqqff] (6.525251185539498,2.649466826184122)-- (6.699981256365425,1.2165269006962423);
\draw [line width=1.2pt,color=xfqqff] (1.3288174525652208,2.3151933111974787)-- (1.5035475233911475,0.8822533857095992);
\draw [line width=1.2pt,color=xfqqff] (1.5035475233911475,0.8822533857095992)-- (0.5061961158567739,1.4834932259945088);
\draw [line width=1.2pt,color=xfqqff] (0.5061961158567739,1.4834932259945088)-- (1.3288174525652208,2.3151933111974787);
\draw [line width=1.2pt,color=ttffcc] (6.699981256365425,1.2165269006962423)-- (5.475976115856779,2.048223225994509);
\draw [line width=1.2pt,color=ttffcc] (5.475976115856779,2.048223225994509)-- (5.545218204593364,0.6695338761726454);
\draw [line width=1.2pt,color=qqzzff] (6.911022593073872,3.692956985899212)-- (7.522602593073872,2.048226985899212);
\draw [line width=1.2pt,color=qqzzff] (7.522602593073872,2.048226985899212)-- (8.495031185539501,3.214196826184122);
\draw [line width=1.2pt,color=qqzzff] (8.495031185539501,3.214196826184122)-- (6.911022593073872,3.692956985899212);
\draw [line width=1.2pt,color=ttffqq] (8.495031185539501,3.214196826184122)-- (8.046131185539501,4.2941968261841215);
\draw [line width=1.2pt,color=ttffqq] (8.046131185539501,4.2941968261841215)-- (8.913671185539501,4.2941968261841215);
\draw [line width=1.2pt,color=ttffqq] (8.913671185539501,4.2941968261841215)-- (8.495031185539501,3.214196826184122);
\draw [line width=1.2pt,color=ffwwqq] (7.891055574020006,5.672889935910688)-- (9.481177662756593,5.938930586088825);
\draw [line width=1.2pt,color=ffwwqq] (9.481177662756593,5.938930586088825)-- (8.913671185539501,4.2941968261841215);
\draw [line width=1.2pt,color=ffwwqq] (8.913671185539501,4.2941968261841215)-- (7.891055574020006,5.672889935910688);
\draw [line width=1.2pt,color=ttffcc] (8.257172522247947,6.770626911387091)-- (9.411935574020008,7.317619935910688);
\draw [line width=1.2pt,color=ttffcc] (9.411935574020008,7.317619935910688)-- (9.481177662756593,5.938930586088825);
\draw [line width=1.2pt,color=ttffcc] (9.481177662756593,5.938930586088825)-- (8.257172522247947,6.770626911387091);
\draw [line width=1.2pt,color=xfqqff] (9.237205503194081,8.750559861398568)-- (9.411935574020008,7.317619935910688);
\draw [line width=1.2pt,color=xfqqff] (9.411935574020008,7.317619935910688)-- (8.414584166485634,7.918859776195598);
\draw [line width=1.2pt,color=xfqqff] (8.414584166485634,7.918859776195598)-- (9.237205503194081,8.750559861398568);
\draw [line width=1.2pt,color=qqzzff] (7.653196910728453,9.229320021113658)-- (9.237205503194081,8.750559861398568);
\draw [line width=1.2pt,color=qqzzff] (9.237205503194081,8.750559861398568)-- (8.625625503194081,10.395289861398567);
\draw [line width=1.2pt,color=qqzzff] (8.625625503194081,10.395289861398567)-- (7.653196910728453,9.229320021113658);
\draw [line width=1.2pt,color=ttffqq] (7.758085503194081,10.395289861398567)-- (8.625625503194081,10.395289861398567);
\draw [line width=1.2pt,color=ttffqq] (8.625625503194081,10.395289861398567)-- (8.176725503194081,11.475289861398567);
\draw [line width=1.2pt,color=ttffqq] (8.176725503194081,11.475289861398567)-- (7.758085503194081,10.395289861398567);
\draw [line width=1.2pt,color=ffwwqq] (6.586603414457494,11.209249211220431)-- (8.176725503194081,11.475289861398567);
\draw [line width=1.2pt,color=ffwwqq] (8.176725503194081,11.475289861398567)-- (7.154109891674587,12.853982971125134);
\draw [line width=1.2pt,color=ffwwqq] (7.154109891674587,12.853982971125134)-- (6.586603414457494,11.209249211220431);
\draw [line width=1.2pt,color=ttffcc] (5.999346839902525,12.306989946601536)-- (7.154109891674587,12.853982971125134);
\draw [line width=1.2pt,color=ttffcc] (7.154109891674587,12.853982971125134)-- (5.93010475116594,13.6856792964234);
\draw [line width=1.2pt,color=ttffcc] (5.93010475116594,13.6856792964234)-- (5.999346839902525,12.306989946601536);
\draw [line width=1.2pt,color=qqzzff] (4.932753343631567,14.28691913670831)-- (3.348744751165938,14.7656792964234);
\draw [line width=1.2pt,color=qqzzff] (3.348744751165938,14.7656792964234)-- (3.960324751165938,13.120949296423401);
\draw [line width=1.2pt,color=qqzzff] (3.960324751165938,13.120949296423401)-- (4.932753343631567,14.28691913670831);
\draw [line width=1.2pt,color=ttffqq] (3.348744751165938,14.7656792964234)-- (2.481204751165938,14.7656792964234);
\draw [line width=1.2pt,color=ttffqq] (2.481204751165938,14.7656792964234)-- (2.930104751165938,13.6856792964234);
\draw [line width=1.2pt,color=ttffqq] (2.930104751165938,13.6856792964234)-- (3.348744751165938,14.7656792964234);
\draw [line width=1.2pt,color=ffwwqq] (2.481204751165938,14.7656792964234)-- (0.8910826624293504,14.499638646245264);
\draw [line width=1.2pt,color=ffwwqq] (0.8910826624293504,14.499638646245264)-- (1.913698273948845,13.120945536518697);
\draw [line width=1.2pt,color=ffwwqq] (1.913698273948845,13.120945536518697)-- (2.481204751165938,14.7656792964234);
\draw [line width=1.2pt,color=xfqqff] (-0.2636803893427109,13.952645621721667)-- (-1.0863017260511576,13.120945536518697);
\draw [line width=1.2pt,color=xfqqff] (-1.0863017260511576,13.120945536518697)-- (-0.08895031851678414,12.519705696233787);
\draw [line width=1.2pt,color=xfqqff] (-0.08895031851678414,12.519705696233787)-- (-0.2636803893427109,13.952645621721667);
\draw [line width=1.2pt,color=xfqqff] (5.107483414457493,12.85397921122043)-- (4.932753343631567,14.28691913670831);
\draw [line width=1.2pt,color=xfqqff] (4.932753343631567,14.28691913670831)-- (5.93010475116594,13.6856792964234);
\draw [line width=1.2pt,color=xfqqff] (5.93010475116594,13.6856792964234)-- (5.107483414457493,12.85397921122043);
\draw [line width=1.2pt,color=ttffcc] (0.8910826624293504,14.499638646245264)-- (-0.2636803893427109,13.952645621721667);
\draw [line width=1.2pt,color=ttffcc] (-0.2636803893427109,13.952645621721667)-- (0.9603247511659356,13.120949296423401);
\draw [line width=1.2pt,color=ttffcc] (0.9603247511659356,13.120949296423401)-- (0.8910826624293504,14.499638646245264);
\draw [line width=1.2pt,color=qqzzff] (-0.4747217260511576,11.476215536518698)-- (-1.0863017260511576,13.120945536518697);
\draw [line width=1.2pt,color=qqzzff] (-1.0863017260511576,13.120945536518697)-- (-2.058730318516787,11.954975696233788);
\draw [line width=1.2pt,color=qqzzff] (-2.058730318516787,11.954975696233788)-- (-0.4747217260511576,11.476215536518698);
\draw [line width=1.2pt,color=ttffqq] (-2.058730318516787,11.954975696233788)-- (-1.6098303185167868,10.874975696233788);
\draw [line width=1.2pt,color=ttffqq] (-1.6098303185167868,10.874975696233788)-- (-2.477370318516787,10.874975696233788);
\draw [line width=1.2pt,color=ttffqq] (-2.477370318516787,10.874975696233788)-- (-2.058730318516787,11.954975696233788);
\draw [line width=1.2pt,color=ffwwqq] (-1.4547547069972921,9.49628258650722)-- (-3.044876795733879,9.230241936329085);
\draw [line width=1.2pt,color=ffwwqq] (-3.044876795733879,9.230241936329085)-- (-2.477370318516787,10.874975696233788);
\draw [line width=1.2pt,color=ffwwqq] (-2.477370318516787,10.874975696233788)-- (-1.4547547069972921,9.49628258650722);
\draw [line width=1.2pt,color=ttffcc] (-1.8208716552252326,8.39854561103082)-- (-2.975634706997294,7.851552586507221);
\draw [line width=1.2pt,color=ttffcc] (-2.975634706997294,7.851552586507221)-- (-3.044876795733879,9.230241936329085);
\draw [line width=1.2pt,color=ttffcc] (-3.044876795733879,9.230241936329085)-- (-1.8208716552252326,8.39854561103082);
\draw [line width=1.2pt,color=xfqqff] (-2.800904636171367,6.418612661019342)-- (-2.975634706997294,7.851552586507221);
\draw [line width=1.2pt,color=xfqqff] (-2.975634706997294,7.851552586507221)-- (-1.9782832994629196,7.250312746222312);
\draw [line width=1.2pt,color=xfqqff] (-1.9782832994629196,7.250312746222312)-- (-2.800904636171367,6.418612661019342);
\draw [line width=1.2pt,color=qqzzff] (-1.2168960437057388,5.939852501304252)-- (-2.800904636171367,6.418612661019342);
\draw [line width=1.2pt,color=qqzzff] (-2.800904636171367,6.418612661019342)-- (-2.189324636171367,4.773882661019343);
\draw [line width=1.2pt,color=qqzzff] (-2.189324636171367,4.773882661019343)-- (-1.2168960437057388,5.939852501304252);
\draw [line width=1.2pt,color=ttffqq] (-1.3217846361713672,4.773882661019343)-- (-2.189324636171367,4.773882661019343);
\draw [line width=1.2pt,color=ttffqq] (-2.189324636171367,4.773882661019343)-- (-1.7404246361713671,3.6938826610193427);
\draw [line width=1.2pt,color=ttffqq] (-1.7404246361713671,3.6938826610193427)-- (-1.3217846361713672,4.773882661019343);
\draw [line width=1.2pt,color=ffwwqq] (-0.1503025474347801,3.9599233111974783)-- (-1.7404246361713671,3.6938826610193427);
\draw [line width=1.2pt,color=ffwwqq] (-1.7404246361713671,3.6938826610193427)-- (-0.7178090246518725,2.315189551292775);
\draw [line width=1.2pt,color=ffwwqq] (-0.7178090246518725,2.315189551292775)-- (-0.1503025474347801,3.9599233111974783);
\draw [line width=1.2pt,color=ttffcc] (0.4369540271201888,2.862182575816373)-- (-0.7178090246518725,2.315189551292775);
\draw [line width=1.2pt,color=ttffcc] (-0.7178090246518725,2.315189551292775)-- (0.506196115856774,1.4834932259945095);
\draw [line width=1.2pt,color=ttffcc] (0.506196115856774,1.4834932259945095)-- (0.4369540271201888,2.862182575816373);

\draw [line width=1.2pt,color=gray!50] (0.5061961158567739,1.4834932259945088)-- (2.4759761158567763,2.048223225994509);
\draw [line width=1.2pt,color=qqffqq] (2.4759761158567763,2.048223225994509)-- (4.522602593073869,2.048226985899212);
\draw [line width=1.2pt,color=gray!50] (4.522602593073869,2.048226985899212)-- (0.5061961158567739,1.4834932259945088);

\draw [line width=1.2pt,color=gray!50] (7.522602593073872,2.048226985899212)-- (3.5061961158567763,1.4834932259945088);

\draw [line width=1.2pt,color=qqzzff] (1.5035475233911475,0.8822533857095992)-- (3.0875561158567764,0.4034932259945089);
\draw [line width=1.2pt,color=qqffqq] (3.0875561158567764,0.4034932259945089)-- (3.9550961158567763,0.4034932259945089);
\draw [line width=1.2pt,color=ffwwqq] (3.9550961158567763,0.4034932259945089)-- (5.545218204593364,0.6695338761726454);
\draw [line width=1.2pt,color=gray] (5.545218204593364,0.6695338761726454)-- (6.699981256365425,1.2165269006962423);
\draw [line width=1.2pt,color=gray!50] (6.699981256365425,1.2165269006962423)-- (1.5035475233911475,0.8822533857095992);

\draw [line width=1.2pt,color=ffxfqq] (3.5061961158567763,1.4834932259945088)-- (5.475976115856779,2.048223225994509);
\draw [line width=1.2pt,color=ffxfqq] (5.475976115856779,2.048223225994509)-- (6.911022593073872,3.692956985899212);
\draw [line width=1.2pt,color=ffxfqq] (6.911022593073872,3.692956985899212)-- (7.891055574020006,5.672889935910688);
\draw [line width=1.2pt,color=ffxfqq] (7.891055574020006,5.672889935910688)-- (8.414584166485634,7.918859776195598);
\draw [line width=1.2pt,color=ffxfqq] (8.414584166485634,7.918859776195598)-- (7.758085503194081,10.395289861398567);
\draw [line width=1.2pt,color=ffxfqq] (7.758085503194081,10.395289861398567)-- (5.999346839902525,12.306989946601536);
\draw [line width=1.2pt,color=ffxfqq] (5.999346839902525,12.306989946601536)-- (3.960324751165938,13.120949296423401);
\draw [line width=1.2pt,color=ffxfqq] (3.960324751165938,13.120949296423401)-- (1.913698273948845,13.120945536518697);
\draw [line width=1.2pt,color=ffxfqq] (1.913698273948845,13.120945536518697)-- (-0.08895031851678414,12.519705696233787);
\draw [line width=1.2pt,color=ffxfqq] (-0.08895031851678414,12.519705696233787)-- (-1.6098303185167868,10.874975696233788);
\draw [line width=1.2pt,color=ffxfqq] (-1.6098303185167868,10.874975696233788)-- (-1.8208716552252326,8.39854561103082);
\draw [line width=1.2pt,color=ffxfqq] (-1.8208716552252326,8.39854561103082)-- (-1.2168960437057388,5.939852501304252);
\draw [line width=1.2pt,color=ffxfqq] (-1.2168960437057388,5.939852501304252)-- (-0.1503025474347801,3.9599233111974783);
\draw [line width=1.2pt,color=ffxfqq] (-0.1503025474347801,3.9599233111974783)-- (1.3288174525652208,2.3151933111974787);
\draw [line width=1.2pt,color=ffxfqq] (1.3288174525652208,2.3151933111974787)-- (3.5061961158567763,1.4834932259945088);
\draw [line width=1.2pt,color=qqffff] (0.9603247511659356,13.120949296423401)-- (2.930104751165938,13.6856792964234);
\draw [line width=1.2pt,color=qqffff] (2.930104751165938,13.6856792964234)-- (5.107483414457493,12.85397921122043);
\draw [line width=1.2pt,color=qqffff] (5.107483414457493,12.85397921122043)-- (6.586603414457494,11.209249211220431);
\draw [line width=1.2pt,color=qqffff] (6.586603414457494,11.209249211220431)-- (7.653196910728453,9.229320021113658);
\draw [line width=1.2pt,color=qqffff] (7.653196910728453,9.229320021113658)-- (8.257172522247947,6.770626911387091);
\draw [line width=1.2pt,color=qqffff] (8.257172522247947,6.770626911387091)-- (8.046131185539501,4.2941968261841215);
\draw [line width=1.2pt,color=qqffff] (8.046131185539501,4.2941968261841215)-- (6.525251185539498,2.649466826184122);
\draw [line width=1.2pt,color=qqffff] (6.525251185539498,2.649466826184122)-- (4.522602593073869,2.048226985899212);
\draw [line width=1.2pt,color=qqffff] (4.522602593073869,2.048226985899212)-- (2.4759761158567763,2.048223225994509);
\draw [line width=1.2pt,color=qqffff] (2.4759761158567763,2.048223225994509)-- (0.4369540271201888,2.862182575816373);
\draw [line width=1.2pt,color=qqffff] (0.4369540271201888,2.862182575816373)-- (-1.3217846361713672,4.773882661019343);
\draw [line width=1.2pt,color=qqffff] (-1.3217846361713672,4.773882661019343)-- (-1.9782832994629196,7.250312746222312);
\draw [line width=1.2pt,color=qqffff] (-1.9782832994629196,7.250312746222312)-- (-1.4547547069972921,9.49628258650722);
\draw [line width=1.2pt,color=qqffff] (-1.4547547069972921,9.49628258650722)-- (-0.4747217260511576,11.476215536518698);
\draw [line width=1.2pt,color=qqffff] (-0.4747217260511576,11.476215536518698)-- (0.9603247511659356,13.120949296423401);
\draw [line width=0.5pt,dashed,color=qqffff] (2.475976115856776,2.0482232259945086)-- (2.027076115856776,3.1282232259945086);
\draw [line width=0.5pt,dashed,color=qqffff] (2.027076115856776,3.1282232259945086)-- (0.4369540271201884,2.862182575816373);
\draw [line width=0.5pt,dashed,color=ffwwqq] (3.506196115856776,1.4834932259945095)-- (2.483580504337281,2.862186335721077);
\draw [line width=0.5pt,dashed,color=ffwwqq] (2.483580504337281,2.862186335721077)-- (1.3288174525652199,2.315193311197479);
\draw [line width=0.5pt,dashed,color=qqffff] (4.522602593073869,2.0482269858992126)-- (3.2985974525652226,2.8799233111974782);
\draw [line width=0.5pt,dashed,color=qqffff] (3.298597452565223,2.8799233111974782)-- (2.4759761158567763,2.0482232259945086);
\draw [line width=0.5pt,dashed,color=ffwwqq] (4.478624708322405,2.6494630662794183)-- (5.475976115856779,2.0482232259945086);
\draw [line width=0.5pt,dashed,color=ffwwqq] (4.478624708322405,2.6494630662794183)-- (3.506196115856776,1.4834932259945095);
\draw [line width=0.5pt,dashed,color=qqffff] (6.525251185539498,2.6494668261841223)-- (4.94124259307387,3.1282269858992127);
\draw [line width=0.5pt,dashed,color=qqffff] (4.52260259307387,2.0482269858992126)-- (4.94124259307387,3.1282269858992127);
\draw [line width=0.5pt,dashed,color=ffwwqq] (6.911022593073872,3.6929569858992117)-- (6.043482593073872,3.6929569858992117);
\draw [line width=0.5pt,dashed,color=ffwwqq] (5.4759761158567795,2.0482232259945086)-- (6.043482593073872,3.6929569858992117);
\draw [line width=0.5pt,dashed,color=qqffff] (8.046131185539501,4.2941968261841215)-- (6.456009096802913,4.028156176005986);
\draw [line width=0.5pt,dashed,color=qqffff] (6.525251185539499,2.6494668261841223)-- (6.456009096802914,4.028156176005986);
\draw [line width=0.5pt,dashed,color=ffwwqq] (7.891055574020006,5.672889935910687)-- (6.736292522247945,5.125896911387089);
\draw [line width=0.5pt,dashed,color=ffwwqq] (6.911022593073872,3.69295698589921)-- (6.736292522247945,5.125896911387089);
\draw [line width=0.5pt,dashed,color=qqffff] (8.257172522247949,6.770626911387091)-- (7.434551185539501,5.9389268261841215);
\draw [line width=0.5pt,dashed,color=qqffff] (7.434551185539501,5.9389268261841215)-- (8.046131185539501,4.294196826184122);
\draw [line width=0.5pt,dashed,color=ffwwqq] (8.414584166485634,7.918859776195598)-- (7.4421555740200045,6.752889935910689);
\draw [line width=0.5pt,dashed,color=ffwwqq] (7.4421555740200045,6.752889935910689)-- (7.891055574020005,5.672889935910689);
\draw [line width=0.5pt,dashed,color=qqffff] (7.234556910728454,8.149320021113658)-- (7.653196910728454,9.229320021113658);
\draw [line width=0.5pt,dashed,color=qqffff] (7.234556910728454,8.149320021113658)-- (8.257172522247949,6.77062691138709);
\draw [line width=0.5pt,dashed,color=ffwwqq] (7.190579025976989,8.750556101493864)-- (7.758085503194081,10.395289861398567);
\draw [line width=0.5pt,dashed,color=ffwwqq] (7.190579025976989,8.750556101493864)-- (8.414584166485636,7.918859776195598);
\draw [line width=0.5pt,dashed,color=ffwwqq] (4.409224751165938,12.0409492964234)-- (5.999346839902525,12.306989946601536);
\draw [line width=0.5pt,dashed,color=ffwwqq] (3.9603247511659383,13.120949296423401)-- (4.409224751165938,12.0409492964234);
\draw [line width=0.5pt,dashed,color=ffwwqq] (3.137703414457491,12.289249211220431)-- (3.960324751165938,13.120949296423401);
\draw [line width=0.5pt,dashed,color=ffwwqq] (1.913698273948845,13.120945536518697)-- (3.1377034144574916,12.289249211220431);
\draw [line width=0.5pt,dashed,color=ffwwqq] (1.9136982739488442,13.120945536518697)-- (1.4950582739488443,12.040945536518697);
\draw [line width=0.5pt,dashed,color=ffwwqq] (-0.08895031851678414,12.519705696233787)-- (1.4950582739488443,12.040945536518697);
\draw [line width=0.5pt,dashed,color=ffwwqq] (-0.08895031851678503,12.519705696233787)-- (-0.019708229780199815,11.141016346411924);
\draw [line width=0.5pt,dashed,color=ffwwqq] (-1.6098303185167868,10.874975696233788)-- (-0.019708229780198927,11.141016346411924);
\draw [line width=0.5pt,dashed,color=ffwwqq] (-0.9982503185167868,9.230245696233787)-- (-1.6098303185167868,10.874975696233786);
\draw [line width=0.5pt,dashed,color=ffwwqq] (-1.8208716552252344,8.39854561103082)-- (-0.9982503185167868,9.230245696233787);
\draw [line width=0.5pt,dashed,color=ffwwqq] (-0.7982560437057398,7.019852501304252)-- (-1.8208716552252344,8.39854561103082);
\draw [line width=0.5pt,dashed,color=ffwwqq] (-0.7982560437057398,7.019852501304252)-- (-1.2168960437057397,5.939852501304252);
\draw [line width=0.5pt,dashed,color=qqffff] (3.952720362685433,12.306986186696832)-- (5.107483414457494,12.85397921122043);
\draw [line width=0.5pt,dashed,color=qqffff] (2.9301047511659384,13.6856792964234)-- (3.952720362685433,12.306986186696832);
\draw [line width=0.5pt,dashed,color=qqffff] (1.957676158700309,12.519709456138491)-- (2.9301047511659384,13.6856792964234);
\draw [line width=0.5pt,dashed,color=qqffff] (1.957676158700309,12.519709456138491)-- (0.9603247511659356,13.120949296423401);
\draw [line width=0.5pt,dashed,color=qqffff] (0.9603247511659347,13.120949296423401)-- (0.3928182739488424,11.476215536518698);
\draw [line width=0.5pt,dashed,color=qqffff] (-0.4747217260511576,11.476215536518698)-- (0.3928182739488424,11.476215536518698);
\draw [line width=0.5pt,dashed,color=qqffff] (-0.4747217260511576,11.4762155365187)-- (-0.29999165522523086,10.04327561103082);
\draw [line width=0.5pt,dashed,color=qqffff] (-1.4547547069972921,9.496282586507222)-- (-0.29999165522523086,10.04327561103082);
\draw [line width=0.5pt,dashed,color=qqffff] (-1.0058547069972903,8.41628258650722)-- (-1.4547547069972904,9.49628258650722);
\draw [line width=0.5pt,dashed,color=qqffff] (-1.9782832994629196,7.250312746222312)-- (-1.0058547069972903,8.41628258650722);
\draw [line width=0.5pt,dashed,color=qqffff] (-0.7542781589542749,6.418616420924046)-- (-1.9782832994629214,7.250312746222312);
\draw [line width=0.5pt,dashed,color=qqffff] (-0.7542781589542749,6.418616420924046)-- (-1.3217846361713672,4.773882661019343);
\draw [line width=0.5pt,dashed,color=qqffff] (6.6558455031940795,9.830559861398568)-- (6.586603414457494,11.209249211220431);
\draw [line width=0.5pt,dashed,color=qqffff] (7.653196910728453,9.229320021113658)-- (6.6558455031940795,9.830559861398568);
\draw [line width=0.5pt,dashed,color=ffxfqq] (6.174076910728452,10.874050021113657)-- (5.999346839902525,12.306989946601536);
\draw [line width=0.5pt,dashed,color=ffxfqq] (6.174076910728452,10.874050021113657)-- (7.758085503194081,10.395289861398567);
\draw [line width=0.5pt,dashed,color=qqffff] (5.107483414457493,12.85397921122043)-- (5.719063414457493,11.209249211220431);
\draw [line width=0.5pt,dashed,color=qqffff] (5.719063414457493,11.209249211220431)-- (6.586603414457493,11.209249211220431);
\draw [line width=0.5pt,dashed,color=ffxfqq] (-1.2168960437057388,5.939852501304252)-- (-0.21954463617136533,5.338612661019342);
\draw [line width=0.5pt,dashed,color=ffxfqq] (-0.21954463617136533,5.338612661019342)-- (-0.1503025474347801,3.9599233111974783);
\draw [line width=0.5pt,dashed,color=qqffff] (0.2622239562942621,4.295122501304252)-- (-1.3217846361713672,4.773882661019343);
\draw [line width=0.5pt,dashed,color=qqffff] (0.2622239562942621,4.295122501304252)-- (0.4369540271201888,2.862182575816373);
\draw [line width=0.5pt,dashed,color=ffxfqq] (1.3288174525652208,2.315193311197479)-- (0.7172374525652208,3.9599233111974783);
\draw [line width=0.5pt,dashed,color=ffxfqq] (0.7172374525652208,3.9599233111974783)-- (-0.15030254743477922,3.9599233111974783);

\draw [->,>=stealth,line width=2pt,color=ffqqqq] (0.6,1.4834932259945088) -- (3.49,1.4834932259945088);
\draw [->,>=stealth,line width=2pt,color=ffqqqq] (4.6,2.048226985899212) -- (7.5,2.048226985899212);

\begin{scriptsize}
\draw [fill=qqqqff] (1.5035475233911475,0.8822533857095992) circle (1.5pt);
\draw[color=qqqqff] (1.4004630598389611,0.5910339560092084) node {$V_1$};
\draw [fill=qqqqff] (3.0875561158567764,0.4034932259945089) circle (1.5pt);
\draw[color=qqqqff] (2.9621966543508016,0.1138378012414252) node {$V_2$};
\draw [fill=qqqqff] (2.4759761158567763,2.048223225994509) circle (1.5pt)node[color=qqqqff,above] {$W_1$};
\draw [fill=qqqqff] (3.9550961158567763,0.4034932259945089) circle (1.5pt);
\draw[color=qqqqff] (3.9382801509207015,0.09214706693379868) node {$V_3$};
\draw [fill=qqqqff] (3.5061961158567763,1.4834932259945088) circle (1.5pt) node[color=qqqqff,above] {$W_2$};
\draw [fill=qqqqff] (5.545218204593364,0.6695338761726454) circle (1.5pt);
\draw[color=qqqqff] (5.651848956010082,0.37412661293294336) node {$V_4$};
\draw [fill=qqqqff] (4.522602593073869,2.048226985899212) circle (1.5pt)node[color=qqqqff,above] {$W_3$};
\draw [fill=qqqqff] (6.699981256365425,1.2165269006962423) circle (1.5pt);
\draw[color=qqqqff] (6.823149151893961,0.8947042363159796) node {$V_5$};
\draw [fill=qqqqff] (0.5061961158567739,1.4834932259945088) circle (1.5pt);
\draw[color=qqqqff] (0.2725443526915209,1.1766837823151244) node {$V_0$};
\draw [fill=qqqqff] (7.522602593073872,2.048226985899212) circle (1.5pt);
\draw[color=qqqqff] (7.7124696709909815,1.76233360862104) node {$V_6$};
\draw [fill=ududff] (6.525251185539498,2.649466826184122) circle (1.5pt);
\draw [fill=ududff] (0.506196115856774,1.4834932259945088) circle (1.5pt);
\draw [fill=ududff] (1.3288174525652208,2.3151933111974787) circle (1.5pt);
\draw [fill=qqqqff] (5.475976115856779,2.048223225994509) circle (1.5pt)node[color=qqqqff,above] {$W_4$};
\draw [fill=ududff] (6.911022593073872,3.692956985899212) circle (1.5pt);
\draw [fill=ududff] (7.522602593073872,2.048226985899212) circle (1.5pt);
\draw [fill=ududff] (8.495031185539501,3.214196826184122) circle (1.5pt);
\draw [fill=ududff] (8.495031185539501,3.214196826184122) circle (1.5pt);
\draw [fill=ududff] (8.046131185539501,4.2941968261841215) circle (1.5pt);
\draw [fill=ududff] (8.913671185539501,4.2941968261841215) circle (1.5pt);
\draw [fill=ududff] (7.891055574020006,5.672889935910688) circle (1.5pt);
\draw [fill=ududff] (9.481177662756593,5.938930586088825) circle (1.5pt);
\draw [fill=ududff] (8.913671185539501,4.2941968261841215) circle (1.5pt);
\draw [fill=ududff] (8.257172522247947,6.770626911387091) circle (1.5pt);
\draw [fill=ududff] (9.411935574020008,7.317619935910688) circle (1.5pt);
\draw [fill=ududff] (9.481177662756593,5.938930586088825) circle (1.5pt);
\draw [fill=ududff] (9.237205503194081,8.750559861398568) circle (1.5pt);
\draw [fill=ududff] (9.411935574020008,7.317619935910688) circle (1.5pt);
\draw [fill=ududff] (8.414584166485634,7.918859776195598) circle (1.5pt);
\draw [fill=ududff] (7.653196910728453,9.229320021113658) circle (1.5pt);
\draw [fill=ududff] (9.237205503194081,8.750559861398568) circle (1.5pt);
\draw [fill=ududff] (8.625625503194081,10.395289861398567) circle (1.5pt);
\draw [fill=ududff] (7.758085503194081,10.395289861398567) circle (1.5pt);
\draw [fill=ududff] (8.625625503194081,10.395289861398567) circle (1.5pt);
\draw [fill=ududff] (8.176725503194081,11.475289861398567) circle (1.5pt);
\draw [fill=ududff] (6.586603414457494,11.209249211220431) circle (1.5pt);
\draw [fill=ududff] (8.176725503194081,11.475289861398567) circle (1.5pt);
\draw [fill=ududff] (7.154109891674587,12.853982971125134) circle (1.5pt);
\draw [fill=ududff] (5.999346839902525,12.306989946601536) circle (1.5pt);
\draw [fill=ududff] (7.154109891674587,12.853982971125134) circle (1.5pt);
\draw [fill=ududff] (5.93010475116594,13.6856792964234) circle (1.5pt);
\draw [fill=uuuuuu] (3.218150433511357,7.584586261208955) circle (2.0pt);
\draw [fill=ududff] (4.932753343631567,14.28691913670831) circle (1.5pt);
\draw [fill=ududff] (3.348744751165938,14.7656792964234) circle (1.5pt);
\draw [fill=ududff] (3.960324751165938,13.120949296423401) circle (1.5pt);
\draw [fill=ududff] (3.348744751165938,14.7656792964234) circle (1.5pt);
\draw [fill=ududff] (2.481204751165938,14.7656792964234) circle (1.5pt);
\draw [fill=ududff] (2.930104751165938,13.6856792964234) circle (1.5pt);
\draw [fill=ududff] (2.481204751165938,14.7656792964234) circle (1.5pt);
\draw [fill=ududff] (0.8910826624293504,14.499638646245264) circle (1.5pt);
\draw [fill=ududff] (1.913698273948845,13.120945536518697) circle (1.5pt);
\draw [fill=ududff] (-0.2636803893427109,13.952645621721667) circle (1.5pt);
\draw [fill=ududff] (-1.0863017260511576,13.120945536518697) circle (1.5pt);
\draw [fill=ududff] (-0.08895031851678414,12.519705696233787) circle (1.5pt);
\draw [fill=ududff] (5.107483414457493,12.85397921122043) circle (1.5pt);
\draw [fill=ududff] (4.932753343631567,14.28691913670831) circle (1.5pt);
\draw [fill=ududff] (0.8910826624293504,14.499638646245264) circle (1.5pt);
\draw [fill=ududff] (-0.2636803893427109,13.952645621721667) circle (1.5pt);
\draw [fill=ududff] (0.9603247511659356,13.120949296423401) circle (1.5pt);
\draw [fill=ududff] (-0.4747217260511576,11.476215536518698) circle (1.5pt);
\draw [fill=ududff] (-1.0863017260511576,13.120945536518697) circle (1.5pt);
\draw [fill=ududff] (-2.058730318516787,11.954975696233788) circle (1.5pt);
\draw [fill=ududff] (-2.058730318516787,11.954975696233788) circle (1.5pt);
\draw [fill=ududff] (-1.6098303185167868,10.874975696233788) circle (1.5pt);
\draw [fill=ududff] (-2.477370318516787,10.874975696233788) circle (1.5pt);
\draw [fill=ududff] (-1.4547547069972921,9.49628258650722) circle (1.5pt);
\draw [fill=ududff] (-3.044876795733879,9.230241936329085) circle (1.5pt);
\draw [fill=ududff] (-2.477370318516787,10.874975696233788) circle (1.5pt);
\draw [fill=ududff] (-1.8208716552252326,8.39854561103082) circle (1.5pt);
\draw [fill=ududff] (-2.975634706997294,7.851552586507221) circle (1.5pt);
\draw [fill=ududff] (-3.044876795733879,9.230241936329085) circle (1.5pt);
\draw [fill=ududff] (-2.800904636171367,6.418612661019342) circle (1.5pt);
\draw [fill=ududff] (-2.975634706997294,7.851552586507221) circle (1.5pt);
\draw [fill=ududff] (-1.9782832994629196,7.250312746222312) circle (1.5pt);
\draw [fill=ududff] (-1.2168960437057388,5.939852501304252) circle (1.5pt);
\draw [fill=ududff] (-2.800904636171367,6.418612661019342) circle (1.5pt);
\draw [fill=ududff] (-2.189324636171367,4.773882661019343) circle (1.5pt);
\draw [fill=ududff] (-1.3217846361713672,4.773882661019343) circle (1.5pt);
\draw [fill=ududff] (-2.189324636171367,4.773882661019343) circle (1.5pt);
\draw [fill=ududff] (-1.7404246361713671,3.6938826610193427) circle (1.5pt);
\draw [fill=ududff] (-0.1503025474347801,3.9599233111974783) circle (1.5pt);
\draw [fill=ududff] (-1.7404246361713671,3.6938826610193427) circle (1.5pt);
\draw [fill=ududff] (-0.7178090246518725,2.315189551292775) circle (1.5pt);
\draw [fill=ududff] (0.4369540271201888,2.862182575816373) circle (1.5pt);
\draw [fill=ududff] (-0.7178090246518725,2.315189551292775) circle (1.5pt);
\draw [fill=ududff] (0.506196115856774,1.4834932259945095) circle (1.5pt);
\draw [fill=ududff] (5.93010475116594,13.6856792964234) circle (1.5pt);
\draw [fill=ududff] (5.107483414457493,12.85397921122043) circle (1.5pt);
\draw[color=ffqqqq] (2.268092834567761,1.2) node {$s_{3,L}$};
\draw[color=qqffqq] (1.7,2.06) node {$\sjx_L$};
\draw[color=qqffqq] (5.8,1.5671169998524015) node {$\sjx_R$};
\draw[color=cyan] (4.784219181281282,0.3) node {$\wbx_{-}$};
\draw[color=ffqqqq] (6.16,2.28) node {$s_{3}$};
\draw [fill=ududff] (2.475976115856776,2.0482232259945086) circle (1.5pt);
\draw [fill=rvwvcq] (-6.490538400610113,14.57319898793836) circle (1.5pt);
\draw[color=rvwvcq] (-3.18712937403957,16.023991415885472) node {$B$};
\draw [fill=ududff] (0.4369540271201884,2.862182575816373) circle (1.5pt);
\draw [fill=ududff] (3.506196115856776,1.4834932259945095) circle (1.5pt);
\draw [fill=ududff] (1.3288174525652199,2.315193311197479) circle (1.5pt);
\draw [fill=ududff] (4.522602593073869,2.0482269858992126) circle (1.5pt);
\draw [fill=ududff] (2.4759761158567763,2.0482232259945086) circle (1.5pt);
\draw [fill=ududff] (3.506196115856776,1.4834932259945095) circle (1.5pt);
\draw [fill=ududff] (6.525251185539498,2.6494668261841223) circle (1.5pt);
\draw [fill=ududff] (4.52260259307387,2.0482269858992126) circle (1.5pt);
\draw [fill=ududff] (6.911022593073872,3.6929569858992117) circle (1.5pt);
\draw [fill=ududff] (5.4759761158567795,2.0482232259945086) circle (1.5pt);
\draw [fill=ududff] (8.046131185539501,4.2941968261841215) circle (1.5pt);
\draw [fill=ududff] (6.525251185539499,2.6494668261841223) circle (1.5pt);
\draw [fill=ududff] (7.891055574020006,5.672889935910687) circle (1.5pt);
\draw [fill=ududff] (6.911022593073872,3.69295698589921) circle (1.5pt);
\draw [fill=ududff] (8.257172522247949,6.770626911387091) circle (1.5pt);
\draw [fill=ududff] (8.046131185539501,4.294196826184122) circle (1.5pt);
\draw [fill=ududff] (8.414584166485634,7.918859776195598) circle (1.5pt);
\draw [fill=ududff] (7.891055574020005,5.672889935910689) circle (1.5pt);
\draw [fill=ududff] (7.653196910728454,9.229320021113658) circle (1.5pt);
\draw [fill=ududff] (8.257172522247949,6.77062691138709) circle (1.5pt);
\draw [fill=ududff] (7.758085503194081,10.395289861398567) circle (1.5pt);
\draw [fill=ududff] (8.414584166485636,7.918859776195598) circle (1.5pt);
\draw [fill=ududff] (5.999346839902525,12.306989946601536) circle (1.5pt);
\draw [fill=ududff] (3.9603247511659383,13.120949296423401) circle (1.5pt);
\draw [fill=ududff] (3.960324751165938,13.120949296423401) circle (1.5pt);
\draw [fill=ududff] (1.913698273948845,13.120945536518697) circle (1.5pt);
\draw [fill=ududff] (1.9136982739488442,13.120945536518697) circle (1.5pt);
\draw [fill=ududff] (-0.08895031851678414,12.519705696233787) circle (1.5pt);
\draw [fill=ududff] (-0.08895031851678503,12.519705696233787) circle (1.5pt);
\draw [fill=ududff] (-1.6098303185167868,10.874975696233788) circle (1.5pt);
\draw [fill=ududff] (-1.6098303185167868,10.874975696233786) circle (1.5pt);
\draw [fill=ududff] (-1.8208716552252344,8.39854561103082) circle (1.5pt);
\draw [fill=ududff] (-1.8208716552252344,8.39854561103082) circle (1.5pt);
\draw [fill=ududff] (-1.2168960437057397,5.939852501304252) circle (1.5pt);
\draw [fill=ududff] (5.107483414457494,12.85397921122043) circle (1.5pt);
\draw [fill=ududff] (2.9301047511659384,13.6856792964234) circle (1.5pt);
\draw [fill=ududff] (2.9301047511659384,13.6856792964234) circle (1.5pt);
\draw [fill=ududff] (0.9603247511659347,13.120949296423401) circle (1.5pt);
\draw [fill=ududff] (-0.4747217260511576,11.476215536518698) circle (1.5pt);
\draw [fill=ududff] (-0.4747217260511576,11.4762155365187) circle (1.5pt);
\draw [fill=ududff] (-1.4547547069972921,9.496282586507222) circle (1.5pt);
\draw [fill=ududff] (-1.4547547069972904,9.49628258650722) circle (1.5pt);
\draw [fill=ududff] (-1.9782832994629196,7.250312746222312) circle (1.5pt);
\draw [fill=ududff] (-1.9782832994629214,7.250312746222312) circle (1.5pt);
\draw [fill=ududff] (-1.3217846361713672,4.773882661019343) circle (1.5pt);
\draw [fill=ududff] (6.586603414457494,11.209249211220431) circle (1.5pt);
\draw [fill=qqqqff] (7.653196910728453,9.229320021113658) circle (1.5pt);
\draw [fill=ududff] (5.999346839902525,12.306989946601536) circle (1.5pt);
\draw [fill=ududff] (5.107483414457493,12.85397921122043) circle (1.5pt);
\draw [fill=ududff] (6.586603414457493,11.209249211220431) circle (1.5pt);
\draw [fill=qqqqff] (-1.2168960437057388,5.939852501304252) circle (1.5pt);
\draw [fill=ududff] (-0.1503025474347801,3.9599233111974783) circle (1.5pt);
\draw [fill=ududff] (0.4369540271201888,2.862182575816373) circle (1.5pt);
\draw [fill=ududff] (1.3288174525652208,2.315193311197479) circle (1.5pt);
\draw [fill=ududff] (-0.15030254743477922,3.9599233111974783) circle (1.5pt);
\end{scriptsize}
\end{tikzpicture}
}
\vskip -.7cm
\caption{The triangles $\sjx_L,\sjx_R$ and the pentagon $\wbx_-$ in type $E_8$}
\label{fig:polygon-E8}
\vskip .6cm

{
\definecolor{qqffff}{rgb}{0.,1.,1.}
\definecolor{ffxfqq}{rgb}{1.,0.4980392156862745,0.}
\definecolor{ttffcc}{rgb}{0.2,1.,0.8}
\definecolor{uuuuuu}{rgb}{0.26666666666666666,0.26666666666666666,0.26666666666666666}
\definecolor{aqaqaq}{rgb}{0.6274509803921569,0.6274509803921569,0.6274509803921569}
\definecolor{xfqqff}{rgb}{0.4980392156862745,0.,1.}
\definecolor{ududff}{rgb}{0.30196078431372547,0.30196078431372547,1.}
\definecolor{ffwwqq}{rgb}{1.,0.4,0.}
\definecolor{ttffqq}{rgb}{0.2,1.,0.}
\definecolor{qqzzff}{rgb}{0.,0.6,1.}
\definecolor{ffqqqq}{rgb}{1.,0.,0.}
\definecolor{rvwvcq}{rgb}{0.08235294117647059,0.396078431372549,0.7529411764705882}
\begin{tikzpicture}[line cap=round,line join=round,arrow/.style={->,>=stealth,thick},scale=1.8,rotate=3]
\clip[rotate=-3](0.1,0.015242192519770936) rectangle (7,2.6);
\fill[fill=qqffff,fill opacity=0.4] (0.5121851973855259,1.4834932259945088) -- (2.4759761158567763,2.048223225994509) -- (4.539942022366593,2.0640379358303678) -- (6.699981256365425,1.2165269006962423) -- (5.545218204593364,0.6695338761726454) -- (3.9550961158567763,0.4034932259945089) -- (3.0875561158567764,0.4034932259945089) -- (1.5095366049198995,0.8822533857095992) -- cycle;

\draw [line width=1.2pt,color=ffxfqq] (4.539942022366593,2.0640379358303678)-- (5.545218204593364,0.6695338761726454);
\draw [line width=1.2pt,color=ffxfqq] (4.539942022366593,2.0640379358303678)-- (3.9550961158567763,0.4034932259945089);
\draw [line width=1.2pt,color=qqzzff] (1.5095366049198995,0.8822533857095992)-- (3.0875561158567764,0.4034932259945089);
\draw [line width=1.2pt,color=qqzzff] (3.0875561158567764,0.4034932259945089)-- (2.4759761158567763,2.048223225994509);
\draw [line width=1.2pt,color=qqzzff] (2.4759761158567763,2.048223225994509)-- (1.5095366049198995,0.8822533857095992);
\draw [line width=1.2pt,color=ttffqq] (3.9550961158567763,0.4034932259945089)-- (3.5061961158567763,1.4834932259945088);
\draw [line width=1.2pt,color=ttffqq] (3.5061961158567763,1.4834932259945088)-- (3.0875561158567764,0.4034932259945089);
\draw [line width=1.2pt,color=xfqqff] (6.699981256365425,1.2165269006962423)-- (7.533952940837843,2.0640379358303678);
\draw [line width=1.2pt,color=xfqqff] (7.533952940837843,2.0640379358303678)-- (6.53660153330347,2.6652777761152775);
\draw [line width=1.2pt,color=xfqqff] (6.53660153330347,2.6652777761152775)-- (6.699981256365425,1.2165269006962423);
\draw [line width=1.2pt,color=xfqqff] (1.3461568818579446,2.3310042611286343)-- (1.5095366049198995,0.8822533857095992);
\draw [line width=1.2pt,color=xfqqff] (1.5095366049198995,0.8822533857095992)-- (0.5121851973855259,1.4834932259945088);
\draw [line width=1.2pt,color=xfqqff] (0.5121851973855259,1.4834932259945088)-- (1.3461568818579446,2.3310042611286343);
\draw [line width=1.2pt,color=ttffcc] (5.545218204593364,0.6695338761726454)-- (6.699981256365425,1.2165269006962423);
\draw [line width=1.2pt,color=ttffcc] (6.699981256365425,1.2165269006962423)-- (5.469987034328026,2.048223225994509);
\draw [line width=1.2pt,color=ttffcc] (5.469987034328026,2.048223225994509)-- (5.545218204593364,0.6695338761726454);
\draw [line width=1.2pt,color=ttffqq] (8.05149245177472,4.3100077761152775)-- (8.91903245177472,4.3100077761152775);
\draw [line width=1.2pt,color=ffwwqq] (7.9137562695479495,5.704511835772999)-- (9.503878358284538,5.970552485951136);
\draw [line width=1.2pt,color=ttffcc] (8.27388413624714,6.802248811249402)-- (9.428647188019202,7.349241835772999);
\draw [line width=1.2pt,color=xfqqff] (9.265267464957248,8.797992711192034)-- (9.428647188019202,7.349241835772999);
\draw [line width=1.2pt,color=xfqqff] (9.428647188019202,7.349241835772999)-- (8.431295780484827,7.950481676057909);
\draw [line width=1.2pt,color=ffwwqq] (6.61466537622066,11.256682061013898)-- (8.204787464957247,11.522722711192033);
\draw [line width=1.2pt,color=ffwwqq] (7.199511282730477,12.917226770849755)-- (6.61466537622066,11.256682061013898);
\draw [line width=1.2pt,color=ttffcc] (6.044748230958415,12.370233746326157)-- (7.199511282730477,12.917226770849755);
\draw [line width=1.2pt,color=ttffcc] (7.199511282730477,12.917226770849755)-- (5.969517060693078,13.74892309614802);
\draw [line width=1.2pt,color=ttffcc] (5.969517060693078,13.74892309614802)-- (6.044748230958415,12.370233746326157);
\draw [line width=1.2pt,color=ttffqq] (2.975506142221828,13.74892309614802)-- (3.3941461422218278,14.82892309614802);
\draw [line width=1.2pt,color=ffwwqq] (2.5266061422218278,14.82892309614802)-- (0.9364840534852403,14.562882445969885);
\draw [line width=1.2pt,color=ffwwqq] (0.9364840534852403,14.562882445969885)-- (1.9417602357120112,13.168378386312163);
\draw [line width=1.2pt,color=ffwwqq] (1.9417602357120112,13.168378386312163)-- (2.5266061422218278,14.82892309614802);
\draw [line width=1.2pt,color=xfqqff] (-0.21827899828682096,14.015889421446287)-- (-1.0522506827592393,13.168378386312163);
\draw [line width=1.2pt,color=xfqqff] (-1.0522506827592393,13.168378386312163)-- (-0.05489927522486582,12.567138546027252);
\draw [line width=1.2pt,color=xfqqff] (-0.05489927522486582,12.567138546027252)-- (-0.21827899828682096,14.015889421446287);
\draw [line width=1.2pt,color=xfqqff] (5.1355453762206595,12.901412061013897)-- (4.972165653158704,14.35016293643293);
\draw [line width=1.2pt,color=xfqqff] (4.972165653158704,14.35016293643293)-- (5.969517060693078,13.74892309614802);
\draw [line width=1.2pt,color=ttffcc] (0.9364840534852403,14.562882445969885)-- (-0.21827899828682096,14.015889421446287);
\draw [line width=1.2pt,color=ttffcc] (-0.21827899828682096,14.015889421446287)-- (1.0117152237505778,13.184193096148022);
\draw [line width=1.2pt,color=ttffcc] (1.0117152237505778,13.184193096148022)-- (0.9364840534852403,14.562882445969885);
\draw [line width=1.2pt,color=qqzzff] (-1.0522506827592393,13.168378386312163)-- (-2.0186901936961164,12.002408546027253);
\draw [line width=1.2pt,color=ttffqq] (-1.5697901936961163,10.922408546027253)-- (-2.4373301936961163,10.922408546027253);
\draw [line width=1.2pt,color=ttffqq] (-2.4373301936961163,10.922408546027253)-- (-2.0186901936961164,12.002408546027253);
\draw [line width=1.2pt,color=ffwwqq] (-1.4320540114693454,9.52790448636953)-- (-3.022176100205934,9.261863836191395);
\draw [line width=1.2pt,color=ffwwqq] (-3.022176100205934,9.261863836191395)-- (-2.4373301936961163,10.922408546027253);
\draw [line width=1.2pt,color=ffwwqq] (-2.4373301936961163,10.922408546027253)-- (-1.4320540114693454,9.52790448636953);
\draw [line width=1.2pt,color=ttffcc] (-1.7921818781685364,8.430167510893128)-- (-2.9469449299405976,7.883174486369531);
\draw [line width=1.2pt,color=xfqqff] (-2.7835652068786434,6.434423610950496)-- (-2.9469449299405976,7.883174486369531);
\draw [line width=1.2pt,color=xfqqff] (-2.9469449299405976,7.883174486369531)-- (-1.9495935224062233,7.281934646084621);
\draw [line width=1.2pt,color=xfqqff] (-1.9495935224062233,7.281934646084621)-- (-2.7835652068786434,6.434423610950496);
\draw [line width=1.2pt,color=qqzzff] (-1.2055456959417672,5.955663451235406)-- (-2.7835652068786434,6.434423610950496);
\draw [line width=1.2pt,color=qqzzff] (-2.7835652068786434,6.434423610950496)-- (-2.1719852068786434,4.789693610950497);
\draw [line width=1.2pt,color=qqzzff] (-2.1719852068786434,4.789693610950497)-- (-1.2055456959417672,5.955663451235406);
\draw [line width=1.2pt,color=ttffqq] (-2.1719852068786434,4.789693610950497)-- (-1.7230852068786433,3.709693610950497);
\draw [line width=1.2pt,color=ttffqq] (-1.7230852068786433,3.709693610950497)-- (-1.3044452068786434,4.789693610950497);

\draw [line width=1.2pt,color=ffwwqq] (-0.1329631181420563,3.9757342611286326)-- (-1.7230852068786433,3.709693610950497);
\draw [line width=1.2pt,color=ffwwqq] (-1.7230852068786433,3.709693610950497)-- (-0.7178090246518725,2.315189551292775);
\draw [line width=1.2pt,color=ffwwqq] (-0.7178090246518725,2.315189551292775)-- (-0.1329631181420563,3.9757342611286326);
\draw [line width=1.2pt,color=ttffcc] (0.4369540271201888,2.862182575816373)-- (-0.7178090246518725,2.315189551292775);
\draw [line width=1.2pt,color=ttffcc] (-0.7178090246518725,2.315189551292775)-- (0.5121851973855263,1.4834932259945095);
\draw [line width=1.2pt,color=ttffcc] (0.5121851973855263,1.4834932259945095)-- (0.4369540271201888,2.862182575816373);
\draw [->,>=stealth,thick,line width=2pt] (3.0875561158567764,0.4034932259945089) -- (3.9550961158567763,0.4034932259945089);
\draw [->,>=stealth,thick,line width=2pt] (2.4759761158567763,2.048223225994509) -- (4.539942022366593,2.0640379358303678);
\draw [->,>=stealth,thick,line width=2pt] (0.5121851973855259,1.4834932259945088) -- (3.5061961158567763,1.4834932259945088);
\draw [->,>=stealth,thick,line width=2pt] (3.9550961158567763,0.4034932259945089) -- (5.545218204593364,0.6695338761726454);
\draw [->,>=stealth,thick,line width=2pt] (5.545218204593364,0.6695338761726454) -- (1.5095366049198997,0.8822533857095992);
\draw [->,>=stealth,thick,line width=2pt] (1.5095366049198995,0.8822533857095992) -- (6.699981256365425,1.2165269006962423);
\draw [->,>=stealth,thick,line width=2pt] (6.699981256365425,1.2165269006962423) -- (0.5121851973855263,1.4834932259945088);
\draw [->,>=stealth,thick,line width=2pt] (0.5121851973855259,1.4834932259945088) -- (2.4759761158567763,2.048223225994509);
\begin{scriptsize}
\draw [fill=rvwvcq] (1.5095366049198995,0.8822533857095992) circle (1pt) node [color=rvwvcq,below] {$V_1$};
\draw [fill=rvwvcq] (3.0875561158567764,0.4034932259945089) circle (1pt)node [color=rvwvcq,below] {$V_2$};
\draw [fill=rvwvcq] (2.4759761158567763,2.048223225994509) circle (1pt)node [color=rvwvcq,above] {$W_1$};
\draw [fill=rvwvcq] (3.9550961158567763,0.4034932259945089) circle (1pt)node [color=rvwvcq,below] {$V_3$};
\draw [fill=rvwvcq] (3.5061961158567763,1.4834932259945088) circle (1pt)node [color=rvwvcq,right] {$W_2$};
\draw [fill=rvwvcq] (5.545218204593364,0.6695338761726454) circle (1pt)node [color=rvwvcq,below] {$V_4$};
\draw [fill=rvwvcq] (4.539942022366593,2.0640379358303678) circle (1pt)node [color=rvwvcq,above] {$W_3$};
\draw [fill=rvwvcq] (6.699981256365425,1.2165269006962423) circle (1pt)node [color=rvwvcq,below] {$V_5$};
\draw [fill=rvwvcq] (0.5121851973855259,1.4834932259945088) circle (1pt)node [color=rvwvcq,below] {$V_0$};
\draw [fill=rvwvcq] (7.533952940837843,2.0640379358303678) circle (1pt);
\draw [fill=ududff] (6.53660153330347,2.6652777761152775) circle (1pt);
\draw [fill=ududff] (0.5121851973855263,1.4834932259945088) circle (1pt);
\draw [fill=ududff] (1.3461568818579446,2.3310042611286343) circle (1pt);
\draw [fill=ududff] (5.469987034328026,2.048223225994509) circle (1pt);
\draw [fill=ududff] (6.922372940837843,3.708767935830368) circle (1pt);
\draw [fill=ududff] (7.533952940837843,2.0640379358303678) circle (1pt);
\draw [fill=ududff] (8.50039245177472,3.2300077761152775) circle (1pt);
\draw [fill=ududff] (8.50039245177472,3.2300077761152775) circle (1pt);
\draw [fill=ududff] (8.05149245177472,4.3100077761152775) circle (1pt);
\draw [fill=ududff] (8.91903245177472,4.3100077761152775) circle (1pt);
\draw [fill=ududff] (7.9137562695479495,5.704511835772999) circle (1pt);
\draw [fill=ududff] (9.503878358284538,5.970552485951136) circle (1pt);
\draw [fill=ududff] (8.91903245177472,4.3100077761152775) circle (1pt);
\draw [fill=ududff] (8.27388413624714,6.802248811249402) circle (1pt);
\draw [fill=ududff] (9.428647188019202,7.349241835772999) circle (1pt);
\draw [fill=ududff] (9.503878358284538,5.970552485951136) circle (1pt);
\draw [fill=ududff] (9.265267464957248,8.797992711192034) circle (1pt);
\draw [fill=ududff] (9.428647188019202,7.349241835772999) circle (1pt);
\draw [fill=ududff] (8.431295780484827,7.950481676057909) circle (1pt);
\draw [fill=ududff] (7.687247954020371,9.276752870907124) circle (1pt);
\draw [fill=ududff] (9.265267464957248,8.797992711192034) circle (1pt);
\draw [fill=ududff] (8.653687464957247,10.442722711192033) circle (1pt);
\draw [fill=ududff] (7.7861474649572475,10.442722711192033) circle (1pt);
\draw [fill=ududff] (8.653687464957247,10.442722711192033) circle (1pt);
\draw [fill=ududff] (8.204787464957247,11.522722711192033) circle (1pt);
\draw [fill=ududff] (6.61466537622066,11.256682061013898) circle (1pt);
\draw [fill=ududff] (8.204787464957247,11.522722711192033) circle (1pt);
\draw [fill=ududff] (7.199511282730477,12.917226770849755) circle (1pt);
\draw [fill=ududff] (6.044748230958415,12.370233746326157) circle (1pt);
\draw [fill=ududff] (7.199511282730477,12.917226770849755) circle (1pt);
\draw [fill=ududff] (5.969517060693078,13.74892309614802) circle (1pt);
\draw [fill=uuuuuu] (3.240851129039302,7.616208161071265) circle (2.0pt);
\draw [fill=ududff] (4.972165653158704,14.35016293643293) circle (1pt);
\draw [fill=ududff] (3.3941461422218278,14.82892309614802) circle (1pt);
\draw [fill=ududff] (4.005726142221828,13.184193096148022) circle (1pt);
\draw [fill=ududff] (3.3941461422218278,14.82892309614802) circle (1pt);
\draw [fill=ududff] (2.5266061422218278,14.82892309614802) circle (1pt);
\draw [fill=ududff] (2.975506142221828,13.74892309614802) circle (1pt);
\draw [fill=ududff] (2.5266061422218278,14.82892309614802) circle (1pt);
\draw [fill=ududff] (0.9364840534852403,14.562882445969885) circle (1pt);
\draw [fill=ududff] (1.9417602357120112,13.168378386312163) circle (1pt);
\draw [fill=ududff] (-0.21827899828682096,14.015889421446287) circle (1pt);
\draw [fill=ududff] (-1.0522506827592393,13.168378386312163) circle (1pt);
\draw [fill=ududff] (-0.05489927522486582,12.567138546027252) circle (1pt);
\draw [fill=ududff] (5.1355453762206595,12.901412061013897) circle (1pt);
\draw [fill=ududff] (4.972165653158704,14.35016293643293) circle (1pt);
\draw [fill=ududff] (5.969517060693078,13.74892309614802) circle (1pt);
\draw [fill=ududff] (0.9364840534852403,14.562882445969885) circle (1pt);
\draw [fill=ududff] (-0.21827899828682096,14.015889421446287) circle (1pt);
\draw [fill=ududff] (1.0117152237505778,13.184193096148022) circle (1pt);
\draw [fill=ududff] (-0.44067068275923926,11.523648386312162) circle (1pt);
\draw [fill=ududff] (-1.0522506827592393,13.168378386312163) circle (1pt);
\draw [fill=ududff] (-2.0186901936961164,12.002408546027253) circle (1pt);
\draw [fill=ududff] (-2.0186901936961164,12.002408546027253) circle (1pt);
\draw [fill=ududff] (-1.5697901936961163,10.922408546027253) circle (1pt);
\draw [fill=ududff] (-2.4373301936961163,10.922408546027253) circle (1pt);
\draw [fill=ududff] (-1.4320540114693454,9.52790448636953) circle (1pt);
\draw [fill=ududff] (-3.022176100205934,9.261863836191395) circle (1pt);
\draw [fill=ududff] (-2.4373301936961163,10.922408546027253) circle (1pt);
\draw [fill=ududff] (-1.7921818781685364,8.430167510893128) circle (1pt);
\draw [fill=ududff] (-2.9469449299405976,7.883174486369531) circle (1pt);
\draw [fill=ududff] (-3.022176100205934,9.261863836191395) circle (1pt);
\draw [fill=ududff] (-2.7835652068786434,6.434423610950496) circle (1pt);
\draw [fill=ududff] (-2.9469449299405976,7.883174486369531) circle (1pt);
\draw [fill=ududff] (-1.9495935224062233,7.281934646084621) circle (1pt);
\draw [fill=ududff] (-1.2055456959417672,5.955663451235406) circle (1pt);
\draw [fill=ududff] (-2.7835652068786434,6.434423610950496) circle (1pt);
\draw [fill=ududff] (-2.1719852068786434,4.789693610950497) circle (1pt);
\draw [fill=ududff] (-1.3044452068786434,4.789693610950497) circle (1pt);
\draw [fill=ududff] (-2.1719852068786434,4.789693610950497) circle (1pt);
\draw [fill=ududff] (-1.7230852068786433,3.709693610950497) circle (1pt);
\draw [fill=ududff] (-0.1329631181420563,3.9757342611286326) circle (1pt);
\draw [fill=ududff] (-1.7230852068786433,3.709693610950497) circle (1pt);
\draw [fill=ududff] (-0.7178090246518725,2.315189551292775) circle (1pt);
\draw [fill=ududff] (0.4369540271201888,2.862182575816373) circle (1pt);
\draw [fill=ududff] (-0.7178090246518725,2.315189551292775) circle (1pt);
\draw [fill=ududff] (0.5121851973855263,1.4834932259945095) circle (1pt);
\draw [fill=ududff] (5.969517060693078,13.74892309614802) circle (1pt);
\draw [fill=ududff] (5.1355453762206595,12.901412061013897) circle (1pt);
\draw[color=black] (3.5123261791394853,0.23) node {$s_8$};
\draw[color=black] (3.585064121379423,2.2128886377808525) node {$s_1$};
\draw[color=black] (3,1.6) node {$s_3$};
\draw[color=black] (4.758569589517084,0.3) node {$s_7$};
\draw[color=black] (3.473532609944852,0.6) node {$s_6$};
\draw[color=black] (4.5,.9) node {$s_5$};
\draw[color=black] (4.5,1.45) node {$s_4$};
\draw[color=black] (1.6,1.96) node {$s_2$};

\draw [->,>=stealth,thick,line width=.8pt,bend right] (4.2,0.45) to (3.6,.425);
\draw [->,>=stealth,thick,line width=.8pt,bend right] (4.5,0.7) to (4.5,0.5);
\draw [->,>=stealth,thick,line width=.8pt,bend right] (3.4,0.8) to (3.4,1);
\draw [->,>=stealth,thick,line width=.8pt,bend right] (4.5,1.3) to (4.5,1.1);
\draw [->,>=stealth,thick,line width=.8pt] (3.2,1.36) to (3.2,1.49);
\draw [->,>=stealth,thick,line width=.8pt,bend right] (2.2,1.42) to (2.1,1.92);
\draw [->,>=stealth,thick,line width=.8pt,bend right] (2.2,1.945) to (2.7,2.03);
\end{scriptsize}
\draw[white, fill=white](7.5,0.015242192519770936) rectangle (6.880092904848598,3.019682265207781);
\end{tikzpicture}}

\begin{tikzpicture}[scale=.5,xscale=1,ar/.style={->,thick,>=stealth}]
\draw(0,0)node(v1){$s_1$}(2,0)node(v2){$s_2$}(4,0)node(v4){$s_4$}(6,0)node(v5){$s_5$}(8,0)node(v6){$s_6$}
(10,0)node(v7){$s_7$}(12,0)node(v8){$s_8$}
    (4,2)node(v3){$s_3$};
\draw[ar](v1)to(v2);
\draw[ar](v4)to(v2);
\draw[ar](v4)to(v3);
\draw[ar](v4)to(v5);
\draw[ar](v6)to(v5);
\draw[ar](v6)to(v7);
\draw[ar](v7)to(v8);
\end{tikzpicture}
\caption{The intersection quiver of a $30$-gon of type $E_8$}
\label{fig:int-quiver-E8}
\end{figure}

\begin{figure}[ht]\centering
\definecolor{ttffcc}{rgb}{0.2,1.,0.8}
\definecolor{uuuuuu}{rgb}{0.26666666666666666,0.26666666666666666,0.26666666666666666}
\definecolor{aqaqaq}{rgb}{0.6274509803921569,0.6274509803921569,0.6274509803921569}
\definecolor{xfqqff}{rgb}{0.4980392156862745,0.,1.}
\definecolor{ududff}{rgb}{0.30,0.30,1}
\definecolor{ffwwqq}{rgb}{1.,0.4,0.}
\definecolor{ttffqq}{rgb}{0.2,1.,0.}
\definecolor{qqzzff}{rgb}{0.,0.6,1.}
\definecolor{rvwvcq}{rgb}{0.08235294117647059,0.396078431372549,0.7529411764705882}
\begin{tikzpicture}[scale=2.01, scale=.9,arrow/.style={->,>=stealth,,thick}]
\clip(.1,-0.3) rectangle (5.858641816625223,2.2);
\draw[help lines, line width=0.2pt,step=0.14cm] (0,-.1) grid (5.858641816625223,2.2);
\fill[fill=qqzzff!50,fill opacity=0.8] (1.4112195947114898,0.9032651191587705) -- (2.1666191951719984,0.5048481019457757) -- (2.0757977170866315,1.6671745559119926) -- cycle;
\fill[line width=2.pt,color=ttffqq,fill=ttffqq!30,fill opacity=.81] (2.1666191951719984,0.5048481019457757) -- (3.051017717086632,0.3324745559119927) -- (2.956017717086632,1.502444555911992) -- cycle;
\fill[line width=2.pt,color=ffwwqq,fill=ffwwqq!30,fill opacity=0.81] (3.051017717086632,0.3324745559119927) -- (4.006017717086632,0.20244455591199217) -- (3.822424194303723,1.6671783158166966) -- cycle;
\fill[line width=2.pt,color=xfqqff,fill=xfqqff!30,fill opacity=.81] (4.949787717086631,0.20244455591199217) -- (6.272424194303723,1.6671783158166966) -- (5.367222316678865,2.266357752569918) -- cycle;
\fill[line width=2.pt,color=xfqqff,fill=xfqqff!30,fill opacity=.81] (1.8286541943037244,2.9671783158166964) -- (1.4112195947114898,0.9032651191587705) -- (0.5060177170866318,1.502444555911992) -- cycle;
\fill[line width=2.pt,color=ttffcc,fill=ttffcc!60,fill opacity=.81] (4.006017717086632,0.20244455591199217) -- (4.949787717086631,0.20244455591199217) -- (4.525797717086632,1.6671745559119926) -- cycle;
\fill[line width=2.pt,color=ttffqq,fill=ttffqq!30,fill opacity=.81] (6.937002316678866,2.4310877525699186) -- (6.842002316678865,3.601057752569918) -- (7.726400838593499,3.428684206536135) -- cycle;
\fill[line width=2.pt,color=ffwwqq,fill=ffwwqq!30,fill opacity=0.81] (7.54280731581059,4.893417966440839) -- (8.49780731581059,4.763387966440838) -- (7.726400838593499,3.428684206536135) -- cycle;
\fill[line width=2.pt,color=ttffcc,fill=ttffcc!60,fill opacity=.81] (8.07381731581059,6.228117966440838) -- (9.01758731581059,6.228117966440838) -- (8.49780731581059,4.763387966440838) -- cycle;
\fill[line width=2.pt,color=xfqqff,fill=xfqqff!30,fill opacity=.81] (9.435021915402825,8.292031163098764) -- (9.01758731581059,6.228117966440838) -- (8.112385438185733,6.827297403194059) -- cycle;
\fill[line width=2.pt,color=qqzzff,fill=qqzzff!50,fill opacity=0.6600000262260437] (8.679622314942316,8.69044818031176) -- (9.435021915402825,8.292031163098764) -- (9.344200437317458,9.454357617064982) -- cycle;
\fill[line width=2.pt,color=ttffqq,fill=ttffqq!30,fill opacity=.81] (8.459801915402824,9.626731163098764) -- (9.344200437317458,9.454357617064982) -- (9.249200437317459,10.624327617064981) -- cycle;
\fill[line width=2.pt,color=ffwwqq,fill=ffwwqq!30,fill opacity=0.81] (8.294200437317459,10.754357617064983) -- (9.249200437317459,10.624327617064981) -- (9.06560691453455,12.089061376969687) -- cycle;
\fill[line width=2.pt,color=ttffcc,fill=ttffcc!60,fill opacity=.81] (8.121836914534551,12.089061376969687) -- (9.06560691453455,12.089061376969687) -- (8.641616914534552,13.553791376969688) -- cycle;
\fill[line width=2.pt,color=qqzzff,fill=qqzzff!50,fill opacity=0.6600000262260437] (7.736415036909694,14.15297081372291) -- (6.981015436449185,14.551387830935905) -- (7.071836914534552,13.389061376969687) -- cycle;
\fill[line width=2.pt,color=ttffqq,fill=ttffqq!30,fill opacity=.81] (6.981015436449185,14.551387830935905) -- (6.096616914534552,14.723761376969687) -- (6.191616914534552,13.553791376969688) -- cycle;
\fill[line width=2.pt,color=ffwwqq,fill=ffwwqq!30,fill opacity=0.81] (6.096616914534552,14.723761376969687) -- (5.141616914534552,14.853791376969689) -- (5.325210437317461,13.389057617064983) -- cycle;
\fill[line width=2.pt,color=xfqqff,fill=xfqqff!30,fill opacity=.81] (4.197846914534553,14.853791376969689) -- (2.8752104373174605,13.389057617064983) -- (3.7804123149423186,12.789878180311762) -- cycle;
\fill[line width=2.pt,color=xfqqff,fill=xfqqff!30,fill opacity=.81] (7.318980437317459,12.089057617064984) -- (7.736415036909694,14.15297081372291) -- (8.641616914534552,13.553791376969688) -- cycle;
\fill[line width=2.pt,color=ttffcc,fill=ttffcc!60,fill opacity=.81] (5.141616914534552,14.853791376969689) -- (4.197846914534553,14.853791376969689) -- (4.621836914534552,13.389061376969687) -- cycle;
\fill[line width=2.pt,color=qqzzff,fill=qqzzff!50,fill opacity=0.6600000262260437] (2.9660319154028274,12.226731163098767) -- (2.8752104373174605,13.389057617064983) -- (2.210632314942319,12.625148180311761) -- cycle;
\fill[line width=2.pt,color=ttffqq,fill=ttffqq!30,fill opacity=.81] (2.210632314942318,12.625148180311761) -- (2.3056323149423186,11.455178180311762) -- (1.4212337930276853,11.627551726345546) -- cycle;
\fill[line width=2.pt,color=ffwwqq,fill=ffwwqq!30,fill opacity=0.81] (1.604827315810594,10.16281796644084) -- (0.6498273158105938,10.292847966440842) -- (1.4212337930276853,11.627551726345546) -- cycle;
\fill[line width=2.pt,color=ttffcc,fill=ttffcc!60,fill opacity=.81] (1.0738173158105937,8.828117966440843) -- (0.13004731581059303,8.828117966440843) -- (0.6498273158105938,10.292847966440842) -- cycle;
\fill[line width=2.pt,color=xfqqff,fill=xfqqff!30,fill opacity=.81] (-0.2873872837816407,6.764204769782916) -- (0.13004731581059303,8.828117966440843) -- (1.035249193435451,8.228938529687621) -- cycle;
\fill[line width=2.pt,color=qqzzff,fill=qqzzff!50,fill opacity=0.6600000262260437] (0.4680123166788679,6.36578775256992) -- (-0.2873872837816407,6.764204769782916) -- (-0.1965658056962738,5.601878315816698) -- cycle;
\fill[line width=2.pt,color=ttffqq,fill=ttffqq!30,fill opacity=.81] (0.6878327162183595,5.429504769782916) -- (-0.1965658056962738,5.601878315816698) -- (-0.10156580569627494,4.431908315816699) -- cycle;
\fill[line width=2.pt,color=ffwwqq,fill=ffwwqq!30,fill opacity=0.81] (0.8534341943037251,4.3018783158166976) -- (-0.10156580569627494,4.431908315816699) -- (0.0820277170866337,2.9671745559119937) -- cycle;
\fill[line width=2.pt,color=ttffcc,fill=ttffcc!60,fill opacity=.81] (1.0257977170866326,2.9671745559119937) -- (0.0820277170866337,2.9671745559119937) -- (0.5060177170866318,1.5024445559119926) -- cycle;
\draw [->,>=stealth,line width=1.5pt] (1.4112195947114898,0.9032651191587705) -- node[below] {$s_4$}(0.5060177170866318,1.502444555911992);
\draw [->,>=stealth,line width=1.5pt] (0.5060177170866318,1.502444555911992) -- node[below] {$s_3$}(2.956017717086632,1.502444555911992);
\draw [->,>=stealth,line width=1.5pt] (2.0757977170866315,1.6671745559119926) -- node[above] {$s_1$}(3.822424194303723,1.6671783158166966);
\draw [->,>=stealth,line width=1.5pt] (0.5060177170866318,1.502444555911992) -- node[above] {$s_2$} (2.0757977170866315,1.6671745559119926);
\draw [->,>=stealth,line width=1.5pt] (4.006017717086632,0.20244455591199217) -- node[below] {$s_8$}(4.949787717086631,0.20244455591199217);
\draw [->,>=stealth,line width=1.5pt] (2.1666191951719984,0.5048481019457757) -- node[below] {$s_5$}(1.4112195947114898,0.9032651191587705);
\draw [->,>=stealth,line width=1.5pt] (4.949787717086631,0.20244455591199217) -- node[above] {$s_7$}(3.051017717086632,0.3324745559119927);
\draw [->,>=stealth,line width=1.5pt] (3.051017717086632,0.3324745559119927) --  node[below] {$s_6$}(2.1666191951719984,0.5048481019457757);
\begin{scriptsize}
\draw [fill=rvwvcq] (1.4112195947114898,0.9032651191587705) circle (1pt)node[color=rvwvcq,below] {$V_1$};
\draw [fill=rvwvcq] (2.1666191951719984,0.5048481019457757) circle (1pt)node[color=rvwvcq,below] {$V_2$};
\draw [fill=rvwvcq] (2.0757977170866315,1.6671745559119926) circle (1pt)node[color=rvwvcq,above] {$W_1$};
\draw [fill=rvwvcq] (3.051017717086632,0.3324745559119927) circle (1pt)node[color=rvwvcq,below] {$V_3$};
\draw [fill=rvwvcq] (2.956017717086632,1.502444555911992) circle (1pt)node[color=rvwvcq,right] {$W_2$};
\draw [fill=rvwvcq] (4.006017717086632,0.20244455591199217) circle (1pt)node[color=rvwvcq,below] {$V_4$};
\draw [fill=rvwvcq] (3.822424194303723,1.6671783158166966) circle (1pt)node[color=rvwvcq,above] {$W_3$};
\draw [fill=rvwvcq] (4.949787717086631,0.20244455591199217) circle (1pt)node[color=rvwvcq,below] {$V_5$};
\draw [fill=rvwvcq] (0.5060177170866318,1.502444555911992) circle (1pt) node[color=rvwvcq,left] {$V_0$};
\draw [fill=ududff] (6.272424194303723,1.6671783158166966) circle (.8pt);
\draw [fill=ududff] (5.367222316678865,2.266357752569918) circle (.8pt);
\draw [fill=ududff] (1.8286541943037244,2.9671783158166964) circle (.8pt);
\draw [fill=ududff] (4.525797717086632,1.6671745559119926) circle (.8pt);
\draw [fill=ududff] (6.181602716218356,2.8295047697829134) circle (.8pt);
\draw [fill=ududff] (6.272424194303723,1.6671783158166966) circle (.8pt);
\draw [fill=ududff] (6.937002316678865,2.4310877525699186) circle (.8pt);
\draw [fill=ududff] (6.937002316678866,2.4310877525699186) circle (.8pt);
\draw [fill=ududff] (6.842002316678865,3.601057752569918) circle (.8pt);
\draw [fill=ududff] (7.726400838593499,3.428684206536135) circle (.8pt);
\draw [fill=ududff] (7.54280731581059,4.893417966440839) circle (.8pt);
\draw [fill=ududff] (8.49780731581059,4.763387966440838) circle (.8pt);
\draw [fill=ududff] (7.726400838593499,3.428684206536135) circle (.8pt);
\draw [fill=ududff] (8.07381731581059,6.228117966440838) circle (.8pt);
\draw [fill=ududff] (9.01758731581059,6.228117966440838) circle (.8pt);
\draw [fill=ududff] (8.49780731581059,4.763387966440838) circle (.8pt);
\draw [fill=ududff] (9.435021915402825,8.292031163098764) circle (.8pt);
\draw [fill=ududff] (9.01758731581059,6.228117966440838) circle (.8pt);
\draw [fill=ududff] (8.112385438185733,6.827297403194059) circle (.8pt);
\draw [fill=ududff] (8.679622314942316,8.69044818031176) circle (.8pt);
\draw [fill=ududff] (9.435021915402825,8.292031163098764) circle (.8pt);
\draw [fill=ududff] (9.344200437317458,9.454357617064982) circle (.8pt);
\draw [fill=ududff] (8.459801915402824,9.626731163098764) circle (.8pt);
\draw [fill=ududff] (9.344200437317458,9.454357617064982) circle (.8pt);
\draw [fill=ududff] (9.249200437317459,10.624327617064981) circle (.8pt);
\draw [fill=ududff] (8.294200437317459,10.754357617064983) circle (.8pt);
\draw [fill=ududff] (9.249200437317459,10.624327617064981) circle (.8pt);
\draw [fill=ududff] (9.06560691453455,12.089061376969687) circle (.8pt);
\draw [fill=ududff] (8.121836914534551,12.089061376969687) circle (.8pt);
\draw [fill=ududff] (9.06560691453455,12.089061376969687) circle (.8pt);
\draw [fill=ududff] (8.641616914534552,13.553791376969688) circle (.8pt);
\draw [fill=uuuuuu] (4.573817315810592,7.52811796644084) circle (2.0pt);
\draw[color=uuuuuu] (-0.2799094183976468,2.7304248550831547) node {$N$};
\draw [fill=ududff] (7.736415036909694,14.15297081372291) circle (.8pt);
\draw [fill=ududff] (6.981015436449185,14.551387830935905) circle (.8pt);
\draw [fill=ududff] (7.071836914534552,13.389061376969687) circle (.8pt);
\draw [fill=ududff] (6.981015436449185,14.551387830935905) circle (.8pt);
\draw [fill=ududff] (6.096616914534552,14.723761376969687) circle (.8pt);
\draw [fill=ududff] (6.191616914534552,13.553791376969688) circle (.8pt);
\draw [fill=ududff] (6.096616914534552,14.723761376969687) circle (.8pt);
\draw [fill=ududff] (5.141616914534552,14.853791376969689) circle (.8pt);
\draw [fill=ududff] (5.325210437317461,13.389057617064983) circle (.8pt);
\draw [fill=ududff] (4.197846914534553,14.853791376969689) circle (.8pt);
\draw [fill=ududff] (2.8752104373174605,13.389057617064983) circle (.8pt);
\draw [fill=ududff] (3.7804123149423186,12.789878180311762) circle (.8pt);
\draw [fill=ududff] (7.318980437317459,12.089057617064984) circle (.8pt);
\draw [fill=ududff] (7.736415036909694,14.15297081372291) circle (.8pt);
\draw [fill=ududff] (8.641616914534552,13.553791376969688) circle (.8pt);
\draw [fill=ududff] (5.141616914534552,14.853791376969689) circle (.8pt);
\draw [fill=ududff] (4.197846914534553,14.853791376969689) circle (.8pt);
\draw [fill=ududff] (4.621836914534552,13.389061376969687) circle (.8pt);
\draw [fill=ududff] (2.9660319154028274,12.226731163098767) circle (.8pt);
\draw [fill=ududff] (2.8752104373174605,13.389057617064983) circle (.8pt);
\draw [fill=ududff] (2.210632314942319,12.625148180311761) circle (.8pt);
\draw [fill=ududff] (2.210632314942318,12.625148180311761) circle (.8pt);
\draw [fill=ududff] (2.3056323149423186,11.455178180311762) circle (.8pt);
\draw [fill=ududff] (1.4212337930276853,11.627551726345546) circle (.8pt);
\draw [fill=ududff] (1.604827315810594,10.16281796644084) circle (.8pt);
\draw [fill=ududff] (0.6498273158105938,10.292847966440842) circle (.8pt);
\draw [fill=ududff] (1.4212337930276853,11.627551726345546) circle (.8pt);
\draw [fill=ududff] (1.0738173158105937,8.828117966440843) circle (.8pt);
\draw [fill=ududff] (0.13004731581059303,8.828117966440843) circle (.8pt);
\draw [fill=ududff] (0.6498273158105938,10.292847966440842) circle (.8pt);
\draw [fill=ududff] (-0.2873872837816407,6.764204769782916) circle (.8pt);
\draw [fill=ududff] (0.13004731581059303,8.828117966440843) circle (.8pt);
\draw [fill=ududff] (1.035249193435451,8.228938529687621) circle (.8pt);
\draw [fill=ududff] (0.4680123166788679,6.36578775256992) circle (.8pt);
\draw [fill=ududff] (-0.2873872837816407,6.764204769782916) circle (.8pt);
\draw [fill=ududff] (-0.1965658056962738,5.601878315816698) circle (.8pt);
\draw [fill=ududff] (0.6878327162183595,5.429504769782916) circle (.8pt);
\draw [fill=ududff] (-0.1965658056962738,5.601878315816698) circle (.8pt);
\draw [fill=ududff] (-0.10156580569627494,4.431908315816699) circle (.8pt);
\draw [fill=ududff] (0.8534341943037251,4.3018783158166976) circle (.8pt);
\draw [fill=ududff] (-0.10156580569627494,4.431908315816699) circle (.8pt);
\draw [fill=ududff] (0.0820277170866337,2.9671745559119937) circle (.8pt);
\draw [fill=ududff] (1.0257977170866326,2.9671745559119937) circle (.8pt);
\draw [fill=ududff] (0.0820277170866337,2.9671745559119937) circle (.8pt);
\draw [fill=ududff] (8.641616914534552,13.553791376969688) circle (.8pt);
\draw [fill=ududff] (7.318980437317459,12.089057617064984) circle (.8pt);
\end{scriptsize}
\end{tikzpicture}
\definecolor{ttffcc}{rgb}{0.2,1.,0.8}
\definecolor{uuuuuu}{rgb}{0.26666666666666666,0.26666666666666666,0.26666666666666666}
\definecolor{aqaqaq}{rgb}{0.6274509803921569,0.6274509803921569,0.6274509803921569}
\definecolor{xfqqff}{rgb}{0.4980392156862745,0.,1.}
\definecolor{ududff}{rgb}{0.30196078431372547,0.30196078431372547,1.}
\definecolor{ffwwqq}{rgb}{1.,0.4,0.}
\definecolor{ttffqq}{rgb}{0.2,1.,0.}
\definecolor{qqzzff}{rgb}{0.,0.6,1.}
\definecolor{ffqqqq}{rgb}{1.,0.,0.}
\begin{tikzpicture}[scale=1.37, scale=.9,arrow/.style={->,>=stealth,,thick}]
\clip(0,-0.3) rectangle (8.502058495478874,3.5);
\draw[help lines, line width=0.2pt,step=0.2cm] (0,0) grid (8.502058495478874,3.5);
\fill[line width=2.pt,color=qqzzff,fill=qqzzff!50,fill opacity=0.6600000262260437] (2.0052560334676004,0.9937842816950129) -- (3.496126166402167,0.5957201695999972) -- (3.1523254452095255,2.5688590043548976) -- cycle;
\fill[line width=2.pt,color=ttffqq,fill=ttffqq!30,fill opacity=.81] (3.496126166402167,0.5957201695999972) -- (4.806256784755807,0.39805909389926697) -- (4.1026460334676,1.9937842816950129) -- cycle;
\fill[line width=2.pt,color=ffwwqq,fill=ffwwqq!30,fill opacity=0.81] (4.806256784755807,0.39805909389926697) -- (6.5084809600814175,0.39805909389926697) -- (5.447535782183381,2.5688590043548896) -- cycle;
\fill[line width=2.pt,color=xfqqff,fill=xfqqff!30,fill opacity=.81] (7.9052560334676,1.4937842816950129) -- (9.04492578218338,2.5688590043548896) -- (7.54492578218338,3.5688590043548896) -- cycle;
\fill[line width=2.pt,color=xfqqff,fill=xfqqff!30,fill opacity=.81] (1.644925782183381,3.0688590043548896) -- (2.0052560334676004,0.9937842816950129) -- (0.5052560334676004,1.9937842816950129) -- cycle;
\fill[line width=2.pt,color=ttffcc,fill=ttffcc!60,fill opacity=.81] (6.5084809600814175,0.39805909389926697) -- (7.9052560334676,1.4937842816950129) -- (6.749715445209525,2.5688590043548976) -- cycle;
\fill[line width=2.pt,color=qqzzff,fill=qqzzff!50,fill opacity=0.6600000262260437] (8.701125060990739,4.54199783910979) -- (9.04492578218338,2.5688590043548896) -- (10.191995193925305,4.143933727014774) -- cycle;
\fill[line width=2.pt,color=ttffqq,fill=ttffqq!30,fill opacity=.81] (10.191995193925305,4.143933727014774) -- (9.488384442637098,5.73965891481052) -- (10.798515060990738,5.54199783910979) -- cycle;
\fill[line width=2.pt,color=ffwwqq,fill=ffwwqq!30,fill opacity=0.81] (9.737569883092702,7.7127977495654125) -- (11.439794058418311,7.7127977495654125) -- (10.798515060990738,5.54199783910979) -- cycle;
\fill[line width=2.pt,color=ttffcc,fill=ttffcc!60,fill opacity=.81] (10.284253470160237,8.787872472225297) -- (11.681028543546418,9.883597660021042) -- (11.439794058418311,7.7127977495654125) -- cycle;
\fill[line width=2.pt,color=xfqqff,fill=xfqqff!30,fill opacity=.81] (11.3206982922622,11.958672382680918) -- (11.681028543546418,9.883597660021042) -- (10.181028543546418,10.883597660021042) -- cycle;
\fill[line width=2.pt,color=qqzzff,fill=qqzzff!50,fill opacity=0.6600000262260437] (9.829828159327633,12.356736494775934) -- (11.3206982922622,11.958672382680916) -- (10.976897571069559,13.931811217435818) -- cycle;
\fill[line width=2.pt,color=ttffqq,fill=ttffqq!30,fill opacity=.81] (9.666766952715918,14.129472293136548) -- (10.976897571069559,13.931811217435818) -- (10.273286819781351,15.527536405231565) -- cycle;
\fill[line width=2.pt,color=ffwwqq,fill=ffwwqq!30,fill opacity=0.81] (8.57106264445574,15.527536405231565) -- (10.273286819781351,15.527536405231565) -- (9.212341641883315,17.698336315687186) -- cycle;
\fill[line width=2.pt,color=ttffcc,fill=ttffcc!60,fill opacity=.81] (7.8155665684971325,16.60261112789144) -- (9.212341641883315,17.698336315687186) -- (8.05680105362524,18.77341103834707) -- cycle;
\fill[line width=2.pt,color=qqzzff,fill=qqzzff!50,fill opacity=0.6600000262260437] (6.556801053625239,19.77341103834707) -- (5.065930920690672,20.171475150442088) -- (5.4097316418833135,18.198336315687186) -- cycle;
\fill[line width=2.pt,color=ttffqq,fill=ttffqq!30,fill opacity=.81] (5.065930920690672,20.171475150442088) -- (3.7558003023370317,20.369136226142817) -- (4.459411053625239,18.77341103834707) -- cycle;
\fill[line width=2.pt,color=ffwwqq,fill=ffwwqq!30,fill opacity=0.81] (3.7558003023370317,20.369136226142817) -- (2.0535761270114214,20.369136226142817) -- (3.1145213049094576,18.198336315687193) -- cycle;
\fill[line width=2.pt,color=xfqqff,fill=xfqqff!30,fill opacity=.81] (0.6568010536252391,19.27341103834707) -- (-0.4828686950905414,18.198336315687193) -- (1.0171313049094586,17.198336315687193) -- cycle;
\fill[line width=2.pt,color=xfqqff,fill=xfqqff!30,fill opacity=.81] (6.917131304909458,17.698336315687193) -- (6.556801053625239,19.77341103834707) -- (8.05680105362524,18.77341103834707) -- cycle;
\fill[line width=2.pt,color=ttffcc,fill=ttffcc!60,fill opacity=.81] (2.0535761270114214,20.369136226142817) -- (0.6568010536252391,19.27341103834707) -- (1.8123416418833136,18.198336315687186) -- cycle;
\fill[line width=2.pt,color=qqzzff,fill=qqzzff!50,fill opacity=0.6600000262260437] (-0.13906797389789993,16.225197480932295) -- (-0.4828686950905414,18.198336315687193) -- (-1.6299381068324656,16.62326159302731) -- cycle;
\fill[line width=2.pt,color=ttffqq,fill=ttffqq!30,fill opacity=.81] (-1.6299381068324656,16.62326159302731) -- (-0.9263273555442595,15.027536405231565) -- (-2.236457973897899,15.225197480932295) -- cycle;
\fill[line width=2.pt,color=ffwwqq,fill=ffwwqq!30,fill opacity=0.81] (-1.1755127959998628,13.054397570476672) -- (-2.877736971325472,13.054397570476672) -- (-2.236457973897899,15.225197480932295) -- cycle;
\fill[line width=2.pt,color=ttffcc,fill=ttffcc!60,fill opacity=.81] (-1.7221963830673985,11.979322847816787) -- (-3.118971456453579,10.883597660021042) -- (-2.877736971325472,13.054397570476672) -- cycle;
\fill[line width=2.pt,color=xfqqff,fill=xfqqff!30,fill opacity=.81] (-2.7586412051693614,8.808522937361166) -- (-3.118971456453579,10.883597660021042) -- (-1.618971456453579,9.883597660021042) -- cycle;
\fill[line width=2.pt,color=qqzzff,fill=qqzzff!50,fill opacity=0.6600000262260437] (-1.267771072234794,8.41045882526615) -- (-2.7586412051693614,8.808522937361168) -- (-2.41484048397672,6.835384102606266) -- cycle;
\fill[line width=2.pt,color=ttffqq,fill=ttffqq!30,fill opacity=.81] (-1.1047098656230787,6.637723026905537) -- (-2.41484048397672,6.835384102606266) -- (-1.711229732688512,5.239658914810519) -- cycle;
\fill[line width=2.pt,color=ffwwqq,fill=ffwwqq!30,fill opacity=0.81] (-0.009005557362900873,5.239658914810519) -- (-1.711229732688512,5.239658914810519) -- (-0.6502845547904759,3.0688590043548984) -- cycle;
\fill[line width=2.pt,color=ttffcc,fill=ttffcc!60,fill opacity=.81] (0.7464905185957065,4.164584192150645) -- (-0.6502845547904759,3.0688590043548984) -- (0.5052560334675995,1.9937842816950138) -- cycle;
\draw [->,>=stealth,line width=1.5pt] (3.1523254452095255,2.5688590043548976) -- node[above] {$s_1$}(5.447535782183381,2.5688590043548896);
\draw [->,>=stealth,line width=1.5pt] (0.5052560334676004,1.9937842816950129) -- (4.1026460334676,1.9937842816950129);
\draw ($ (0.5052560334676004,1.9937842816950129)!.7!(4.1026460334676,1.9937842816950129)$) node[above] {$s_3$};
\draw [->,>=stealth,line width=1.5pt] (7.9052560334676,1.4937842816950129) -- node[below] {$s_4$}(0.5052560334676004,1.9937842816950129);
\draw [->,>=stealth,line width=1.5pt] (0.5052560334676004,1.9937842816950129) -- node[above] {$s_2$}(3.1523254452095255,2.5688590043548976);
\draw [->,>=stealth,line width=1.5pt] (4.806256784755807,0.39805909389926697) -- node[below] {$s_8$}(6.5084809600814175,0.39805909389926697);
\draw [->,>=stealth,line width=1.5pt] (6.5084809600814175,0.39805909389926697) -- node[above] {$s_7$}(3.496126166402167,0.5957201695999972);
\draw [->,>=stealth,line width=1.5pt] (3.496126166402167,0.5957201695999972) -- node[below] {$s_6$}(2.0052560334676004,0.9937842816950129);
\draw [->,>=stealth,line width=1.5pt] (2.0052560334676004,0.9937842816950129) -- node[below] {$s_5$}(7.9052560334676,1.4937842816950129);
\begin{scriptsize}
\draw [fill=rvwvcq] (2.0052560334676004,0.9937842816950129) circle (1pt)node[color=rvwvcq,below] {$V_1$};
\draw [fill=rvwvcq] (3.496126166402167,0.5957201695999972) circle (1pt)node[color=rvwvcq,below] {$V_{2}$};
\draw [fill=rvwvcq] (3.1523254452095255,2.5688590043548976) circle (1pt)node[color=rvwvcq,above] {$W_1$};
\draw [fill=rvwvcq] (4.806256784755807,0.39805909389926697) circle (1pt)node[color=rvwvcq,below] {$V_3$};
\draw [fill=rvwvcq] (4.1026460334676,1.9937842816950129) circle (1pt)node[color=rvwvcq,right] {$W_2$};
\draw [fill=rvwvcq] (6.5084809600814175,0.39805909389926697) circle (1pt)node[color=rvwvcq,below] {$V_4$};
\draw [fill=rvwvcq] (5.447535782183381,2.5688590043548896) circle (1pt)node[color=rvwvcq,above] {$W_3$};
\draw [fill=rvwvcq] (7.9052560334676,1.4937842816950129) circle (1pt)node[color=rvwvcq,below] {$V_5$};
\draw [fill=rvwvcq] (0.5052560334676004,1.9937842816950129) circle (1pt)node[color=rvwvcq,left] {$V_0$};
\draw [fill=ududff] (9.04492578218338,2.5688590043548896) circle (1pt);
\draw [fill=ududff] (7.54492578218338,3.5688590043548896) circle (1pt);
\draw [fill=ududff] (1.644925782183381,3.0688590043548896) circle (1pt);
\draw [fill=ududff] (6.749715445209525,2.5688590043548976) circle (1pt);
\draw [fill=ududff] (8.701125060990739,4.54199783910979) circle (1pt);
\draw [fill=ududff] (9.04492578218338,2.5688590043548896) circle (1pt);
\draw [fill=ududff] (10.191995193925305,4.143933727014774) circle (1pt);
\draw [fill=ududff] (10.191995193925305,4.143933727014774) circle (1pt);
\draw [fill=ududff] (9.488384442637098,5.73965891481052) circle (1pt);
\draw [fill=ududff] (10.798515060990738,5.54199783910979) circle (1pt);
\draw [fill=ududff] (9.737569883092702,7.7127977495654125) circle (1pt);
\draw [fill=ududff] (11.439794058418311,7.7127977495654125) circle (1pt);
\draw [fill=ududff] (10.798515060990738,5.54199783910979) circle (1pt);
\draw [fill=ududff] (10.284253470160237,8.787872472225297) circle (1pt);
\draw [fill=ududff] (11.681028543546418,9.883597660021042) circle (1pt);
\draw [fill=ududff] (11.439794058418311,7.7127977495654125) circle (1pt);
\draw [fill=ududff] (11.3206982922622,11.958672382680918) circle (1pt);
\draw [fill=ududff] (11.681028543546418,9.883597660021042) circle (1pt);
\draw [fill=ududff] (10.181028543546418,10.883597660021042) circle (1pt);
\draw [fill=ududff] (9.829828159327633,12.356736494775934) circle (1pt);
\draw [fill=ududff] (11.3206982922622,11.958672382680916) circle (1pt);
\draw [fill=ududff] (10.976897571069559,13.931811217435818) circle (1pt);
\draw [fill=ududff] (9.666766952715918,14.129472293136548) circle (1pt);
\draw [fill=ududff] (10.976897571069559,13.931811217435818) circle (1pt);
\draw [fill=ududff] (10.273286819781351,15.527536405231565) circle (1pt);
\draw [fill=ududff] (8.57106264445574,15.527536405231565) circle (1pt);
\draw [fill=ududff] (10.273286819781351,15.527536405231565) circle (1pt);
\draw [fill=ududff] (9.212341641883315,17.698336315687186) circle (1pt);
\draw [fill=ududff] (7.8155665684971325,16.60261112789144) circle (1pt);
\draw [fill=ududff] (9.212341641883315,17.698336315687186) circle (1pt);
\draw [fill=ududff] (8.05680105362524,18.77341103834707) circle (1pt);
\draw [fill=uuuuuu] (4.2810285435464195,10.383597660021042) circle (2.0pt);
\draw[color=uuuuuu] (-0.0020254019125345267,4.127453417742386) node {$N$};
\draw [fill=ududff] (6.556801053625239,19.77341103834707) circle (1pt);
\draw [fill=ududff] (5.065930920690672,20.171475150442088) circle (1pt);
\draw [fill=ududff] (5.4097316418833135,18.198336315687186) circle (1pt);
\draw [fill=ududff] (5.065930920690672,20.171475150442088) circle (1pt);
\draw [fill=ududff] (3.7558003023370317,20.369136226142817) circle (1pt);
\draw [fill=ududff] (4.459411053625239,18.77341103834707) circle (1pt);
\draw [fill=ududff] (3.7558003023370317,20.369136226142817) circle (1pt);
\draw [fill=ududff] (2.0535761270114214,20.369136226142817) circle (1pt);
\draw [fill=ududff] (3.1145213049094576,18.198336315687193) circle (1pt);
\draw [fill=ududff] (0.6568010536252391,19.27341103834707) circle (1pt);
\draw [fill=ududff] (-0.4828686950905414,18.198336315687193) circle (1pt);
\draw [fill=ududff] (1.0171313049094586,17.198336315687193) circle (1pt);
\draw [fill=ududff] (6.917131304909458,17.698336315687193) circle (1pt);
\draw [fill=ududff] (6.556801053625239,19.77341103834707) circle (1pt);
\draw [fill=ududff] (8.05680105362524,18.77341103834707) circle (1pt);
\draw[color=ududff] (0.029032009399319887,4.141570422884136) node {$I'_{1}$};
\draw [fill=ududff] (2.0535761270114214,20.369136226142817) circle (1pt);
\draw [fill=ududff] (0.6568010536252391,19.27341103834707) circle (1pt);
\draw [fill=ududff] (1.8123416418833136,18.198336315687186) circle (1pt);
\draw [fill=ududff] (-0.13906797389789993,16.225197480932295) circle (1pt);
\draw [fill=ududff] (-0.4828686950905414,18.198336315687193) circle (1pt);
\draw [fill=ududff] (-1.6299381068324656,16.62326159302731) circle (1pt);
\draw [fill=ududff] (-1.6299381068324656,16.62326159302731) circle (1pt);
\draw [fill=ududff] (-0.9263273555442595,15.027536405231565) circle (1pt);
\draw [fill=ududff] (-2.236457973897899,15.225197480932295) circle (1pt);
\draw [fill=ududff] (-1.1755127959998628,13.054397570476672) circle (1pt);
\draw [fill=ududff] (-2.877736971325472,13.054397570476672) circle (1pt);
\draw [fill=ududff] (-2.236457973897899,15.225197480932295) circle (1pt);
\draw [fill=ududff] (-1.7221963830673985,11.979322847816787) circle (1pt);
\draw [fill=ududff] (-3.118971456453579,10.883597660021042) circle (1pt);
\draw [fill=ududff] (-2.877736971325472,13.054397570476672) circle (1pt);
\draw [fill=ududff] (-2.7586412051693614,8.808522937361166) circle (1pt);
\draw [fill=ududff] (-3.118971456453579,10.883597660021042) circle (1pt);
\draw [fill=ududff] (-1.618971456453579,9.883597660021042) circle (1pt);
\draw [fill=ududff] (-1.267771072234794,8.41045882526615) circle (1pt);
\draw [fill=ududff] (-2.7586412051693614,8.808522937361168) circle (1pt);
\draw [fill=ududff] (-2.41484048397672,6.835384102606266) circle (1pt);
\draw [fill=ududff] (-1.1047098656230787,6.637723026905537) circle (1pt);
\draw [fill=ududff] (-2.41484048397672,6.835384102606266) circle (1pt);
\draw [fill=ududff] (-1.711229732688512,5.239658914810519) circle (1pt);
\draw [fill=ududff] (-0.009005557362900873,5.239658914810519) circle (1pt);
\draw [fill=ududff] (-1.711229732688512,5.239658914810519) circle (1pt);
\draw [fill=ududff] (-0.6502845547904759,3.0688590043548984) circle (1pt);
\draw [fill=ududff] (0.7464905185957065,4.164584192150645) circle (1pt);
\draw [fill=ududff] (-0.6502845547904759,3.0688590043548984) circle (1pt);
\draw [fill=ududff] (8.05680105362524,18.77341103834707) circle (1pt);
\draw [fill=ududff] (6.917131304909458,17.698336315687193) circle (1pt);
\end{scriptsize}
\end{tikzpicture}
\definecolor{ttffcc}{rgb}{0.2,1.,0.8}
\definecolor{uuuuuu}{rgb}{0.26666666666666666,0.26666666666666666,0.26666666666666666}
\definecolor{aqaqaq}{rgb}{0.6274509803921569,0.6274509803921569,0.6274509803921569}
\definecolor{xfqqff}{rgb}{0.4980392156862745,0.,1.}
\definecolor{ududff}{rgb}{0.30196078431372547,0.30196078431372547,1.}
\definecolor{ffwwqq}{rgb}{1.,0.4,0.}
\definecolor{ttffqq}{rgb}{0.2,1.,0.}
\definecolor{qqzzff}{rgb}{0.,0.6,1.}
\definecolor{rvwvcq}{rgb}{0.08235294117647059,0.396078431372549,0.7529411764705882}
\begin{tikzpicture}[scale=1.8, scale=.9,arrow/.style={->,>=stealth,,thick}]
\clip(0.11,-0.35) rectangle (6.65,2.753006741045315);
\draw[help lines, line width=0.2pt,step=0.15cm] (0.13,-0.1) grid (6.65,2.753006741045315);
\fill[line width=2.pt,color=qqzzff,fill=qqzzff!50,fill opacity=0.6600000262260437] (1.4944807351834926,0.9070957970844746) -- (2.589278857558635,0.5206552338376964) -- (2.759058857558635,2.0710052338376954) -- cycle;
\fill[line width=2.pt,color=ttffqq,fill=ttffqq!30,fill opacity=.81] (2.589278857558635,0.5206552338376964) -- (3.538178857558635,0.3262752338376964) -- (3.489278857558635,1.5062752338376955) -- cycle;
\fill[line width=2.pt,color=ffwwqq,fill=ffwwqq!30,fill opacity=0.81] (3.538178857558635,0.3262752338376964) -- (5.187768857558636,0.3262752338376964) -- (4.905685334775726,2.0710089937423986) -- cycle;
\fill[line width=2.pt,color=xfqqff,fill=xfqqff!30,fill opacity=.81] (6.341094041114868,0.7486023217431645) -- (7.905685334775726,2.0710089937423986) -- (6.900483457150869,2.670188430495619) -- cycle;
\fill[line width=2.pt,color=xfqqff,fill=xfqqff!30,fill opacity=.81] (2.053870151219493,2.8286819058369295) -- (1.4944807351834926,0.9070957970844746) -- (0.48927885755863487,1.5062752338376955) -- cycle;
\fill[line width=2.pt,color=ttffcc,fill=ttffcc!60,fill opacity=.81] (5.187768857558636,0.3262752338376964) -- (6.341094041114868,0.7486023217431645) -- (5.759058857558635,2.0710052338376954) -- cycle;
\fill[line width=2.pt,color=qqzzff,fill=qqzzff!50,fill opacity=0.6600000262260437] (8.075465334775727,3.6213589937423976) -- (7.905685334775726,2.0710089937423986) -- (9.170263457150869,3.234918430495619) -- cycle;
\fill[line width=2.pt,color=ttffqq,fill=ttffqq!30,fill opacity=.81] (9.170263457150869,3.234918430495619) -- (9.121363457150867,4.414918430495618) -- (10.070263457150869,4.220538430495618) -- cycle;
\fill[line width=2.pt,color=ffwwqq,fill=ffwwqq!30,fill opacity=0.81] (9.78817993436796,5.96527219040032) -- (11.43776993436796,5.96527219040032) -- (10.070263457150869,4.220538430495617) -- cycle;
\fill[line width=2.pt,color=ttffcc,fill=ttffcc!60,fill opacity=.81] (10.855734750811727,7.287675102494851) -- (12.00905993436796,7.710002190400319) -- (11.43776993436796,5.96527219040032) -- cycle;
\fill[line width=2.pt,color=xfqqff,fill=xfqqff!30,fill opacity=.81] (12.56844935040396,9.631588299152774) -- (12.00905993436796,7.710002190400319) -- (11.003858056743102,8.309181627153539) -- cycle;
\fill[line width=2.pt,color=qqzzff,fill=qqzzff!50,fill opacity=0.6600000262260437] (11.473651228028816,10.018028862399552) -- (12.56844935040396,9.631588299152774) -- (12.738229350403959,11.181938299152772) -- cycle;
\fill[line width=2.pt,color=ttffqq,fill=ttffqq!30,fill opacity=.81] (11.78932935040396,11.376318299152771) -- (12.738229350403959,11.181938299152772) -- (12.689329350403959,12.361938299152772) -- cycle;
\fill[line width=2.pt,color=ffwwqq,fill=ffwwqq!30,fill opacity=0.81] (11.03973935040396,12.361938299152772) -- (12.689329350403959,12.361938299152772) -- (12.40724582762105,14.106672059057473) -- cycle;
\fill[line width=2.pt,color=ttffcc,fill=ttffcc!60,fill opacity=.81] (11.253920644064818,13.684344971152006) -- (12.40724582762105,14.106672059057473) -- (11.825210644064818,15.429074971152005) -- cycle;
\fill[line width=2.pt,color=qqzzff,fill=qqzzff!50,fill opacity=0.6600000262260437] (10.82000876643996,16.028254407905226) -- (9.725210644064816,16.414694971152002) -- (9.555430644064817,14.864344971152004) -- cycle;
\fill[line width=2.pt,color=ttffqq,fill=ttffqq!30,fill opacity=.81] (9.725210644064816,16.414694971152002) -- (8.776310644064818,16.609074971152005) -- (8.825210644064818,15.429074971152005) -- cycle;
\fill[line width=2.pt,color=ffwwqq,fill=ffwwqq!30,fill opacity=0.81] (8.776310644064818,16.609074971152005) -- (7.126720644064816,16.609074971152005) -- (7.408804166847726,14.8643412112473) -- cycle;
\fill[line width=2.pt,color=xfqqff,fill=xfqqff!30,fill opacity=.81] (5.973395460508584,16.186747883246536) -- (4.408804166847726,14.8643412112473) -- (5.4140060444725835,14.26516177449408) -- cycle;
\fill[line width=2.pt,color=xfqqff,fill=xfqqff!30,fill opacity=.81] (10.26061935040396,14.10666829915277) -- (10.82000876643996,16.028254407905226) -- (11.825210644064818,15.429074971152005) -- cycle;
\fill[line width=2.pt,color=ttffcc,fill=ttffcc!60,fill opacity=.81] (7.126720644064816,16.609074971152005) -- (5.973395460508584,16.186747883246536) -- (6.555430644064817,14.864344971152004) -- cycle;
\fill[line width=2.pt,color=qqzzff,fill=qqzzff!50,fill opacity=0.6600000262260437] (4.2390241668477255,13.313991211247302) -- (4.408804166847726,14.8643412112473) -- (3.1442260444725836,13.700431774494081) -- cycle;
\fill[line width=2.pt,color=ttffqq,fill=ttffqq!30,fill opacity=.81] (3.1442260444725836,13.700431774494081) -- (3.193126044472585,12.520431774494082) -- (2.2442260444725832,12.71481177449408) -- cycle;
\fill[line width=2.pt,color=ffwwqq,fill=ffwwqq!30,fill opacity=0.81] (2.526309567255492,10.97007801458938) -- (0.8767195672554919,10.97007801458938) -- (2.2442260444725832,12.714811774494082) -- cycle;
\fill[line width=2.pt,color=ttffcc,fill=ttffcc!60,fill opacity=.81] (1.4587547508117247,9.647675102494848) -- (0.30542956725549253,9.22534801458938) -- (0.8767195672554919,10.97007801458938) -- cycle;
\fill[line width=2.pt,color=xfqqff,fill=xfqqff!30,fill opacity=.81] (-0.25395984878050726,7.303761905836925) -- (0.30542956725549253,9.22534801458938) -- (1.3106314448803502,8.626168577836161) -- cycle;
\fill[line width=2.pt,color=qqzzff,fill=qqzzff!50,fill opacity=0.6600000262260437] (0.8408382735946365,6.917321342590148) -- (-0.25395984878050726,7.303761905836925) -- (-0.42373984878050663,5.753411905836927) -- cycle;
\fill[line width=2.pt,color=ttffqq,fill=ttffqq!30,fill opacity=.81] (0.5251601512194917,5.559031905836928) -- (-0.42373984878050663,5.753411905836927) -- (-0.3748398487805069,4.573411905836927) -- cycle;
\fill[line width=2.pt,color=ffwwqq,fill=ffwwqq!30,fill opacity=0.81] (1.274750151219493,4.573411905836927) -- (-0.3748398487805069,4.573411905836927) -- (-0.09275632599759831,2.8286781459322263) -- cycle;
\fill[line width=2.pt,color=ttffcc,fill=ttffcc!60,fill opacity=.81] (1.0605688575586338,3.2510052338376934) -- (-0.09275632599759831,2.8286781459322263) -- (0.48927885755863443,1.5062752338376946) -- cycle;
\draw [->,>=stealth,line width=1.5pt] (1.4944807351834926,0.9070957970844746) -- node[below] {$s_4$}(0.48927885755863487,1.5062752338376955);
\draw [->,>=stealth,line width=1.5pt] (0.48927885755863487,1.5062752338376955) -- node[above right] {$s_3$}(3.489278857558635,1.5062752338376955);
\draw [->,>=stealth,line width=1.5pt] (2.759058857558635,2.0710052338376954) -- node[above] {$s_1$}(4.905685334775726,2.0710089937423986);
\draw [->,>=stealth,line width=1.5pt] (0.48927885755863487,1.5062752338376955) -- node[above] {$s_2$}(2.759058857558635,2.0710052338376954);
\draw [->,>=stealth,line width=1.5pt] (3.538178857558635,0.3262752338376964) -- node[below] {$s_8$}(5.187768857558636,0.3262752338376964);
\draw [->,>=stealth,line width=1.5pt] (6.341094041114868,0.7486023217431645) -- node[above] {$s_5$}(1.4944807351834921,0.9070957970844746);
\draw [->,>=stealth,line width=1.5pt] (5.187768857558636,0.3262752338376964) -- (2.589278857558635,0.5206552338376964);
\draw ($(5.187768857558636,0.3262752338376964)!.2! (2.589278857558635,0.5206552338376964)$) node[above] {$s_7$};
\draw [->,>=stealth,line width=1.5pt] (2.589278857558635,0.5206552338376964) -- (6.341094041114868,0.7486023217431645);
\draw ($(2.589278857558635,0.5206552338376964)!.2! (6.341094041114868,0.7486023217431645)$) node[above] {$s_6$};
\begin{scriptsize}
\draw [fill=rvwvcq] (1.4944807351834926,0.9070957970844746) circle (1pt)node[color=rvwvcq,below] {$V_1$};
\draw [fill=rvwvcq] (2.589278857558635,0.5206552338376964) circle (1pt)node[color=rvwvcq,below] {$V_2$};
\draw [fill=rvwvcq] (2.759058857558635,2.0710052338376954) circle (1pt)node[color=rvwvcq,above] {$W_1$};
\draw [fill=rvwvcq] (3.538178857558635,0.3262752338376964) circle (1pt)node[color=rvwvcq,below] {$V_3$};
\draw [fill=rvwvcq] (3.489278857558635,1.5062752338376955) circle (1pt)node[color=rvwvcq,right] {$W_2$};
\draw [fill=rvwvcq] (5.187768857558636,0.3262752338376964) circle (1pt)node[color=rvwvcq,below] {$V_4$};
\draw [fill=rvwvcq] (4.905685334775726,2.0710089937423986) circle (1pt)node[color=rvwvcq,above] {$W_3$};
\draw [fill=rvwvcq] (6.341094041114868,0.7486023217431645) circle (1pt)node[color=rvwvcq,below] {$V_5$};
\draw [fill=rvwvcq] (0.48927885755863487,1.5062752338376955) circle (1pt)node[color=rvwvcq,left] {$V_0$};
\draw [fill=ududff] (7.905685334775726,2.0710089937423986) circle (1pt);
\draw [fill=ududff] (6.900483457150869,2.670188430495619) circle (1pt);
\draw [fill=ududff] (2.053870151219493,2.8286819058369295) circle (1pt);
\draw [fill=ududff] (5.759058857558635,2.0710052338376954) circle (1pt);
\draw [fill=ududff] (8.075465334775727,3.6213589937423976) circle (1pt);
\draw [fill=ududff] (7.905685334775726,2.0710089937423986) circle (1pt);
\draw [fill=ududff] (9.170263457150869,3.234918430495619) circle (1pt);
\draw [fill=ududff] (9.170263457150869,3.234918430495619) circle (1pt);
\draw [fill=ududff] (9.121363457150867,4.414918430495618) circle (1pt);
\draw [fill=ududff] (10.070263457150869,4.220538430495618) circle (1pt);
\draw [fill=ududff] (9.78817993436796,5.96527219040032) circle (1pt);
\draw [fill=ududff] (11.43776993436796,5.96527219040032) circle (1pt);
\draw [fill=ududff] (10.070263457150869,4.220538430495617) circle (1pt);
\draw [fill=ududff] (10.855734750811727,7.287675102494851) circle (1pt);
\draw [fill=ududff] (12.00905993436796,7.710002190400319) circle (1pt);
\draw [fill=ududff] (11.43776993436796,5.96527219040032) circle (1pt);
\draw [fill=ududff] (12.56844935040396,9.631588299152774) circle (1pt);
\draw [fill=ududff] (12.00905993436796,7.710002190400319) circle (1pt);
\draw [fill=ududff] (11.003858056743102,8.309181627153539) circle (1pt);
\draw [fill=ududff] (11.473651228028816,10.018028862399552) circle (1pt);
\draw [fill=ududff] (12.56844935040396,9.631588299152774) circle (1pt);
\draw [fill=ududff] (12.738229350403959,11.181938299152772) circle (1pt);
\draw [fill=ududff] (11.78932935040396,11.376318299152771) circle (1pt);
\draw [fill=ududff] (12.738229350403959,11.181938299152772) circle (1pt);
\draw [fill=ududff] (12.689329350403959,12.361938299152772) circle (1pt);
\draw [fill=ududff] (11.03973935040396,12.361938299152772) circle (1pt);
\draw [fill=ududff] (12.689329350403959,12.361938299152772) circle (1pt);
\draw [fill=ududff] (12.40724582762105,14.106672059057473) circle (1pt);
\draw [fill=ududff] (11.253920644064818,13.684344971152006) circle (1pt);
\draw [fill=ududff] (12.40724582762105,14.106672059057473) circle (1pt);
\draw [fill=ududff] (11.825210644064818,15.429074971152005) circle (1pt);
\draw [fill=uuuuuu] (6.157244750811726,8.46767510249485) circle (2.0pt);
\draw [fill=ududff] (10.82000876643996,16.028254407905226) circle (1pt);
\draw [fill=ududff] (9.725210644064816,16.414694971152002) circle (1pt);
\draw [fill=ududff] (9.555430644064817,14.864344971152004) circle (1pt);
\draw [fill=ududff] (9.725210644064816,16.414694971152002) circle (1pt);
\draw [fill=ududff] (8.776310644064818,16.609074971152005) circle (1pt);
\draw [fill=ududff] (8.825210644064818,15.429074971152005) circle (1pt);
\draw [fill=ududff] (8.776310644064818,16.609074971152005) circle (1pt);
\draw [fill=ududff] (7.126720644064816,16.609074971152005) circle (1pt);
\draw [fill=ududff] (7.408804166847726,14.8643412112473) circle (1pt);
\draw [fill=ududff] (5.973395460508584,16.186747883246536) circle (1pt);
\draw [fill=ududff] (4.408804166847726,14.8643412112473) circle (1pt);
\draw [fill=ududff] (5.4140060444725835,14.26516177449408) circle (1pt);
\draw [fill=ududff] (10.26061935040396,14.10666829915277) circle (1pt);
\draw [fill=ududff] (10.82000876643996,16.028254407905226) circle (1pt);
\draw [fill=ududff] (11.825210644064818,15.429074971152005) circle (1pt);
\draw [fill=ududff] (7.126720644064816,16.609074971152005) circle (1pt);
\draw [fill=ududff] (5.973395460508584,16.186747883246536) circle (1pt);
\draw [fill=ududff] (6.555430644064817,14.864344971152004) circle (1pt);
\draw [fill=ududff] (4.2390241668477255,13.313991211247302) circle (1pt);
\draw [fill=ududff] (4.408804166847726,14.8643412112473) circle (1pt);
\draw [fill=ududff] (3.1442260444725836,13.700431774494081) circle (1pt);
\draw [fill=ududff] (3.1442260444725836,13.700431774494081) circle (1pt);
\draw [fill=ududff] (3.193126044472585,12.520431774494082) circle (1pt);
\draw [fill=ududff] (2.2442260444725832,12.71481177449408) circle (1pt);
\draw [fill=ududff] (2.526309567255492,10.97007801458938) circle (1pt);
\draw [fill=ududff] (0.8767195672554919,10.97007801458938) circle (1pt);
\draw [fill=ududff] (2.2442260444725832,12.714811774494082) circle (1pt);
\draw [fill=ududff] (1.4587547508117247,9.647675102494848) circle (1pt);
\draw [fill=ududff] (0.30542956725549253,9.22534801458938) circle (1pt);
\draw [fill=ududff] (0.8767195672554919,10.97007801458938) circle (1pt);
\draw [fill=ududff] (-0.25395984878050726,7.303761905836925) circle (1pt);
\draw [fill=ududff] (0.30542956725549253,9.22534801458938) circle (1pt);
\draw [fill=ududff] (1.3106314448803502,8.626168577836161) circle (1pt);
\draw [fill=ududff] (0.8408382735946365,6.917321342590148) circle (1pt);
\draw [fill=ududff] (-0.25395984878050726,7.303761905836925) circle (1pt);
\draw [fill=ududff] (-0.42373984878050663,5.753411905836927) circle (1pt);
\draw [fill=ududff] (0.5251601512194917,5.559031905836928) circle (1pt);
\draw [fill=ududff] (-0.42373984878050663,5.753411905836927) circle (1pt);
\draw [fill=ududff] (-0.3748398487805069,4.573411905836927) circle (1pt);
\draw [fill=ududff] (1.274750151219493,4.573411905836927) circle (1pt);
\draw [fill=ududff] (-0.3748398487805069,4.573411905836927) circle (1pt);
\draw [fill=ududff] (-0.09275632599759831,2.8286781459322263) circle (1pt);
\draw [fill=ududff] (1.0605688575586338,3.2510052338376934) circle (1pt);
\draw [fill=ududff] (-0.09275632599759831,2.8286781459322263) circle (1pt);
\draw [fill=ududff] (11.825210644064818,15.429074971152005) circle (1pt);
\draw [fill=ududff] (10.26061935040396,14.10666829915277) circle (1pt);
\end{scriptsize}
\end{tikzpicture}
\definecolor{ttffcc}{rgb}{0.2,1.,0.8}
\definecolor{uuuuuu}{rgb}{0.26666666666666666,0.26666666666666666,0.26666666666666666}
\definecolor{aqaqaq}{rgb}{0.6274509803921569,0.6274509803921569,0.6274509803921569}
\definecolor{xfqqff}{rgb}{0.4980392156862745,0.,1.}
\definecolor{ududff}{rgb}{0.30196078431372547,0.30196078431372547,1.}
\definecolor{ffwwqq}{rgb}{1.,0.4,0.}
\definecolor{ttffqq}{rgb}{0.2,1.,0.}
\definecolor{qqzzff}{rgb}{0.,0.6,1.}
\definecolor{rvwvcq}{rgb}{0.08235294117647059,0.396078431372549,0.7529411764705882}
\begin{tikzpicture}[scale=2.05, scale=.9,arrow/.style={->,>=stealth,,thick}]
\clip(0.16,0) rectangle (5.899460258052324,2.910347861601015);
\draw[help lines, line width=0.2pt,step=0.135cm] (0.20164062625996548,0.1) grid (5.899460258052324,2.910347861601015);
\fill[line width=2.pt,color=qqzzff,fill=qqzzff!50,fill opacity=0.6600000262260437] (1.3965242753986016,0.996181247831571) -- (2.351923875859111,0.697764230618576) -- (2.261102397773744,2.060090684584793) -- cycle;
\fill[line width=2.pt,color=ttffqq,fill=ttffqq!30,fill opacity=.81] (2.351923875859111,0.697764230618576) -- (3.240222397773744,0.4153606845847938) -- (2.991322397773744,1.495360684584793) -- cycle;
\fill[line width=2.pt,color=ffwwqq,fill=ffwwqq!30,fill opacity=0.81] (3.240222397773744,0.4153606845847938) -- (4.312692435425764,0.4195296942616159) -- (3.907728874990835,2.060094444489496) -- cycle;
\fill[line width=2.pt,color=xfqqff,fill=xfqqff!30,fill opacity=.81] (5.191322397773745,0.5953606845847924) -- (6.407728874990836,2.060094444489496) -- (5.502526997365978,2.559273881242718) -- cycle;
\fill[line width=2.pt,color=xfqqff,fill=xfqqff!30,fill opacity=.81] (1.707728874990834,2.960094444489497) -- (1.3965242753986016,0.996181247831571) -- (0.4913223977737433,1.495360684584793) -- cycle;
\fill[line width=2.pt,color=ttffcc,fill=ttffcc!60,fill opacity=.81] (4.312692435425764,0.4195296942616159) -- (5.191322397773745,0.5953606845847924) -- (4.761102397773745,2.060090684584793) -- cycle;
\fill[line width=2.pt,color=qqzzff,fill=qqzzff!50,fill opacity=0.6600000262260437] (6.316907396905469,3.422420898455713) -- (6.407728874990836,2.060094444489496) -- (7.272306997365979,3.124003881242718) -- cycle;
\fill[line width=2.pt,color=ttffqq,fill=ttffqq!30,fill opacity=.81] (7.272306997365979,3.124003881242718) -- (7.023406997365979,4.204003881242717) -- (7.911705519280612,3.921600335208935) -- cycle;
\fill[line width=2.pt,color=ffwwqq,fill=ffwwqq!30,fill opacity=0.81] (7.506741958845683,5.562165085436815) -- (8.579211996497703,5.566334095113637) -- (7.911705519280612,3.921600335208935) -- cycle;
\fill[line width=2.pt,color=ttffcc,fill=ttffcc!60,fill opacity=.81] (8.148991996497703,7.0310640951136385) -- (9.027621958845684,7.206895085436815) -- (8.579211996497703,5.566334095113637) -- cycle;
\fill[line width=2.pt,color=xfqqff,fill=xfqqff!30,fill opacity=.81] (9.338826558437916,9.170808282094741) -- (9.027621958845684,7.206895085436815) -- (8.122420081220826,7.706074522190037) -- cycle;
\fill[line width=2.pt,color=qqzzff,fill=qqzzff!50,fill opacity=0.6600000262260437] (8.383426957977406,9.469225299307736) -- (9.338826558437916,9.170808282094741) -- (9.248005080352549,10.533134736060958) -- cycle;
\fill[line width=2.pt,color=ttffqq,fill=ttffqq!30,fill opacity=.81] (8.359706558437916,10.81553828209474) -- (9.248005080352549,10.533134736060958) -- (8.99910508035255,11.613134736060957) -- cycle;
\fill[line width=2.pt,color=ffwwqq,fill=ffwwqq!30,fill opacity=0.81] (7.926635042700529,11.608965726384135) -- (8.99910508035255,11.613134736060957) -- (8.59414151991762,13.253699486288838) -- cycle;
\fill[line width=2.pt,color=ttffcc,fill=ttffcc!60,fill opacity=.81] (7.71551155756964,13.077868495965662) -- (8.59414151991762,13.253699486288838) -- (8.16392151991762,14.718429486288839) -- cycle;
\fill[line width=2.pt,color=qqzzff,fill=qqzzff!50,fill opacity=0.6600000262260437] (7.258719642292762,15.217608923042063) -- (6.303320041832253,15.516025940255057) -- (6.3941415199176195,14.15369948628884) -- cycle;
\fill[line width=2.pt,color=ttffqq,fill=ttffqq!30,fill opacity=.81] (6.303320041832253,15.516025940255057) -- (5.4150215199176195,15.798429486288839) -- (5.663921519917619,14.71842948628884) -- cycle;
\fill[line width=2.pt,color=ffwwqq,fill=ffwwqq!30,fill opacity=0.81] (5.4150215199176195,15.798429486288839) -- (4.342551482265599,15.794260476612017) -- (4.747515042700528,14.153695726384136) -- cycle;
\fill[line width=2.pt,color=xfqqff,fill=xfqqff!30,fill opacity=.81] (3.4639215199176183,15.618429486288841) -- (2.2475150427005275,14.153695726384136) -- (3.1527169203253855,13.654516289630916) -- cycle;
\fill[line width=2.pt,color=xfqqff,fill=xfqqff!30,fill opacity=.81] (6.947515042700529,13.253695726384136) -- (7.258719642292762,15.217608923042063) -- (8.16392151991762,14.71842948628884) -- cycle;
\fill[line width=2.pt,color=ttffcc,fill=ttffcc!60,fill opacity=.81] (4.342551482265599,15.794260476612017) -- (3.4639215199176183,15.618429486288841) -- (3.8941415199176186,14.15369948628884) -- cycle;
\fill[line width=2.pt,color=qqzzff,fill=qqzzff!50,fill opacity=0.6600000262260437] (2.3383365207858944,12.79136927241792) -- (2.2475150427005275,14.153695726384136) -- (1.3829369203253847,13.089786289630915) -- cycle;
\fill[line width=2.pt,color=ttffqq,fill=ttffqq!30,fill opacity=.81] (1.3829369203253847,13.089786289630915) -- (1.6318369203253846,12.009786289630917) -- (0.7435383984107515,12.292189835664697) -- cycle;
\fill[line width=2.pt,color=ffwwqq,fill=ffwwqq!30,fill opacity=0.81] (1.1485019588456806,10.651625085436818) -- (0.07603192119366042,10.647456075759996) -- (0.7435383984107515,12.292189835664697) -- cycle;
\fill[line width=2.pt,color=ttffcc,fill=ttffcc!60,fill opacity=.81] (0.5062519211936607,9.182726075759994) -- (-0.37237804115432027,9.006895085436817) -- (0.07603192119366042,10.647456075759996) -- cycle;
\fill[line width=2.pt,color=xfqqff,fill=xfqqff!30,fill opacity=.81] (-0.6835826407465522,7.0429818887788915) -- (-0.37237804115432027,9.006895085436817) -- (0.5328238364705378,8.507715648683597) -- cycle;
\fill[line width=2.pt,color=qqzzff,fill=qqzzff!50,fill opacity=0.6600000262260437] (0.2718169597139575,6.744564871565897) -- (-0.6835826407465522,7.0429818887788915) -- (-0.5927611626611853,5.6806554348126745) -- cycle;
\fill[line width=2.pt,color=ttffqq,fill=ttffqq!30,fill opacity=.81] (0.2955373592534478,5.398251888778892) -- (-0.5927611626611853,5.6806554348126745) -- (-0.34386116266118627,4.600655434812676) -- cycle;
\fill[line width=2.pt,color=ffwwqq,fill=ffwwqq!30,fill opacity=0.81] (0.7286088749908339,4.604824444489498) -- (-0.34386116266118627,4.600655434812676) -- (0.06110239777374282,2.960090684584795) -- cycle;
\fill[line width=2.pt,color=ttffcc,fill=ttffcc!60,fill opacity=.81] (0.9397323601217238,3.135921674907971) -- (0.06110239777374282,2.960090684584795) -- (0.4913223977737431,1.495360684584794) -- cycle;
\draw [->,>=stealth,line width=1.5pt] (1.3965242753986016,0.996181247831571) -- node[below] {$s_4$}(0.4913223977737433,1.495360684584793);
\draw [->,>=stealth,line width=1.5pt] (0.4913223977737433,1.495360684584793) -- node[above] {$s_3$}(2.991322397773744,1.495360684584793);
\draw [->,>=stealth,line width=1.5pt] (2.261102397773744,2.060090684584793) -- node[above] {$s_1$}(3.907728874990835,2.060094444489496);
\draw [->,>=stealth,line width=1.5pt] (0.4913223977737433,1.495360684584793) -- node[above] {$s_2$}(2.261102397773744,2.060090684584793);
\draw [->,>=stealth,line width=1.5pt] (4.312692435425764,0.4195296942616159) -- node[below] {$s_7$}(5.191322397773745,0.5953606845847924);
\draw [->,>=stealth,line width=1.5pt] (3.240222397773744,0.4153606845847938) -- node[below] {$s_8$}(4.312692435425764,0.4195296942616159);
\draw [->,>=stealth,line width=1.5pt] (5.191322397773745,0.5953606845847924) -- node[above] {$s_6$}(2.351923875859111,0.697764230618576);
\draw [->,>=stealth,line width=1.5pt] (2.351923875859111,0.697764230618576) -- node[below] {$s_5$}(1.3965242753986016,0.996181247831571);
\begin{scriptsize}
\draw [fill=rvwvcq] (1.3965242753986016,0.996181247831571) circle (1pt)node[color=rvwvcq,below] {$V_1$};
\draw [fill=rvwvcq] (2.351923875859111,0.697764230618576) circle (1pt)node[color=rvwvcq,below] {$V_2$};
\draw [fill=rvwvcq] (2.261102397773744,2.060090684584793) circle (1pt)node[color=rvwvcq,above] {$W_1$};
\draw [fill=rvwvcq] (3.240222397773744,0.4153606845847938) circle (1pt)node[color=rvwvcq,below] {$V_3$};
\draw [fill=rvwvcq] (2.991322397773744,1.495360684584793) circle (1pt)node[color=rvwvcq,right] {$W_2$};
\draw [fill=rvwvcq] (4.312692435425764,0.4195296942616159) circle (1pt)node[color=rvwvcq,below] {$V_4$};
\draw [fill=rvwvcq] (3.907728874990835,2.060094444489496) circle (1pt)node[color=rvwvcq,above] {$W_3$};
\draw [fill=rvwvcq] (5.191322397773745,0.5953606845847924) circle (1pt)node[color=rvwvcq,below] {$V_5$};
\draw [fill=rvwvcq] (0.4913223977737433,1.495360684584793) circle (1pt)node[color=rvwvcq,left] {$V_0$};
\draw [fill=ududff] (6.407728874990836,2.060094444489496) circle (1pt);
\draw [fill=ududff] (5.502526997365978,2.559273881242718) circle (1pt);
\draw [fill=ududff] (1.707728874990834,2.960094444489497) circle (1pt);
\draw [fill=ududff] (4.761102397773745,2.060090684584793) circle (1pt);
\draw [fill=ududff] (6.316907396905469,3.422420898455713) circle (1pt);
\draw [fill=ududff] (6.407728874990836,2.060094444489496) circle (1pt);
\draw [fill=ududff] (7.272306997365979,3.124003881242718) circle (1pt);
\draw [fill=ududff] (7.272306997365979,3.124003881242718) circle (1pt);
\draw [fill=ududff] (7.023406997365979,4.204003881242717) circle (1pt);
\draw [fill=ududff] (7.911705519280612,3.921600335208935) circle (1pt);
\draw [fill=ududff] (7.506741958845683,5.562165085436815) circle (1pt);
\draw [fill=ududff] (8.579211996497703,5.566334095113637) circle (1pt);
\draw [fill=ududff] (7.911705519280612,3.921600335208935) circle (1pt);
\draw [fill=ududff] (8.148991996497703,7.0310640951136385) circle (1pt);
\draw [fill=ududff] (9.027621958845684,7.206895085436815) circle (1pt);
\draw [fill=ududff] (8.579211996497703,5.566334095113637) circle (1pt);
\draw [fill=ududff] (9.338826558437916,9.170808282094741) circle (1pt);
\draw [fill=ududff] (9.027621958845684,7.206895085436815) circle (1pt);
\draw [fill=ududff] (8.122420081220826,7.706074522190037) circle (1pt);
\draw [fill=ududff] (8.383426957977406,9.469225299307736) circle (1pt);
\draw [fill=ududff] (9.338826558437916,9.170808282094741) circle (1pt);
\draw [fill=ududff] (9.248005080352549,10.533134736060958) circle (1pt);
\draw [fill=ududff] (8.359706558437916,10.81553828209474) circle (1pt);
\draw [fill=ududff] (9.248005080352549,10.533134736060958) circle (1pt);
\draw [fill=ududff] (8.99910508035255,11.613134736060957) circle (1pt);
\draw [fill=ududff] (7.926635042700529,11.608965726384135) circle (1pt);
\draw [fill=ududff] (8.99910508035255,11.613134736060957) circle (1pt);
\draw [fill=ududff] (8.59414151991762,13.253699486288838) circle (1pt);
\draw [fill=ududff] (7.71551155756964,13.077868495965662) circle (1pt);
\draw [fill=ududff] (8.59414151991762,13.253699486288838) circle (1pt);
\draw [fill=ududff] (8.16392151991762,14.718429486288839) circle (1pt);
\draw [fill=uuuuuu] (4.327621958845682,8.106895085436816) circle (2.0pt);
\draw [fill=ududff] (7.258719642292762,15.217608923042063) circle (1pt);
\draw [fill=ududff] (6.303320041832253,15.516025940255057) circle (1pt);
\draw [fill=ududff] (6.3941415199176195,14.15369948628884) circle (1pt);
\draw [fill=ududff] (6.303320041832253,15.516025940255057) circle (1pt);
\draw [fill=ududff] (5.4150215199176195,15.798429486288839) circle (1pt);
\draw [fill=ududff] (5.663921519917619,14.71842948628884) circle (1pt);
\draw [fill=ududff] (5.4150215199176195,15.798429486288839) circle (1pt);
\draw [fill=ududff] (4.342551482265599,15.794260476612017) circle (1pt);
\draw [fill=ududff] (4.747515042700528,14.153695726384136) circle (1pt);
\draw [fill=ududff] (3.4639215199176183,15.618429486288841) circle (1pt);
\draw [fill=ududff] (2.2475150427005275,14.153695726384136) circle (1pt);
\draw [fill=ududff] (3.1527169203253855,13.654516289630916) circle (1pt);
\draw [fill=ududff] (6.947515042700529,13.253695726384136) circle (1pt);
\draw [fill=ududff] (7.258719642292762,15.217608923042063) circle (1pt);
\draw [fill=ududff] (8.16392151991762,14.71842948628884) circle (1pt);

\draw [fill=ududff] (4.342551482265599,15.794260476612017) circle (1pt);
\draw [fill=ududff] (3.4639215199176183,15.618429486288841) circle (1pt);
\draw [fill=ududff] (3.8941415199176186,14.15369948628884) circle (1pt);
\draw [fill=ududff] (2.3383365207858944,12.79136927241792) circle (1pt);
\draw [fill=ududff] (2.2475150427005275,14.153695726384136) circle (1pt);
\draw [fill=ududff] (1.3829369203253847,13.089786289630915) circle (1pt);
\draw [fill=ududff] (1.3829369203253847,13.089786289630915) circle (1pt);
\draw [fill=ududff] (1.6318369203253846,12.009786289630917) circle (1pt);
\draw [fill=ududff] (0.7435383984107515,12.292189835664697) circle (1pt);
\draw [fill=ududff] (1.1485019588456806,10.651625085436818) circle (1pt);
\draw [fill=ududff] (0.07603192119366042,10.647456075759996) circle (1pt);
\draw [fill=ududff] (0.7435383984107515,12.292189835664697) circle (1pt);
\draw [fill=ududff] (0.5062519211936607,9.182726075759994) circle (1pt);
\draw [fill=ududff] (-0.37237804115432027,9.006895085436817) circle (1pt);
\draw [fill=ududff] (0.07603192119366042,10.647456075759996) circle (1pt);
\draw [fill=ududff] (-0.6835826407465522,7.0429818887788915) circle (1pt);
\draw [fill=ududff] (-0.37237804115432027,9.006895085436817) circle (1pt);
\draw [fill=ududff] (0.5328238364705378,8.507715648683597) circle (1pt);
\draw [fill=ududff] (0.2718169597139575,6.744564871565897) circle (1pt);
\draw [fill=ududff] (-0.6835826407465522,7.0429818887788915) circle (1pt);
\draw [fill=ududff] (-0.5927611626611853,5.6806554348126745) circle (1pt);
\draw [fill=ududff] (0.2955373592534478,5.398251888778892) circle (1pt);
\draw [fill=ududff] (-0.5927611626611853,5.6806554348126745) circle (1pt);
\draw [fill=ududff] (-0.34386116266118627,4.600655434812676) circle (1pt);
\draw [fill=ududff] (0.7286088749908339,4.604824444489498) circle (1pt);
\draw [fill=ududff] (-0.34386116266118627,4.600655434812676) circle (1pt);
\draw [fill=ududff] (0.06110239777374282,2.960090684584795) circle (1pt);
\draw [fill=ududff] (0.9397323601217238,3.135921674907971) circle (1pt);
\draw [fill=ududff] (0.06110239777374282,2.960090684584795) circle (1pt);
\draw [fill=ududff] (8.16392151991762,14.71842948628884) circle (1pt);
\draw [fill=ududff] (6.947515042700529,13.253695726384136) circle (1pt);
\end{scriptsize}
\end{tikzpicture}
\caption{Four stable 30-gons of type $E_8$}
\label{fig:E8-1}
\end{figure}

\begin{figure}[ht]\centering
\definecolor{ttffcc}{rgb}{0.2,1.,0.8}
\definecolor{uuuuuu}{rgb}{0.26666666666666666,0.26666666666666666,0.26666666666666666}
\definecolor{aqaqaq}{rgb}{0.6274509803921569,0.6274509803921569,0.6274509803921569}
\definecolor{xfqqff}{rgb}{0.4980392156862745,0.,1.}
\definecolor{ududff}{rgb}{0.30196078431372547,0.30196078431372547,1.}
\definecolor{ffwwqq}{rgb}{1.,0.4,0.}
\definecolor{ttffqq}{rgb}{0.2,1.,0.}
\definecolor{qqzzff}{rgb}{0.,0.6,1.}
\definecolor{rvwvcq}{rgb}{0.08235294117647059,0.396078431372549,0.7529411764705882}
\begin{tikzpicture}[scale=1.67, scale=.9,arrow/.style={->,>=stealth,,thick}]
\clip(0.06907443200495603,-0.16257290577795844) rectangle (7.075512428024496,3.057959562207037);
\draw[help lines, line width=0.2pt,step=0.18cm] (0.06907443200495603,0) grid (7.075512428024496,3.057959562207037);
\fill[line width=2.pt,color=qqzzff,fill=qqzzff!50,fill opacity=0.6600000262260437] (1.687344961817413,0.8072909699098726) -- (3.072874961817413,0.4272909699098728) -- (2.461294961817413,2.0720209699098726) -- cycle;
\fill[line width=2.pt,color=ttffqq,fill=ttffqq!30,fill opacity=.81] (3.072874961817413,0.4272909699098728) -- (3.940414961817413,0.4272909699098728) -- (3.491514961817413,1.5072909699098729) -- cycle;
\fill[line width=2.pt,color=ffwwqq,fill=ffwwqq!30,fill opacity=0.81] (3.940414961817413,0.4272909699098728) -- (5.405984961817413,0.8072909699098726) -- (4.507921439034504,2.0720247298145757) -- cycle;
\fill[line width=2.pt,color=xfqqff,fill=xfqqff!30,fill opacity=.81] (6.68530010232606,1.2403246446116063) -- (7.507921439034504,2.0720247298145757) -- (6.312091439034504,2.772024729814576) -- cycle;
\fill[line width=2.pt,color=xfqqff,fill=xfqqff!30,fill opacity=.81] (1.314136298525857,2.3389910551128423) -- (1.687344961817413,0.8072909699098726) -- (0.49151496181741283,1.5072909699098729) -- cycle;
\fill[line width=2.pt,color=ttffcc,fill=ttffcc!60,fill opacity=.81] (5.405984961817413,0.8072909699098726) -- (6.68530010232606,1.2403246446116063) -- (5.461294961817413,2.0720209699098726) -- cycle;
\fill[line width=2.pt,color=qqzzff,fill=qqzzff!50,fill opacity=0.6600000262260437] (6.896341439034504,3.7167547298145753) -- (7.507921439034504,2.0720247298145757) -- (8.281871439034504,3.336754729814576) -- cycle;
\fill[line width=2.pt,color=ttffqq,fill=ttffqq!30,fill opacity=.81] (8.281871439034504,3.336754729814576) -- (7.8329714390345035,4.416754729814576) -- (8.700511439034504,4.416754729814576) -- cycle;
\fill[line width=2.pt,color=ffwwqq,fill=ffwwqq!30,fill opacity=0.81] (7.8024479162515945,5.681488489719279) -- (9.268017916251594,6.061488489719279) -- (8.700511439034504,4.416754729814576) -- cycle;
\fill[line width=2.pt,color=ttffcc,fill=ttffcc!60,fill opacity=.81] (8.044012775742946,6.893184815017546) -- (9.323327916251593,7.326218489719279) -- (9.268017916251594,6.061488489719279) -- cycle;
\fill[line width=2.pt,color=xfqqff,fill=xfqqff!30,fill opacity=.81] (8.950119252960036,8.857918574922248) -- (9.323327916251593,7.326218489719279) -- (8.127497916251592,8.02621848971928) -- cycle;
\fill[line width=2.pt,color=qqzzff,fill=qqzzff!50,fill opacity=0.6600000262260437] (7.564589252960036,9.237918574922249) -- (8.950119252960036,8.857918574922248) -- (8.338539252960036,10.502648574922247) -- cycle;
\fill[line width=2.pt,color=ttffqq,fill=ttffqq!30,fill opacity=.81] (7.470999252960036,10.502648574922247) -- (8.338539252960036,10.502648574922247) -- (7.889639252960036,11.582648574922247) -- cycle;
\fill[line width=2.pt,color=ffwwqq,fill=ffwwqq!30,fill opacity=0.81] (6.424069252960035,11.202648574922247) -- (7.889639252960036,11.582648574922247) -- (6.991575730177127,12.84738233482695) -- cycle;
\fill[line width=2.pt,color=ttffcc,fill=ttffcc!60,fill opacity=.81] (5.71226058966848,12.414348660125217) -- (6.991575730177127,12.84738233482695) -- (5.7675705896684795,13.679078660125217) -- cycle;
\fill[line width=2.pt,color=qqzzff,fill=qqzzff!50,fill opacity=0.6600000262260437] (4.5717405896684795,14.379078660125217) -- (3.1862105896684794,14.759078660125217) -- (3.7977905896684794,13.114348660125216) -- cycle;
\fill[line width=2.pt,color=ttffqq,fill=ttffqq!30,fill opacity=.81] (3.1862105896684794,14.759078660125217) -- (2.3186705896684794,14.759078660125217) -- (2.7675705896684795,13.679078660125217) -- cycle;
\fill[line width=2.pt,color=ffwwqq,fill=ffwwqq!30,fill opacity=0.81] (2.3186705896684794,14.759078660125217) -- (0.8531005896684789,14.379078660125217) -- (1.751164112451388,13.114344900220514) -- cycle;
\fill[line width=2.pt,color=xfqqff,fill=xfqqff!30,fill opacity=.81] (-0.426214550840168,13.946044985423484) -- (-1.248835887548612,13.114344900220514) -- (-0.05300588754861213,12.414344900220513) -- cycle;
\fill[line width=2.pt,color=xfqqff,fill=xfqqff!30,fill opacity=.81] (4.944949252960035,12.847378574922248) -- (4.5717405896684795,14.379078660125217) -- (5.7675705896684795,13.679078660125217) -- cycle;
\fill[line width=2.pt,color=ttffcc,fill=ttffcc!60,fill opacity=.81] (0.8531005896684789,14.379078660125217) -- (-0.426214550840168,13.946044985423484) -- (0.7977905896684794,13.114348660125216) -- cycle;
\fill[line width=2.pt,color=qqzzff,fill=qqzzff!50,fill opacity=0.6600000262260437] (-0.6372558875486121,11.469614900220515) -- (-1.248835887548612,13.114344900220514) -- (-2.0227858875486113,11.849614900220514) -- cycle;
\fill[line width=2.pt,color=ttffqq,fill=ttffqq!30,fill opacity=.81] (-2.0227858875486113,11.849614900220514) -- (-1.5738858875486113,10.769614900220514) -- (-2.4414258875486112,10.769614900220514) -- cycle;
\fill[line width=2.pt,color=ffwwqq,fill=ffwwqq!30,fill opacity=0.81] (-1.5433623647657022,9.50488114031581) -- (-3.008932364765702,9.12488114031581) -- (-2.4414258875486112,10.769614900220514) -- cycle;
\fill[line width=2.pt,color=ttffcc,fill=ttffcc!60,fill opacity=.81] (-1.7849272242570535,8.293184815017543) -- (-3.0642423647657004,7.860151140315811) -- (-3.008932364765702,9.12488114031581) -- cycle;
\fill[line width=2.pt,color=xfqqff,fill=xfqqff!30,fill opacity=.81] (-2.6910337014741437,6.328451055112842) -- (-3.0642423647657004,7.860151140315811) -- (-1.8684123647656996,7.16015114031581) -- cycle;
\fill[line width=2.pt,color=qqzzff,fill=qqzzff!50,fill opacity=0.6600000262260437] (-1.3055037014741435,5.948451055112841) -- (-2.6910337014741437,6.328451055112842) -- (-2.0794537014741437,4.6837210551128425) -- cycle;
\fill[line width=2.pt,color=ttffqq,fill=ttffqq!30,fill opacity=.81] (-1.2119137014741437,4.6837210551128425) -- (-2.0794537014741437,4.6837210551128425) -- (-1.6305537014741436,3.6037210551128425) -- cycle;
\fill[line width=2.pt,color=ffwwqq,fill=ffwwqq!30,fill opacity=0.81] (-0.1649837014741431,3.9837210551128432) -- (-1.6305537014741436,3.6037210551128425) -- (-0.7324901786912346,2.33898729520814) -- cycle;
\fill[line width=2.pt,color=ttffcc,fill=ttffcc!60,fill opacity=.81] (0.5468249618174124,2.772020969909873) -- (-0.7324901786912346,2.33898729520814) -- (0.49151496181741283,1.5072909699098727) -- cycle;
\draw [->,>=stealth,line width=1.5pt] (3.072874961817413,0.4272909699098728) -- node[below] {$s_8$}(3.940414961817413,0.4272909699098728);
\draw [->,>=stealth,line width=1.5pt] (2.461294961817413,2.0720209699098726) -- node[above] {$s_1$}(4.507921439034504,2.0720247298145757);
\draw [->,>=stealth,line width=1.5pt] (0.49151496181741283,1.5072909699098729) -- (3.491514961817413,1.5072909699098729);
\draw ($(0.49151496181741283,1.5072909699098729)!.7!(3.491514961817413,1.5072909699098729)$) node[above] {$s_3$};
\draw [->,>=stealth,line width=1.5pt] (6.68530010232606,1.2403246446116063) -- node[below] {$s_4$}(0.49151496181741283,1.5072909699098729);
\draw [->,>=stealth,line width=1.5pt] (0.49151496181741283,1.5072909699098729) -- node[above] {$s_2$}(2.461294961817413,2.0720209699098726);
\draw [->,>=stealth,line width=1.5pt] (3.940414961817413,0.4272909699098728) -- node[below] {$s_7$}(1.6873449618174128,0.8072909699098726);
\draw [->,>=stealth,line width=1.5pt] (1.687344961817413,0.8072909699098726) -- node[above] {$s_6$}(5.405984961817413,0.8072909699098726);
\draw [->,>=stealth,line width=1.5pt] (5.405984961817413,0.8072909699098726) -- node[below] {$s_5$}(6.68530010232606,1.2403246446116063);
\begin{scriptsize}
\draw [fill=rvwvcq] (1.687344961817413,0.8072909699098726) circle (1pt)node[color=rvwvcq,below] {$V_1$};
\draw [fill=rvwvcq] (3.072874961817413,0.4272909699098728) circle (1pt)node[color=rvwvcq,below] {$V_2$};
\draw [fill=rvwvcq] (2.461294961817413,2.0720209699098726) circle (1pt)node[color=rvwvcq,above] {$W_1$};
\draw [fill=rvwvcq] (3.940414961817413,0.4272909699098728) circle (1pt)node[color=rvwvcq,below] {$V_3$};
\draw [fill=rvwvcq] (3.491514961817413,1.5072909699098729) circle (1pt)node[color=rvwvcq,right] {$W_2$};
\draw [fill=rvwvcq] (5.405984961817413,0.8072909699098726) circle (1pt)node[color=rvwvcq,below] {$V_4$};
\draw [fill=rvwvcq] (4.507921439034504,2.0720247298145757) circle (1pt)node[color=rvwvcq,above] {$W_3$};
\draw [fill=rvwvcq] (6.68530010232606,1.2403246446116063) circle (1pt)node[color=rvwvcq,below] {$V_5$};
\draw [fill=rvwvcq] (0.49151496181741283,1.5072909699098729) circle (1pt)node[color=rvwvcq,left] {$V_0$};
\draw [fill=ududff] (7.507921439034504,2.0720247298145757) circle (1pt);
\draw [fill=ududff] (6.312091439034504,2.772024729814576) circle (1pt);
\draw [fill=ududff] (1.314136298525857,2.3389910551128423) circle (1pt);
\draw [fill=ududff] (5.461294961817413,2.0720209699098726) circle (1pt);
\draw [fill=ududff] (6.896341439034504,3.7167547298145753) circle (1pt);
\draw [fill=ududff] (7.507921439034504,2.0720247298145757) circle (1pt);
\draw [fill=ududff] (8.281871439034504,3.336754729814576) circle (1pt);
\draw [fill=ududff] (8.281871439034504,3.336754729814576) circle (1pt);
\draw [fill=ududff] (7.8329714390345035,4.416754729814576) circle (1pt);
\draw [fill=ududff] (8.700511439034504,4.416754729814576) circle (1pt);
\draw [fill=ududff] (7.8024479162515945,5.681488489719279) circle (1pt);
\draw [fill=ududff] (9.268017916251594,6.061488489719279) circle (1pt);
\draw [fill=ududff] (8.700511439034504,4.416754729814576) circle (1pt);
\draw [fill=ududff] (8.044012775742946,6.893184815017546) circle (1pt);
\draw [fill=ududff] (9.323327916251593,7.326218489719279) circle (1pt);
\draw [fill=ududff] (9.268017916251594,6.061488489719279) circle (1pt);
\draw [fill=ududff] (8.950119252960036,8.857918574922248) circle (1pt);
\draw [fill=ududff] (9.323327916251593,7.326218489719279) circle (1pt);
\draw [fill=ududff] (8.127497916251592,8.02621848971928) circle (1pt);
\draw [fill=ududff] (7.564589252960036,9.237918574922249) circle (1pt);
\draw [fill=ududff] (8.950119252960036,8.857918574922248) circle (1pt);
\draw [fill=ududff] (8.338539252960036,10.502648574922247) circle (1pt);
\draw [fill=ududff] (7.470999252960036,10.502648574922247) circle (1pt);
\draw [fill=ududff] (8.338539252960036,10.502648574922247) circle (1pt);
\draw [fill=ududff] (7.889639252960036,11.582648574922247) circle (1pt);
\draw [fill=ududff] (6.424069252960035,11.202648574922247) circle (1pt);
\draw [fill=ududff] (7.889639252960036,11.582648574922247) circle (1pt);
\draw [fill=ududff] (6.991575730177127,12.84738233482695) circle (1pt);
\draw [fill=ududff] (5.71226058966848,12.414348660125217) circle (1pt);
\draw [fill=ududff] (6.991575730177127,12.84738233482695) circle (1pt);
\draw [fill=ududff] (5.7675705896684795,13.679078660125217) circle (1pt);
\draw [fill=uuuuuu] (3.129542775742946,7.593184815017545) circle (2.0pt);
\draw [fill=ududff] (4.5717405896684795,14.379078660125217) circle (1pt);
\draw [fill=ududff] (3.1862105896684794,14.759078660125217) circle (1pt);
\draw [fill=ududff] (3.7977905896684794,13.114348660125216) circle (1pt);
\draw [fill=ududff] (3.1862105896684794,14.759078660125217) circle (1pt);
\draw [fill=ududff] (2.3186705896684794,14.759078660125217) circle (1pt);
\draw [fill=ududff] (2.7675705896684795,13.679078660125217) circle (1pt);
\draw [fill=ududff] (2.3186705896684794,14.759078660125217) circle (1pt);
\draw [fill=ududff] (0.8531005896684789,14.379078660125217) circle (1pt);
\draw [fill=ududff] (1.751164112451388,13.114344900220514) circle (1pt);
\draw [fill=ududff] (-0.426214550840168,13.946044985423484) circle (1pt);
\draw [fill=ududff] (-1.248835887548612,13.114344900220514) circle (1pt);
\draw [fill=ududff] (-0.05300588754861213,12.414344900220513) circle (1pt);
\draw [fill=ududff] (4.944949252960035,12.847378574922248) circle (1pt);
\draw [fill=ududff] (4.5717405896684795,14.379078660125217) circle (1pt);
\draw [fill=ududff] (5.7675705896684795,13.679078660125217) circle (1pt);
\draw [fill=ududff] (0.8531005896684789,14.379078660125217) circle (1pt);
\draw [fill=ududff] (-0.426214550840168,13.946044985423484) circle (1pt);
\draw [fill=ududff] (0.7977905896684794,13.114348660125216) circle (1pt);
\draw [fill=ududff] (-0.6372558875486121,11.469614900220515) circle (1pt);
\draw [fill=ududff] (-1.248835887548612,13.114344900220514) circle (1pt);
\draw [fill=ududff] (-2.0227858875486113,11.849614900220514) circle (1pt);
\draw [fill=ududff] (-2.0227858875486113,11.849614900220514) circle (1pt);
\draw [fill=ududff] (-1.5738858875486113,10.769614900220514) circle (1pt);
\draw [fill=ududff] (-2.4414258875486112,10.769614900220514) circle (1pt);
\draw [fill=ududff] (-1.5433623647657022,9.50488114031581) circle (1pt);
\draw [fill=ududff] (-3.008932364765702,9.12488114031581) circle (1pt);
\draw [fill=ududff] (-2.4414258875486112,10.769614900220514) circle (1pt);
\draw [fill=ududff] (-1.7849272242570535,8.293184815017543) circle (1pt);
\draw [fill=ududff] (-3.0642423647657004,7.860151140315811) circle (1pt);
\draw [fill=ududff] (-3.008932364765702,9.12488114031581) circle (1pt);
\draw [fill=ududff] (-2.6910337014741437,6.328451055112842) circle (1pt);
\draw [fill=ududff] (-3.0642423647657004,7.860151140315811) circle (1pt);
\draw [fill=ududff] (-1.8684123647656996,7.16015114031581) circle (1pt);
\draw [fill=ududff] (-1.3055037014741435,5.948451055112841) circle (1pt);
\draw [fill=ududff] (-2.6910337014741437,6.328451055112842) circle (1pt);
\draw [fill=ududff] (-2.0794537014741437,4.6837210551128425) circle (1pt);
\draw [fill=ududff] (-1.2119137014741437,4.6837210551128425) circle (1pt);
\draw [fill=ududff] (-2.0794537014741437,4.6837210551128425) circle (1pt);
\draw [fill=ududff] (-1.6305537014741436,3.6037210551128425) circle (1pt);
\draw [fill=ududff] (-0.1649837014741431,3.9837210551128432) circle (1pt);
\draw [fill=ududff] (-1.6305537014741436,3.6037210551128425) circle (1pt);
\draw [fill=ududff] (-0.7324901786912346,2.33898729520814) circle (1pt);
\draw [fill=ududff] (0.5468249618174124,2.772020969909873) circle (1pt);
\draw [fill=ududff] (-0.7324901786912346,2.33898729520814) circle (1pt);
\draw [fill=ududff] (5.7675705896684795,13.679078660125217) circle (1pt);
\draw [fill=ududff] (4.944949252960035,12.847378574922248) circle (1pt);
\end{scriptsize}
\end{tikzpicture}
\definecolor{ttffcc}{rgb}{0.2,1.,0.8}
\definecolor{uuuuuu}{rgb}{0.26666666666666666,0.26666666666666666,0.26666666666666666}
\definecolor{aqaqaq}{rgb}{0.6274509803921569,0.6274509803921569,0.6274509803921569}
\definecolor{xfqqff}{rgb}{0.4980392156862745,0.,1.}
\definecolor{ududff}{rgb}{0.30196078431372547,0.30196078431372547,1.}
\definecolor{ffwwqq}{rgb}{1.,0.4,0.}
\definecolor{ttffqq}{rgb}{0.2,1.,0.}
\definecolor{qqzzff}{rgb}{0.,0.6,1.}
\definecolor{rvwvcq}{rgb}{0.08235294117647059,0.396078431372549,0.7529411764705882}
\begin{tikzpicture}[scale=1.7, scale=.9,arrow/.style={->,>=stealth,,thick}]
\clip(0.1925760469266172,-0.15847710284996636) rectangle (7.107441726166619,3.1022252967431663);
\draw[help lines, line width=0.2pt,step=0.17cm] (0.1925760469266172,0) grid (7.107441726166619,3.1022252967431663);
\fill[line width=2.pt,color=qqzzff,fill=qqzzff!50,fill opacity=0.6600000262260437] (1.5035475233911475,0.8822533857095992) -- (3.0875561158567764,0.4034932259945089) -- (2.4759761158567763,2.048223225994509) -- cycle;
\fill[line width=2.pt,color=ttffqq,fill=ttffqq!30,fill opacity=.81] (3.0875561158567764,0.4034932259945089) -- (3.9550961158567763,0.4034932259945089) -- (3.5061961158567763,1.4834932259945088) -- cycle;
\fill[line width=2.pt,color=ffwwqq,fill=ffwwqq!30,fill opacity=0.81] (3.9550961158567763,0.4034932259945089) -- (5.545218204593364,0.6695338761726454) -- (4.522602593073869,2.048226985899212) -- cycle;
\fill[line width=2.pt,color=xfqqff,fill=xfqqff!30,fill opacity=.81] (6.699981256365425,1.2165269006962423) -- (7.522602593073872,2.048226985899212) -- (6.525251185539498,2.649466826184122) -- cycle;
\fill[line width=2.pt,color=xfqqff,fill=xfqqff!30,fill opacity=.81] (1.3288174525652208,2.3151933111974787) -- (1.5035475233911475,0.8822533857095992) -- (0.5061961158567739,1.4834932259945088) -- cycle;
\fill[line width=2.pt,color=ttffcc,fill=ttffcc!60,fill opacity=.81] (5.545218204593364,0.6695338761726454) -- (6.699981256365425,1.2165269006962423) -- (5.475976115856779,2.048223225994509) -- cycle;
\fill[line width=2.pt,color=qqzzff,fill=qqzzff!50,fill opacity=0.6600000262260437] (6.911022593073872,3.692956985899212) -- (7.522602593073872,2.048226985899212) -- (8.495031185539501,3.214196826184122) -- cycle;
\fill[line width=2.pt,color=ttffqq,fill=ttffqq!30,fill opacity=.81] (8.495031185539501,3.214196826184122) -- (8.046131185539501,4.2941968261841215) -- (8.913671185539501,4.2941968261841215) -- cycle;
\fill[line width=2.pt,color=ffwwqq,fill=ffwwqq!30,fill opacity=0.81] (7.891055574020006,5.672889935910688) -- (9.481177662756593,5.938930586088825) -- (8.913671185539501,4.2941968261841215) -- cycle;
\fill[line width=2.pt,color=ttffcc,fill=ttffcc!60,fill opacity=.81] (8.257172522247947,6.770626911387091) -- (9.411935574020008,7.317619935910688) -- (9.481177662756593,5.938930586088825) -- cycle;
\fill[line width=2.pt,color=xfqqff,fill=xfqqff!30,fill opacity=.81] (9.237205503194081,8.750559861398568) -- (9.411935574020008,7.317619935910688) -- (8.414584166485634,7.918859776195598) -- cycle;
\fill[line width=2.pt,color=qqzzff,fill=qqzzff!50,fill opacity=0.6600000262260437] (7.653196910728453,9.229320021113658) -- (9.237205503194081,8.750559861398568) -- (8.625625503194081,10.395289861398567) -- cycle;
\fill[line width=2.pt,color=ttffqq,fill=ttffqq!30,fill opacity=.81] (7.758085503194081,10.395289861398567) -- (8.625625503194081,10.395289861398567) -- (8.176725503194081,11.475289861398567) -- cycle;
\fill[line width=2.pt,color=ffwwqq,fill=ffwwqq!30,fill opacity=0.81] (6.586603414457494,11.209249211220431) -- (8.176725503194081,11.475289861398567) -- (7.154109891674587,12.853982971125134) -- cycle;
\fill[line width=2.pt,color=ttffcc,fill=ttffcc!60,fill opacity=.81] (5.999346839902525,12.306989946601536) -- (7.154109891674587,12.853982971125134) -- (5.93010475116594,13.6856792964234) -- cycle;
\fill[line width=2.pt,color=qqzzff,fill=qqzzff!50,fill opacity=0.6600000262260437] (4.932753343631567,14.28691913670831) -- (3.348744751165938,14.7656792964234) -- (3.960324751165938,13.120949296423401) -- cycle;
\fill[line width=2.pt,color=ttffqq,fill=ttffqq!30,fill opacity=.81] (3.348744751165938,14.7656792964234) -- (2.481204751165938,14.7656792964234) -- (2.930104751165938,13.6856792964234) -- cycle;
\fill[line width=2.pt,color=ffwwqq,fill=ffwwqq!30,fill opacity=0.81] (2.481204751165938,14.7656792964234) -- (0.8910826624293504,14.499638646245264) -- (1.913698273948845,13.120945536518697) -- cycle;
\fill[line width=2.pt,color=xfqqff,fill=xfqqff!30,fill opacity=.81] (-0.2636803893427109,13.952645621721667) -- (-1.0863017260511576,13.120945536518697) -- (-0.08895031851678414,12.519705696233787) -- cycle;
\fill[line width=2.pt,color=xfqqff,fill=xfqqff!30,fill opacity=.81] (5.107483414457493,12.85397921122043) -- (4.932753343631567,14.28691913670831) -- (5.93010475116594,13.6856792964234) -- cycle;
\fill[line width=2.pt,color=ttffcc,fill=ttffcc!60,fill opacity=.81] (0.8910826624293504,14.499638646245264) -- (-0.2636803893427109,13.952645621721667) -- (0.9603247511659356,13.120949296423401) -- cycle;
\fill[line width=2.pt,color=qqzzff,fill=qqzzff!50,fill opacity=0.6600000262260437] (-0.4747217260511576,11.476215536518698) -- (-1.0863017260511576,13.120945536518697) -- (-2.058730318516787,11.954975696233788) -- cycle;
\fill[line width=2.pt,color=ttffqq,fill=ttffqq!30,fill opacity=.81] (-2.058730318516787,11.954975696233788) -- (-1.6098303185167868,10.874975696233788) -- (-2.477370318516787,10.874975696233788) -- cycle;
\fill[line width=2.pt,color=ffwwqq,fill=ffwwqq!30,fill opacity=0.81] (-1.4547547069972921,9.49628258650722) -- (-3.044876795733879,9.230241936329085) -- (-2.477370318516787,10.874975696233788) -- cycle;
\fill[line width=2.pt,color=ttffcc,fill=ttffcc!60,fill opacity=.81] (-1.8208716552252326,8.39854561103082) -- (-2.975634706997294,7.851552586507221) -- (-3.044876795733879,9.230241936329085) -- cycle;
\fill[line width=2.pt,color=xfqqff,fill=xfqqff!30,fill opacity=.81] (-2.800904636171367,6.418612661019342) -- (-2.975634706997294,7.851552586507221) -- (-1.9782832994629196,7.250312746222312) -- cycle;
\fill[line width=2.pt,color=qqzzff,fill=qqzzff!50,fill opacity=0.6600000262260437] (-1.2168960437057388,5.939852501304252) -- (-2.800904636171367,6.418612661019342) -- (-2.189324636171367,4.773882661019343) -- cycle;
\fill[line width=2.pt,color=ttffqq,fill=ttffqq!30,fill opacity=.81] (-1.3217846361713672,4.773882661019343) -- (-2.189324636171367,4.773882661019343) -- (-1.7404246361713671,3.6938826610193427) -- cycle;
\fill[line width=2.pt,color=ffwwqq,fill=ffwwqq!30,fill opacity=0.81] (-0.1503025474347801,3.9599233111974783) -- (-1.7404246361713671,3.6938826610193427) -- (-0.7178090246518725,2.315189551292775) -- cycle;
\fill[line width=2.pt,color=ttffcc,fill=ttffcc!60,fill opacity=.81] (0.4369540271201888,2.862182575816373) -- (-0.7178090246518725,2.315189551292775) -- (0.506196115856774,1.4834932259945095) -- cycle;
\draw [line width=2.pt,color=ffwwqq] (3.9550961158567763,0.4034932259945089)-- (5.545218204593364,0.6695338761726454);
\draw [line width=2.pt,color=ttffqq] (8.046131185539501,4.2941968261841215)-- (8.913671185539501,4.2941968261841215);
\draw [line width=2.pt,color=ffwwqq] (7.891055574020006,5.672889935910688)-- (9.481177662756593,5.938930586088825);
\draw [line width=2.pt,color=ttffcc] (8.257172522247947,6.770626911387091)-- (9.411935574020008,7.317619935910688);
\draw [line width=2.pt,color=xfqqff] (9.237205503194081,8.750559861398568)-- (9.411935574020008,7.317619935910688);
\draw [line width=2.pt,color=xfqqff] (9.411935574020008,7.317619935910688)-- (8.414584166485634,7.918859776195598);
\draw [line width=2.pt,color=ffwwqq] (6.586603414457494,11.209249211220431)-- (8.176725503194081,11.475289861398567);
\draw [line width=2.pt,color=ffwwqq] (7.154109891674587,12.853982971125134)-- (6.586603414457494,11.209249211220431);
\draw [line width=2.pt,color=ttffcc] (5.999346839902525,12.306989946601536)-- (7.154109891674587,12.853982971125134);
\draw [line width=2.pt,color=ttffcc] (7.154109891674587,12.853982971125134)-- (5.93010475116594,13.6856792964234);
\draw [line width=2.pt,color=ttffcc] (5.93010475116594,13.6856792964234)-- (5.999346839902525,12.306989946601536);
\draw [line width=2.pt,color=ttffqq] (2.930104751165938,13.6856792964234)-- (3.348744751165938,14.7656792964234);
\draw [line width=2.pt,color=ffwwqq] (2.481204751165938,14.7656792964234)-- (0.8910826624293504,14.499638646245264);
\draw [line width=2.pt,color=ffwwqq] (0.8910826624293504,14.499638646245264)-- (1.913698273948845,13.120945536518697);
\draw [line width=2.pt,color=ffwwqq] (1.913698273948845,13.120945536518697)-- (2.481204751165938,14.7656792964234);
\draw [line width=2.pt,color=xfqqff] (-0.2636803893427109,13.952645621721667)-- (-1.0863017260511576,13.120945536518697);
\draw [line width=2.pt,color=xfqqff] (-1.0863017260511576,13.120945536518697)-- (-0.08895031851678414,12.519705696233787);
\draw [line width=2.pt,color=xfqqff] (-0.08895031851678414,12.519705696233787)-- (-0.2636803893427109,13.952645621721667);
\draw [line width=2.pt,color=xfqqff] (5.107483414457493,12.85397921122043)-- (4.932753343631567,14.28691913670831);
\draw [line width=2.pt,color=xfqqff] (4.932753343631567,14.28691913670831)-- (5.93010475116594,13.6856792964234);
\draw [line width=2.pt,color=ttffcc] (0.8910826624293504,14.499638646245264)-- (-0.2636803893427109,13.952645621721667);
\draw [line width=2.pt,color=ttffcc] (-0.2636803893427109,13.952645621721667)-- (0.9603247511659356,13.120949296423401);
\draw [line width=2.pt,color=ttffcc] (0.9603247511659356,13.120949296423401)-- (0.8910826624293504,14.499638646245264);
\draw [line width=2.pt,color=qqzzff] (-1.0863017260511576,13.120945536518697)-- (-2.058730318516787,11.954975696233788);
\draw [line width=2.pt,color=ttffqq] (-1.6098303185167868,10.874975696233788)-- (-2.477370318516787,10.874975696233788);
\draw [line width=2.pt,color=ttffqq] (-2.477370318516787,10.874975696233788)-- (-2.058730318516787,11.954975696233788);
\draw [line width=2.pt,color=ffwwqq] (-1.4547547069972921,9.49628258650722)-- (-3.044876795733879,9.230241936329085);
\draw [line width=2.pt,color=ffwwqq] (-3.044876795733879,9.230241936329085)-- (-2.477370318516787,10.874975696233788);
\draw [line width=2.pt,color=ffwwqq] (-2.477370318516787,10.874975696233788)-- (-1.4547547069972921,9.49628258650722);
\draw [line width=2.pt,color=ttffcc] (-1.8208716552252326,8.39854561103082)-- (-2.975634706997294,7.851552586507221);
\draw [line width=2.pt,color=xfqqff] (-2.800904636171367,6.418612661019342)-- (-2.975634706997294,7.851552586507221);
\draw [line width=2.pt,color=xfqqff] (-2.975634706997294,7.851552586507221)-- (-1.9782832994629196,7.250312746222312);
\draw [line width=2.pt,color=xfqqff] (-1.9782832994629196,7.250312746222312)-- (-2.800904636171367,6.418612661019342);
\draw [line width=2.pt,color=qqzzff] (-1.2168960437057388,5.939852501304252)-- (-2.800904636171367,6.418612661019342);
\draw [line width=2.pt,color=qqzzff] (-2.800904636171367,6.418612661019342)-- (-2.189324636171367,4.773882661019343);
\draw [line width=2.pt,color=qqzzff] (-2.189324636171367,4.773882661019343)-- (-1.2168960437057388,5.939852501304252);
\draw [line width=2.pt,color=ttffqq] (-2.189324636171367,4.773882661019343)-- (-1.7404246361713671,3.6938826610193427);
\draw [line width=2.pt,color=ttffqq] (-1.7404246361713671,3.6938826610193427)-- (-1.3217846361713672,4.773882661019343);
\draw [line width=2.pt,color=ffwwqq] (-0.1503025474347801,3.9599233111974783)-- (-1.7404246361713671,3.6938826610193427);
\draw [line width=2.pt,color=ffwwqq] (-1.7404246361713671,3.6938826610193427)-- (-0.7178090246518725,2.315189551292775);
\draw [line width=2.pt,color=ffwwqq] (-0.7178090246518725,2.315189551292775)-- (-0.1503025474347801,3.9599233111974783);
\draw [->,>=stealth,line width=1.5pt] (3.0875561158567764,0.4034932259945089) -- node[below] {$s_8$}(3.9550961158567763,0.4034932259945089);
\draw [->,>=stealth,line width=1.5pt] (2.4759761158567763,2.048223225994509) -- node[above] {$s_1$}(4.522602593073869,2.048226985899212);
\draw [->,>=stealth,line width=1.5pt] (0.5061961158567739,1.4834932259945088) -- (3.5061961158567763,1.4834932259945088);
\draw ($(0.5061961158567739,1.4834932259945088)!.7!(3.5061961158567763,1.4834932259945088)$) node[above] {$s_3$};
\draw [->,>=stealth,line width=1.5pt] (3.9550961158567763,0.4034932259945089) -- node[below] {$s_7$}(5.545218204593364,0.6695338761726454);
\draw [->,>=stealth,line width=1.5pt] (5.545218204593364,0.6695338761726454) -- node[below] {$s_6$}(1.5035475233911475,0.8822533857095992);
\draw [->,>=stealth,line width=1.5pt] (1.5035475233911475,0.8822533857095992) -- (6.699981256365425,1.2165269006962423);
\draw ($(1.5035475233911475,0.8822533857095992)!.3!(6.699981256365425,1.2165269006962423)$) node[above] {$s_5$};
\draw [->,>=stealth,line width=1.5pt] (6.699981256365425,1.2165269006962423) -- (0.506196115856774,1.4834932259945088);
\draw ($(6.699981256365425,1.2165269006962423)!.33!(0.506196115856774,1.4834932259945088)$) node[above] {$s_4$};
\draw [->,>=stealth,line width=1.5pt] (0.5061961158567739,1.4834932259945088) -- node[above] {$s_2$}(2.4759761158567763,2.048223225994509);
\begin{scriptsize}
\draw [fill=rvwvcq] (1.5035475233911475,0.8822533857095992) circle (1pt)node[color=rvwvcq,below] {$V_1$};
\draw [fill=rvwvcq] (3.0875561158567764,0.4034932259945089) circle (1pt)node[color=rvwvcq,below] {$V_2$};
\draw [fill=rvwvcq] (2.4759761158567763,2.048223225994509) circle (1pt)node[color=rvwvcq,above] {$W_1$};
\draw [fill=rvwvcq] (3.9550961158567763,0.4034932259945089) circle (1pt)node[color=rvwvcq,below] {$V_3$};
\draw [fill=rvwvcq] (3.5061961158567763,1.4834932259945088) circle (1pt)node[color=rvwvcq,right] {$W_2$};
\draw [fill=rvwvcq] (5.545218204593364,0.6695338761726454) circle (1pt)node[color=rvwvcq,below] {$V_4$};
\draw [fill=rvwvcq] (4.522602593073869,2.048226985899212) circle (1pt)node[color=rvwvcq,above] {$W_3$};
\draw [fill=rvwvcq] (6.699981256365425,1.2165269006962423) circle (1pt)node[color=rvwvcq,below] {$V_5$};
\draw [fill=rvwvcq] (0.5061961158567739,1.4834932259945088) circle (1pt)node[color=rvwvcq,left] {$V_0$};
\draw [fill=ududff] (7.522602593073872,2.048226985899212) circle (1pt);
\draw [fill=ududff] (6.525251185539498,2.649466826184122) circle (1pt);
\draw [fill=ududff] (1.3288174525652208,2.3151933111974787) circle (1pt);
\draw [fill=ududff] (5.475976115856779,2.048223225994509) circle (1pt);
\draw [fill=ududff] (6.911022593073872,3.692956985899212) circle (1pt);
\draw [fill=ududff] (7.522602593073872,2.048226985899212) circle (1pt);
\draw [fill=ududff] (8.495031185539501,3.214196826184122) circle (1pt);
\draw [fill=ududff] (8.495031185539501,3.214196826184122) circle (1pt);
\draw [fill=ududff] (8.046131185539501,4.2941968261841215) circle (1pt);
\draw [fill=ududff] (8.913671185539501,4.2941968261841215) circle (1pt);
\draw [fill=ududff] (7.891055574020006,5.672889935910688) circle (1pt);
\draw [fill=ududff] (9.481177662756593,5.938930586088825) circle (1pt);
\draw [fill=ududff] (8.913671185539501,4.2941968261841215) circle (1pt);
\draw [fill=ududff] (8.257172522247947,6.770626911387091) circle (1pt);
\draw [fill=ududff] (9.411935574020008,7.317619935910688) circle (1pt);
\draw [fill=ududff] (9.481177662756593,5.938930586088825) circle (1pt);
\draw [fill=ududff] (9.237205503194081,8.750559861398568) circle (1pt);
\draw [fill=ududff] (9.411935574020008,7.317619935910688) circle (1pt);
\draw [fill=ududff] (8.414584166485634,7.918859776195598) circle (1pt);
\draw [fill=ududff] (7.653196910728453,9.229320021113658) circle (1pt);
\draw [fill=ududff] (9.237205503194081,8.750559861398568) circle (1pt);
\draw [fill=ududff] (8.625625503194081,10.395289861398567) circle (1pt);
\draw [fill=ududff] (7.758085503194081,10.395289861398567) circle (1pt);
\draw [fill=ududff] (8.625625503194081,10.395289861398567) circle (1pt);
\draw [fill=ududff] (8.176725503194081,11.475289861398567) circle (1pt);
\draw [fill=ududff] (6.586603414457494,11.209249211220431) circle (1pt);
\draw [fill=ududff] (8.176725503194081,11.475289861398567) circle (1pt);
\draw [fill=ududff] (7.154109891674587,12.853982971125134) circle (1pt);
\draw [fill=ududff] (5.999346839902525,12.306989946601536) circle (1pt);
\draw [fill=ududff] (7.154109891674587,12.853982971125134) circle (1pt);
\draw [fill=ududff] (5.93010475116594,13.6856792964234) circle (1pt);
\draw [fill=uuuuuu] (3.218150433511357,7.584586261208955) circle (2.0pt);
\draw [fill=ududff] (4.932753343631567,14.28691913670831) circle (1pt);
\draw [fill=ududff] (3.348744751165938,14.7656792964234) circle (1pt);
\draw [fill=ududff] (3.960324751165938,13.120949296423401) circle (1pt);
\draw [fill=ududff] (3.348744751165938,14.7656792964234) circle (1pt);
\draw [fill=ududff] (2.481204751165938,14.7656792964234) circle (1pt);
\draw [fill=ududff] (2.930104751165938,13.6856792964234) circle (1pt);
\draw [fill=ududff] (2.481204751165938,14.7656792964234) circle (1pt);
\draw [fill=ududff] (0.8910826624293504,14.499638646245264) circle (1pt);
\draw [fill=ududff] (1.913698273948845,13.120945536518697) circle (1pt);
\draw [fill=ududff] (-0.2636803893427109,13.952645621721667) circle (1pt);
\draw [fill=ududff] (-1.0863017260511576,13.120945536518697) circle (1pt);
\draw [fill=ududff] (-0.08895031851678414,12.519705696233787) circle (1pt);
\draw [fill=ududff] (5.107483414457493,12.85397921122043) circle (1pt);
\draw [fill=ududff] (4.932753343631567,14.28691913670831) circle (1pt);
\draw [fill=ududff] (5.93010475116594,13.6856792964234) circle (1pt);
\draw [fill=ududff] (0.8910826624293504,14.499638646245264) circle (1pt);
\draw [fill=ududff] (-0.2636803893427109,13.952645621721667) circle (1pt);
\draw [fill=ududff] (0.9603247511659356,13.120949296423401) circle (1pt);
\draw [fill=ududff] (-0.4747217260511576,11.476215536518698) circle (1pt);
\draw [fill=ududff] (-1.0863017260511576,13.120945536518697) circle (1pt);
\draw [fill=ududff] (-2.058730318516787,11.954975696233788) circle (1pt);
\draw [fill=ududff] (-2.058730318516787,11.954975696233788) circle (1pt);
\draw [fill=ududff] (-1.6098303185167868,10.874975696233788) circle (1pt);
\draw [fill=ududff] (-2.477370318516787,10.874975696233788) circle (1pt);
\draw [fill=ududff] (-1.4547547069972921,9.49628258650722) circle (1pt);
\draw [fill=ududff] (-3.044876795733879,9.230241936329085) circle (1pt);
\draw [fill=ududff] (-2.477370318516787,10.874975696233788) circle (1pt);
\draw [fill=ududff] (-1.8208716552252326,8.39854561103082) circle (1pt);
\draw [fill=ududff] (-2.975634706997294,7.851552586507221) circle (1pt);
\draw [fill=ududff] (-3.044876795733879,9.230241936329085) circle (1pt);
\draw [fill=ududff] (-2.800904636171367,6.418612661019342) circle (1pt);
\draw [fill=ududff] (-2.975634706997294,7.851552586507221) circle (1pt);
\draw [fill=ududff] (-1.9782832994629196,7.250312746222312) circle (1pt);
\draw [fill=ududff] (-1.2168960437057388,5.939852501304252) circle (1pt);
\draw [fill=ududff] (-2.800904636171367,6.418612661019342) circle (1pt);
\draw [fill=ududff] (-2.189324636171367,4.773882661019343) circle (1pt);
\draw [fill=ududff] (-1.3217846361713672,4.773882661019343) circle (1pt);
\draw [fill=ududff] (-2.189324636171367,4.773882661019343) circle (1pt);
\draw [fill=ududff] (-1.7404246361713671,3.6938826610193427) circle (1pt);
\draw [fill=ududff] (-0.1503025474347801,3.9599233111974783) circle (1pt);
\draw [fill=ududff] (-1.7404246361713671,3.6938826610193427) circle (1pt);
\draw [fill=ududff] (-0.7178090246518725,2.315189551292775) circle (1pt);
\draw [fill=ududff] (0.4369540271201888,2.862182575816373) circle (1pt);
\draw [fill=ududff] (-0.7178090246518725,2.315189551292775) circle (1pt);
\draw [fill=ududff] (5.93010475116594,13.6856792964234) circle (1pt);
\draw [fill=ududff] (5.107483414457493,12.85397921122043) circle (1pt);
\end{scriptsize}
\end{tikzpicture}
\definecolor{ttffcc}{rgb}{0.2,1.,0.8}
\definecolor{uuuuuu}{rgb}{0.26666666666666666,0.26666666666666666,0.26666666666666666}
\definecolor{aqaqaq}{rgb}{0.6274509803921569,0.6274509803921569,0.6274509803921569}
\definecolor{xfqqff}{rgb}{0.4980392156862745,0.,1.}
\definecolor{ududff}{rgb}{0.30196078431372547,0.30196078431372547,1.}
\definecolor{ffwwqq}{rgb}{1.,0.4,0.}
\definecolor{ttffqq}{rgb}{0.2,1.,0.}
\definecolor{qqzzff}{rgb}{0.,0.6,1.}
\definecolor{rvwvcq}{rgb}{0.08235294117647059,0.396078431372549,0.7529411764705882}
\begin{tikzpicture}[scale=1.68, scale=.9, xscale=1,arrow/.style={->,>=stealth,,thick}]
\clip(0,-0.1233017230992943) rectangle (7.012561070485398,3);
\draw[help lines, line width=0.2pt,step=0.18cm] (0,0) grid (7.012561070485398,3);
\fill[line width=2.pt,color=qqzzff,fill=qqzzff!50,fill opacity=0.6600000262260437] (1.9081716794437384,0.8894085563132137) -- (3.084329801818881,0.4085879930664357) -- (2.472749801818881,2.053317993066436) -- cycle;
\fill[line width=2.pt,color=ttffqq,fill=ttffqq!30,fill opacity=.81] (3.084329801818881,0.4085879930664357) -- (3.951869801818881,0.4085879930664357) -- (3.502969801818881,1.4885879930664354) -- cycle;
\fill[line width=2.pt,color=ffwwqq,fill=ffwwqq!30,fill opacity=0.81] (3.951869801818881,0.4085879930664357) -- (5.417439801818881,0.48858799306643574) -- (4.519376279035972,2.053321752971139) -- cycle;
\fill[line width=2.pt,color=xfqqff,fill=xfqqff!30,fill opacity=.81] (6.654784985375114,0.7309150809719034) -- (7.519376279035972,2.053321752971139) -- (6.114174401411114,2.6525011897243607) -- cycle;
\fill[line width=2.pt,color=xfqqff,fill=xfqqff!30,fill opacity=.81] (1.3675610954797381,2.810994665065671) -- (1.9081716794437384,0.8894085563132137) -- (0.5029698018188804,1.4885879930664354) -- cycle;
\fill[line width=2.pt,color=ttffcc,fill=ttffcc!60,fill opacity=.81] (5.417439801818881,0.48858799306643574) -- (6.654784985375114,0.7309150809719034) -- (5.472749801818882,2.053317993066436) -- cycle;
\fill[line width=2.pt,color=qqzzff,fill=qqzzff!50,fill opacity=0.6600000262260437] (6.907796279035972,3.6980517529711396) -- (7.519376279035972,2.053321752971139) -- (8.083954401411116,3.2172311897243615) -- cycle;
\fill[line width=2.pt,color=ttffqq,fill=ttffqq!30,fill opacity=.81] (8.083954401411116,3.2172311897243615) -- (7.635054401411116,4.297231189724362) -- (8.502594401411116,4.297231189724362) -- cycle;
\fill[line width=2.pt,color=ffwwqq,fill=ffwwqq!30,fill opacity=0.81] (7.604530878628207,5.861964949629066) -- (9.070100878628207,5.941964949629066) -- (8.502594401411116,4.297231189724362) -- cycle;
\fill[line width=2.pt,color=ttffcc,fill=ttffcc!60,fill opacity=.81] (7.888065695071974,7.264367861723598) -- (9.125410878628207,7.506694949629066) -- (9.070100878628207,5.941964949629066) -- cycle;
\fill[line width=2.pt,color=xfqqff,fill=xfqqff!30,fill opacity=.81] (8.584800294664207,9.428281058381522) -- (9.125410878628207,7.506694949629066) -- (7.720209001003349,8.105874386382288) -- cycle;
\fill[line width=2.pt,color=qqzzff,fill=qqzzff!50,fill opacity=0.6600000262260437] (7.408642172289065,9.9091016216283) -- (8.584800294664207,9.428281058381522) -- (7.973220294664207,11.073011058381523) -- cycle;
\fill[line width=2.pt,color=ttffqq,fill=ttffqq!30,fill opacity=.81] (7.105680294664207,11.073011058381523) -- (7.973220294664207,11.073011058381523) -- (7.524320294664207,12.153011058381523) -- cycle;
\fill[line width=2.pt,color=ffwwqq,fill=ffwwqq!30,fill opacity=0.81] (6.058750294664207,12.073011058381523) -- (7.524320294664207,12.153011058381523) -- (6.626256771881298,13.717744818286228) -- cycle;
\fill[line width=2.pt,color=ttffcc,fill=ttffcc!60,fill opacity=.81] (5.388911588325065,13.475417730380759) -- (6.626256771881298,13.717744818286228) -- (5.444221588325066,15.04014773038076) -- cycle;
\fill[line width=2.pt,color=qqzzff,fill=qqzzff!50,fill opacity=0.6600000262260437] (4.039019710700208,15.639327167133981) -- (2.8628615883250648,16.120147730380758) -- (3.474441588325065,14.475417730380759) -- cycle;
\fill[line width=2.pt,color=ttffqq,fill=ttffqq!30,fill opacity=.81] (2.8628615883250648,16.120147730380758) -- (1.9953215883250648,16.120147730380758) -- (2.444221588325065,15.04014773038076) -- cycle;
\fill[line width=2.pt,color=ffwwqq,fill=ffwwqq!30,fill opacity=0.81] (1.9953215883250648,16.120147730380758) -- (0.5297515883250643,16.04014773038076) -- (1.4278151111079733,14.475413970476055) -- cycle;
\fill[line width=2.pt,color=xfqqff,fill=xfqqff!30,fill opacity=.81] (-0.7075935952311685,15.79782064247529) -- (-1.5721848888920267,14.475413970476055) -- (-0.16698301126716864,13.876234533722833) -- cycle;
\fill[line width=2.pt,color=xfqqff,fill=xfqqff!30,fill opacity=.81] (4.579630294664208,13.717741058381524) -- (4.039019710700208,15.639327167133981) -- (5.444221588325066,15.04014773038076) -- cycle;
\fill[line width=2.pt,color=ttffcc,fill=ttffcc!60,fill opacity=.81] (0.5297515883250643,16.04014773038076) -- (-0.7075935952311685,15.79782064247529) -- (0.4744415883250639,14.475417730380759) -- cycle;
\fill[line width=2.pt,color=qqzzff,fill=qqzzff!50,fill opacity=0.6600000262260437] (-0.9606048888920267,12.830683970476056) -- (-1.5721848888920267,14.475413970476055) -- (-2.1367630112671705,13.311504533722832) -- cycle;
\fill[line width=2.pt,color=ttffqq,fill=ttffqq!30,fill opacity=.81] (-2.1367630112671705,13.311504533722832) -- (-1.6878630112671704,12.231504533722834) -- (-2.5554030112671704,12.231504533722834) -- cycle;
\fill[line width=2.pt,color=ffwwqq,fill=ffwwqq!30,fill opacity=0.81] (-1.6573394884842614,10.66677077381813) -- (-3.122909488484261,10.58677077381813) -- (-2.5554030112671704,12.231504533722834) -- cycle;
\fill[line width=2.pt,color=ttffcc,fill=ttffcc!60,fill opacity=.81] (-1.9408743049280286,9.264367861723596) -- (-3.1782194884842614,9.02204077381813) -- (-3.122909488484261,10.58677077381813) -- cycle;
\fill[line width=2.pt,color=xfqqff,fill=xfqqff!30,fill opacity=.81] (-2.6376089045202615,7.100454665065673) -- (-3.1782194884842614,9.02204077381813) -- (-1.7730176108594033,8.422861337064907) -- cycle;
\fill[line width=2.pt,color=qqzzff,fill=qqzzff!50,fill opacity=0.6600000262260437] (-1.4614507821451195,6.619634101818894) -- (-2.6376089045202615,7.100454665065673) -- (-2.0260289045202615,5.455724665065672) -- cycle;
\fill[line width=2.pt,color=ttffqq,fill=ttffqq!30,fill opacity=.81] (-1.1584889045202615,5.455724665065672) -- (-2.0260289045202615,5.455724665065672) -- (-1.5771289045202614,4.375724665065672) -- cycle;
\fill[line width=2.pt,color=ffwwqq,fill=ffwwqq!30,fill opacity=0.81] (-0.11155890452026096,4.455724665065672) -- (-1.5771289045202614,4.375724665065672) -- (-0.6790653817373524,2.810990905160967) -- cycle;
\fill[line width=2.pt,color=ttffcc,fill=ttffcc!60,fill opacity=.81] (0.5582798018188804,3.053317993066436) -- (-0.6790653817373524,2.810990905160967) -- (0.50296980181888,1.4885879930664352) -- cycle;
\draw [->,>=stealth,line width=1.5pt] (1.9081716794437384,0.8894085563132137) -- node[below] {$s_4$}(0.5029698018188804,1.4885879930664354);
\draw [->,>=stealth,line width=1.5pt] (0.5029698018188804,1.4885879930664354) -- (3.502969801818881,1.4885879930664354);
\draw ($(0.5029698018188804,1.4885879930664354)!.7!(3.502969801818881,1.4885879930664354)$) node[above] {$s_3$};
\draw [->,>=stealth,line width=1.5pt] (3.084329801818881,0.4085879930664357) -- node[below] {$s_8$}(3.951869801818881,0.4085879930664357);
\draw [->,>=stealth,line width=1.5pt] (2.472749801818881,2.053317993066436) -- node[above] {$s_1$}(4.519376279035972,2.053321752971139);
\draw [->,>=stealth,line width=1.5pt] (0.5029698018188804,1.4885879930664354) -- node[above] {$s_2$}(2.472749801818881,2.053317993066436);
\draw [->,>=stealth,line width=1.5pt] (5.417439801818881,0.48858799306643574) -- node[below] {$s_6$}(6.654784985375114,0.7309150809719034);
\draw [->,>=stealth,line width=1.5pt] (3.951869801818881,0.4085879930664357) -- node[below] {$s_7$}(5.417439801818881,0.48858799306643574);
\draw [->,>=stealth,line width=1.5pt] (6.654784985375114,0.7309150809719034) -- node[above] {$s_5$}(1.908171679443738,0.8894085563132137);
\begin{scriptsize}
\draw [fill=rvwvcq] (1.9081716794437384,0.8894085563132137) circle (1pt)node[color=rvwvcq,below] {$V_1$};
\draw [fill=rvwvcq] (3.084329801818881,0.4085879930664357) circle (1pt)node[color=rvwvcq,below] {$V_2$};
\draw [fill=rvwvcq] (2.472749801818881,2.053317993066436) circle (1pt)node[color=rvwvcq,above] {$W_1$};
\draw [fill=rvwvcq] (3.951869801818881,0.4085879930664357) circle (1pt)node[color=rvwvcq,below] {$V_3$};
\draw [fill=rvwvcq] (3.502969801818881,1.4885879930664354) circle (1pt)node[color=rvwvcq,right] {$W_2$};
\draw [fill=rvwvcq] (5.417439801818881,0.48858799306643574) circle (1pt)node[color=rvwvcq,below] {$V_4$};
\draw [fill=rvwvcq] (4.519376279035972,2.053321752971139) circle (1pt)node[color=rvwvcq,above] {$W_3$};
\draw [fill=rvwvcq] (6.654784985375114,0.7309150809719034) circle (1pt)node[color=rvwvcq,below] {$V_5$};
\draw [fill=rvwvcq] (0.5029698018188804,1.4885879930664354) circle (1pt)node[color=rvwvcq,left] {$V_0$};
\draw [fill=ududff] (7.519376279035972,2.053321752971139) circle (1pt);
\draw [fill=ududff] (6.114174401411114,2.6525011897243607) circle (1pt);
\draw [fill=ududff] (1.3675610954797381,2.810994665065671) circle (1pt);
\draw [fill=ududff] (5.472749801818882,2.053317993066436) circle (1pt);
\draw [fill=ududff] (6.907796279035972,3.6980517529711396) circle (1pt);
\draw [fill=ududff] (7.519376279035972,2.053321752971139) circle (1pt);
\draw [fill=ududff] (8.083954401411116,3.2172311897243615) circle (1pt);
\draw [fill=ududff] (8.083954401411116,3.2172311897243615) circle (1pt);
\draw [fill=ududff] (7.635054401411116,4.297231189724362) circle (1pt);
\draw [fill=ududff] (8.502594401411116,4.297231189724362) circle (1pt);
\draw [fill=ududff] (7.604530878628207,5.861964949629066) circle (1pt);
\draw [fill=ududff] (9.070100878628207,5.941964949629066) circle (1pt);
\draw [fill=ududff] (8.502594401411116,4.297231189724362) circle (1pt);
\draw [fill=ududff] (7.888065695071974,7.264367861723598) circle (1pt);
\draw [fill=ududff] (9.125410878628207,7.506694949629066) circle (1pt);
\draw [fill=ududff] (9.070100878628207,5.941964949629066) circle (1pt);
\draw [fill=ududff] (8.584800294664207,9.428281058381522) circle (1pt);
\draw [fill=ududff] (9.125410878628207,7.506694949629066) circle (1pt);
\draw [fill=ududff] (7.720209001003349,8.105874386382288) circle (1pt);
\draw [fill=ududff] (7.408642172289065,9.9091016216283) circle (1pt);
\draw [fill=ududff] (8.584800294664207,9.428281058381522) circle (1pt);
\draw [fill=ududff] (7.973220294664207,11.073011058381523) circle (1pt);
\draw [fill=ududff] (7.105680294664207,11.073011058381523) circle (1pt);
\draw [fill=ududff] (7.973220294664207,11.073011058381523) circle (1pt);
\draw [fill=ududff] (7.524320294664207,12.153011058381523) circle (1pt);
\draw [fill=ududff] (6.058750294664207,12.073011058381523) circle (1pt);
\draw [fill=ududff] (7.524320294664207,12.153011058381523) circle (1pt);
\draw [fill=ududff] (6.626256771881298,13.717744818286228) circle (1pt);
\draw [fill=ududff] (5.388911588325065,13.475417730380759) circle (1pt);
\draw [fill=ududff] (6.626256771881298,13.717744818286228) circle (1pt);
\draw [fill=ududff] (5.444221588325066,15.04014773038076) circle (1pt);
\draw [fill=uuuuuu] (2.973595695071973,8.264367861723597) circle (2.0pt);
\draw[color=uuuuuu] (0.3476704471452668,3.1784014731494254) node {$N$};
\draw [fill=ududff] (4.039019710700208,15.639327167133981) circle (1pt);
\draw [fill=ududff] (2.8628615883250648,16.120147730380758) circle (1pt);
\draw [fill=ududff] (3.474441588325065,14.475417730380759) circle (1pt);
\draw [fill=ududff] (2.8628615883250648,16.120147730380758) circle (1pt);
\draw [fill=ududff] (1.9953215883250648,16.120147730380758) circle (1pt);
\draw [fill=ududff] (2.444221588325065,15.04014773038076) circle (1pt);
\draw [fill=ududff] (1.9953215883250648,16.120147730380758) circle (1pt);
\draw [fill=ududff] (0.5297515883250643,16.04014773038076) circle (1pt);
\draw [fill=ududff] (1.4278151111079733,14.475413970476055) circle (1pt);
\draw [fill=ududff] (-0.7075935952311685,15.79782064247529) circle (1pt);
\draw [fill=ududff] (-1.5721848888920267,14.475413970476055) circle (1pt);
\draw [fill=ududff] (-0.16698301126716864,13.876234533722833) circle (1pt);
\draw [fill=ududff] (4.579630294664208,13.717741058381524) circle (1pt);
\draw [fill=ududff] (4.039019710700208,15.639327167133981) circle (1pt);
\draw [fill=ududff] (5.444221588325066,15.04014773038076) circle (1pt);
\draw[color=ududff] (0.3702146651208945,3.188648844956529) node {$I'_{1}$};
\draw [fill=ududff] (0.5297515883250643,16.04014773038076) circle (1pt);
\draw [fill=ududff] (-0.7075935952311685,15.79782064247529) circle (1pt);
\draw [fill=ududff] (0.4744415883250639,14.475417730380759) circle (1pt);
\draw [fill=ududff] (-0.9606048888920267,12.830683970476056) circle (1pt);
\draw [fill=ududff] (-1.5721848888920267,14.475413970476055) circle (1pt);
\draw [fill=ududff] (-2.1367630112671705,13.311504533722832) circle (1pt);
\draw [fill=ududff] (-2.1367630112671705,13.311504533722832) circle (1pt);
\draw [fill=ududff] (-1.6878630112671704,12.231504533722834) circle (1pt);
\draw [fill=ududff] (-2.5554030112671704,12.231504533722834) circle (1pt);
\draw [fill=ududff] (-1.6573394884842614,10.66677077381813) circle (1pt);
\draw [fill=ududff] (-3.122909488484261,10.58677077381813) circle (1pt);
\draw [fill=ududff] (-2.5554030112671704,12.231504533722834) circle (1pt);
\draw [fill=ududff] (-1.9408743049280286,9.264367861723596) circle (1pt);
\draw [fill=ududff] (-3.1782194884842614,9.02204077381813) circle (1pt);
\draw [fill=ududff] (-3.122909488484261,10.58677077381813) circle (1pt);
\draw [fill=ududff] (-2.6376089045202615,7.100454665065673) circle (1pt);
\draw [fill=ududff] (-3.1782194884842614,9.02204077381813) circle (1pt);
\draw [fill=ududff] (-1.7730176108594033,8.422861337064907) circle (1pt);
\draw [fill=ududff] (-1.4614507821451195,6.619634101818894) circle (1pt);
\draw [fill=ududff] (-2.6376089045202615,7.100454665065673) circle (1pt);
\draw [fill=ududff] (-2.0260289045202615,5.455724665065672) circle (1pt);
\draw [fill=ududff] (-1.1584889045202615,5.455724665065672) circle (1pt);
\draw [fill=ududff] (-2.0260289045202615,5.455724665065672) circle (1pt);
\draw [fill=ududff] (-1.5771289045202614,4.375724665065672) circle (1pt);
\draw [fill=ududff] (-0.11155890452026096,4.455724665065672) circle (1pt);
\draw [fill=ududff] (-1.5771289045202614,4.375724665065672) circle (1pt);
\draw [fill=ududff] (-0.6790653817373524,2.810990905160967) circle (1pt);
\draw [fill=ududff] (0.5582798018188804,3.053317993066436) circle (1pt);
\draw [fill=ududff] (-0.6790653817373524,2.810990905160967) circle (1pt);
\draw [fill=ududff] (5.444221588325066,15.04014773038076) circle (1pt);
\draw [fill=ududff] (4.579630294664208,13.717741058381524) circle (1pt);
\end{scriptsize}
\end{tikzpicture}
\definecolor{ttffcc}{rgb}{0.2,1.,0.8}
\definecolor{uuuuuu}{rgb}{0.26666666666666666,0.26666666666666666,0.26666666666666666}
\definecolor{aqaqaq}{rgb}{0.6274509803921569,0.6274509803921569,0.6274509803921569}
\definecolor{xfqqff}{rgb}{0.4980392156862745,0.,1.}
\definecolor{ududff}{rgb}{0.30196078431372547,0.30196078431372547,1.}
\definecolor{ffwwqq}{rgb}{1.,0.4,0.}
\definecolor{ttffqq}{rgb}{0.2,1.,0.}
\definecolor{qqzzff}{rgb}{0.,0.6,1.}
\definecolor{rvwvcq}{rgb}{0.08235294117647059,0.396078431372549,0.7529411764705882}
\begin{tikzpicture}[scale=1.46, scale=.9, xscale=1,arrow/.style={->,>=stealth,,thick}]
\clip(0,-0.18140970733848455) rectangle (8.055281353641172,3.695894376175167);
\draw[help lines, line width=0.2pt,step=0.2cm] (-0.03714134341956283,0) grid (8.055281353641172,3.695894376175167);
\fill[line width=2.pt,color=qqzzff,fill=qqzzff!50,fill opacity=0.6600000262260437] (2.03671749770347,0.391053935364154) -- (3.6764531369532563,0.391055911064663) -- (3.150166909445395,2.5748286580240394) -- cycle;
\fill[line width=2.pt,color=ttffqq,fill=ttffqq!30,fill opacity=.81] (3.6764531369532563,0.391055911064663) -- (5.003097497703469,0.4997539353641538) -- (4.100487497703469,1.9997539353641534) -- cycle;
\fill[line width=2.pt,color=ffwwqq,fill=ffwwqq!30,fill opacity=0.81] (5.003097497703469,0.4997539353641538) -- (6.5652629286567175,0.7929645476979998) -- (5.44537724641925,2.5748286580240305) -- cycle;
\fill[line width=2.pt,color=xfqqff,fill=xfqqff!30,fill opacity=.81] (7.603097497703469,1.4997539353641538) -- (9.042767246419249,2.5748286580240305) -- (7.509147246419248,4.18352865802403) -- cycle;
\fill[line width=2.pt,color=xfqqff,fill=xfqqff!30,fill opacity=.81] (1.9427672464192494,3.07482865802403) -- (2.03671749770347,0.391053935364154) -- (0.5030974977034695,1.9997539353641534) -- cycle;
\fill[line width=2.pt,color=ttffcc,fill=ttffcc!60,fill opacity=.81] (6.5652629286567175,0.7929645476979998) -- (7.603097497703469,1.4997539353641538) -- (6.747556909445395,2.5748286580240394) -- cycle;
\fill[line width=2.pt,color=qqzzff,fill=qqzzff!50,fill opacity=0.6600000262260437] (8.516481018911389,4.758601404983407) -- (9.042767246419249,2.5748286580240305) -- (10.156216658161174,4.758603380683915) -- cycle;
\fill[line width=2.pt,color=ttffqq,fill=ttffqq!30,fill opacity=.81] (10.156216658161174,4.758603380683915) -- (9.253606658161175,6.258603380683915) -- (10.580251018911387,6.367301404983406) -- cycle;
\fill[line width=2.pt,color=ffwwqq,fill=ffwwqq!30,fill opacity=0.81] (9.46036533667392,8.149165515309438) -- (11.022530767627167,8.442376127643284) -- (10.580251018911387,6.367301404983406) -- cycle;
\fill[line width=2.pt,color=ttffcc,fill=ttffcc!60,fill opacity=.81] (10.166990179369094,9.51745085030317) -- (11.204824748415845,10.224240237969324) -- (11.022530767627167,8.442376127643284) -- cycle;
\fill[line width=2.pt,color=xfqqff,fill=xfqqff!30,fill opacity=.81] (11.110874497131624,12.9080149606292) -- (11.204824748415845,10.224240237969324) -- (9.671204748415844,11.832940237969325) -- cycle;
\fill[line width=2.pt,color=qqzzff,fill=qqzzff!50,fill opacity=0.6600000262260437] (9.471138857881837,12.90801298492869) -- (11.110874497131624,12.9080149606292) -- (10.584588269623762,15.091787707588576) -- cycle;
\fill[line width=2.pt,color=ttffqq,fill=ttffqq!30,fill opacity=.81] (9.25794390887355,14.983089683289085) -- (10.584588269623762,15.091787707588576) -- (9.681978269623762,16.591787707588576) -- cycle;
\fill[line width=2.pt,color=ffwwqq,fill=ffwwqq!30,fill opacity=0.81] (8.119812838670514,16.29857709525473) -- (9.681978269623762,16.591787707588576) -- (8.562092587386296,18.373651817914606) -- cycle;
\fill[line width=2.pt,color=ttffcc,fill=ttffcc!60,fill opacity=.81] (7.524258018339545,17.66686243024845) -- (8.562092587386296,18.373651817914606) -- (7.706551999128222,19.44872654057449) -- cycle;
\fill[line width=2.pt,color=qqzzff,fill=qqzzff!50,fill opacity=0.6600000262260437] (6.172931999128221,21.05742654057449) -- (4.533196359878435,21.057424564873983) -- (5.059482587386296,18.873651817914606) -- cycle;
\fill[line width=2.pt,color=ttffqq,fill=ttffqq!30,fill opacity=.81] (4.533196359878435,21.057424564873983) -- (3.206551999128222,20.94872654057449) -- (4.109161999128222,19.44872654057449) -- cycle;
\fill[line width=2.pt,color=ffwwqq,fill=ffwwqq!30,fill opacity=0.81] (3.206551999128222,20.94872654057449) -- (1.6443865681749736,20.655515928240646) -- (2.764272250412441,18.873651817914613) -- cycle;
\fill[line width=2.pt,color=xfqqff,fill=xfqqff!30,fill opacity=.81] (0.6065519991282224,19.94872654057449) -- (-0.8331177495875579,18.873651817914613) -- (0.700502250412443,17.264951817914614) -- cycle;
\fill[line width=2.pt,color=xfqqff,fill=xfqqff!30,fill opacity=.81] (6.266882250412442,18.373651817914613) -- (6.172931999128221,21.05742654057449) -- (7.706551999128221,19.44872654057449) -- cycle;
\fill[line width=2.pt,color=ttffcc,fill=ttffcc!60,fill opacity=.81] (1.6443865681749736,20.655515928240646) -- (0.6065519991282224,19.94872654057449) -- (1.4620925873862962,18.873651817914606) -- cycle;
\fill[line width=2.pt,color=qqzzff,fill=qqzzff!50,fill opacity=0.6600000262260437] (-0.30683152207969755,16.689879070955236) -- (-0.8331177495875579,18.873651817914613) -- (-1.946567161329483,16.68987709525473) -- cycle;
\fill[line width=2.pt,color=ttffqq,fill=ttffqq!30,fill opacity=.81] (-1.946567161329483,16.68987709525473) -- (-1.0439571613294838,15.18987709525473) -- (-2.3706015220796957,15.081179070955239) -- cycle;
\fill[line width=2.pt,color=ffwwqq,fill=ffwwqq!30,fill opacity=0.81] (-1.2507158398422291,13.299314960629207) -- (-2.812881270795476,13.006104348295361) -- (-2.3706015220796957,15.081179070955239) -- cycle;
\fill[line width=2.pt,color=ttffcc,fill=ttffcc!60,fill opacity=.81] (-1.957340682537403,11.931029625635475) -- (-2.995175251584154,11.22424023796932) -- (-2.812881270795476,13.006104348295361) -- cycle;
\fill[line width=2.pt,color=xfqqff,fill=xfqqff!30,fill opacity=.81] (-2.9012250002999327,8.540465515309444) -- (-2.995175251584154,11.22424023796932) -- (-1.4615552515841532,9.61554023796932) -- cycle;
\fill[line width=2.pt,color=qqzzff,fill=qqzzff!50,fill opacity=0.6600000262260437] (-1.2614893610501454,8.540467491009954) -- (-2.9012250002999327,8.540465515309444) -- (-2.3749387727920706,6.356692768350069) -- cycle;
\fill[line width=2.pt,color=ttffqq,fill=ttffqq!30,fill opacity=.81] (-1.0482944120418587,6.4653907926495595) -- (-2.3749387727920706,6.356692768350069) -- (-1.4723287727920713,4.856692768350069) -- cycle;
\fill[line width=2.pt,color=ffwwqq,fill=ffwwqq!30,fill opacity=0.81] (0.08983665816117714,5.1499033806839165) -- (-1.4723287727920713,4.856692768350069) -- (-0.35244309055460477,3.074828658024039) -- cycle;
\fill[line width=2.pt,color=ttffcc,fill=ttffcc!60,fill opacity=.81] (0.6853914784921464,3.7816180456901947) -- (-0.35244309055460477,3.074828658024039) -- (0.503097497703469,1.9997539353641542) -- cycle;
\draw [->,>=stealth,line width=1.5pt] (3.150166909445395,2.5748286580240394) -- node[above] {$s_1$}(5.44537724641925,2.5748286580240305);
\draw [->,>=stealth,line width=1.5pt] (0.5030974977034695,1.9997539353641534) -- (4.100487497703469,1.9997539353641534);
\draw ($(0.5030974977034695,1.9997539353641534)!.7!(4.100487497703469,1.9997539353641534)$) node[above] {$s_3$};
\draw [->,>=stealth,line width=1.5pt] (7.603097497703469,1.4997539353641538) -- node[below] {$s_4$}(0.503097497703469,1.9997539353641534);
\draw [->,>=stealth,line width=1.5pt] (0.5030974977034695,1.9997539353641534) -- node[above] {$s_2$}(3.150166909445395,2.5748286580240394);
\draw [->,>=stealth,line width=1.5pt] (6.5652629286567175,0.7929645476979998) -- node[below] {$s_5$}(7.603097497703469,1.4997539353641538);
\draw [->,>=stealth,line width=1.5pt] (2.03671749770347,0.391053935364154) -- node[below] {$s_8$}(3.6764531369532563,0.391055911064663);
\draw [->,>=stealth,line width=1.5pt] (3.6764531369532563,0.391055911064663) -- node[below] {$s_7$}(5.003097497703469,0.4997539353641538);
\draw [->,>=stealth,line width=1.5pt] (5.003097497703469,0.4997539353641538) -- node[below] {$s_6$}(6.5652629286567175,0.7929645476979998);
\begin{scriptsize}
\draw [fill=rvwvcq] (2.03671749770347,0.391053935364154) circle (1pt)node[color=rvwvcq,below] {$V_1$};
\draw [fill=rvwvcq] (3.6764531369532563,0.391055911064663) circle (1pt)node[color=rvwvcq,below] {$V_2$};
\draw [fill=rvwvcq] (3.150166909445395,2.5748286580240394) circle (1pt)node[color=rvwvcq,above] {$W_1$};
\draw [fill=rvwvcq] (5.003097497703469,0.4997539353641538) circle (1pt)node[color=rvwvcq,below] {$V_3$};
\draw [fill=rvwvcq] (4.100487497703469,1.9997539353641534) circle (1pt)node[color=rvwvcq,right] {$W_2$};
\draw [fill=rvwvcq] (6.5652629286567175,0.7929645476979998) circle (1pt)node[color=rvwvcq,below] {$V_4$};
\draw [fill=rvwvcq] (5.44537724641925,2.5748286580240305) circle (1pt)node[color=rvwvcq,above] {$W_3$};
\draw [fill=rvwvcq] (7.603097497703469,1.4997539353641538) circle (1pt)node[color=rvwvcq,right] {$V_5$};
\draw [fill=rvwvcq] (0.5030974977034695,1.9997539353641534) circle (1pt)node[color=rvwvcq,left] {$V_0$};
\draw [fill=ududff] (9.042767246419249,2.5748286580240305) circle (1pt);
\draw [fill=ududff] (7.509147246419248,4.18352865802403) circle (1pt);
\draw [fill=ududff] (1.9427672464192494,3.07482865802403) circle (1pt);
\draw [fill=ududff] (6.747556909445395,2.5748286580240394) circle (1pt);
\draw [fill=ududff] (8.516481018911389,4.758601404983407) circle (1pt);
\draw [fill=ududff] (9.042767246419249,2.5748286580240305) circle (1pt);
\draw [fill=ududff] (10.156216658161174,4.758603380683915) circle (1pt);
\draw [fill=ududff] (10.156216658161174,4.758603380683915) circle (1pt);
\draw [fill=ududff] (9.253606658161175,6.258603380683915) circle (1pt);
\draw [fill=ududff] (10.580251018911387,6.367301404983406) circle (1pt);
\draw [fill=ududff] (9.46036533667392,8.149165515309438) circle (1pt);
\draw [fill=ududff] (11.022530767627167,8.442376127643284) circle (1pt);
\draw [fill=ududff] (10.580251018911387,6.367301404983406) circle (1pt);
\draw [fill=ududff] (10.166990179369094,9.51745085030317) circle (1pt);
\draw [fill=ududff] (11.204824748415845,10.224240237969324) circle (1pt);
\draw [fill=ududff] (11.022530767627167,8.442376127643284) circle (1pt);
\draw [fill=ududff] (11.110874497131624,12.9080149606292) circle (1pt);
\draw [fill=ududff] (11.204824748415845,10.224240237969324) circle (1pt);
\draw [fill=ududff] (9.671204748415844,11.832940237969325) circle (1pt);
\draw [fill=ududff] (9.471138857881837,12.90801298492869) circle (1pt);
\draw [fill=ududff] (11.110874497131624,12.9080149606292) circle (1pt);
\draw [fill=ududff] (10.584588269623762,15.091787707588576) circle (1pt);
\draw [fill=ududff] (9.25794390887355,14.983089683289085) circle (1pt);
\draw [fill=ududff] (10.584588269623762,15.091787707588576) circle (1pt);
\draw [fill=ududff] (9.681978269623762,16.591787707588576) circle (1pt);
\draw [fill=ududff] (8.119812838670514,16.29857709525473) circle (1pt);
\draw [fill=ududff] (9.681978269623762,16.591787707588576) circle (1pt);
\draw [fill=ududff] (8.562092587386296,18.373651817914606) circle (1pt);
\draw [fill=ududff] (7.524258018339545,17.66686243024845) circle (1pt);
\draw [fill=ududff] (8.562092587386296,18.373651817914606) circle (1pt);
\draw [fill=ududff] (7.706551999128222,19.44872654057449) circle (1pt);
\draw [fill=uuuuuu] (4.1048247484158455,10.724240237969322) circle (2.0pt);
\draw [fill=ududff] (6.172931999128221,21.05742654057449) circle (1pt);
\draw [fill=ududff] (4.533196359878435,21.057424564873983) circle (1pt);
\draw [fill=ududff] (5.059482587386296,18.873651817914606) circle (1pt);
\draw [fill=ududff] (4.533196359878435,21.057424564873983) circle (1pt);
\draw [fill=ududff] (3.206551999128222,20.94872654057449) circle (1pt);
\draw [fill=ududff] (4.109161999128222,19.44872654057449) circle (1pt);
\draw [fill=ududff] (3.206551999128222,20.94872654057449) circle (1pt);
\draw [fill=ududff] (1.6443865681749736,20.655515928240646) circle (1pt);
\draw [fill=ududff] (2.764272250412441,18.873651817914613) circle (1pt);
\draw [fill=ududff] (0.6065519991282224,19.94872654057449) circle (1pt);
\draw [fill=ududff] (-0.8331177495875579,18.873651817914613) circle (1pt);
\draw [fill=ududff] (0.700502250412443,17.264951817914614) circle (1pt);
\draw [fill=ududff] (6.266882250412442,18.373651817914613) circle (1pt);
\draw [fill=ududff] (6.172931999128221,21.05742654057449) circle (1pt);
\draw [fill=ududff] (7.706551999128221,19.44872654057449) circle (1pt);
\draw [fill=ududff] (1.6443865681749736,20.655515928240646) circle (1pt);
\draw [fill=ududff] (0.6065519991282224,19.94872654057449) circle (1pt);
\draw [fill=ududff] (1.4620925873862962,18.873651817914606) circle (1pt);
\draw [fill=ududff] (-0.30683152207969755,16.689879070955236) circle (1pt);
\draw [fill=ududff] (-0.8331177495875579,18.873651817914613) circle (1pt);
\draw [fill=ududff] (-1.946567161329483,16.68987709525473) circle (1pt);
\draw [fill=ududff] (-1.946567161329483,16.68987709525473) circle (1pt);
\draw [fill=ududff] (-1.0439571613294838,15.18987709525473) circle (1pt);
\draw [fill=ududff] (-2.3706015220796957,15.081179070955239) circle (1pt);
\draw [fill=ududff] (-1.2507158398422291,13.299314960629207) circle (1pt);
\draw [fill=ududff] (-2.812881270795476,13.006104348295361) circle (1pt);
\draw [fill=ududff] (-2.3706015220796957,15.081179070955239) circle (1pt);
\draw [fill=ududff] (-1.957340682537403,11.931029625635475) circle (1pt);
\draw [fill=ududff] (-2.995175251584154,11.22424023796932) circle (1pt);
\draw [fill=ududff] (-2.812881270795476,13.006104348295361) circle (1pt);
\draw [fill=ududff] (-2.9012250002999327,8.540465515309444) circle (1pt);
\draw [fill=ududff] (-2.995175251584154,11.22424023796932) circle (1pt);
\draw [fill=ududff] (-1.4615552515841532,9.61554023796932) circle (1pt);
\draw [fill=ududff] (-1.2614893610501454,8.540467491009954) circle (1pt);
\draw [fill=ududff] (-2.9012250002999327,8.540465515309444) circle (1pt);
\draw [fill=ududff] (-2.3749387727920706,6.356692768350069) circle (1pt);
\draw [fill=ududff] (-1.0482944120418587,6.4653907926495595) circle (1pt);
\draw [fill=ududff] (-2.3749387727920706,6.356692768350069) circle (1pt);
\draw [fill=ududff] (-1.4723287727920713,4.856692768350069) circle (1pt);
\draw [fill=ududff] (0.08983665816117714,5.1499033806839165) circle (1pt);
\draw [fill=ududff] (-1.4723287727920713,4.856692768350069) circle (1pt);
\draw [fill=ududff] (-0.35244309055460477,3.074828658024039) circle (1pt);
\draw [fill=ududff] (0.6853914784921464,3.7816180456901947) circle (1pt);
\draw [fill=ududff] (-0.35244309055460477,3.074828658024039) circle (1pt);
\draw [fill=ududff] (7.706551999128222,19.44872654057449) circle (1pt);
\draw [fill=ududff] (6.266882250412442,18.373651817914613) circle (1pt);
\end{scriptsize}
\end{tikzpicture}
\caption{Another four stable 30-gons of type $E_8$}
\label{fig:E8-2}
\end{figure}
\end{example}
%





\end{document}